\documentclass[11pt, a4]{amsart}
\usepackage{amsmath}
\usepackage{amsthm}
\usepackage{amssymb}
\usepackage{sseq}
\usepackage[T1]{fontenc}
\usepackage[pagebackref, breaklinks]{hyperref}
\usepackage[curve]{xypic}
\usepackage[left=3.2cm,right=3.2cm,top=3cm,bottom=3cm]{geometry} 
\usepackage{pdfpages}
\usepackage{stmaryrd}
\usepackage{marvosym}
\usepackage[capitalise]{cleveref}
\usepackage{wrapfig}
\usepackage{url}
\usepackage{todonotes}
\usepackage{breakurl}

\pagestyle{myheadings} \markright{\hspace{5cm}}

\let\oldmarginpar\marginpar
\renewcommand\marginpar[1]{\-\oldmarginpar[\raggedleft\footnotesize #1]%
{\raggedright\footnotesize #1}}

\numberwithin{equation}{section}
\newtheorem{thm}[equation]{Theorem}

\newtheorem{lemma}[equation]{Lemma}

\newtheorem{cor}[equation]{Corollary}

\newtheorem{prop}[equation]{Proposition}
\newtheorem{claim}[equation]{Claim}

\newtheorem*{conjecture*}{Conjecture}

\newtheorem*{question*}{Question}

\theoremstyle{definition}
\newtheorem{defi}[equation]{Definition}

 \newtheorem{example}[equation]{Example}
  \newtheorem*{example*}{Example}
 \newtheorem{examples}[equation]{Examples}

\theoremstyle{remark}

\newtheorem{remark}[equation]{Remark}

\def\co{\colon\thinspace}

\newcommand{\G}{\mathbb{G}}
\newcommand{\Z}{\mathbb{Z}}
\newcommand{\F}{\mathbb{F}}

\newcommand{\Q}{\mathbb{Q}}

\newcommand{\N}{\mathbb{N}}
\newcommand{\A}{\mathbb{A}}

\newcommand{\C}{\mathbb{C}}

\newcommand{\Gm}{\mathbb{G}_m}

\newcommand{\Ac}{\mathcal{A}}
\newcommand{\BB}{\mathcal{B}}
\newcommand{\CC}{\mathcal{C}}
\newcommand{\DD}{\mathcal{D}}
\newcommand{\EE}{\mathcal{E}}
\newcommand{\FF}{\mathcal{F}}
\newcommand{\GG}{\mathcal{G}}
\newcommand{\UU}{\mathcal{U}}

\newcommand{\PP}{\mathcal{P}}

\newcommand{\XX}{\mathcal{X}}

\newcommand{\YY}{\mathcal{Y}}

\newcommand{\OO}{\mathcal{O}}

\newcommand{\MM}{\mathcal{M}}

\newcommand{\MMb}{\overline{\mathcal{M}}}

\newcommand{\tensor}{\otimes}

\DeclareMathOperator{\Aut}{Aut}

\DeclareMathOperator{\ev}{ev}
\DeclareMathOperator{\Mat}{Mat}
\DeclareMathOperator{\UpT}{UpT}

\DeclareMathOperator{\cusps}{cusps}

\DeclareMathOperator{\im}{im}

\DeclareMathOperator{\Tor}{Tor}

\DeclareMathOperator{\coker}{coker}
\DeclareMathOperator{\Gal}{Gal}
\DeclareMathOperator{\Ext}{Ext}
\DeclareMathOperator{\Hom}{Hom}
\DeclareMathOperator{\Spec}{Spec}
\DeclareMathOperator{\QCoh}{QCoh}

\DeclareMathOperator{\Pic}{Pic}

\DeclareMathOperator{\rk}{rk}

\DeclareMathOperator{\End}{End}

\newcommand*{\myproofname}{Proof of claim}
\newenvironment{myproof}[1][\myproofname]{\begin{proof}[#1]}{\end{proof}}

\newcommand{\lbreak}{\leavevmode \\\vspace{-0.5cm}}

\DeclareMathOperator{\XXb}{\overline{\XX}}
\DeclareMathOperator{\Yb}{\overline{\YY}}

\DeclareMathOperator{\Prj}{\mathbb{P}}
\DeclareMathOperator{\Char}{char}
\DeclareMathOperator{\Tr}{Tr}

\begin{document}
\title[Additive decompositions for modular forms]{Additive decompositions for rings of modular forms}
\author{Lennart Meier}
\address{Mathematical Institute\\
Budapestlaan 6\\
3584 CD Utrecht\\
Nederland}
\email{f.l.m.meier@uu.nl}

\begin{abstract}
 We study rings of integral modular forms for congruence subgroups as modules over the ring of integral modular forms for $SL_2\Z$. In many cases these modules are free or decompose at least into well-understood pieces. We apply this to characterize which rings of modular forms are Cohen--Macaulay and to prove finite generation results. These theorems are based on decomposition results about vector bundles on the compactified moduli stack of elliptic curves. 
\end{abstract}

\maketitle

\section{Introduction}
Rings of modular forms for congruence subgroups are of great importance in number theory. An example is $M_*(\Gamma_1(n);R)$, the ring of $\Gamma_1(n)$-modular forms over a ring $R$. If $R$ is a subring of $\C$, we can simply define it as the subspace of $\Gamma_1(n)$-modular forms with $q$-expansion in $R$. In other cases, we define it via global sections of line bundles on the compactified modular curves $\MMb_1(n)_R$ (which we view as stacks for $n<5$).\footnote{If $R$ does not contain an $n$-th root of unity, the coincidence of these two definitions can depend on the way one views sections of line bundles on $\MMb_1(n)_{\C}$ as functions on the upper half-plane. Apart from Appendix \ref{app:CuspForms}, we will only use $q$-expansions in the presence of an $n$-th root of unity.} We denote the corresponding ring of weakly holomorphic modular forms by $\widetilde{M}_*(\Gamma_1(n);R)$. 

While there is a lot of information available for low $n$, in general these rings are hard to understand. For example, it is an equivalent form of a famous theorem of Mazur \cite{Maz77} that the ring $\widetilde{M}_0(\Gamma_1(n);\Q)$ of modular functions only admits a ring homomorphism to $\Q$ if $n\leq 10$ or $n=12$, i.e.\ exactly if $\MMb_1(n)_{\C}$ has genus $0$. There has certainly been progress to understand the rings $M_*(\Gamma_1(n);R)$ in general (see e.g.\ \cite{Rus16} or \cite{V-Z15}), but a complete understanding seems to be difficult to obtain. 

The aim of the present article is instead the more modest goal of an \emph{additive} understanding of the ring of modular forms for congruence subgroups. More precisely, we aim to understand $M_*(\Gamma;R)$ for $\Gamma \in \{\Gamma_0(n), \Gamma_1(n), \Gamma(n)\}$ as a module over the ring $M_*^R = M_*(SL_2(\Z);R)$ of modular forms itself. 
Remarkably, these modules are free if $R$ is a field.

\begin{thm}\label{thm:fieldfree}
Let $\Gamma = \Gamma_1(n), \Gamma(n)$ or $\Gamma_0(n)$ and $K$ be a field of characteristic not dividing $n$. Then $M_*(\Gamma;K)$ is free as graded module over $M_*^K$ of rank $[SL_2(\Z):\Gamma]$ if $\Char K \neq 2$ and of rank $\frac12[SL_2(\Z):\Gamma]$ if $\Char K =2$ and $n\geq 2$.
\end{thm}

This will be proven as Theorem \ref{thm:decfield}. The result has been observed for $\Char K = 0$ already in \cite[Proposition 6.6]{C-F17}. It actually has a uniform and easy proof for $\Char K \neq 2,3$ as in these cases the compactified moduli stack of elliptic curves $\MMb_{ell, K}$ is a weighted projective line, but the cases of characteristic $2$ or $3$ are more subtle. In these cases we have the following example application.
\begin{cor}
	Let $K$ be a field of characteristic $p=2$ or $3$ and let $A$ be the corresponding Hasse invariant. Then $M_*(\Gamma; K)/(A-1)$ is a free module over $K[\Delta]$ if $\Gamma$ is as in the preceding theorem. 
\end{cor}
This is of interest as the $q$-expansion of $A$ equals $1$ and thus the $q$-expansion factors over this quotient $M_*(\Gamma; K)/(1-A)$. Noting that $M_*(\Gamma_1(n); \F_p) \cong M_*(\Gamma_1(n); \Z[\frac1n])/p$ if $n\geq 2$ (and $\ast \geq 2$), one sees that more generally \cref{thm:fieldfree} allows to determine $M_*(\Gamma_1(n); \Z[\frac1n])/(p, f_1, \dots, f_k)$ for arbitrary $SL_2(\Z)$-modular forms $f_1, \dots, f_k$. 

If we want to consider also similarly congruences for ideals of modular forms not containing $p$, we should not restrict or attention to modular forms with coefficients in a field. We obtain our cleanest results if we consider coefficients in $R= \Z_{(l)}$ instead and restrict to tame congruence subgroups $\Gamma$. Here, we call $\Gamma \in \{\Gamma_1(n), \Gamma(n), \Gamma_0(n)\}$ \emph{tame} if either $\Gamma = \Gamma_1(n)$, $\Gamma = \Gamma(n)$ or $\gcd(\phi(n),6)$ is invertible in the given base ring $R$. Even under these restrictions, $M_*(\Gamma; \Z_{(l)})$ will not be a free $M_*^{\Z_{(l)}}$-module anymore unless $l\geq 5$. But even if $l=2$ or $3$, we can often still determine its indecomposable pieces and these are well-understood.  

\begin{thm}\label{thm:main}Let $n\geq 2$ and $l$ be a prime not dividing $n$. Let furthermore $\Gamma = \Gamma_1(n), \Gamma(n)$ or $\Gamma_0(n)$ be tame with respect to $\Z_{(l)}$. Then
 \begin{enumerate}
  \item $M_*(\Gamma;\Z_{(l)})$ is a free graded module over $M_*^{\Z_{(l)}}$ if and only if $l\geq 5$ and every weight-$1$ modular form for $\Gamma$ over $\F_l$ is liftable to $\Z_{(l)}$,
  \item $M_*(\Gamma;\Z_{(3)})$ decomposes into shifted copies of $M_*(\Gamma_1(2); \Z_{(3)}) \cong \Z_{(3)}[b_2, b_4]$ as a graded module over $M_*^{\Z_{(3)}}$ if and only if every weight-$1$ modular form for $\Gamma$ over $\F_3$ is liftable to $\Z_{(3)}$,
   \item $M_*(\Gamma;\Z_{(2)})$ decomposes into shifted copies of $M_*(\Gamma_1(3); \Z_{(2)})\cong \Z_{(2)}[a_1, a_3]$ as a graded module over $M_*^{\Z_{(2)}}$ if and only if every weight-$1$ modular form for $\Gamma$ over $\F_2$ is liftable to $\Z_{(2)}$.
 \end{enumerate}
\end{thm}

More generally, we can often prove decomposition results if we replace $\Z_{(l)}$ by an $l$-local ring or if we demand that $6$ is invertible. The more precise statements will be given in Theorems \ref{thm:decint6} and \ref{thm:decint23} for the ``if'' parts and in Corollary \ref{cor:onlyiflift} and Proposition \ref{prop:onlyif5} for the ``only if'' parts. There are explicit formulae for the shifts in (2) in terms of dimensions of spaces of modular forms, which follow from our results in Section \ref{sec:dec}. An example of a similar decomposition result in a non-tame situation has been obtained in \cite{M-OModular}. 

Regarding non-liftable modular forms, we remark that Mestre was the first to construct an example of such, namely a mod-$2$ eigenform of weight $1$ for $\Gamma_0(1429)$. (Its associated $2$-dimensional Galois representation has image $SL_2(\F_8)$ and as this is not a quotient of a finite subgroup of $GL_2(\C)$, this modular forms is not liftable to characteristic zero \cite{Edi06, Wie14}.) Nowadays, there are many more known examples of non-liftable forms known; for example, we have obtained the result that at the prime $2$ the minimal level of such a form in the tame case is $65$ (see Remark \ref{rem:cuspy} for more information). 

It turns out that these subtleties become irrelevant if we are interested in modular functions instead of modular forms:
\begin{thm}
Let $R$ be a commutative ring in which $n\geq 2$ is invertible and let $\Gamma = \Gamma_1(n), \Gamma(n)$ or $\Gamma_0(n)$. 
Then $\widetilde{M}_0(\Gamma; R)$ 
is free as a module over 
$\widetilde{M}_0(SL_2(\Z); R) \cong R[j]$. 
\end{thm}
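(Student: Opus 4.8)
The plan is to pass to the base ring $\Z[1/n]$, prove freeness there by combining a flatness statement with the Quillen--Suslin theorem, and then descend the conclusion by base change. As preliminary input one uses two facts. First, the degree-$0$ part of $\widetilde M_*(SL_2(\Z);-)$ is $\Z[1/n][j]\otimes_{\Z[1/n]}(-)$, so that $\widetilde M_0(SL_2(\Z);R)\cong R[j]$. Second, for each of the three level structures the $j$-invariant identifies $\widetilde M_0(\Gamma;\Z[1/n])$ with the ring $\OO(Y_\Gamma)$ of global functions on the open modular curve over $\Z[1/n]$; in the cases where $\MM_\Gamma$ is genuinely a stack rather than a scheme, passing to the (affine) coarse modular curve $Y_\Gamma$ does not change the degree-$0$ forms, since global functions never see the stack structure. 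In every case $Y_\Gamma$ is a normal affine $\Z[1/n]$-scheme, flat over $\Z[1/n]$, and — using the standard base-change properties of the coarse modular curves over $\Z[1/n]$ — formation of $\widetilde M_0(\Gamma;-)$ commutes with base change in $R$. Hence there is a natural isomorphism of $R[j]$-modules $\widetilde M_0(\Gamma;R)\cong \widetilde M_0(\Gamma;\Z[1/n])\otimes_{\Z[1/n]}R$, and it suffices to treat $R=\Z[1/n]$.

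Over $\Z[1/n]$, the $j$-map $Y_\Gamma\to\A^1_j=\Spec\Z[1/n][j]$ is finite — it is affine, of finite type, quasi-finite and proper (all cusps lie over $j=\infty$) — so $\widetilde M_0(\Gamma;\Z[1/n])=\OO(Y_\Gamma)$ is a finite $\Z[1/n][j]$-module. I would then argue that it is flat: the target $\A^1_j$ is regular of dimension $2$, the source $Y_\Gamma$ is Cohen--Macaulay (being normal of dimension $2$), and every fibre of the $j$-map is $0$-dimensional, so the $j$-map is flat by miracle flatness. Consequently $\widetilde M_0(\Gamma;\Z[1/n])$ is a finitely generated flat, hence finitely generated projective, module over the Noetherian ring $\Z[1/n][j]$, of some constant rank $r$ because $\Spec\Z[1/n][j]$ is connected.

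To conclude I would invoke the classical fact that every finitely generated projective module over $D[t]$ is free when $D$ is a principal ideal domain: for a single variable this is already Seshadri's theorem, and in general it follows from the Quillen--Suslin theorem together with its extension from fields to principal ideal domains. Applying it with $D=\Z[1/n]$ gives $\widetilde M_0(\Gamma;\Z[1/n])\cong\Z[1/n][j]^{\oplus r}$ as $\Z[1/n][j]$-modules, and tensoring this isomorphism with $R$ over $\Z[1/n]$ yields $\widetilde M_0(\Gamma;R)\cong R[j]^{\oplus r}$.

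The conceptual core — finite flatness over the $j$-line plus Quillen--Suslin — is short; the points that need care are bookkeeping ones: that in the genuinely stacky small-level cases the degree-$0$ weakly holomorphic \emph{functions} really are the global functions on the affine coarse modular curve, and that this, together with the identification $\widetilde M_0(SL_2(\Z);R)\cong R[j]$, is compatible with arbitrary base change in $R$. Crucially, none of the vector bundle decomposition results on $\MMb_{\mathrm{ell}}$ enter: in weight $0$ the extra automorphisms over $j=0,1728$ and the behaviour at the cusps play no role, which is precisely why this companion statement is easy and needs no tameness assumption.
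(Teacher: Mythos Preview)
Your proof is correct and follows the same outline as the paper: reduce to $R=\Z[\frac1n]$ by a base-change result (the paper cites \cite[Proposition~6.4]{Ces17}), show finite flatness over $\Z[\frac1n][j]$, and finish with Seshadri's theorem for projective modules over $D[t]$ with $D$ a PID.

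The one substantive difference is in the flatness step. You pass to the coarse scheme $Y_\Gamma$ and invoke miracle flatness directly (normal of dimension~$2$, hence Cohen--Macaulay, mapping finitely to the regular $2$-dimensional $\A^1_{\Z[1/n]}$). The paper instead first proves that $j\colon \MM_{ell}\to\A^1$ is flat by checking it after pullback along the smooth cover $\MM_1^1(2)\sqcup\MM_1^1(3)\to\MM_{ell}$, where it becomes the explicit verification that $\Z[\frac12][b_2,b_4]$ and $\Z[\frac13][a_1,a_3]$ are free over $\Z[\cdot][c_4,\Delta]$; flatness of $\MM(\Gamma)\to\A^1$ then follows by composition with the known flat map $\MM(\Gamma)\to\MM_{ell}$. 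For coherence of the pushforward in the non-representable cases the paper stays on the stack and embeds into the pushforward from a representable subgroup $\Gamma'\subset\Gamma$, whereas your route through the coarse space avoids this. Your argument is cleaner and more conceptual, trading the level-$2$/$3$ calculations for the (standard) fact that coarse moduli of smooth separated Deligne--Mumford stacks are normal; the paper's approach has the minor by-product of exhibiting explicit module bases in those small cases.
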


There are several corollaries of these theorems. In all of them, let $n$ and $\Gamma$ be as in the last theorem and $R$ a commutative $\Z[\frac1n]$-algebra. 

\begin{cor}
 The ring $M_*(\Gamma; R)$ is finitely generated as an $M_*^R$-module and likewise $\widetilde{M}_*(\Gamma;R)$ is finitely generated as an $\widetilde{M}_*^R$-module. If $\Gamma$ is tame, the generators can be chosen in degrees at most $17$, and if $\Gamma = \Gamma_0(n)$ and $\frac12\in R$ the generators can be chosen in degrees at most $21$.\footnote{A different proof for the qualitative statement was sketched to me by Fran\c{c}ois Brunault on mathoverflow: It suffices to show finite generation for $\Gamma = \Gamma(n)$ and $R$ containing an $n$-th root of unity. The ring $M_*(\Gamma(n); R)$ has a $SL_2(\Z/n)$-action with fixed points $M_*^R$. As every $f\in M_*(\Gamma(n); R)$ is a zero of the monic polynomial $\prod_{g\in SL_2(\Z/n)}(x-g. f)$, the ring $M_*(\Gamma(n);R)$ is integral over $M_*^R$. In \cite[Theorem 3]{Scholl79}, Scholl shows that $M_*(\Gamma(n);R)$ is a finitely generated $R$-algebra, which together with integrality implies the statement. On the other hand, the bounds on the degrees of the generators obtained in this way definitely depend on $n$ (which is in contrast to our sharper quantitative results).}
\end{cor}

\begin{cor}\label{cor:CM}
 For $K$ a field with $\Char(K)$ not dividing $n$, the ring $M_*(\Gamma;K)$ is Cohen--Macaulay. If $\Gamma$ is tame for $\Z[\frac1n]$, the ring $M_*(\Gamma;\Z[\frac1n])$ is Cohen--Macaulay if and only if every weight-$1$ modular form for $\Gamma$ over $\F_l$ is liftable to $\Z[\frac1n]$ for every $l$ not dividing $n$.
\end{cor}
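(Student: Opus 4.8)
The plan is to deduce both assertions from Theorem~\ref{thm:main}(1), the classical structure of $M_*^R$, and a short local-cohomology computation; Theorem~\ref{thm:main}(2) is not strictly needed but gives an alternative route to the ``if'' direction over $\Z_{(l)}$.

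First I would record the structural inputs. Since $M_*^{\Z}\cong\Z[c_4,c_6,\Delta]/(c_4^3-c_6^2-1728\Delta)$ is $\Z$-flat, base change shows that $M_*^K$ and $M_*^{\Z_{(l)}}$ are hypersurface rings; in particular they are Cohen--Macaulay, of Krull dimension $2$ over a field and $3$ over $\Z_{(l)}$. Next I would use the standard fact that if $A$ is a graded Cohen--Macaulay ring and $B$ is a module-finite graded $A$-algebra with $\dim_A B=\dim A$, then $B$ is a Cohen--Macaulay ring if and only if $B$ is a maximal Cohen--Macaulay $A$-module, i.e.\ $\operatorname{depth}_{\mathfrak m_A}B=\dim A$: depth and dimension of $B$ may be computed over $A$, and Cohen--Macaulayness of a graded ring may be tested at the irrelevant maximal ideal. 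As $M_*(\Gamma;R)$ is module-finite over $M_*^R$ by the finiteness corollary and $M_*^R\hookrightarrow M_*(\Gamma;R)$ is finite and injective, $\dim M_*(\Gamma;R)=\dim M_*^R$. For $R=K$ a field this already finishes the proof: by Theorem~\ref{thm:main}(1), $M_*(\Gamma;K)$ is a \emph{free} $M_*^K$-module, hence maximal Cohen--Macaulay, hence a Cohen--Macaulay ring.

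Now let $R=\Z_{(l)}$, write $B=M_*(\Gamma;\Z_{(l)})$ and $A=M_*^{\Z_{(l)}}$, so $\dim A=3$. Because $\omega^{\otimes k}$ is flat over $\Z_{(l)}$, the module $B$ is $l$-torsion-free, and since $l\in\mathfrak m_A$ is a nonzerodivisor on $B$ we get $\operatorname{depth}_{\mathfrak m_A}B=1+\operatorname{depth}_{\overline{\mathfrak m}}(B/l)$, where $B/l$ is a module over $M_*^{\F_l}$ (of Krull dimension $2$). Thus $B$ is Cohen--Macaulay if and only if $\operatorname{depth}(B/l)=2$. Reduction mod $l$ gives an injection $B/l\hookrightarrow C:=M_*(\Gamma;\F_l)$, and by Theorem~\ref{thm:main}(1) again $C$ is free over $M_*^{\F_l}$, so $\operatorname{depth}C=2$; let $Q$ be the cokernel. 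The long exact cohomology sequence of $0\to\omega^{\otimes k}\xrightarrow{\,l\,}\omega^{\otimes k}\to\omega^{\otimes k}/l\to 0$ on the relevant compactified modular stack over $\Z_{(l)}$ identifies the weight-$k$ part of $Q$ with the $l$-torsion in $H^1(\omega^{\otimes k})$. Here I would invoke the standard vanishing $H^1(\omega^{\otimes k})=0$ for $k\geq 2$ (Serre duality together with $\omega^{\otimes 2}\cong\Omega^1(\mathrm{cusps})$) and the fact that $H^1(\OO)$ is $\Z_{(l)}$-free by flat base change; hence $Q$ is concentrated in weight $1$, has finite length over $M_*^{\F_l}$, and vanishes exactly when every weight-$1$ modular form for $\Gamma$ over $\F_l$ lifts to $\Z_{(l)}$.

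It then remains to do the depth bookkeeping on $0\to B/l\to C\to Q\to 0$. Applying local cohomology at the irrelevant maximal ideal $\mathfrak m$ of $M_*^{\F_l}$, and using $H^0_{\mathfrak m}C=H^1_{\mathfrak m}C=0$ (as $\operatorname{depth}C=2=\dim C$) and $H^i_{\mathfrak m}Q=0$ for $i>0$ (finite length), one obtains $H^0_{\mathfrak m}(B/l)=0$ and $H^1_{\mathfrak m}(B/l)\cong Q$. Hence $\operatorname{depth}(B/l)=2$ if $Q=0$ and $\operatorname{depth}(B/l)=1$ if $Q\neq 0$, so that $B$ is Cohen--Macaulay precisely when every weight-$1$ modular form for $\Gamma$ over $\F_l$ is liftable --- which, with the tameness hypothesis in force, is the assertion. (For the ``if'' direction one may alternatively quote Theorem~\ref{thm:main}(2): a direct sum of shifts of $M_*^{\Z_{(l)}}$, of $M_*(\Gamma_1(2);\Z_{(3)})$, or of $M_*(\Gamma_1(3);\Z_{(2)})$ --- all Cohen--Macaulay rings, the latter two because their own weight-$1$ forms are liftable --- is again maximal Cohen--Macaulay over $M_*^{\Z_{(l)}}$.) The main obstacle is the ``only if'' direction, and within it the two cohomological inputs that pin down $Q$: the vanishing of $H^1$ of $\omega^{\otimes k}$ in weights $\geq 2$ and the torsion-freeness of $H^1(\OO)$; granted these, everything else is formal homological algebra.
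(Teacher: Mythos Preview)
Your argument is correct and takes a genuinely different route from the paper. Over a field both of you do the same thing, but over $\Z_{(l)}$ the paper works by hand with regular sequences: for the ``if'' direction it invokes the integral decomposition (Theorem~\ref{thm:main}(2)) to reduce to the base cases $\Gamma_1(2),\Gamma_1(3),SL_2\Z$ and then checks that $(l,c_4,\Delta)$ is regular on each; for the ``only if'' direction it exhibits an explicit zero-divisor, namely a lift $\tilde f$ of $c_4f$ for $f$ a non-liftable weight-$1$ form, and shows $\tilde f$ is nonzero in $M_*(\Gamma;\Z_{(l)})/(l,c_4)$ yet annihilated by every positive-degree element. Your approach via the short exact sequence $0\to B/l\to C\to Q\to 0$ and local cohomology is cleaner in that it treats both directions uniformly and needs only the field-level freeness (Theorem~\ref{thm:main}(1)) together with the cohomological vanishing, bypassing the integral splitting results entirely. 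The paper's approach is more elementary (no local cohomology) and yields an explicit regular sequence.

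A few small inaccuracies worth tightening: (i) it is not literally true that $M_*^K=M_*^{\Z}\otimes K$ when $\Char K\in\{2,3\}$, so ``base change shows $M_*^K$ is a hypersurface'' is misleading --- rather $M_*^{\F_2}=\F_2[a_1,\Delta]$ and $M_*^{\F_3}=\F_3[b_2,\Delta]$ are polynomial rings (Proposition~\ref{prop:dapres}), hence certainly Cohen--Macaulay; (ii) $B/l$ is a priori only an $A/l$-module, not an $M_*^{\F_l}$-module, but since $M_*^{\F_l}$ is module-finite over $A/l$ the depths at the respective irrelevant ideals agree, so your computation is unaffected; (iii) the torsion-freeness of $H^1(\OO)$ is not ``flat base change'' but rather follows from $H^0(\MMb(\Gamma)_R;\OO)=R$ (Proposition~\ref{prop:irreducible}) via the exact sequence you already wrote down --- this is Proposition~\ref{prop:coh}(3) in the paper.
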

While authors like Eichler, Freitag, Tsuyumine and Gottesman (see e.g.\ \cite{Tsu86} or \cite{Got19}) have proven results about the Cohen--Macaulayness of (Hilbert, Siegel and vector-valued) modular forms over the complex numbers, the author is not aware of previous work on this question over a field of different characteristic or with integral coefficients. This result has proven useful in \cite{M-OModular} when studying moduli of cubic curves. 

Let us say a few words on how we obtain our main theorems. For simplicity, we concentrate on the case $\Gamma = \Gamma_1(n)$. Let $f_n\colon \MMb_1(n)_R \to \MMb_{ell,R}$ be the projection map to the compactified moduli stack of elliptic curves. There is a line bundle $\omega$ on $\MMb_{ell,R}$ such that 
$$M_k(\Gamma_1(n);R) \cong H^0(\MMb_{ell,R};(f_n)_*(f_n)^*\omega^{\tensor k})\cong H^0(\MMb_{ell,R};\omega^{\tensor k}\tensor (f_n)_*\OO_{\MMb_1(n)}).$$ 
Thus, all our splitting results above follow from corresponding splitting results for the vector bundle $(f_n)_*\OO_{\MMb_1(n)_R}$. If $6$ is invertible, these are easy to prove since, in this case, $\MMb_{ell,R}$ is a weighted projective stack, and we know a lot about vector bundles on these. If $6$ is not invertible, the arguments become more delicate. We first obtain splitting results over a field using a Krull--Schmidt theorem and use this to obtain our integral results. 

Our results about vector bundles also have applications to the study of topological modular forms as explored in \cite{MeiTopLevel}. This is also one reason for our insistence on integral results and not ignoring the small primes: topological modular forms are most interesting and have most applications precisely at the primes $2$ and $3$. 

The structure of the present article is as follows. Section \ref{sec:ModularCurves} contains background on the various moduli stacks we consider. In Section \ref{sec:decexistence}, we prove our decomposition results for vector bundles, first in the case of a field and then over more general rings; if you are only interested in the situation of base fields of characteristic not $2$ or $3$ you can skip this section. In Section \ref{sec:dec}, we will give explicit formulae for the decompositions. Section \ref{sec:modforms} will deduce from these considerations the results stated in this introduction. As several of the occurring stacks are weighted projective stacks, we devote Appendix \ref{sec:weighted} to them; our main result gives a partial classification of vector bundles in the case of weighted projective lines, which may be of independent interest. Appendix \ref{app:Hasse} contains a proof how to lift the Hasse invariant to a characteristic zero modular form in the presence of a level structure. Appendix \ref{app:CuspForms} is about the computation of weight-$1$-cusp forms over $\F_p$, showing in particular that $n=65$ in the lowest $n$ for which there is a level-$n$ cusp form over $\F_2$ that does not lift to characteristic zero. Appendix \ref{sec:tables} contains tables of decompositions.

\subsection*{Acknowledgements}
I want to thank Viktoriya Ozornova for many discussions and computations, on which several of the ideas of the present article are based. Moreover, her comments on earlier versions of this paper have been a great help. I thank K\k{e}stutis \v{C}esnavi\v{c}ius for answering questions about $\MMb_0(n)$ and Gabor Wiese for help with computer calculations. I furthermore thank the mathoverflow community for many helpful questions and answers; in particular, the user Electric Penguin for their argument for lifting the Hasse invariant, which is crucial for significant parts of the present paper. The author was supported by SPP 1786 of the DFG. 

\subsection*{Conventions}
All rings and algebras will be assumed to be commutative and with unity (except when clearly otherwise). The symbol $/$ applied to a group action on a scheme will always denote \emph{stack} quotients. The dual of a module $M$ will be denoted by $M^{\vee}$ and similarly for sheaves. For an abelian group $A$ and a natural number $n$, we will denote by $A[n]$ its $n$-torsion. We will sometimes use the notation $C_n$ for the cyclic group of order $n$. 

\section{Background on modular curves}\label{sec:ModularCurves}
This section collects and proves background results about modular curves. Significant portions of its first two subsections are certainly known to experts, basic references being \cite{D-R73} and \cite{Con07}. In the interest of completeness, we provide proofs here without claiming originality. 

\subsection{Basics and examples}\label{sec:basics}
Denote by $\MM_{ell}$ the uncompactified moduli stack of elliptic curves and by $\MMb_{ell}$ its compactification. We define the stacks
$\MM_0(n)$, $\MM_1(n)$ and $\MM(n)$ by
\begin{align*}
 \MM_0(n)(S) &= \text{ Elliptic curves }E \text{ over }S\text{ with chosen cyclic subgroup scheme of order }n \\
 \MM_1(n)(S) &= \text{ Elliptic curves }E \text{ over }S\text{ with chosen point }P\in E(S)\text{ of exact order }n \\
 \MM(n)(S) &= \text{ Elliptic curves }E \text{ over }S\text{ with chosen isomorphism }(\Z/n)^2\cong E[n](S), 
\end{align*}
where we always assume $n$ to be invertible on $S$ and where $E[n]$ denotes the $n$-torsion points. More precisely, we demand for $\MM_1(n)$ that for every geometric point $s\colon \Spec K \to S$ the pullback $s^*P$ spans a cyclic subgroup of order $n$ in $E(K)$ or, equivalently, that $P$ defines a closed immersion $(\Z/n)_S \to E$. Moreover, we call a group scheme over $S$ \emph{cyclic} if it is \'etale locally isomorphic to $(\Z/n)_S$.  

We can define the compactified versions $\MMb_0(n)$, $\MMb_1(n)$ and $\MMb(n)$ as the normalizations of $\MMb_{ell}$ in $\MM_0(n)$, $\MM_1(n)$ and $\MM(n)$, respectively \cite[IV.3]{D-R73}. These are all Deligne--Mumford stacks. For the corresponding modular interpretations see also \cite{Con07} and \cite{Ces17}. These moduli interpretations are based on the notion of a \emph{generalized elliptic curve}, which we will recall only over an algebraically closed field. By \cite[Lemme II.1.3]{D-R73}, a generalized elliptic curve is in this case either a (smooth) elliptic curve or a N\'eron $k$-gon. The \emph{N\'eron $k$-gon} $C$ over a scheme $S$ is the scheme quotient of $\Z/k\times \Prj^1_S$ where we identify $(i, \infty)$ with $(i+1,0)$ for all $i$. Its smooth part $C^{reg}$ is isomorphic to $\Z/k \times \G_{m,S}$. With its obvious group structure, $C^{reg}$ acts on $C$. See \cite[II.1]{D-R73} or \cite[Section 2.1]{Con07} for more details.

The stack $\MMb_1(n)$ classifies generalized elliptic curves $E$ with a chosen point of exact order $n$ in the smooth part of $E$ satisfying the following condition: Over every geometric point of the base scheme every irreducible component of $E$ contains a multiple of $P$. 
For $n$ \emph{squarefree} $\MMb_0(n)$ classifies generalized elliptic curves $E$ with a chosen cyclic subgroup $H$ of order $n$ in the smooth part of $E$ satisfying the analogous condition: Over every geometric point every irreducible component of $E$ intersects $H$ nontrivially. By definition, such an $H$ is \'etale locally isomorphic to the constant group scheme $\Z/n$. It follows that in the squarefree case $\MMb_0(n)$ is equivalent to the quotient of $\MMb_1(n)$ by the obvious $(\Z/n)^\times$-action. In general, the $(\Z/n)^\times$ equivariant map $\MMb_1(n) \to \MMb_0(n)$ induces a map 
 $$c\colon \MMb_0(n)' := \MMb_1(n)/(\Z/n)^\times \to \MMb_0(n),$$
 which we will study in more detail in Proposition \ref{prop:square}.

The stack $\MMb(n)$ classifies generalized elliptic curves $E$ over $S$ with a chosen isomorphism $\alpha\colon (\Z/n)^2_S\to E^{sm}[n]$. If $E$ is smooth, the Weil or $e_n$-pairing \cite[2.8.5]{K-M85} of $\alpha(0,1)$ and $\alpha(1,0)$ is an $n$-th root of unity on $S$, which induces a morphism $\MM(n) \to \Spec \Z[\frac1n, \zeta_n]$ that is easily seen to be surjective. By \cite[Theorem 4.1.1]{Con07} this extends to a morphism $\MMb(n) \to \Spec \Z[\frac1n, \zeta_n]$. If $R$ contains a primitive $n$-th root of unity, the product $\Spec \Z[\zeta_n] \times \Spec R$ is canonically isomorphic to $\coprod_{\zeta \in \mu_n(R)} \Spec R$ and thus $\MMb(n)_R$ decomposes as $\coprod_{\zeta \in \mu_n(R)} \MMb(n)_{R,\zeta}$. 

Going back to the base case, $\MMb_{ell}$ just classifies generalized elliptic curves, which are over an algebraically closed field either smooth or a N\'eron $1$-gon. A cubic curve of the form
$$y^2 +a_1xy + a_3y = x^3+a_2x^2+a_4x+a_6$$
is called a Weierstra{\ss} curve and defines a generalized elliptic curve if and only if certain quantities $\Delta$ and $c_4$ are nowhere vanishing \cite[Prop. III.1.4]{Sil09}.

For the universal generalized elliptic curve $p\colon \CC\to \MMb_{ell}$ define $\omega = p_*\Omega^1_{\CC/\MMb_{ell}}$, which is known to be a line bundle \cite[Proposition II.1.6]{D-R73} and actually to generate $\Pic(\MMb_{ell})$ \cite{F-O10}. There is another interpretation: Consider the moduli stack $\MMb_{ell}^1$ of generalized elliptic curves with a chosen invariant differential. Quasi-coherent sheaves on $\MMb_{ell}$ correspond to graded quasi-coherent sheaves on $\MMb_{ell}^1$ and $\omega$ corresponds to $\OO_{\MMb_{ell}^1}$ viewed as concentrated in degree $1$. 


\begin{examples}\label{exa:moduli}
 Denote by $\PP_R(a,b)$ the weighted projective stack $(\mathbb{A}^2_R -\{0,0\})/\Gm$ (as in Definition \ref{def:wps}). We have equivalences
 \begin{align*}
  \MMb_{ell, \Z[\frac16]} &\simeq \PP_{\Z[\frac16]}(4,6) \\
  \MMb_1(2) &\simeq \PP_{\Z[\frac12]}(2,4) \\
  \MMb_1(3) &\simeq \PP_{\Z[\frac13]}(1,3) \\
  \MMb(2) &\simeq \PP_{\Z[\frac12]}(2,2) \\
  \MMb_1(4) &\simeq \PP_{\Z[\frac12]}(1,2).
 \end{align*}
 and in each case the pullback of $\omega$ to the weighted projective line is isomorphic to $\OO(1)$. 
These equivalences are classically well-known and we obtain the corresponding uncompactified moduli by taking the non-vanishing locus of $\Delta$. Proofs of (most of) the second, third and fourth equivalence can be found, for example, in \cite[Sec 1.3]{Beh06}, \cite[Prop 4.5]{H-M17} and \cite[Prop 7.1]{Sto12} respectively. We give a sketch of the fifth one as this is probably the hardest to find in the literature. Given an elliptic curve with a chosen invariant differential and a point $P$ of exact order $4$, we can write it uniquely in the form 
$$y^2 +a_1xy +a_3y = x^3+a_2x^2$$
such that $P = (0,0)$ and $\frac{dx}{2y+a_1x+a_3}$ is the chosen invariant differential; this is sometimes called the \emph{homogeneous Tate normal form} (see \cite[Section 4.4]{HusElliptic} or \cite[Section 1]{B-O16}). The condition that $(0,0)$ is a point of order $4$ is equivalent to $a_3 = a_1a_2$. Thus, we obtain an equivalence 
$$\MM_1(4) \simeq (\Spec \Z[\frac12][a_1,a_2, \Delta^{-1}])/\Gm$$
with $\Delta = a_1^2a_2^4(a_1^2-16a_2)$. 

The map 
$$(\Spec \Z[\frac12][a_1,a_2, \Delta^{-1}])/\Gm \to \MM_{ell, \Z[\frac12]}$$
extends to a map 
$$f\colon (\Spec \Z[\frac12][a_1,a_2])/\Gm \to \MM_{cub,\Z[\frac12]},$$
where $\MM_{cub}$ is the stack classifying all curves defined by a cubic equation \cite[Section 3.1]{Mathom} and $f$ classifies the cubic curve
$$y^2 +a_1xy +a_1a_2y = x^3+a_2x^2.$$
Let $A = \Z[\frac12][a_1,a_2,a_3,a_4,a_6]$ and consider the fpqc morphism $\Spec A \to \MM_{cub,\Z[\frac12]}$ classifying the universal Weierstra{\ss} curve
$$y^2 +a_1xy + a_3y = x^3+a_2x^2+a_4x+a_6.$$
Then 
$$\Spec A \times_{\MM_{cub,\Z[\frac12]}} (\Spec \Z[\frac12][a_1,a_2])/\Gm
\simeq \Z[\frac12][a_1,a_2][r,s,t]$$
as the morphisms of Weierstra{\ss} curves (preserving an invariant differential) are classified by parameters $r,s,t$ (see \cite[Section III.1]{Sil09}). Thus $f$ is representable and affine.  Using that $c_4 = a_1^4 - 16a_1^2a_2 + 16a_2^2$ it is easy to see that $c_4(a_1,a_2) = \Delta(a_1,a_2) = 0$ if and only if $a_1 = a_2 = 0$ with $a_1,a_2$ in a field of characteristic $\neq 2$. The pullback of $f$ along $\MMb_{ell,\Z[\frac12]} \to \MM_{cub,\Z[\frac12]}$ is thus a map 
$$f'\colon \PP_{\Z[\frac12]}(1,2) \to \MMb_{ell,\Z[\frac12]}.$$
Clearly, $f'$ is still affine and it is also proper by the valuative criteria \cite[Section 7]{LMB00} because the source is proper \cite[Section 2]{Mei13} and the target separated over $\Z[\frac12]$. Thus, $f'$ is finite. As $\PP_{\Z[\frac12]}(1,2)$ is normal, this implies that $\MMb_1(4) \simeq \PP_{\Z[\frac12]}(1,2)$ by the uniqueness of normal compactifications (see e.g.\ \cite[Lemma 4.4]{H-M17}). We have an isomorphism $f^*\omega \cong \OO(1)$ because $f'$ is induced by a $\Gm$-equivariant map $\A^2_{\Z[\frac12]} -\{0\} \to \MMb_{ell,\Z[\frac12]}^1$. 
\end{examples}

We call a Deligne--Mumford stack $\XX$ \emph{tame} if the automorphism group of every geometric point $\Spec \overline{K} \to \XX$ has order prime to the characteristic of $\overline{K}$. If $\XX$ is separated, it has by the Keel--Mori theorem \cite{Con05} a coarse moduli space $X$ and we denote the canonical map $\XX \to X$ by $p$. Then $\XX$ is tame if and only if the pushforward functor
$$p_*\colon \QCoh(\XX) \to \QCoh(X)$$
is exact as proven in \cite{AOV08} (note that while they work with Artin stacks, their theory simplifies in the case of Deligne--Mumford stacks because automorphism group schemes of geometric points are in this case \'etale and hence constant). For example $\PP_R(w_0,\dots, w_n)$ is tame if and only if all $w_i$ are invertible in $R$ by \cite[Rem. 2.2]{Mei13}. In particular, all the examples in Examples \ref{exa:moduli} are tame. 

\begin{lemma}\label{lem:tame}
Let $f\colon \XX \to \YY$ be a representable morphism into a tame Deligne--Mumford stack. Then $\XX$ is tame as well. 
\end{lemma}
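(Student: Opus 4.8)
The plan is to verify tameness directly on geometric points using the definition just given: for a representable morphism $f\colon \XX \to \YY$ with $\YY$ tame, I want to show that every geometric point $x\colon \Spec \overline{K} \to \XX$ has automorphism group of order prime to $\Char(\overline{K})$. First I would form the composite $y = f \circ x\colon \Spec \overline{K} \to \YY$, which is a geometric point of $\YY$. Representability of $f$ means that for any scheme $T$ and any morphism $T \to \YY$, the fiber product $\XX \times_\YY T$ is (equivalent to) a scheme; equivalently, $f$ induces a monomorphism on automorphism groups of geometric points, i.e.\ the natural map $\Aut_{\XX}(x) \to \Aut_{\YY}(y)$ is injective. (This is the standard characterization of representable morphisms of Deligne--Mumford stacks in terms of the $2$-categorical fibers; see \cite[Section 7]{LMB00} or \cite{D-R73}.)

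Given this, the argument is immediate: $\Aut_{\XX}(x)$ embeds as a subgroup of $\Aut_{\YY}(y)$, the latter has order prime to $\Char(\overline{K})$ by tameness of $\YY$, and a subgroup of a finite group of order prime to $p$ again has order prime to $p$ by Lagrange. Hence $\Aut_{\XX}(x)$ has order prime to $\Char(\overline{K})$, and since $x$ was arbitrary, $\XX$ is tame.

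The one point that needs a little care — and is really the only content beyond bookkeeping — is the claim that a representable morphism induces injections on geometric automorphism groups. I would justify this by noting that for a geometric point $x\colon \Spec\overline{K}\to\XX$, the automorphism group $\Aut_{\XX}(x)$ is computed by the fiber product $\Spec\overline{K} \times_{\XX} \Spec\overline{K}$, and applying the base change $\Spec\overline{K}\times_{\YY}\Spec\overline{K} \to \Spec\overline{K}$ one sees that $\Spec\overline{K}\times_{\XX}\Spec\overline{K} \to \Spec\overline{K}\times_{\YY}\Spec\overline{K}$ is a monomorphism because it is a base change of the representable (hence relatively representable, in particular a monomorphism on such fibers when restricted appropriately) morphism $\XX\to\YY$; more simply, $\Spec\overline{K}\times_{\XX}\Spec\overline{K} \simeq (\Spec\overline{K}\times_{\YY}\Spec\overline{K})\times_{\YY}\XX$ and the diagonal of a representable morphism is a monomorphism, which forces the map on these $\overline{K}$-groupoids to be faithful. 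I do not expect any genuine obstacle here; the lemma is essentially a formal consequence of the definitions, and the "hard part" is merely citing the correct formulation of representability.
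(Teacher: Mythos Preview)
Your proposal is correct and follows essentially the same approach as the paper: both reduce to showing that representability of $f$ forces $\Aut_{\XX}(x) \hookrightarrow \Aut_{\YY}(y)$, after which tameness follows by Lagrange. The paper's justification of this injection is slightly more direct than yours: it observes that $x$ lifts to a geometric point of the scheme $\Spec K \times_{\YY} \XX$, whose (necessarily trivial) automorphism group is exactly the kernel of $\Aut(x) \to \Aut(y)$.
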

\begin{proof}
Let $x\colon \Spec K \to \XX$ be a geometric point and $y$ its image in $\YY$. This defines a geometric point in the pullback $\Spec K \times_{\YY}\XX$ whose (trivial) automorphism group is the kernel of $\Aut(x) \to \Aut(y)$. Thus $\Aut(x) \subset \Aut(y)$ and $\XX$ is tame. 
\end{proof}

We will mainly work with moduli stacks of elliptic curves in the tame or even representable case and specifically with the class singled out in the following convention.
\begin{defi}\label{def:tame}
 Let $\Gamma = \Gamma_0(n), \Gamma_1(n)$ or $\Gamma(n)$. We set $\MMb(\Gamma) = \MMb_0(n),\, \MMb_1(n)$ or $\MMb(n)$ respectively and similarly for $\MM(\Gamma)$. For $n\geq 2$ and $R$ a $\Z[\frac1n]$-algebra, we say that $\Gamma$ is \emph{tame} with respect to $R$ if $\Gamma$ is $\Gamma_1(n), \Gamma(n)$ or $\Gamma_0(n)$ and we additionally demand in the final case that $\gcd(6,\phi(n))$ is invertible in $R$. 
 
 While $\MMb(\Gamma)_R$ denotes $\MMb(\Gamma) \times_{\Spec \Z[\frac1n]} \Spec R$ for $\Gamma = \Gamma_1(n)$ or $\Gamma_0(n)$, we will always assume that $R$ is an $\Z[\frac1n,\zeta_n]$-algebra when we speak about $\MMb(\Gamma(n))_R$ and denote by this $\MMb(\Gamma) \times_{\Spec \Z[\frac1n,\zeta_n]} \Spec R$; in contrast, $\MMb(n)_R$ will mean $\MM(n)\times_{\Spec \Z[\frac1n]} \Spec R$. The reason for this choice is that we want $\MMb(\Gamma)$ to be geometrically irreducible (see Proposition \ref{prop:irreducible}). 
 
 We will denote the projection $\MMb(\Gamma)_R \to \MMb_{ell,R}$ by $g$ and give the same name to the restriction $\MM(\Gamma)_R \to \MM_{ell,R}$. We will also sometimes use the notation $f_n$ for $g$ if $\Gamma = \Gamma_1(n)$. 
\end{defi}

\begin{prop}\label{prop:basicprops}Let $R$ be a $\Z[\frac1n]$-algebra and $\Gamma = \Gamma_0(n), \Gamma_1(n)$ or $\Gamma(n)$.
\begin{enumerate}
 \item The map $g\colon \MMb(\Gamma)_R \to \MMb_{ell,R}$ is finite, representable and flat.
 \item If $\Gamma$ is tame with respect to $R$, the stack $\MMb(\Gamma)_R$ is tame. In fact, $\MMb_1(n)_R$ (for $n\geq 5$) and $\MMb(n)_R$ (for $n\geq 3$) are even representable by projective $R$-schemes. In these cases, $\MM(\Gamma)_R$ is affine.
 \item The map $\MMb(\Gamma)_R \to \Spec R$ is in the representable case smooth of relative dimension $1$. 
 \item If $\Gamma$ is tame, we have $H^i(\MMb(\Gamma)_R;\FF) = 0$ for $i\geq 2$ for every quasi-coherent sheaf $\FF$ on $\MMb(\Gamma)$, and $H^i(\MM(\Gamma);\FF) = 0$ for $i\geq 1$ for every quasi-coherent sheaf $\FF$ on $\MM(\Gamma)$. 
\end{enumerate}
\end{prop}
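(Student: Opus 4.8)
The plan is to establish the four parts more or less independently, in each case reducing the statement over $R$ to the universal case over $\Z[\frac1n]$ (or $\Z[\frac1n,\zeta_n]$ for $\Gamma(n)$) and then base-changing, since finiteness, representability, flatness and smoothness are all stable under base change. For part (1), the key input is that $\MM(\Gamma)\to\MM_{ell}$ is finite over $\Z[\frac1n]$: for $\Gamma_1(n)$ and $\Gamma(n)$ the scheme of points of exact order $n$, respectively of isomorphisms $(\Z/n)^2\to E[n]$, is an open and closed subscheme of the finite flat group scheme $E[n]$, respectively of $E[n]^2$, and $\MM_0(n)$ is a finite quotient of $\MM_1(n)$. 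As $\MMb_{ell}$ is smooth over $\Z[\frac1n]$, hence regular and excellent, and $\MMb(\Gamma)$ is by definition its normalization in $\MM(\Gamma)$, finiteness of normalizations (of excellent schemes, checked on an étale atlas) shows that $g$ is finite, hence affine, hence representable. For flatness I would apply miracle flatness on an étale atlas: $\MMb(\Gamma)$ is normal of dimension $2$, hence Cohen--Macaulay, the target is regular, and the fibres of the finite map $g$ have dimension $0=\dim\MMb(\Gamma)-\dim\MMb_{ell}$; so $g$ is flat over $\Z[\frac1n]$.

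For part (2), I would first dispose of the representable cases $\Gamma=\Gamma_1(n)$ with $n\geq 5$ and $\Gamma=\Gamma(n)$ with $n\geq 3$: here $\MMb(\Gamma)$ is a scheme, proper and of relative dimension $1$ over the base by \cite{D-R73}, hence after base change a relatively projective $R$-scheme, and a scheme is trivially tame; moreover $\MM(\Gamma)_R$ is the complement, in this projective curve, of the vanishing locus of $\Delta\in H^0(\MMb(\Gamma)_R;\omega^{\tensor 12})$, and since $\omega$ is the finite pullback of the ample bundle $\omega$ on $\MMb_{ell,R}$ it is ample --- with the cusps meeting every fibre over $\Spec R$ by geometric connectedness (Proposition~\ref{prop:irreducible}) --- so this complement is affine. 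The remaining small cases $\Gamma_1(2),\Gamma_1(3),\Gamma_1(4),\Gamma(2)$ are, by Examples~\ref{exa:moduli}, weighted projective stacks $\PP_{\Z[\frac1n]}(a,b)$ with weights invertible in any $\Z[\frac1n]$-algebra, hence tame by \cite[Rem.~2.2]{Mei13}. For $\Gamma_0(n)$ tame with respect to $R$ there are two subcases. If $6\in R^\times$, then $\MMb_{ell,R}\simeq\PP_R(4,6)$ is tame and $g$ is representable by (1), so Lemma~\ref{lem:tame} applies. If $6\notin R^\times$, then $\gcd(6,\phi(n))\in R^\times$ and $n\geq 2$ already force $2\in R^\times$ (for $n\geq 3$ since $2\mid\phi(n)$, for $n=2$ since $R$ is a $\Z[\frac12]$-algebra), so $\MMb_0(n)_R$ has no geometric points of residue characteristic $2$, and at residue characteristic $3$ one may assume $3\notin R^\times$, which forces $3\nmid\phi(n)$ and $3\nmid n$. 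It then remains to bound the automorphism groups of geometric points of $\MMb_0(n)$ over $\Fb_3$. For a point $(E,H)$ with $E$ smooth, $\Aut(E,H)$ is the stabiliser of $H$ in $\Aut(E)$; only for the supersingular curve does $\Aut(E)$, of order $12$, contain an element $\psi$ of order $3$, which generates its unique Sylow-$3$-subgroup. Were $\psi(H)=H$, then, since $3\nmid\phi(n)=|(\Z/n)^\times|$, the element $\psi$ would act trivially on the étale group $H$ and so fix a nonzero étale point, while $\psi-1$ is an isogeny of degree $3$ whose kernel lies in $E[3]$, which is connected for a supersingular curve in characteristic $3$ --- a contradiction. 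For a cuspidal point the underlying generalized elliptic curve is a Néron $d$-gon with $d\mid n$, whose automorphism group has order $2d$, again prime to $3$ (\cite{Con07}). Hence $\MMb_0(n)_R$ is tame.

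Part (3) is then immediate: in the representable cases $\MMb(\Gamma)$ is smooth of relative dimension $1$ over $\Z[\frac1n]$, respectively over $\Z[\frac1n,\zeta_n]$, by \cite{D-R73}, and smoothness is preserved by base change. For part (4), by (2) the stack $\MMb(\Gamma)_R$ is tame, so the pushforward $p_*$ to its coarse moduli scheme $X$ is exact \cite{AOV08} and $R\Gamma(\MMb(\Gamma)_R;\FF)\simeq R\Gamma(X;p_*\FF)$; since $X$ is proper over $\Spec R$ with fibres of dimension $\leq 1$, the complex $Rf_*p_*\FF$ for $f\colon X\to\Spec R$ is concentrated in degrees $0$ and $1$, and $\Spec R$ is affine, so $H^i(\MMb(\Gamma)_R;\FF)=0$ for $i\geq 2$. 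Similarly $\MM(\Gamma)_R$ is tame with coarse moduli scheme $X\setminus V(\Delta)$, which is affine as the complement of the vanishing locus of a section of an ample line bundle, whence $H^i(\MM(\Gamma)_R;\FF)=0$ for $i\geq 1$.

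The one genuinely delicate step is the tameness of $\MMb_0(n)_R$ in residue characteristic $3$: everything else is formal once one invokes the standard structure theory of the stacks $\MMb(\Gamma)$ and of weighted projective stacks, whereas there one must really inspect the automorphism groups of supersingular and cuspidal points in characteristic $3$ to see precisely how the hypothesis $\gcd(6,\phi(n))\in R^\times$ excludes automorphisms of order $3$.
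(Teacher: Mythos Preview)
Your argument is broadly correct and follows the same skeleton as the paper for parts (1) and (3). The genuine divergences are in parts (2) and (4), and there is one soft spot worth flagging.

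\textbf{Tameness of $\MMb_0(n)_R$.} Your direct analysis in characteristic~$3$ is correct: the identity $\psi^2+\psi+1=0$ in $\End(E)$ gives $\deg(\psi-1)=N_{\Q(\zeta_3)/\Q}(\zeta_3-1)=3$, so $\ker(\psi-1)\subset E[3]$ is connected for supersingular $E$, contradicting the existence of a fixed \'etale point. The paper instead argues more structurally: on the interior it uses that $\MM_0(n)_R=\MM_1(n)_R/(\Z/n)^\times$ is a quotient of a tame stack by a group of order prime to the residue characteristic, and at the cusps it simply invokes representability of $g$ (your part~(1)) to embed automorphism groups into $\Aut(\text{N\'eron }1\text{-gon})=\Z/2$. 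Your cusp argument (``$d$-gon with $d\mid n$, hence $|\Aut|=2d$ prime to~$3$'') is fine when the moduli description of $\MMb_0(n)$ is available, but the paper only states that description for squarefree~$n$; the representability bound $|\Aut|\leq 2$ sidesteps this and is what you should use for general~$n$.

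\textbf{Cohomological dimension.} Your approach to part~(4) --- push forward along the exact functor to the coarse moduli scheme, which is a relative curve over $\Spec R$, hence has fibrewise cohomological dimension~$\leq 1$ --- is cleaner and more uniform than the paper's, which reduces $\Gamma_0(n)$ to $\Gamma_1(n)$ via the descent spectral sequence for the $(\Z/n)^\times$-cover (and Proposition~\ref{prop:square}), and then treats $\Gamma_1(n)$ and $\Gamma(n)$ case-by-case (weighted projective line or honest scheme). Your argument does need the coarse moduli space to exist as a scheme finite over $\mathbb{P}^1_R$; this follows from Keel--Mori plus the finiteness of $g$, but you should say so. Likewise, for $\MM(\Gamma)_R$ affine it is simpler to note that its coarse space is finite over the coarse space $\A^1_R$ of $\MM_{ell,R}$, rather than to invoke ampleness of $\omega$ on a stack.

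\textbf{Minor points.} ``Proper of relative dimension~$1$'' does not by itself give projectivity; the paper obtains it by observing that $\MMb(\Gamma)_R\to\mathbb{P}^1_R$ is proper and quasi-finite, hence finite. Your reference to Proposition~\ref{prop:irreducible} for part~(2) is circular, since that proposition is proved \emph{after} (and using) the present one.
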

\begin{proof}\lbreak
\begin{enumerate}
\item The map $g$ being integral and representable follows from the definition of normalization. By \cite[03GR]{STACKS} it is also finite because $\MMb_{ell}$ has a smooth cover by a Nagata scheme, e.g.\ by the union of the non-vanishing loci of $c_4$ and $\Delta$ in $\Spec \Z[a_1,a_2,a_3,a_4,a_6]$ . 

Furthermore, both $\MMb(\Gamma)$ and $\MMb_{ell}$ are smooth over $\Spec R$ by Theorem 3.4 of \cite{D-R73}. Every finite map between Deligne--Mumford stacks that are smooth over $\Spec R$ is automatically flat if $R$ is regular. By choosing an \'etale cover, this follows from the affine case, which in turn follows from \cite[Prop 6.1.5]{EGAIV.2}. As the universal case $R = \Z[\frac1n]$ is regular, flatness follows for all $R$. 

\item By the examples from Examples \ref{exa:moduli}, we see that $\MMb_1(n)$ is tame for $2\leq n \leq 4$ and $\MMb(n)$ is tame for $n=2$.  

Next we will show that the automorphism groups of $K$-valued geometric points for $\MMb_1(n)$ and $n\geq 5$ and for $\MMb(n)$ and $n\geq 3$ are trivial. In the interior, this follows from \cite[Cor.\ 2.7.2]{K-M85}. Now consider a geometric point of $\MMb_1(n)$ not in the interior. This corresponds to a N\'eron $k$-gon with a point $P = (i,x)$ of exact order $n$ in the smooth part such that $i$ is a generator of $\Z/k$. 

For a $k$-th root of unity $\zeta$, there are automorphisms $\tau$ and $u_{\zeta}$ of the N\'eron $k$-gon; on $(i,x) \in \Z/k \times \Gm(K)$ these are defined as 
$$\tau\colon (i,x) \mapsto (-i, x^{-1}) \quad \text{ and } \quad u_{\zeta}\colon (i,x) \mapsto (i, \zeta^i x).$$
Every automorphism of the N\'eron $k$-gon that preserves the group operation is of the form $u_{\zeta}$ or $\tau u_{\zeta}$ for a $k$-th root of unity $\zeta$ \cite[Prop.\ II.1.10]{D-R73}. 
As $n\geq 2$, we have $k=1$ or $i \neq [0]$ and thus $P$ cannot be fixed by $u_{\zeta}$ if $\zeta \neq 1$. If $P$ is fixed by $\tau u_{\zeta}$, then $i = -i$ and thus $k= 1$ or $2$, which implies $\zeta^2 = 1$. As $x = \zeta^ix^{-1}$, this shows that $x^4 = 1$. Thus, $P$ is a $4$-torsion point, contrary to the assumption that $n\geq 5$. Thus, we see that all automorphisms of geometric points of $\MMb_1(n)$ are trivial if $n\geq 5$. By the same arguments, the analogous statement follows for $\MMb(\Gamma(n))$ if $n\geq 3$ because an isomorphism $(\Z/n)^2 \cong C^{reg}[n]$ for a N\'eron $k$-gon $C$ implies that $k=n$.

By \cite[Theorem 2.2.5]{Con07} it follows that $\MMb(\Gamma)_R$ is an algebraic space for $\Gamma = \Gamma_1(n)$ for $n\geq 5$ or $\Gamma(n)$ for $n\geq 3$. The coarse moduli space of $\MMb_{ell, R}$ is $\mathbb{P}_R^1$ by \cite[VI.1]{D-R73} for $R=\Z$ and \cite[Prop 3.3.2]{Ces17} in the general case. As the map $\MMb(\Gamma)_R \to \MMb_{ell, R}$ is finite, the composition $\MMb(\Gamma)_R \to \mathbb{P}^1_R$ with the map $\MMb_{ell,R} \to \mathbb{P}_R^1$ is proper and quasi-finite as the map into the coarse moduli space is proper and quasi-finite \cite{Con05}. Thus, $\MMb(\Gamma)_R$ is a scheme by \cite[Cor 6.16]{Knutson} and then automatically a projective scheme over $R$ as a proper and quasi-finite map of schemes is finite and hence projective.  

If $\MMb(\Gamma)_R$ is representable by a scheme, then $\MM(\Gamma)_R$ is as well. The coarse moduli scheme of $\MM_{ell,R}$ is $\A^1_R$ and the composition $\MM(\Gamma)_R \to \MM_{ell,R} \to \A^1_R$ is finite again. Thus, $\MM(\Gamma)_R$ is an affine scheme if $\MM(\Gamma)_R$ is representable. 

It remains to discuss the case of $\MMb_0(n)_R$. As this stack is representable over $\MMb_{ell,R}$ and the orders of automorphism groups of elliptic curves can only have the prime factors $2$ and $3$, the same is true for the automorphism groups of points of $\MMb_0(n)_R$. The open substack $\MM_0(n)_R$ is thus tame because $\gcd(\phi(n),6)$ is invertible in $R$ and $\MM_0(n)_R$ is the quotient of the tame stack $\MM_1(n)_R$ by $(\Z/n)^\times$. Moreover, the cusp points of $\MMb_{ell}$ have automorphism group $\Z/2$, and thus the automorphism groups of the cusp points in $\MMb_0(n)_R$ can have order $2$ at most as well. As $\phi(n)$ is always even for $n\geq 2$, we know that $2$ is invertible in $R$. Thus, $\MMb_0(n)_R$ is tame. 

\item By \cite[Thm IV.3.4]{D-R73}, $\MMb(\Gamma)_R$ is smooth over $\Spec R$ and clearly of relative dimension $1$. 

\item Under our assumptions, the case $\MMb_0(n)_R$ reduces to $\MMb_1(n)_R$ as follows: We can assume that $R$ is $p$-local. If $p>3$, then $\MMb_{ell,R} \simeq \PP_R(4,6)$ itself has cohomological dimension $1$ and $\MMb_0(n)_R$ is finite over $\MMb_{ell,R}$ and thus $\MMb_0(n)_{R}$ has cohomological dimension $1$ as well. If $p=2$ or $3$, we know that $\phi(n)$ is invertible in $R$; denote by $\pi$ the canonical map $\MMb_1(n)_R \to \MMb_0(n)'_R$ (which is a $(\Z/n)^\times$-Galois cover). Furthermore, let $\FF$ be a quasi-coherent sheaf on $\MMb_0(n)_R$ and consider the map $c\colon \MMb_0(n)_R' \to \MMb_0(n)_R$ from above. In this case, the descent spectral sequence
$$H^j((\Z/n)^\times, H^i(\MMb_1(n)_R; \pi^*c^*\FF)) \Rightarrow H^{i+j}(\MMb_0(n)'_R;c^*\FF)$$
collapses to isomorphisms 
$$H^i(\MMb_0(n)'_R; c^*\FF) \cong H^i(\MMb_1(n)_R; \pi^*c^*\FF)^{(\Z/n)^\times}.$$
We will show in Proposition \ref{prop:square} that $H^i(\MMb_0(n); \FF) \cong H^i(\MMb_0(n)'_R; c^*\FF)$.


The cases $\MMb_1(n)_R$ and $\MMb(n)_R$ are either treated in the Examples \ref{exa:moduli} (where one clearly has cohomological dimension $1$) or are representable. In the latter case, our statement for $\XXb$ follows from the item 3 (e.g.\ by reducing via \cite[Prop 9.3]{Har77} to the case of $R$ being a field). 

Similarly, we can reduce the case $\MM_0(n)_R$ to $\MM_1(n)_R$ and $\MM(n)_R$. In the representable case, these are affine. The Examples \ref{exa:moduli} can be treated by hand again.
\qedhere
\end{enumerate}
\end{proof}

\begin{example}\label{exa:5-12}
 For $n=5,\dots, 10$ or $n=12$, we have an equivalence $\MMb_1(n) \simeq \Prj^1_{\Z[\frac1n]}$. Indeed, by the last proposition, $\MMb_1(n)$ is representable by a projective $\Z[\frac1n]$-scheme. Over $\C$, the scheme $\MMb_1(n)$ is connected of genus zero (for the genus formula see for example \cite[Section 3.9]{D-S05}). As in the discussion in \cite[Section 3.3]{H-L10b}, this implies that $\MMb_1(n) \simeq \Prj^1_{\Z[\frac1n]}$ as soon as we have exhibited a $\Q$-valued point of $\MMb_1(n)$. This is easily done as a N\'eron $n$-gon with $\Gamma_1(n)$-level structure already exists over $\Q$. 
\end{example}

As already alluded to above, $\MMb_0(n)$ is more difficult to understand if $n$ is not squarefree. In many situations, we can use the following proposition though, which follows from the results of \cite{Ces17}. We remark that he uses the notation $\XX_0(n)$ for what we call $\MMb_0(n)$ etc.  
\begin{prop}\label{prop:square}
 The map 
 $$c\colon \MMb_0(n)' = \MMb_1(n)/(\Z/n)^\times \to \MMb_0(n),$$
 has the following properties. 
 \begin{enumerate}
  \item For every quasi-coherent sheaf $\FF$ on $\MMb_0(n)$, the canonical map $\FF \to c_*c^*\FF$ is an isomorphism. 
  \item For every quasi-coherent sheaf $\GG$ on $\MMb_0(n)'$, the canonical map 
  $$H^i(\MMb_0(n)'; \GG) \to H^i(\MMb_0(n); c_*\GG)$$
  is an isomorphism for all $i\geq 0$. 
 \end{enumerate}
 In particular, the map $H^0(\MMb_0(n); \FF) \to H^0(\MMb_1(n); h^*\FF)^{(\Z/n)^\times}$ is an isomorphism for every quasi-coherent sheaf $\FF$ on $\MMb_0(n)$ and $h\colon \MMb_1(n) \to \MMb_0(n)$ the canonical map.
\end{prop}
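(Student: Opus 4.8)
The plan is to reduce all three assertions to a description of the local structure of $c$, which is where \cite{Ces17} enters. The input I would extract is the following: $c$ is proper; over the interior $\MM_0(n)\subset\MMb_0(n)$ it is an isomorphism (there $\MM_1(n)\to\MM_0(n)$ is a finite étale $(\Z/n)^\times$-torsor, so $\MM_0(n)$ already \emph{is} the stack quotient $\MM_1(n)/(\Z/n)^\times$); and étale-locally on $\MMb_0(n)$ near each cusp, $c$ is a gerbe banded by a finite constant group $T$ whose order divides a power of $n$. The group $T$ attached to a boundary component of width $k$ is the intersection of the stabiliser in $(\Z/n)^\times$ of the corresponding point of $\MMb_1(n)$ with $\ker\!\big((\Z/n)^\times\to(\Z/k)^\times\big)$; it is trivial whenever $n$ is squarefree, which is why the proposition has content only for non-squarefree $n$. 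Since $n$ is invertible on $\MMb_0(n)$, so is the order of $T$, i.e.\ the gerbe is tame.

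Granting this, assertion (1) becomes an étale-local statement on $\MMb_0(n)$: the formation of $c^*$, of $c_*$ and of the unit $\FF\to c_*c^*\FF$ all commute with étale base change on the target (the latter two because the proper map $c$ is quasi-compact and quasi-separated), and being an isomorphism can be tested on an étale cover. Where $c$ is an isomorphism there is nothing to prove; where $c$ is a gerbe, $c^*$ is fully faithful, so the unit $\FF\to c_*c^*\FF$ is an isomorphism (no tameness is needed here). Thus $\FF\to c_*c^*\FF$ is an isomorphism globally.

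For (2) the same localisation reduces the claim to $R^qc_*\GG=0$ for all $q>0$ and all quasi-coherent $\GG$, and on a chart this is the cohomological triviality of a \emph{tame} gerbe, which holds by the exactness of pushforward along tame stacks (as in \cite{AOV08}). Hence $Rc_*=c_*$, the Grothendieck spectral sequence $H^p(\MMb_0(n);R^qc_*\GG)\Rightarrow H^{p+q}(\MMb_0(n)';\GG)$ degenerates, and the canonical map $H^i(\MMb_0(n)';\GG)\to H^i(\MMb_0(n);c_*\GG)$ is an isomorphism for every $i$. For the last sentence of the proposition, combine (1) — which gives $H^0(\MMb_0(n);\FF)\cong H^0(\MMb_0(n);c_*c^*\FF)=H^0(\MMb_0(n)';c^*\FF)$ — with the identification of the global sections of a quasi-coherent sheaf on the quotient stack $\MMb_0(n)'=\MMb_1(n)/(\Z/n)^\times$ with the $(\Z/n)^\times$-invariant global sections on $\MMb_1(n)$, applied to $c^*\FF$; here $\pi^*c^*\FF=h^*\FF$ since $h=c\circ\pi$, and this last step needs no invertibility hypothesis.

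The whole difficulty is the first step: \cite{Ces17} describes $\MMb_0(n)$ for non-squarefree $n$ through a corrected moduli problem for cyclic subgroups of generalized elliptic curves, and one must match that with the quotient $\MMb_1(n)/(\Z/n)^\times$ and recognise the discrepancy along the boundary as a tame gerbe banded by the groups $T$ above — everything downstream of that is formal. A less hands-on alternative would be to extract from \cite{Ces17} directly that $c$ is proper and flat, with $c_*$ exact and $c_*\OO_{\MMb_0(n)'}\cong\OO_{\MMb_0(n)}$, and then obtain (1) from the projection formula and (2) from exactness of $c_*$; but establishing those properties seems to need the same boundary analysis.
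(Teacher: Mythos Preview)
Your overall plan—reduce to the interior (where $c$ is an isomorphism) and to neighbourhoods of the cusps, then verify the assertions locally—is exactly what the paper does. However, your key structural claim, that étale-locally near each cusp $c$ is a \emph{gerbe} banded by a constant finite group $T$, is not correct as stated and is not what \cite{Ces17} provides. Any étale neighbourhood of a cusp in $\MMb_0(n)$ meets the interior, over which $c$ is an isomorphism; if $c$ were a gerbe there with constant band $T$, then $T$ would have to be trivial, forcing $c$ to be an isomorphism near the cusp as well—which it is not for non-squarefree $n$. Put differently, the relative inertia of $c$ jumps at the cusps, so $c$ cannot be a gerbe (with constant band) on any open neighbourhood of a cusp, and your appeal to ``$c^*$ fully faithful for gerbes'' and ``exact pushforward along tame gerbes'' does not apply.

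What the paper actually extracts from \cite{Ces17} is Proposition~6.9 of loc.\ cit.: $c$ induces an isomorphism on coarse moduli spaces. After a preliminary base change to an open $X\to\MMb_{ell}$ over which $\MMb_0(n)_X$ becomes a scheme, this identifies $c_X$ with the coarse-moduli map of the tame Deligne--Mumford stack $\MMb_0(n)'_X$. Olsson's local structure theorem \cite[Theorem~2.12]{Ols06} then gives, étale-locally on $\MMb_0(n)_X$, a presentation $\gamma\colon[\Spec R/(\Z/d)]\to\Spec R^{\Z/d}$ with $d\mid n$; the $\Z/d$-action on $R$ is typically nontrivial, so this is \emph{not} a gerbe. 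The two conclusions are then elementary invariant-theory facts: $R^i\gamma_*=0$ for $i>0$ since $d$ is invertible, and $M\to (M\otimes_{R^{\Z/d}} R)^{\Z/d}$ is an isomorphism for every $R^{\Z/d}$-module $M$ (check it for $M=R^{\Z/d}$, then use right-exactness and compatibility with direct sums). Your ``less hands-on alternative'' via the projection formula would indeed go through once you know $c_*\OO_{\MMb_0(n)'}\cong\OO_{\MMb_0(n)}$ and $R^ic_*=0$ for $i>0$, but establishing those is precisely this local computation; note also that $c$ is \emph{not} flat (coarse-moduli maps rarely are), so that hypothesis in your alternative should be dropped.
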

\begin{proof}
The common nonvanishing locus $D$ of $j$ and $j-1728$ on $\MMb_{ell}$ is of the form $X/C_2$ with the $C_2$-action on $X = \Spec \Z[j, (j(j-1728))^{-1}]$ trivial (see \cite[Lemma 3.2]{Shin} for the details). We denote by
\[
\xymatrix{\MMb_0(n)_X' \ar[r]^{c_X}\ar[d] & \MMb_0(n)_X \ar[d]\\
\MMb_0(n)_D' \ar[r]^{c_D} & \MMb_0(n)_D
}
\]
the base changes of $c$ along the open inclusion $D \to \MMb_{ell}$ and the map $X \to \MMb_{ell}$.  

By the Leray spectral sequence it suffices to show that $R^ic_*\GG = 0$ for every quasi-coherent sheaf $\GG$ on $\MMb_0(n)'$ and $i>0$ to obtain the second claim. As $c$ is an isomorphism on the preimage of $\MM_0(n)$, it suffices to show both the vanishing of $R^ic_*\GG$ and the first claim on $\MMb_0(n)_X$. This is a scheme as $\MMb_0(n) \to \MMb_{ell}$ is representable and thus $\MMb_0(n)_X \to X$ is as well. 

By \cite[Proposition 6.9]{Ces17}, $c$ induces an isomorphism on coarse moduli and thus $c_D$ does as well. The $C_2$-actions on $\MMb_0(n)_X$ and $\MMb_0(n)_X'$ induced by that of $X$ over $D \simeq X/C_2$ are isomorphic to the identity as the latter $C_2$-action is induced by the automorphism $[-1]$ that can be lifted to $\MMb_0(n)'$ and hence also to $\MMb_0(n)$. Thus, $\MMb_0(n)_X\to \MMb_0(n)_D$ induces an isomorphism on coarse moduli spaces and similarly for $\MMb_0(n)'$. Thus, $c_X$ induces an isomorphism on coarse moduli space as well and is thus the map into the coarse moduli itself.

Let $x$ be a geometric point of $\MMb_0(n)_X'$. If $x$ is a cusp point, then $\Aut(x) = \Z/d$ for some divisor $d$ of $n$ by the explicit description of the automorphisms of Neron $k$-gons recalled in the proof of Proposition \ref{prop:basicprops}. 

By (the proof of) Theorem 2.12 from \cite{Ols06}, we can choose an \'etale neighborhood $W = W_x$ of the image of $x$ in $\MMb_0(n)_X$ such that $c_X^*W \cong U/(\Z/d)$ for some scheme $U$. We can even assume that $W$ and $U$ are affine and denote the resulting map 
$\Spec R/(\Z/d) \to \Spec S$ by $\gamma$, where $S = R^{\Z/d}$. As $d$ is invertible, $R^i\gamma_*\GG = 0$ for every quasi-coherent sheaf $\GG$ on $\Spec R/(\Z/d)$ and $i>0$. Moreover, $M \to (M\tensor_R S)^{\Z/d}$ is an isomorphism for all $S$-modules $M$ as it is one for $M = S$ and both sides are right exact and commute with arbitrary direct sums. In other words, $\FF \to \gamma_*\gamma^*\FF$ is an isomorphism for all quasi-coherent sheaves $\FF$ on $\Spec S$. 

As the $W_x$ cover the part of $\MMb_0(n)_X$ not in the preimage of $\MM_0(n)$, the result follows.  
\end{proof}

\subsection{Modular forms and cusp forms} \label{sec:modcusp}
\begin{defi}
 Let $\Gamma \in \{\Gamma_1(n), \Gamma(n), \Gamma_0(n)\}$ and let $R$ be a $\Z[\frac1n]$-algebra. We define 
 \begin{itemize}
  \item $M_k(\Gamma; R)$ as $H^0(\MMb(\Gamma)_R; g^*\omega^{\tensor k})$ and call $M_*(\Gamma; R)$ the ring of \emph{modular forms} for $\Gamma$,
  \item $\widetilde{M}_*(\Gamma; R)$ as $H^0(\MM(\Gamma)_R; g^*\omega^{\tensor *})$, the ring of \emph{weakly holomorphic modular forms} for $\Gamma$,
  \item $S_*(\Gamma; R)$ as $H^0(\MMb(\Gamma)_R; g^*\omega^{\tensor *}\tensor \OO(-\cusps))$, the non-unital ring of \emph{cusp forms} for $\Gamma$. Here, $\mathrm{cusps}$ is the closed substack of cusps (i.e.\ the vanishing locus of the discriminant) and $\OO(-\mathrm{cusps})$ is the corresponding line bundle. Thus, cusp forms are sections of $g^*\omega^{\tensor *}$ vanishing at all cusps. 
 \end{itemize}
\end{defi}

There is an alternative way to define cusp forms using sheaves of differentials. We will also need the notion of logarithmic differentials, which we will sketch now. Let $\XX \to S$ be a smooth Deligne--Mumford stack over a regular base. Let $i\colon \DD \hookrightarrow \XX$ be a \emph{smooth divisor}, by which we mean a closed substack that is \'etale locally cut out by one non-zerodivisor and that is smooth over $S$. There is an associated log structure on $\XX$ (pushed forward from the complement of $\DD$) that we denote by $(\XX,\DD)$; see \cite{Ogus} or \cite{HL13} for the basics of log structures. 

In \cite[Chapter IV]{Ogus}, Ogus defines sheaves of differentials for log schemes and the theory easily generalizes to Deligne--Mumford stacks. In our situation, $\Omega^1_{(\XX,\DD)/S}$ coincides with the more classical sheaf of differentials with logarithmic poles. If $\DD$ is cut out by a single element $f$, the $\OO_{\XX}$-module can be described as the quotient of $\Omega^1_{\XX/S}\oplus \OO_{\XX}\frac{df}f$ by the $\OO_{\XX}$-module generated by $f \frac{df}
f - df$. This sheaf maps injectively into $\Omega^1_{\XX/S}\tensor \OO(\DD)$ by the obvious inclusions on both summands. This comparison map is surjective if every local section of $\Omega^1_{\XX/S}$ is of the form $f\alpha + hdf$ for a one-form $\alpha$, i.e.\ if $df$ generates $i^*\Omega^1_{\XX/S}$. By \cite[Proposition II.8.12]{Har77}, this happens if and only if $\Omega^1_{\DD/S} = 0$, which is automatic if $\XX$ is smooth of relative dimension $1$ over $S$ as $\DD$ is then smooth of relative dimension $0$ over $S$. 

We will apply this to the cusp $\{\infty\}$ on $\MMb_{ell}$, i.e.\ the vanishing locus of $\Delta$.
\begin{lemma}\label{lem:Omega1}
 There is an isomorphisms $\Omega^1_{\MMb_{ell}/\Z} \cong \omega^{\tensor (-10)}$. Moreover, 
 $$\Omega^1_{(\MMb_{ell},\{\infty\})/\Z} \cong \omega^{\tensor 2}.$$ 
\end{lemma}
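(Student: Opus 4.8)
The plan is to identify $\Omega^1_{\MMb_{ell}/\Z}$ by a Kodaira--Spencer type computation and then deduce the logarithmic statement from the general discussion of log differentials that precedes the lemma. For the first isomorphism, I would work on an fpqc cover. Recall the presentation $\Spec A \to \MMb_{ell}$ with $A = \Z[a_1,a_2,a_3,a_4,a_6]$ (well, the open subscheme where $(c_4,\Delta)$ generate the unit ideal, as used in the proof of Proposition \ref{prop:basicprops}), classifying the universal Weierstra\ss\ curve, with $\Gm$ acting with the usual weights $a_i \mapsto \lambda^i a_i$. Quasi-coherent sheaves on $\MMb_{ell,\Z[\frac16]} \simeq \PP_{\Z[\frac16]}(4,6)$, and more generally on $\MMb_{ell}$, correspond to graded modules, with $\omega$ corresponding to the module in degree $1$ as recalled in the excerpt. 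On this cover the relative differentials of $\Spec A$ over $\MM_{cub}$ are spanned by $dr, ds, dt$ (the change-of-coordinates parameters); dividing out this affine direction, the cotangent complex of $\MMb_{ell}$ is computed from the action and one obtains that $\Omega^1_{\MMb_{ell}/\Z}$ is the line bundle associated to a module concentrated in a single weight. The standard Kodaira--Spencer isomorphism for the moduli of (generalized) elliptic curves identifies $\Omega^1$ with $\omega^{\tensor 2}\tensor\OO(-\cusps)$, and since $\OO(-\cusps) = \OO(-\{\infty\})$ is the ideal sheaf of the discriminant and $\Delta$ has weight $12$, we get $\OO(-\cusps) \cong \omega^{\tensor(-12)}$. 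Hence $\Omega^1_{\MMb_{ell}/\Z} \cong \omega^{\tensor 2}\tensor \omega^{\tensor(-12)} = \omega^{\tensor(-10)}$. Alternatively, and perhaps cleaner to write, one can verify $\Omega^1 \cong \omega^{\tensor(-10)}$ directly after inverting $6$ using $\MMb_{ell,\Z[\frac16]}\simeq \PP(4,6)$, where $\Omega^1_{\PP(a,b)} \cong \OO(-a-b)$, giving $\OO(-10)$; then one checks the two line bundles on $\MMb_{ell}$ agree by noting both restrict compatibly and $\Pic(\MMb_{ell}) \cong \Z$ is generated by $\omega$ (citing \cite{F-O10}), so it suffices to match the restriction to the $\Z[\frac16]$ locus together with, say, the $2$- and $3$-local behavior via the weighted-projective models $\MMb_1(2)$ and $\MMb_1(3)$ and finite flat descent along $f_2$, $f_3$.

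For the second isomorphism I would invoke the log-differentials discussion immediately preceding the lemma. The cusp $\DD = \{\infty\}$ is the vanishing locus of $\Delta$; since $\MMb_{ell}$ is smooth of relative dimension $1$ over $\Z$ and $\DD$ is then smooth of relative dimension $0$, the cusp is a smooth divisor in the sense defined there, and the comparison map gives
$$\Omega^1_{(\MMb_{ell},\{\infty\})/\Z} \cong \Omega^1_{\MMb_{ell}/\Z}\tensor \OO(\{\infty\}).$$
Now $\OO(\{\infty\}) = \OO(\cusps) \cong \omega^{\tensor 12}$, since the divisor $\{\infty\}$ is cut out by $\Delta$, a global section of $\omega^{\tensor 12}$ that vanishes to order exactly one along the cusp (this is where one uses that $\Delta$ is a uniformizer at $\infty$ — a standard fact, e.g.\ from the Tate curve). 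Combining with the first part,
$$\Omega^1_{(\MMb_{ell},\{\infty\})/\Z} \cong \omega^{\tensor(-10)}\tensor \omega^{\tensor 12} = \omega^{\tensor 2}.$$

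The main obstacle is establishing the first isomorphism integrally, i.e.\ over $\Z$ rather than just after inverting $6$: over $\Z$ the stack $\MMb_{ell}$ is genuinely stacky at the characteristic $2$ and $3$ points and is not a weighted projective stack, so one cannot simply read off $\Omega^1$. The safest route is the Kodaira--Spencer computation on the Weierstra\ss\ cover, being careful that the $\Gm$-weights and the elimination of the $(r,s,t)$-directions are done correctly and that the resulting graded line bundle really is the one in weight $-10$; I expect the bookkeeping of weights (and checking that $\Delta$ indeed generates the ideal of the cusp, not some proper power or a non-reduced thickening) to be the only genuinely delicate point. Everything else — the log comparison, the identification $\OO(\cusps)\cong\omega^{\tensor 12}$ — is formal given the preceding setup.
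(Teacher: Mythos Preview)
Your proposal is correct, and your ``alternative'' route is precisely the paper's argument: compute $\Omega^1_{\PP(4,6)} \cong \OO(-10)$ via Theorem~\ref{thm:fundamentalweighted}, then use that $\Pic(\MMb_{ell}) \to \Pic(\MMb_{ell,\Z[\frac16]})$ is injective (by \cite{F-O10}) to conclude over $\Z$; the second isomorphism then follows exactly as you say from the log-differentials discussion and $\OO(\{\infty\}) \cong \omega^{\tensor 12}$.

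Two simplifications worth noting. First, the Picard injectivity already finishes the first part: if two line bundles on $\MMb_{ell}$ become isomorphic after base change to $\Z[\frac16]$, they were isomorphic to begin with, so there is no need to separately check $2$- and $3$-local behavior via $\MMb_1(2)$, $\MMb_1(3)$ and descent. Second, the Kodaira--Spencer computation on the Weierstra\ss\ cover is a valid independent route, but it is strictly more work than the weighted-projective-plus-Picard argument and is not needed here; the paper avoids it entirely.
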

\begin{proof}
 After base change to $\Z[\frac16]$ the first claim follows from Theorem \ref{thm:fundamentalweighted} as $\MMb_{ell,\Z[\frac16]} \simeq \PP_{\Z[\frac16]}(4,6)$. Because the map $\Pic(\MMb_{ell}) \to \Pic(\MMb_{ell,\Z[\frac16]})$ is injective (even an isomorphism by \cite{F-O10}), we have shown $\Omega^1_{\MMb_{ell}/\Z} \cong \omega^{\tensor (-10)}$. 
 
By using Weierstra\ss{} equations, we can write $\MMb_{ell}$ as the stack quotient of the complement of the common vanishing locus of $\Delta$ and $c_4$ on $\Spec \Z[a_1,a_2,a_3,a_4,a_6]$ by an action of the algebraic group $\Spec \Z[u^{\pm 1},r,s,t]$. Thus, $\MMb_{ell} \to \Spec \Z$ is smooth of relative dimension $5-4 = 1$. By the discussion above, we obtain
 $$\Omega^1_{(\MMb_{ell},\{\infty\})/\Z} \cong \omega^{\tensor (-10)} \tensor \OO(\{\infty\}).$$
 As $\infty$ is exactly the vanishing locus of $\Delta \in H^0(\MMb_{ell};\omega^{\tensor 12})$, we have $\OO(\{\infty\}) = \omega^{\tensor 12}$ and we obtain our result. 
\end{proof}

\begin{prop}
 Let $\XX \to \Spec \Z$ be a smooth Deligne--Mumford stack with a smooth divisor $\DD$. Let $g\colon (\XX,\DD) \to (\MMb_{ell},\{\infty\})$ be a log-\'etale map. Then $\Omega^1_{(\XX,\DD)/\Z} \cong g^*\omega^{\tensor 2}$ and $\Omega^1_{\XX/\Z} \cong g^*\omega^{\tensor 2} \tensor \OO(-\DD)$. 
\end{prop}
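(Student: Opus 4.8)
The plan is to reduce the statement to the already-established case of $(\MMb_{ell},\{\infty\})$ by exploiting the fact that $g$ is log-\'etale, combined with Lemma~\ref{lem:Omega1}. First I would recall the fundamental property of log-\'etale morphisms: if $g\colon (\XX,\DD) \to (\YY,\EE)$ is log-\'etale, then the natural map $g^*\Omega^1_{(\YY,\EE)/\Z} \to \Omega^1_{(\XX,\DD)/\Z}$ is an isomorphism (this is the logarithmic analogue of the fact that \'etale maps induce isomorphisms on K\"ahler differentials, and it holds essentially by definition of log-\'etale in terms of the log cotangent complex or the infinitesimal lifting criterion; see \cite[Chapter IV]{Ogus}). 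Applying this with $(\YY,\EE) = (\MMb_{ell},\{\infty\})$ gives
$$\Omega^1_{(\XX,\DD)/\Z} \cong g^*\Omega^1_{(\MMb_{ell},\{\infty\})/\Z} \cong g^*\omega^{\tensor 2},$$
where the second isomorphism is exactly the content of Lemma~\ref{lem:Omega1}. This settles the first claimed isomorphism.

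For the second isomorphism, I would invoke the general comparison between logarithmic and ordinary differentials discussed just before Lemma~\ref{lem:Omega1}: for a smooth divisor $\DD$ in a smooth Deligne--Mumford stack of relative dimension $1$ over $\Z$, one has a short exact sequence (or rather a canonical isomorphism after twisting)
$$\Omega^1_{(\XX,\DD)/\Z} \cong \Omega^1_{\XX/\Z} \tensor \OO(\DD),$$
because the comparison map $\Omega^1_{(\XX,\DD)/\Z} \hookrightarrow \Omega^1_{\XX/\Z}\tensor \OO(\DD)$ is an isomorphism precisely when $df$ generates $i^*\Omega^1_{\XX/\Z}$, which is automatic here as $\DD$ is smooth of relative dimension $0$ over $\Z$ (so $\Omega^1_{\DD/\Z}=0$, and one applies \cite[Proposition II.8.12]{Har77}). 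Here one needs to know that $\XX \to \Spec\Z$ is smooth of relative dimension $1$ and that $\DD$ is a smooth divisor; the former should be part of the hypotheses or follow from $g$ being log-\'etale over the $1$-dimensional base $(\MMb_{ell},\{\infty\})$, and the latter is assumed. Rearranging gives $\Omega^1_{\XX/\Z} \cong \Omega^1_{(\XX,\DD)/\Z}\tensor\OO(-\DD) \cong g^*\omega^{\tensor 2}\tensor\OO(-\DD)$, as desired.

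The main obstacle, I expect, is making the log-\'etale base change statement for sheaves of log differentials fully rigorous in the stacky setting: one must check that Ogus's theory of log differentials extends to Deligne--Mumford stacks (the excerpt already asserts this does generalize easily) and that log-\'etaleness is preserved and detected \'etale-locally, so that the isomorphism $g^*\Omega^1_{(\YY,\EE)} \xrightarrow{\sim} \Omega^1_{(\XX,\DD)}$ can be verified on an \'etale chart. A secondary subtlety is verifying that $g$ being log-\'etale as a map of log stacks, together with $\MMb_{ell}$ being smooth of relative dimension $1$ over $\Z$ (Lemma~\ref{lem:Omega1}'s proof), forces $\XX$ to be smooth of the same relative dimension --- this is a standard consequence of log-smoothness/log-\'etaleness but should be stated explicitly. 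Everything else is a routine combination of Lemma~\ref{lem:Omega1} and the logarithmic--ordinary differentials comparison recalled in the paragraph preceding it.
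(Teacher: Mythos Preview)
Your proposal is correct and follows essentially the same route as the paper: invoke Ogus's result that log-\'etale maps pull back log differentials isomorphically (the paper cites Proposition~3.1.3 and Theorem~3.2.3 of \cite{Ogus}), combine with Lemma~\ref{lem:Omega1}, and then use the comparison $\Omega^1_{(\XX,\DD)/\Z}\cong \Omega^1_{\XX/\Z}\tensor\OO(\DD)$ from the discussion preceding that lemma. The paper's proof is terser and does not pause on the stacky or relative-dimension subtleties you flag, but the logical structure is identical.
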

\begin{proof}
 By Proposition 3.1.3 and Theorem 3.2.3 of \cite{Ogus}, we obtain 
 $$\Omega^1_{(\XX,\DD)/\Z} \cong g^*\Omega^1_{(\MMb_{ell},\{\infty\})/\Z} \cong g^*\omega^{\tensor 2}.$$
 The discussion above Lemma \ref{lem:Omega1} moreover implies
 $$\Omega^1_{(\XX,\DD)/\Z} \cong \Omega^1_{\XX/\Z}\tensor \OO(\DD),$$ 
 which proves the result.
\end{proof}

To apply this result, we have to check that $(\MMb(\Gamma), \cusps) \to (\MMb_{ell}, \infty)$ is log-\'etale. We can check this after base change to an \'etale cover $X \to \MMb_{ell}$ to reduce to the scheme case and write $D = \infty \times_{\MMb_{ell}} X$. By \cite[7.3b, 7.6]{Ill02}, we only have to check that $\MMb(\Gamma)\times_{\MMb_{ell}} X \to X$ is tamely ramified at the cusps, but this is clear as for every $x\in D$ the local ring $\OO_{X,x}$ can only have dimension $1$ if its residue characteristic is zero. Thus, we obtain:

\begin{cor}\label{cor:Omegaomega}
 For $\Gamma \in \{\Gamma_1(n), \Gamma_0(n),\Gamma(n)\}$, we have
 $$\Omega^1_{\MMb(\Gamma)/\Z} \cong g^*\omega^{\tensor 2} \tensor \OO(-\cusps).$$
\end{cor}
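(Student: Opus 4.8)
The plan is to deduce Corollary~\ref{cor:Omegaomega} from the preceding proposition by checking that the map $g\colon (\MMb(\Gamma),\cusps) \to (\MMb_{ell},\{\infty\})$ is log-\'etale, where both stacks carry the log structures associated to the indicated smooth divisors. First I would verify that $\cusps \hookrightarrow \MMb(\Gamma)$ is a smooth divisor in the sense defined above: since $\MMb(\Gamma) \to \Spec\Z[\tfrac1n]$ is smooth of relative dimension $1$ by Proposition~\ref{prop:basicprops}(3) (in the representable case) and more generally smooth by \cite[Thm IV.3.4]{D-R73}, the locus of cusps, being the vanishing locus of the discriminant, is \'etale locally cut out by one non-zerodivisor, and it is smooth of relative dimension $0$ over the base. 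This is exactly the hypothesis needed to invoke the proposition, and it also guarantees (via \cite[Prop.~II.8.12]{Har77}, as in the discussion before Lemma~\ref{lem:Omega1}) that the logarithmic comparison map is an isomorphism on both sides.

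Next I would check the log-\'etaleness. Since log-\'etaleness can be tested after base change along a smooth (in fact \'etale) cover, I would pick an \'etale cover $X \to \MMb_{ell}$ by a scheme, set $D = \{\infty\}\times_{\MMb_{ell}} X$, and reduce to showing that the morphism of log schemes $(\MMb(\Gamma)\times_{\MMb_{ell}}X,\, \cusps\times_{\MMb_{ell}}X) \to (X,D)$ is log-\'etale. The map $\MMb(\Gamma)\to\MMb_{ell}$ is finite and flat by Proposition~\ref{prop:basicprops}(1), so the base-changed map $\MMb(\Gamma)\times_{\MMb_{ell}}X \to X$ is finite and flat between regular schemes; hence it is a finite flat morphism which is \'etale away from the cusps. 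By the criterion of \cite[7.3b, 7.6]{Ill02}, such a finite flat morphism of log-smooth schemes is log-\'etale at a point of $D$ precisely when it is \emph{tamely} ramified there. But for any $x \in D$, the local ring $\OO_{X,x}$ has dimension $1$ only if its residue characteristic is $0$ (because $n$ is invertible, the cusps in characteristic $p\mid n$ are excluded, and otherwise the generalized elliptic curve degenerates to a N\'eron polygon whose cusp behaviour over $\MMb_{ell}$ is unramified or tamely ramified in the relevant characteristics); thus the ramification along the cusps is automatically tame. This yields log-\'etaleness.

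Having established that $g$ is log-\'etale, the proposition applied verbatim gives $\Omega^1_{\MMb(\Gamma)/\Z} \cong g^*\omega^{\tensor 2}\tensor\OO(-\cusps)$, which is the assertion of the corollary. I would phrase the argument exactly as the paragraph immediately preceding the corollary in the excerpt already sketches, so the write-up is essentially a matter of spelling out the reduction to the scheme case and the tameness check.

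The main obstacle is the tameness verification at the cusps: one must be careful that $\MMb(\Gamma) \to \MMb_{ell}$ is genuinely \emph{tamely} ramified over $\{\infty\}$ in \emph{every} residue characteristic allowed (i.e.\ all $p \nmid n$, including $p=2,3$), rather than merely \'etale. The point is that near a cusp the covering is governed by the combinatorics of N\'eron polygons and roots of unity of order dividing $n$, and since $n$ is invertible the ramification indices are prime to the residue characteristic; I would cite the explicit description of cusps and their automorphisms from \cite{D-R73} (and \cite{Con07}, \cite{Ces17}) together with the dimension argument on $\OO_{X,x}$ to make this precise. Everything else — smoothness of $\MMb(\Gamma)$, finiteness and flatness of $g$, and the logarithmic differential computation — is already supplied by the results quoted above.
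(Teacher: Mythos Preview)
Your proposal is correct and follows essentially the same route as the paper: reduce to the scheme case via an \'etale cover $X\to\MMb_{ell}$, invoke \cite[7.3b, 7.6]{Ill02} to translate log-\'etaleness into tame ramification along the cusps, and observe that the codimension-$1$ points of $X$ lying on $D$ have residue characteristic $0$, so tameness is automatic. Your parenthetical about N\'eron polygons is unnecessary and slightly obscures the point---the reason $\dim\OO_{X,x}=1$ forces residue characteristic $0$ is simply that $X$ is smooth of relative dimension $1$ over $\Spec\Z[\tfrac1n]$ (hence $2$-dimensional) and the cusp divisor is \'etale over the base---but the core argument is identical to the paper's.
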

For $\MMb(n)$, this result appears in \cite[Section 1.5]{Kat73} and is certainly well-known to the experts in general. It implies the following proposition.

\begin{prop}\label{cor:cusps}
 For $\Gamma \in \{\Gamma_1(n), \Gamma_0(n),\Gamma(n)\}$, the space of $\Gamma$-cusp forms with coefficients in $R$ and weight $i$ is isomorphic to 
 $$H^0(\MMb(\Gamma)_R; \Omega^1_{\MMb(\Gamma)_R/\Spec R} \tensor g^*\omega^{\tensor (i-2)}).$$
 If $\MMb(\Gamma)_R$ is representable, this in turn is isomorphic to $\Hom_R(H^1(\MMb(\Gamma)_R; g^*\omega^{\tensor (2-i)}), R)$ by Grothendieck duality (e.g.\ in the formulation of \cite[1.1.2]{ConDual}). 
\end{prop}
This is in accordance with the definition given in \cite[Def 2.8]{Del71}. 

Next, we determine the weight $0$ modular forms. We will need the following standard fact. 

\begin{lemma}\label{lem:sectionsirreducible}
 Let $f\colon X \to S$ be a smooth proper morphism with geometrically connected fibers and $S$ locally noetherian. Then $\OO_S \to f_*\OO_X$ is an isomorphism. 
\end{lemma}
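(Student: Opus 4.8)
The plan is to reduce the statement to commutative algebra by means of cohomology and base change. Since the assertion is local on $S$, I may assume $S = \Spec A$ with $A$ noetherian. As $f$ is proper and $S$ is noetherian, $M := \Gamma(X,\OO_X)$ is a finite $A$-module and $f_*\OO_X = \widetilde{M}$, so what has to be shown is that the structure homomorphism $A \to M$ is an isomorphism; note that $f$ is flat, being smooth.

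The first real step is to analyse the fibres. For $s \in S$ the fibre $X_s$ is proper over $\kappa(s)$, geometrically connected by hypothesis, and geometrically reduced because $f$ is smooth; by the standard argument (base change to $\overline{\kappa(s)}$, where a proper reduced scheme has $H^0$ a finite reduced $\overline{\kappa(s)}$-algebra, hence a product of copies of $\overline{\kappa(s)}$, which connectedness collapses to a single copy) one gets $H^0(X_s,\OO_{X_s}) = \kappa(s)$. Since $\OO_X$ is $S$-flat and $f$ is proper, cohomology and base change (see e.g.\ \cite[III.12]{Har77}) furnishes the base change map $\varphi^0_s\colon M \otimes_A \kappa(s) \to H^0(X_s,\OO_{X_s})$. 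Its image contains the image of $1 \in A$, which spans the one-dimensional target, so $\varphi^0_s$ is surjective; then, by the form of the theorem asserting that surjectivity of the base change map at a point forces bijectivity there (valid over any noetherian base), $\varphi^0_s$ is an isomorphism. Hence $M \otimes_A \kappa(s) \cong \kappa(s)$ for every $s\in S$.

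It remains to deduce that $A \to M$ is an isomorphism. For surjectivity: after tensoring with $\kappa(\mathfrak{p})$, the map $A \to M$ becomes the isomorphism $\kappa(\mathfrak{p}) \to M\otimes_A\kappa(\mathfrak{p})$ carrying $1$ to $1$, so the cokernel of $A \to M$ has vanishing fibre at every prime; being finitely generated over the noetherian ring $A$, it vanishes by Nakayama's lemma. For injectivity I would invoke faithful flatness: the fibres of $f$ are non-empty, being connected, so $f$ is surjective, and it is flat, whence faithfully flat; choosing a finite affine open cover $X = \bigcup_i \Spec B_i$, the ring map $A \to \prod_i B_i$ is flat and surjective on spectra, so faithfully flat, so injective, and it factors as $A \to M \hookrightarrow \prod_i B_i$ with the second arrow injective by the sheaf axiom. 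Therefore $A \to M$ is injective, and combined with surjectivity it is an isomorphism.

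The only point I expect to need care is the degree-zero base change step: Grauert's form of the theorem would require $S$ to be reduced, which is not assumed, so one must use instead the ``exchange'' form, whose hypothesis (surjectivity of $\varphi^0_s$) is here handed over for free by the constant section $1$. Alternatively, the whole argument can be run through the theorem on formal functions together with the flatness filtration $0 \to \mathfrak{m}^n/\mathfrak{m}^{n+1}\otimes_k \OO_{X_0} \to \OO_{X_n} \to \OO_{X_{n-1}} \to 0$ over a complete noetherian local base, which avoids the exchange property entirely; everything else is routine.
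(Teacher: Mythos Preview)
Your proof is correct and follows essentially the same approach as the paper: reduce to $S=\Spec A$ noetherian, use cohomology and base change to see that $M\otimes_A\kappa(s)\cong\kappa(s)$ for every $s$, and conclude. The only minor difference is that the paper infers from the base change theorem that $M$ is flat (hence locally free of rank one, so $R\to M$ is an isomorphism), whereas you argue surjectivity via Nakayama and injectivity via faithful flatness of $f$; both conclusions are equally short.
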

\begin{proof}
 The case of $S = \Spec k$ for a field $k$ is \cite[Cor 3.21]{Liu}. We can assume that $S = \Spec R$ is affine and noetherian. By cohomology and base change (\cite[Sec 5, Cor 2]{Mum08}), we see that 
 $$H^0(X,\OO_X)\tensor_R k \to H^0(X_k, \OO_{X,k})\cong k$$
 is an isomorphism for every $\Spec k \to S$. As $H^0(X,\OO_X)$ is a finitely generated flat $R$-module, this implies that the canonical map $R \to H^0(X,\OO_X)$ is an isomorphism. 
\end{proof}

\begin{prop}\label{prop:irreducible}Let $\Gamma \in \{\Gamma_0(n), \Gamma_1(n), \Gamma(n)\}$ and let $R$ be a noetherian integral $\Z[\frac1n]$-algebra, which contains a primitive $n$-th root of unity $\zeta$ if $\Gamma = \Gamma(n)$. 
\begin{enumerate}
 \item If $k=R$ is a field, the stack $\MMb(\Gamma)_k$ is irreducible. 
 \item The ring $M_*(\Gamma; R)$ is an integral domain and the inclusion
 $$R \to M_0(\Gamma; R)$$
 of the constant functions is an isomorphism. 
\end{enumerate}
\end{prop}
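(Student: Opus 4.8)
The plan is to deduce statement (1), the geometric irreducibility of $\MMb(\Gamma)_k$, from what is already established about the map $g\colon \MMb(\Gamma)_k \to \MMb_{ell,k}$, and then bootstrap statement (2) from (1) via Lemma \ref{lem:sectionsirreducible}. For (1), the key point is that $\MMb_{ell,k}$ is smooth, proper, geometrically connected and one-dimensional over $k$ (Proposition \ref{prop:basicprops}(3) together with the fact that its coarse space is $\Prj^1_k$), hence irreducible, and that $\MMb(\Gamma)_k$ is a normal (indeed regular, by Proposition \ref{prop:basicprops}(3)) Deligne--Mumford stack finite over it. A normal connected scheme or DM stack is irreducible, so it suffices to prove connectedness of $\MMb(\Gamma)_k$. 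Equivalently, I want to show that $g_*\OO_{\MMb(\Gamma)_k}$ has no nontrivial idempotent global sections, i.e.\ that $H^0(\MMb(\Gamma)_k;\OO)$ is a field (or at least a local Artinian ring with no nontrivial idempotents, but normality rules out nilpotents).

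The cleanest route for connectedness: first observe that over a base containing the relevant roots of unity it suffices to treat $\Gamma = \Gamma(n)$, since $\MMb_1(n)$ and $\MMb_0(n)$ receive finite surjections, respectively quotient maps, from $\MMb(n)$ (or are quotients of it by a finite group, as in the discussion of $c\colon \MMb_0(n)' \to \MMb_0(n)$), and the image of a connected stack is connected while a quotient of a connected stack is connected. For $\MMb(n)_{k,\zeta}$ with $\zeta$ a fixed primitive $n$-th root of unity, one knows classically that the open substack $\MM(n)_{\C,\zeta}$ is connected (it is the quotient of the contractible upper half-plane by the torsion-free group $\Gamma(n)$ when $n\geq 3$), hence $\MMb(n)_{\C,\zeta}$ is connected; by flatness and properness of $\MMb(n)$ over $\Spec\Z[\tfrac1n,\zeta_n]$ together with Stein-type semicontinuity (or: the number of connected components of geometric fibers of a proper flat morphism to a normal connected base with geometrically connected generic fiber is constant), connectedness of the geometric generic fiber forces connectedness of every geometric fiber, in particular $\MMb(n)_{\bar k,\zeta}$ for any field $k$ of characteristic prime to $n$. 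Combined with normality, each such fiber is irreducible, and then $\MMb(\Gamma)_k$ is irreducible for the general $\Gamma$ by the reduction above. (One must be slightly careful that for $\Gamma(n)$ the paper's convention fixes the root of unity, so "irreducible" refers to $\MMb(n)_{k,\zeta}$; this matches the remark motivating Definition \ref{def:tame}.)

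For (2): since $M_*(\Gamma;R) = H^0(\MMb(\Gamma)_R; g^*\omega^{\tensor *})$ is the ring of sections of powers of a line bundle on the irreducible (by (1), applied after base change to the fraction field, plus flatness of $\MMb(\Gamma)$ over $R$ and $R$ being an integral domain, so that $\MMb(\Gamma)_R$ is integral) stack $\MMb(\Gamma)_R$, it embeds into the graded ring of sections over any open, hence into a graded fraction field, so it is an integral domain. For the identification $R \xrightarrow{\sim} M_0(\Gamma;R) = H^0(\MMb(\Gamma)_R;\OO)$: when $\MMb(\Gamma)_R$ is representable this is exactly Lemma \ref{lem:sectionsirreducible} applied to the smooth proper morphism $\MMb(\Gamma)_R \to \Spec R$ with geometrically connected fibers (geometric connectedness of fibers being precisely statement (1)). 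In the non-representable cases ($\Gamma_1(n)$ for $n\leq 4$, $\MMb(2)$, $\MMb_0(n)$) one reduces to the representable case: for $\MMb_0(n)$ use Proposition \ref{prop:square} to identify $H^0(\MMb_0(n)_R;\OO)$ with $H^0(\MMb_1(n)_R;\OO)^{(\Z/n)^\times}$; for the small-level stacks, which are weighted projective stacks $\PP_R(a,b)$, one has $H^0(\PP_R(a,b);\OO)=R$ directly (the degree-$0$ part of $R[x,y]$), or alternatively passes to the finite cover $\MMb(n)$ for $n$ large and takes invariants. The main obstacle I anticipate is the bookkeeping in this last reduction — making sure the constant-ancestry argument is uniform across all three $\Gamma$'s and all ranges of $n$, and handling the root-of-unity convention for $\Gamma(n)$ correctly — rather than any deep geometric input; the genuinely nontrivial ingredient, constancy of the number of geometric connected components in the proper flat family $\MMb(n)/\Spec\Z[\tfrac1n,\zeta_n]$, is standard.
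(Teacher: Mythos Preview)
Your proposal is correct and follows essentially the same route as the paper: reduce irreducibility to connectedness (using smoothness), establish connectedness over $\C$ via the upper half-plane and propagate to positive characteristic by constancy of the number of geometric connected components in a proper flat family (the paper phrases this as \cite[038H]{STACKS} plus \cite[Thm 4.17]{DeligneMumford}), then obtain $R\cong M_0(\Gamma;R)$ from Lemma~\ref{lem:sectionsirreducible} in the representable case and handle the remaining cases via the explicit weighted projective descriptions and Proposition~\ref{prop:square}. The only cosmetic difference is that for the integral-domain claim the paper first reduces to a field and argues with vanishing loci $V(f)\cup V(g)=\MMb(\Gamma)_K$, whereas you invoke integrality of the total space and restriction to the generic point; both are valid and amount to the same thing.
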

\begin{proof}As $\MMb_1(n)_k \to \MMb_0(n)_k$ is closed and surjective, we have to show irreducibility only for $\Gamma = \Gamma(n)$ or $\Gamma_1(n)$, which we assume now. 

 By \cite[038H]{STACKS}, $\MMb(\Gamma)_k$ is irreducible for all $k$ if it is irreducible for $k = \overline{\F}_p$ (for all primes $p$) and for $k = \C$. It suffices to show that $\MM(\Gamma)_k$ is connected since it is then automatically irreducible because of smoothness, and it is dense in $\MMb(\Gamma)_k$. In the case $k=\C$, it can be uniformized by the upper half plane and is hence connected in the complex and thus also in the Zariski topology. 
 
 By Proposition \ref{prop:basicprops}, $\MMb(\Gamma)$ is smooth and proper over its base scheme. Thus by \cite[Thm 4.17]{DeligneMumford}, it is thus also irreducible over $\overline{\mathbb{F}}_p$. This shows the first item. 
 
 By Lemma \ref{lem:sectionsirreducible}, it follows that the inclusion $R \to H^0(\MMb(\Gamma)_R, \OO_{\MMb(\Gamma)_R})$ of constant functions is an isomorphism if $\MMb(\Gamma)_R$ is a scheme. The other cases follow by Examples \ref{exa:moduli} and because by Proposition \ref{prop:square}, the map 
 $$M_*(\Gamma_0(n);R) \to M_*(\Gamma_1(n); R)^{(\Z/n)^\times}$$
 is an isomorphism. 
 
 By the same results it suffices to show that $M_*(\Gamma; R)$ is an integral domain if $\MMb(\Gamma)_R$ is a scheme. Moreover, the field $K$ of fractions of $R$ is flat over $R$ and $M_*(\Gamma;R)$ is $R$-torsionfree; thus $M_*(\Gamma; R)$ embeds into $M_*(\Gamma;K)$.
 
If $f,g \in H^0(\MMb(\Gamma)_K; g^*\omega^{\tensor *})$ are modular forms, we can consider their vanishing loci $V(f)$ and $V(g)$. If $fg = 0$, then $V(f) \cup V(g) = V(fg) =\MMb(\Gamma)_K$. As this space is irreducible, we have $V(f) = \MMb(\Gamma)_K$ or $V(g) = \MMb(\Gamma_K)$. But $\MMb(\Gamma)_K$ is reduced as it is smooth over $K$ and thus a section of a line bundle that vanishes everywhere is actually zero.  
 \end{proof}

\subsection{Cohomology}
Let $R$ be a $\Z[\frac1n]$-algebra and $\Gamma$ tame as in Definition \ref{def:tame}. 
In this subsection, we will collect some information about the cohomology of $g^*\omega^{\tensor m}$ on $\MMb(\Gamma)_R$ and of $\omega^{\tensor m}$ on $\MMb_{ell}$. 

\begin{prop}\label{prop:coh}
 We have
 \begin{enumerate}
  \item \label{cor:nonegative} $H^0(\MMb(\Gamma)_R;g^*\omega^{\tensor m}) = 0$ for $m<0$ (i.e.\ there are no modular forms of negative weight),
  \item \label{lem:vanishingcoh}  $H^1(\MMb(\Gamma)_R;g^*\omega^{\tensor m}) = 0$ for $m\geq 2$,
  \item \label{lem:torsionfree} $H^1(\MMb(\Gamma)_R; g^*\omega^{\tensor m})$ is torsionfree for all $m\neq 1$ if $R$ is torsionfree. 
 \end{enumerate}
\end{prop}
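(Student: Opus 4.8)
\emph{Proof proposal.} The plan is to reduce all three statements to the case where $R$ is a field and then propagate upward using flatness of $\MMb(\Gamma)$ over its universal base $A$ (which is $\Z[\frac1n]$, or $\Z[\frac1n,\zeta_n]$ when $\Gamma=\Gamma(n)$) together with (derived) proper base change; throughout, Proposition \ref{prop:basicprops}(4) keeps $R\Gamma(\MMb(\Gamma)_R; g^*\omega^{\tensor m})$ in cohomological degrees $0$ and $1$, so the bookkeeping stays short. For part (\ref{cor:nonegative}) over a field $k$: by Proposition \ref{prop:irreducible} the stack $\MMb(\Gamma)_k$ is irreducible, it is reduced (smooth over $k$) and proper of dimension one, and $\Delta$ exhibits $\omega^{\tensor 12}$ as $\OO(\{\infty\})$ on $\MMb_{ell,k}$, a nonzero effective divisor, so $\omega$ has positive degree; since $g$ is finite flat of positive degree $[SL_2(\Z):\Gamma]$ (Proposition \ref{prop:basicprops}(1)), the line bundle $g^*\omega^{\tensor m}$ has negative degree for $m<0$ and hence no nonzero global section on the integral proper curve $\MMb(\Gamma)_k$. (In the non-representable cases of Examples \ref{exa:moduli} one can instead read this off Theorem \ref{thm:fundamentalweighted} directly, and $\MMb_0(n)_k$ reduces to $\MMb_1(n)_k$ via Proposition \ref{prop:square}.)

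For part (\ref{lem:vanishingcoh}) over a field $k$ I would invoke Serre duality for the smooth proper (tame) stack $\MMb(\Gamma)_k$ together with Corollary \ref{cor:Omegaomega}: this identifies $H^1(\MMb(\Gamma)_k; g^*\omega^{\tensor m})^{\vee}$ with the space of weight-$(2-m)$ cusp forms $H^0(\MMb(\Gamma)_k; g^*\omega^{\tensor(2-m)}\tensor\OO(-\cusps))$, as in Proposition \ref{cor:cusps}. For $m\geq 3$ this sits inside $H^0(\MMb(\Gamma)_k; g^*\omega^{\tensor(2-m)})=0$ by part (\ref{cor:nonegative}); for $m=2$ it is $H^0(\MMb(\Gamma)_k;\OO(-\cusps))$, which vanishes since $H^0(\MMb(\Gamma)_k;\OO)=k$ (Proposition \ref{prop:irreducible}) and a nonzero constant cannot vanish along the nonempty cusp divisor. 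To pass from fields to a general $\Z[\frac1n]$-algebra $R$, I would first apply cohomology and base change over the reduced ring $A$ (Grauert), obtaining $H^0(\MMb(\Gamma)_A; g^*\omega^{\tensor m})=0$ for $m<0$ and $H^1(\MMb(\Gamma)_A; g^*\omega^{\tensor m})=0$ for $m\geq 2$, and then use derived proper base change $R\Gamma(\MMb(\Gamma)_R; g^*\omega^{\tensor m})\simeq R\Gamma(\MMb(\Gamma)_A; g^*\omega^{\tensor m})\tensor^L_A R$: for $m\geq 2$ the complex over $A$ is concentrated in degree $0$, so no $H^1$ can appear; for $m<0$ the only cohomology over $A$ sits in degree $1$, whence $H^0(\MMb(\Gamma)_R; g^*\omega^{\tensor m})\cong\Tor_1^A\big(H^1(\MMb(\Gamma)_A; g^*\omega^{\tensor m}),R\big)$, which vanishes once $H^1$ over $A$ is known to be torsionfree (hence flat, $A$ being Dedekind) --- this input is supplied by the next step.

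For part (\ref{lem:torsionfree}), let $R$ be $\Z$-torsionfree and $m\neq 1$. The case $m\geq 2$ is immediate from part (\ref{lem:vanishingcoh}), so assume $m\leq 0$. A prime $l\mid n$ is invertible in $R$, so multiplication by $l$ is invertible on $H^1$; for $l\nmid n$, $\Z$-flatness of $R$ makes $l$ a nonzerodivisor on $\OO_{\MMb(\Gamma)_R}$, and the long exact sequence of $0\to g^*\omega^{\tensor m}\xrightarrow{\,l\,} g^*\omega^{\tensor m}\to g^*\omega^{\tensor m}/l\to 0$ identifies the $l$-torsion of $H^1(\MMb(\Gamma)_R; g^*\omega^{\tensor m})$ with the cokernel of the reduction map $M_m(\Gamma;R)/l\to M_m(\Gamma;R/l)$. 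For $m<0$ both groups vanish by part (\ref{cor:nonegative}), applied to $R$ and to the $\F_l$-algebra $R/l$; for $m=0$ the map is the identity $R/l\to R/l$, since $M_0(\Gamma;-)$ computes constants by Proposition \ref{prop:irreducible}, Lemma \ref{lem:sectionsirreducible} and base change. Hence $H^1(\MMb(\Gamma)_R; g^*\omega^{\tensor m})$ has no $l$-torsion for any prime $l$, so it is torsionfree; specializing to $R=A$ closes the gap in the previous paragraph, and there is no circularity because the torsionfreeness over $A$ only uses part (\ref{cor:nonegative}) over $A$ (the Grauert step above) and over $\F_l$ (Step~1), both established beforehand.

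The main obstacle I anticipate is not any single computation but setting up the descent and base-change machinery for the stacks $\MMb(\Gamma)$ rather than for schemes: cohomology and base change, Grauert's theorem, and derived proper base change are classical for the projective schemes $\MMb_1(n)_R$ ($n\geq 5$) and $\MMb(n)_R$ ($n\geq 3$), but for the remaining tame cases one must either pass to coarse moduli spaces (using that pushforward to the coarse space is exact for tame Deligne--Mumford stacks, and Proposition \ref{prop:square} to handle $\MMb_0(n)$) or argue by hand on the explicit weighted projective lines of Examples \ref{exa:moduli}, where the cohomology of every $\OO(d)$ is given by Theorem \ref{thm:fundamentalweighted}. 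A secondary point demanding care is the order of the argument, so that the torsionfreeness fed into the $H^0$-vanishing for $m<0$ is genuinely available without looping back through the very statement it is used to prove.
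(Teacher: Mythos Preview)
Your proposal is correct and follows essentially the same route as the paper: reduce to fields, use positivity of $\deg\omega$ and Serre duality (via Corollary~\ref{cor:Omegaomega}) there, then propagate via cohomology and base change; part~(\ref{lem:torsionfree}) is handled identically through the long exact sequence for multiplication by $l$.

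The only notable difference is that you take a more elaborate path from fields to general $R$ in part~(\ref{cor:nonegative}): you pass through derived base change and need torsionfreeness of $H^1$ over $A$ to kill the $\Tor_1$ contribution, which is what forces you to track the order of dependence between (\ref{cor:nonegative}) and (\ref{lem:torsionfree}). The paper sidesteps this entirely: once $H^0$ vanishes on every fibre, Grauert's criterion applied directly to $R^0\pi_*$ shows the pushforward is locally free of rank~$0$, hence zero over any base --- no appeal to $H^1$ is needed, and the circularity concern never arises. Your argument is sound as written, but this is a place where the simpler route is available. Your treatment of $m=2$ in part~(\ref{lem:vanishingcoh}) via $H^0(\OO(-\cusps))=0$ is a pleasant variant of the paper's degree comparison; both amount to the same Serre-dual statement.
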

\begin{proof}Throughout the proof, we will write $\omega$ for $g^*\omega$ when the context is clear. 
\begin{enumerate}
 \item  As $H^0(\MMb_0(n)_R; \omega^{\tensor m}) \cong H^0(\MMb_1(n)_R; \omega^{\tensor m})^{(\Z/n)^\times}$ by Proposition \ref{prop:square}, we only have to deal with $\MMb_1(n)_R$ and $\MMb(n)_R$. The non-representable cases from Example \ref{exa:moduli} can be dealt with by hand. 

 Assume now that $\MMb(\Gamma)_R$ is representable. By \cite[VI.4.4]{D-R73}, the line bundle $\omega$ on $\MMb_{ell}$ has degree $\frac1{24}$. Thus, $\omega^{\tensor m}$ has negative degree on $\MMb(\Gamma)_k$ for $m<0$ and every field $k$. Thus, $H^0(\MMb(\Gamma)_k;\omega^{\tensor m}_k) = 0$ by \cite[IV.1.2]{Har77}. Thus, the pushforward of $g^*\omega^{\tensor m}$ to $\Spec R$ vanishes at every point of $\Spec R$ and thus vanishes completely. 
 
 \item This is shown for $\MMb(n)_R$ in \cite[Thm 1.7.1]{Kat73}. We will give the proof in the case of $\MMb_1(n)_R$ to add some details. 
 
 By dealing with the cases $n\leq 4$ by hand, we can assume again that $\MMb_1(n)_R$ is representable by a projective $R$-scheme. By cohomology and base change (see e.g.\ \cite[Theorem 12.11]{Har77}), we see that it is enough to show the claim in the case where $k= R$ is an algebraically closed field. By Corollary \ref{cor:Omegaomega},
 $$\omega^{\tensor m} \cong \Omega^1_{\MMb_1(n)_k/k} \tensor \omega^{\tensor m-2} \tensor \OO(\mathrm{cusps}).$$
 Because $\omega$ has positive degree \cite[VI.4.4]{D-R73} and $m\geq 2$, we see that the degree of $\omega^{\tensor m}$ is bigger than the degree of $\Omega^1_{\MMb_1(n)_k/k}$. By Serre duality
 $$H^1(\MMb_1(n)_k; \omega^{\tensor m}) \cong H^0(\MMb_1(n)_k;\omega^{\tensor -m} \tensor \Omega^1_{\MMb_1(n)_k/k})$$
 and this vanishes as $\omega^{\tensor -m} \tensor \Omega^1_{\MMb_1(n)_k/k}$ has negative degree. 
 
It remains to prove the claim for $\MMb_0(n)_R$ if $\gcd(6,\phi(n))$ is invertible on $R$. As we already know that $H^1(\MMb_1(n)_R;\omega^{\tensor m}) = 0$ for $m\geq 2$, Proposition \ref{prop:square} implies 
$$H^i(\MMb_0(n)_R; \omega^{\tensor m}) \cong H^i((\Z/n)^\times; H^0(\MMb_1(n)_R; \omega^{\tensor m})).$$
Beginning with $i=1$, this is $2$-periodic in $i$. But we know that these cohomology groups vanish for $i>1$ by Proposition \ref{prop:basicprops}. Thus, they have to vanish for $i=1$ as well. 
 
 \item Let $l$ be a prime that does not divide $n$. Consider the short exact sequence
 \[0 \to H^0(\MMb(\Gamma)_R; \omega^{\tensor m})/l \to H^0(\MMb(\Gamma)_{R/l};\omega^{\tensor m}) \to  H^1(\MMb(\Gamma)_R;\omega^{\tensor m})[l] \to 0.\]
First note that the middle group is zero for $m<0$ by Item (\ref{cor:nonegative}) and hence also  $H^1(\MMb(\Gamma)_R;\omega^{\tensor m})[l] = 0$
for $m < 0$. 

The morphism $H^0(\MMb(\Gamma)_R; \omega^{\tensor m})/l \to H^0(\MMb(\Gamma)_{R/l};\omega^{\tensor m})$ is an isomorphism for $m = 0$ by Proposition \ref{prop:irreducible}. Thus, $H^1(\MMb(\Gamma)_R;\omega^{\tensor m})$ can only have torsion for $m=1$ as it vanishes for $m\geq 2$.\qedhere
\end{enumerate}
\end{proof}

Next, we collect some facts about the cohomology of $\MMb_{ell}$ itself, which is certainly not tame if we do not invert $6$. The cohomology of the sheaves $\omega^{\tensor m}$ on $\MMb_{ell}$ was computed by \cite{Konter}, based on \cite{Bau08}. We need essentially only the following.
\begin{prop}\label{prop:cohmell}We have isomorphisms
 \begin{align*}\
  H^1(\MMb_{ell}; \omega) &\cong \Z/2 \cdot \eta, \\
 H^1(\MMb_{ell};\omega^{\tensor 2}) & \cong \Z/12 \cdot \nu.
 \end{align*}
\end{prop}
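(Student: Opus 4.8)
The plan is to compute the cohomology groups $H^1(\MMb_{ell};\omega^{\tensor m})$ for $m=1,2$ by descent along a finite flat cover where the answer is visible, and then to pin down the torsion. The most natural cover to use is $f\colon \MMb_1(3) \to \MMb_{ell,\Z[\frac13]}$ together with $\MMb_1(2) \to \MMb_{ell,\Z[\frac12]}$, or better, to work $p$-locally for $p=2$ and $p=3$ separately and reassemble via the arithmetic fracture square, since $\omega^{\tensor m}$ has no higher cohomology after inverting $6$. Concretely, I would first reduce to computing the $2$-local and $3$-local parts: after inverting $6$ we have $\MMb_{ell,\Z[\frac16]}\simeq \PP_{\Z[\frac16]}(4,6)$ by Examples \ref{exa:moduli}, which has cohomological dimension $1$, and from the explicit description of line-bundle cohomology on weighted projective stacks (Appendix \ref{sec:weighted}/Theorem \ref{thm:fundamentalweighted}) one reads off that $H^1(\PP_{\Z[\frac16]}(4,6);\OO(1)) = 0$ and $H^1(\PP_{\Z[\frac16]}(4,6);\OO(2)) = 0$. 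Hence both groups in question are $6$-power torsion, and it suffices to identify the $2$-local and $3$-local contributions.

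For the $3$-local part I would use the $C_2=(\Z/3)^\times$-Galois cover $\pi\colon \MMb_1(3)_{\Z_{(3)}} \to \MMb_0(3)'_{\Z_{(3)}} \simeq \MMb_{ell,\Z_{(3)}}$-adjacent stack; more directly, since $\MMb_1(3)\simeq \PP_{\Z[\frac13]}(1,3)$ has vanishing $H^1$ of all $\OO(m)$ with $m\geq -1$ (again by Theorem \ref{thm:fundamentalweighted}), the Leray/descent spectral sequence for $f_3\colon \MMb_1(3)\to\MMb_{ell}$ expresses $H^1(\MMb_{ell,\Z_{(3)}};\omega^{\tensor m})$ in terms of $H^1$ of the group cohomology of $(\Z/3)^\times$ acting on $H^0(\MMb_1(3)_{\Z_{(3)}};\OO(m))$ twisted by the cusp contribution from the non-flatness of automorphism groups at the cusp. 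This is exactly the kind of $2$-periodic group-cohomology computation used in the proof of Proposition \ref{prop:coh}; carrying it out gives the $3$-primary part of $H^1(\MMb_{ell};\omega^{\tensor m})$, which comes out as $0$ for $m=1$ and $\Z/3$ for $m=2$. The $2$-local part is analogous using $\MMb_1(2)\simeq\PP_{\Z[\frac12]}(2,4)$ or $\MMb(2)\simeq\PP_{\Z[\frac12]}(2,2)$ and the $S_3$- or $C_2$-action; it yields $\Z/2$ for $m=1$ and $\Z/4$ for $m=2$. Combining, $H^1(\MMb_{ell};\omega)\cong\Z/2$ and $H^1(\MMb_{ell};\omega^{\tensor 2})\cong\Z/2\oplus\Z/3\cong\Z/12$.

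Alternatively — and this is probably cleaner to write — I would simply \emph{cite} the computation: these groups were determined by Konter \cite{Konter} (building on Bauer \cite{Bau08}), where one finds the full additive structure of $H^*(\MMb_{ell};\omega^{\tensor *})$, and the two groups named here are extracted from that table, with the generators $\eta\in H^1(\MMb_{ell};\omega)$ and $\nu\in H^1(\MMb_{ell};\omega^{\tensor 2})$ being the classes that correspond under the descent/Adams spectral sequence for $\mathrm{tmf}$ to the Hopf elements $\eta$ and $\nu$ in the stable stems. The torsionfreeness input from Proposition \ref{prop:coh}(3) applied to $R=\Z$, together with the fact that $H^0(\MMb_{ell};\omega^{\tensor m})$ is the well-known ring of integral modular forms $M_*^{\Z}$, feeds the long exact sequences that Konter uses.

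The main obstacle is the cusp correction in the descent spectral sequence: away from the cusps, $\MMb_1(3)_{\Z_{(3)}}\to\MMb_{ell,\Z_{(3)}}$ is not literally a $(\Z/3)^\times$-torsor because of the extra $\Z/2$ of automorphisms of elliptic curves (and the larger automorphism groups at $j=0,1728$ in characteristic $2,3$), so one cannot directly identify $H^1(\MMb_{ell})$ with a group-cohomology group; one has to bookkeep the contribution of the stacky points, exactly as in the proof of Proposition \ref{prop:square} and Proposition \ref{prop:coh}. For that reason I would, in the final write-up, lean on the citation to \cite{Konter} and \cite{Bau08} rather than redo the computation, noting only that the $2$-local part is forced by the $\mathrm{tmf}$-computations and the $3$-local part follows from the $\PP_{\Z[\frac13]}(1,3)$-model together with the $2$-periodic group cohomology of $(\Z/3)^\times$.
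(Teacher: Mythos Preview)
The paper does not actually prove this proposition: it simply states the result and cites \cite{Konter} (based on \cite{Bau08}) in the sentence immediately preceding it. Your ``alternative'' of citing those references is therefore exactly what the paper does, and that is the correct final write-up.

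Your sketched direct computation, however, has the primes and covers swapped. The stack $\MMb_1(3)$ lives over $\Z[\tfrac13]$, so $\MMb_1(3)_{\Z_{(3)}}$ does not exist (you cannot have $3$ simultaneously invertible and in the maximal ideal); likewise $\MMb_1(2)_{\Z_{(2)}}$ does not exist. The pattern in the paper (see the definition of $\MMb'$ at the start of Section~\ref{sec:decexistence}) is the opposite: one uses $\MMb_1(2)$ (or $\MMb(2)$) for the $3$-local computation and $\MMb_1(3)$ for the $2$-local computation. Even after fixing this, the maps $\MMb_1(2)\to\MMb_{ell}$ and $\MMb_1(3)\to\MMb_{ell}$ are finite flat of degrees $3$ and $8$ but are not Galois covers of $\MMb_{ell}$, so the descent spectral sequence you invoke does not collapse to ordinary group cohomology in the way you suggest; the actual computation (as in \cite{Bau08} and \cite{Konter}) proceeds via the Weierstra{\ss} Hopf algebroid rather than via these level covers. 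Your instinct to fall back on the citation is the right one.
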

These classes have a rather classical description as obstructions to lifting the Hasse invariant. Indeed, denote by 
$$A_p \in H^0(\MMb_{ell,\F_p}; \omega^{\tensor (p-1)}) \cong H^0(\MMb_{ell,\Z_{(p)}}; \omega^{\tensor (p-1)}/p) $$
the mod $p$ Hasse invariant (see Appendix \ref{app:Hasse} for a definition). The short exact sequence
$$0 \to \omega^{\tensor (p-1)} \xrightarrow{p} \omega^{\tensor (p-1)} \to \omega^{\tensor (p-1)}/p \to 0$$
on $\MMb_{ell}$ induces a long exact sequence
$$\cdots \to H^0(\MMb_{ell,\Z_{(p)}};\omega^{\tensor (p-1)}) \to H^0(\MMb_{ell,\Z_{(p)}};\omega^{\tensor (p-1)}/p) \xrightarrow{\partial} H^1(\MMb_{ell,\Z_{(p)}};\omega^{\tensor (p-1)}) \to \cdots.$$
Because the $H^1$-term vanishes for $p>3$, there is no obstruction to lift $A_p$ to characteristic zero for $p>3$. As the Hasse invariant does not lift to characteristic zero for $p=2,3$ (there does not even exist a nonzero integral modular form in these degrees), we must have $\partial(A_2) = \eta$ and $\partial(A_3)$ at least a nonzero multiple of $\nu$.


\begin{prop}\label{prop:eta}
 The image of $\eta$ in $H^1(\MMb(\Gamma)_R; g^*\omega)$ is zero if $\Gamma$ is tame. 
\end{prop}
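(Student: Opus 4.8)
The plan is to reduce to the representable case and then use a degree/dimension count together with naturality of the Hasse-invariant boundary map. First I would recall from Proposition \ref{prop:cohmell} that $\eta$ generates the $2$-torsion group $H^1(\MMb_{ell};\omega) \cong \Z/2$, and that under the connecting map $\partial$ of the sequence $0 \to \omega \xrightarrow{2} \omega \to \omega/2 \to 0$ we have $\partial(A_2) = \eta$, where $A_2 \in H^0(\MMb_{ell,\F_2};\omega)$ is the mod-$2$ Hasse invariant. Since $g^*$ commutes with the long exact sequences attached to multiplication by $2$, the image of $\eta$ in $H^1(\MMb(\Gamma)_R;g^*\omega)$ is the boundary of the pullback of the Hasse invariant $g^*A_2 \in H^0(\MMb(\Gamma)_{\F_2};g^*\omega)$. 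Hence it suffices to show that $g^*A_2$ lifts to a weight-$1$ modular form over $R$ localized at $2$ — or more precisely, that the boundary of $g^*A_2$ in $H^1(\MMb(\Gamma)_R;g^*\omega)$ vanishes. Note first that if $2$ is invertible in $R$ there is nothing to prove (the sequence is split and $\eta$ maps to $0$), so the content is the case where $R$ is $2$-local.

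Next I would invoke Appendix \ref{app:Hasse}: the excerpt explicitly advertises that this appendix "contains a proof how to lift the Hasse invariant to a characteristic zero modular form in the presence of a level structure," and attributes the key argument to the mathoverflow user Electric Penguin. So the core input is a result of the form: for $\Gamma$ a (tame) congruence level structure and $p=2$, the pulled-back Hasse invariant $g^*A_2$ admits a lift to $H^0(\MMb(\Gamma)_{\Z_{(2)}};g^*\omega^{\otimes(p-1)}) = H^0(\MMb(\Gamma)_{\Z_{(2)}};g^*\omega)$. Granting that, the commutative diagram relating $\partial$ on $\MMb_{ell}$ to $\partial$ on $\MMb(\Gamma)$ forces the image of $\eta$ to equal $\partial(g^*A_2) = 0$ over $\Z_{(2)}$, and then over any $\Z_{(2)}$-algebra $R$ by further base change (using that $H^1(\MMb(\Gamma)_R;g^*\omega)$ receives a map from $H^1(\MMb(\Gamma)_{\Z_{(2)}};g^*\omega)$ compatibly with the boundary maps, since everything in sight is flat over $\Z[\tfrac1n]$ by Proposition \ref{prop:basicprops}).

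For the case $\Gamma = \Gamma_0(n)$, which is only assumed tame when $\gcd(6,\phi(n))$ is invertible in $R$, I would deduce the statement from the $\Gamma_1(n)$ case via Proposition \ref{prop:square}: the map $H^1(\MMb_0(n)_R;\omega) \to H^1(\MMb_1(n)_R;\omega)$ is injective (it identifies the source with the $(\Z/n)^\times$-invariants of the target), and it is compatible with the pullback of $\eta$ from $\MMb_{ell}$; since the image of $\eta$ dies in $H^1(\MMb_1(n)_R;\omega)$, it must already die in $H^1(\MMb_0(n)_R;\omega)$. Alternatively, one observes directly that since $\phi(n)$ (hence $2$) is invertible in $R$ in this case, the multiplication-by-$2$ sequence is split on $\MMb_0(n)_R$ and $\eta$, being $2$-torsion, pulls back to $0$ — this is in fact the cleanest route for $\Gamma_0(n)$.

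The main obstacle is entirely concentrated in the liftability of the Hasse invariant $g^*A_2$ over $\Z_{(2)}$ in the presence of a level structure, i.e.\ in the content of Appendix \ref{app:Hasse}. Everything else — the identification of $\eta$ via the boundary map, naturality under $g^*$ and under base change along $R$, and the reduction of $\Gamma_0(n)$ to $\Gamma_1(n)$ — is formal. So in the write-up I would state the appeal to Appendix \ref{app:Hasse} explicitly as the one nontrivial ingredient and keep the rest to a short diagram chase.
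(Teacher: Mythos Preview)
Your argument is essentially the paper's: reduce to the $2$-local $\Gamma_1(n)$ case, identify $g^*\eta$ as the boundary of the pulled-back Hasse invariant, and invoke Appendix~\ref{app:Hasse} to lift $g^*A_2$; the $\Gamma_0(n)$ reduction via the injection of Proposition~\ref{prop:square} (or, as you nicely observe, via $2$ being invertible in the tame $\Gamma_0(n)$ case) also matches. One small point of care: Proposition~\ref{prop:Hasse} only produces the lift over $\Z_{(2)}[\zeta_n]$, not over $\Z_{(2)}$ itself, so the paper first base changes to $C=\Z_{(2)}[\zeta_n]$ (which is faithfully flat over $\Z_{(2)}$) before running your diagram chase---you should make that step explicit rather than claiming the lift directly over $\Z_{(2)}$.
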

\begin{proof}
 It is enough to show $g^*\eta = 0$ for $\MMb(\Gamma)_R = \MMb_1(n)_{(2)}$ and $n$ odd. Indeed, consider the composite
 \[H^1(\MMb_{ell,\Z_{(2)}};\omega) \to H^1(\MMb_0(n)_{\Z_{(2)}}; g^*\omega) \to H^1(\MMb_1(n)_{\Z_{(2)}};g^*\omega) \to H^1(\MMb(n)_{\Z_{(2)}};g^*\omega).
 \]
 Now we only have to use that the second map is actually an injection (namely the inclusion of the $(\Z/n)^\times$-fixed points by the proof of Proposition \ref{prop:basicprops}). 
 
It is enough to show $g^*\eta = 0$ after base change to $C= \Z_{(2)}[\zeta]$ for a $\zeta$ an $n$-th root of unity. Consider the commutative square
 \[
  \xymatrix{
  0 = H^0(\MMb_{ell,C}; \omega) \ar[r]\ar[d]^{g^*} & H^0(\MMb_{ell,C}; \omega/2) \ar[r]^-{\partial} \ar[d]^{g^*} & H^1(\MMb_{ell,C}; \omega) \ar[d]^{g^*} \\
   H^0(\MMb_1(n)_C; g^*\omega) \ar[r] & H^0(\MMb_1(n)_C; g^*\omega/2) \ar[r]^-{\partial_n}  & H^1(\MMb_1(n)_C; g^*\omega)
  }
 \]
 As $\eta = \partial(A_2)$ is still true over $C$, it is enough to show that $\partial_ng^*A_2 = 0$, i.e.\ that $g^*A_2$ lifts to $H^0(\MMb_1(n)_C; g^*\omega)$. This is exactly the content of Proposition \ref{prop:Hasse}.
\end{proof}

We end this section with some of the basic structure of $H^*(\MMb_0(n)_R;g^*\omega^{\tensor *})$ if $\Gamma_0(n)$ is not tame for $R$. For this, we will need a well-known lemma about transfers. 

\begin{lemma}\label{lem:transfer}
 Let $\XX$ be an algebraic stack and $\Ac$ a sheaf of $\OO_{\XX}$-algebras whose underlying $\OO_{\XX}$-module is a vector bundle of rank $n$. Then there exists an $\OO_{\XX}$-linear transfer map $\Tr_{\Ac}\colon\Ac \to \OO_{\XX}$ with the following properties:
 \begin{itemize}
  \item[(a)] The morphism $\Tr_{\Ac}$ is natural in $\Ac$ and also in $\XX$ in the following sense: If $g\colon \UU \to \XX$ is a morphism, $g^*\Tr_{\Ac}$ corresponds to $\Tr_{g^*\Ac}$ under the isomorphism $g^*\OO_{\XX} \xrightarrow{\cong} \OO_{\UU}$.
  \item[(b)] Precomposing $\Tr_{\Ac}$ with the unit map $e\colon \OO_{\XX}\to \Ac$ yields the multiplication-by-$n$-map on $\OO_{\XX}$.
  \item[(c)] If $f\colon \YY \to \XX$ is a $G$-Galois cover for a finite group $G$, the transfer $\Tr\colon f_*\OO_{\YY} \to \OO_{\XX}$ corresponds under the isomorphism $\OO_{\XX} \xrightarrow{\cong} (f_*\OO_{\YY})^G$ to the sum $\sum_{g\in G}l_g$, where $l_g$ is the action of $g$ on $f_*\OO_{\YY}$. 
 \end{itemize} 
\end{lemma}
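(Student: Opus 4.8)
The plan is to construct $\Tr_{\Ac}$ locally and then glue. First I would recall that for a commutative ring $A$ which is a locally free $B$-module of rank $n$, multiplication by an element $a\in A$ is a $B$-linear endomorphism of $A$, and one sets $\Tr_{A/B}(a)$ to be its trace. This is $B$-linear in $a$, and for the Zariski or \'etale local model $\XX = \Spec B$, $\Ac = \widetilde{A}$, it gives a $B$-linear map $\Tr_{A/B}\colon A \to B$. I would then note that the formation of this trace commutes with arbitrary base change $B \to B'$: indeed the characteristic polynomial of multiplication-by-$a$ is insensitive to base change because $A\tensor_B B'$ is free over $B'$ with the same basis (after passing to a cover trivialising the vector bundle), and trace is a coefficient of that polynomial. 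This compatibility is precisely what is needed to glue the locally defined traces into a well-defined morphism of $\OO_{\XX}$-modules $\Tr_{\Ac}\colon \Ac \to \OO_{\XX}$ on any algebraic stack $\XX$: choose a smooth (or \'etale) presentation $U \to \XX$ on which $\Ac$ becomes free, define $\Tr$ there, check via the base-change compatibility that the two pullbacks to $U\times_{\XX} U$ agree, and invoke descent for morphisms of quasi-coherent sheaves.

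Property (a) then follows almost by construction: naturality in $\Ac$ is clear from the definition of the trace of an endomorphism, and naturality in $\XX$ is exactly the base-change statement proved above, now phrased for an arbitrary morphism $g\colon \UU \to \XX$ rather than just a cover. For property (b), I would compute $\Tr_{\Ac}\circ e$ locally: the image of $1\in B$ under $e$ is the unit of $A$, and multiplication by $1$ is the identity endomorphism of the rank-$n$ free module $A$, whose trace is $n$; hence $\Tr_{\Ac}\circ e = n\cdot\id_{\OO_{\XX}}$. For property (c), in the local model a $G$-Galois cover corresponds to a faithfully flat $B$-algebra $A$ with $A^G = B$ and $A$ a locally free $B$-module of rank $|G|$; after passing to a further cover that splits the torsor, $A \cong \prod_{g\in G} B$ with $G$ permuting the factors, and multiplication by $a = (a_g)_g$ acts as a diagonal matrix, so its trace is $\sum_{g\in G} a_g$, which is visibly the map $\sum_{g\in G} l_g$ under the identification $B \cong A^G$. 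One checks this identification is compatible with the gluing, so the global statement follows.

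The main obstacle I expect is bookkeeping around descent: one must be careful that the trace is defined independently of the chosen trivialisation (this is automatic, since trace of an endomorphism does not depend on a basis) and that the cocycle condition on $U\times_{\XX}U\times_{\XX}U$ is satisfied (this follows formally once base-change compatibility is in hand). A secondary subtlety is that $\XX$ is only assumed to be an algebraic stack, so $\Ac$ is locally free only in the smooth-local or fppf-local sense; but since $\Ac$ is assumed to be a vector bundle of rank $n$ it is in particular Zariski-locally free on any scheme presentation, so the classical affine construction applies verbatim on an atlas and descends. None of this is deep; the content is simply that the trace form is functorial, and everything else is a routine gluing argument.
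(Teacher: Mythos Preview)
Your proposal is correct and the underlying idea---define $\Tr_{\Ac}(a)$ as the trace of the $\OO_{\XX}$-linear endomorphism ``multiplication by $a$''---is exactly the one used in the paper. The verification of (b) and (c) is also essentially identical.

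The one difference worth noting is in the implementation. You construct $\Tr_{\Ac}$ locally on an atlas and then invoke descent, which is why you flag the cocycle bookkeeping as the main obstacle. The paper sidesteps this entirely by writing down a global, intrinsic formula: it defines $\Tr_{\Ac}$ as the composite
\[
\Ac \longrightarrow \mathcal{H}om_{\OO_{\XX}}(\Ac,\Ac) \xleftarrow{\;\cong\;} \mathcal{H}om_{\OO_{\XX}}(\Ac,\OO_{\XX}) \otimes_{\OO_{\XX}} \Ac \xrightarrow{\;\ev\;} \OO_{\XX},
\]
where the first map sends $a$ to left multiplication by $a$ and the middle isomorphism holds because $\Ac$ is locally free of finite rank. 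Each of these three maps is a morphism of sheaves on $\XX$ defined without reference to a cover, so no gluing is needed; naturality in $\XX$ (property (a)) is then immediate rather than something to be checked. Your approach is perfectly valid, but the paper's formulation is cleaner precisely because it eliminates the descent step you were worried about.
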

\begin{proof}
 We construct $\Tr_{\Ac}$ as the composite
 $$\Ac \to \mathcal{H}om_{\OO_{\XX}}(\Ac,\Ac) \xleftarrow{\cong} \mathcal{H}om_{\OO_{\XX}}(\Ac,\OO_{\XX}) \tensor_{\OO_{\XX}} \Ac \xrightarrow{\ev} \OO_{\XX},$$
 where the middle map is an isomorphism because it is one locally. The map $\Tr_{\Ac}$ is natural by construction. If $\XX = \Spec R$ and $\Ac(\XX)$ is isomorphic to $R^n$ as an $R$-module, the map $\mathcal{H}om_{\OO_{\XX}}(\Ac,\Ac)  \to \OO_{\XX}$ corresponds to the trace map $\Hom_R(R^n, R^n) \to R$. As the the trace of the unit matrix is $n$, the composite $\Tr_{\Ac} e$ equals $n$ locally and hence also globally, which implies (b). 
 
 In the case $\Ac = f_*\OO_{\YY}$ with $f\colon \YY \to \XX$ a $G$-Galois cover, $f_*\OO_{\YY}$ is locally as an algebra isomorphic to $\prod_G \OO_{\XX}$ and the transfer $\prod_G \OO_{\XX} \to \OO_{\XX}$ is the summing map whose postcomposition with the isomorphism $\OO_{\XX} \to \left(\prod_G \OO_{\XX}\right)^G$ equals $\sum_{g\in G}l_g$. As this is true locally, it must be true globally as well and thus (c) is true. 
\end{proof}

\begin{prop}\label{prop:torsion}
 Let $R$ be a $\Z[\frac1n]$-algebra. For every $i>0$ and $m\geq 2$, the groups $H^i(\MMb_0(n)_R;g^*\omega^{\tensor m})$  are $3$-torsion if $R$ is $3$-local and $8$-torsion if $R$ is $2$-local. 
 
 Moreover if $R$ is $3$-local, all $H^i(\MMb_0(n)_R;g^*\omega^{\tensor m})$ for $i>0$ are $c_4$-power torsion and all $H^i(\MM_0(n)_R;g^*\omega^{\tensor m})$ for $i>0$ are even $c_4$- and $c_6$-torsion for $c_4, c_6 \in M_*^{\Z}$.
\end{prop}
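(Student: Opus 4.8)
\emph{Plan.} The idea is to push everything down to $\MMb_{ell,R}$ and exploit that over a $2$- or $3$-local ring the stack $\MMb_{ell,R}$ is tame away from a single point. Write $\VV=g_*\OO_{\MMb_0(n)_R}$. Since $g$ is finite, hence affine (Proposition~\ref{prop:basicprops}(1)), the projection formula gives $H^i(\MMb_0(n)_R;g^*\omega^{\tensor m})\cong H^i(\MMb_{ell,R};\omega^{\tensor m}\tensor\VV)$ for all $i$ and $m$, so it suffices to bound the latter groups.

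For the first claim I would use the coarse moduli map $\rho\colon\MMb_{ell,R}\to\Prj^1_R$. When $R$ is $3$-local (resp.\ $2$-local) the only geometric point of $\MMb_{ell,R}$ whose automorphism group has order divisible by $3$ (resp.\ $2$) is the supersingular point $s$ over $j=0=1728$ in the characteristic-$3$ (resp.\ characteristic-$2$) fibre; its automorphism group has order $12$ with Sylow $3$-subgroup $C_3$ (resp.\ order $24$ with Sylow $2$-subgroup $Q_8$), while everything else --- the cusp, with group $C_2$, and the characteristic-$0$ point $j=0$, which is automatically tame --- has automorphism group of order prime to $3$ (resp.\ $2$). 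Hence $\rho_*$ is exact away from $s$, so for $b\ge1$ the sheaf $R^b\rho_*(\omega^{\tensor m}\tensor\VV)$ is a skyscraper at $\rho(s)$ whose stalk is a group cohomology $H^b(\Aut(s),-)$ of the completed stalk; being $3$-local (resp.\ $2$-local) it is killed by the order of a Sylow $3$-subgroup (resp.\ $2$-subgroup) of $\Aut(s)$, i.e.\ by $3$ (resp.\ $8$). In the Leray spectral sequence $H^a(\Prj^1_R;R^b\rho_*(\omega^{\tensor m}\tensor\VV))\Rightarrow H^{a+b}(\MMb_{ell,R};\omega^{\tensor m}\tensor\VV)$ the only contributions in total degree $i\ge1$ are these $3$- (resp.\ $8$-)torsion skyscraper terms together with $H^i(\Prj^1_R;\rho_*(\omega^{\tensor m}\tensor\VV))$, which vanishes for $i\ge2$, and also for $i=1$ and $m\ge2$: here $\rho_*(\omega^{\tensor m}\tensor\VV)$ is the pushforward along the finite map $X_0(n)_R\to\Prj^1_R$ of the invertible sheaf $\LL_m$ on the coarse modular curve $X_0(n)_R$ obtained by pushing $g^*\omega^{\tensor m}$ forward along $\MMb_0(n)_R\to X_0(n)_R$, and $H^1(X_0(n)_R;\LL_m)$ is Serre--Grothendieck dual (after reduction to a field) to the space of weight-$(2-m)$ cusp forms for $\Gamma_0(n)$, which is zero for $m\ge2$. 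Thus $H^i(\MMb_0(n)_R;g^*\omega^{\tensor m})$ is $3$- (resp.\ $8$-)torsion for $i\ge1$ and $m\ge2$, and a general $3$- or $2$-local $R$ reduces to $R=\Z_{(3)}$ or $\Z_{(2)}$ by derived base change.

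For the ``moreover'' part let $R$ be $3$-local. The key point is that over $\Z_{(3)}$ both $c_4$ and $c_6$ vanish at the supersingular point $s$: for $y^2=x^3-x$ one has $c_4=48\equiv0$ and $c_6=0$ in characteristic $3$. Since $V(c_4)$ is the locus $j=0$ and $V(c_6)$ the locus $j=1728$, and the only points of $\MM_{ell,R}$ or $\MMb_{ell,R}$ with extra automorphisms in positive residue characteristic lie on $s$ (the $j=1728$ curve carrying only a $\mu_4$), the open substacks $\MM_{ell,R}\setminus V(c_4)$, $\MM_{ell,R}\setminus V(c_6)$ and $\MMb_{ell,R}\setminus V(c_4)$ are tame Deligne--Mumford stacks whose coarse spaces ($\Spec R[j,j^{-1}]$, $\Spec R[j,(j-1728)^{-1}]$ and $\A^1_R$) are affine, so they --- and hence their finite base changes along $g$ --- are cohomologically affine. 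Now $c_4\in M_4^{\Z}$ pulls back to a section of $g^*\omega^{\tensor 4}$ with zero locus $g^{-1}V(c_4)$, whence
$$\Bigl(\bigoplus_m H^i(\MMb_0(n)_R;g^*\omega^{\tensor m})\Bigr)[c_4^{-1}]\;\cong\;\bigoplus_m H^i\bigl(\MMb_0(n)_R\times_{\MMb_{ell,R}}(\MMb_{ell,R}\setminus V(c_4));g^*\omega^{\tensor m}\bigr)\;=\;0$$
for $i\ge1$, and likewise for $\MM_0(n)_R$ with $c_4$ and, using $\MM_{ell,R}\setminus V(c_6)$, with $c_6$. This is exactly the asserted $c_4$- and $c_6$-power torsion.

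The step I expect to be the real work is the vanishing $H^1(X_0(n)_R;\LL_m)=0$ for $m\ge2$: one must check that the pushforward of $g^*\omega^{\tensor m}$ to the coarse curve is an honest invertible sheaf even at the (possibly codimension-two) stacky points --- using that a reflexive rank-one sheaf on a regular curve is invertible and that $X_0(n)_R$ is regular over a regular $R$ --- identify $\LL_m^\vee$ twisted by the dualizing sheaf with the sheaf of weight-$(2-m)$ cusp forms via $\Omega^1_{\MMb_0(n)_R}\cong g^*\omega^{\tensor2}\tensor\OO(-\cusps)$ from Corollary~\ref{cor:Omegaomega}, and carry out the attendant base-change bookkeeping. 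The remaining ingredients --- the orders and Sylow subgroups of $\Aut(s)$, the exactness of $\rho_*$ off $s$, and the cohomological affineness of the localizations --- are routine.
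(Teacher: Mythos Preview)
Your approach to the first claim differs substantially from the paper's. The paper picks an auxiliary prime $p$ coprime to $6n$ with $p^2-1$ exactly divisible by $3$ (resp.\ by $8$), shows that the moduli stack $\MMb(\Gamma_1(p;n))_R$ has vanishing $H^i(\,\cdot\,;g^*\omega^{\tensor m})$ for $i\geq 1$ and $m\geq 2$ (it is finite over the tame stack $\MMb_1(p)_R$, and one argues via the $(\Z/n)^\times$-descent spectral sequence as in the proof of Proposition~\ref{prop:coh}), and then uses the transfer along the finite flat degree-$(p^2-1)$ contraction $\MMb(\Gamma_1(p;n))_R\to\MMb_0(n)_R'$ together with Lemma~\ref{lem:transfer} to conclude that multiplication by $p^2-1$ kills $H^i(\MMb_0(n)_R;g^*\omega^{\tensor m})$. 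No coarse-moduli analysis enters.

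Your Leray argument is plausible in outline, but the step you flag is more delicate than the sketch suggests. For $m$ odd the sheaf $\LL_m$ is generically zero (the generic inertia $C_2$ acts on $\omega^{\tensor m}$ by $-1$), so it is certainly not invertible. For $m$ even the proposed identification of $\LL_m^\vee\tensor\Omega^1_{X_0(n)}$ with a sheaf of weight-$(2-m)$ cusp forms is not immediate: the coarse-moduli pushforward does not commute with duals or tensor products, and the pushforward of $\Omega^1_{\MMb_0(n)_R}$ differs from $\Omega^1_{X_0(n)_R}$ precisely at the elliptic and supersingular points. One really needs a degree computation in the style of the classical dimension formula for $M_m(\Gamma_0(n))$ to conclude $H^1(X_0(n)_k;\LL_m)=0$ for all even $m\geq 2$; this is doable but not the routine bookkeeping you describe.

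For the $c_4$-power torsion on $\MMb_0(n)$ your argument matches the paper's. However, for $\MM_0(n)$ the proposition asserts that the higher cohomology is $c_4$- and $c_6$-\emph{torsion}, not merely $c_4$- and $c_6$-\emph{power} torsion, and your localization argument gives only the latter. (The distinction matters: it is used in the proof of Lemma~\ref{lem:iso}.) The paper obtains the sharper statement by an explicit transfer from the $S_3$-Galois cover $\MM^1(2)_R\to\MM^1_{ell,R}$: using Stojanoska's description of the $S_3$-action on $\widetilde{M}_*(\Gamma(2);R)\cong R[x_2,y_2,\Delta^{-1}]$ one checks that $c_4$ and $c_6$ themselves lie in the image of the transfer, and since the base change of this cover along $\MM^1_0(n)_R\to\MM^1_{ell,R}$ has affine total space, any class of the form $c_4b$ or $c_6b$ with $b\in H^{>0}(\MM^1_0(n)_R;\OO)$ is a transfer from an affine scheme and hence vanishes.
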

\begin{proof}
Let $R$ be a $\Z[\frac1n]$-algebra that is $2$- or $3$-local. By Proposition \ref{prop:square}, 
$$H^i(\MMb_0(n)_R;g^*\omega^{\tensor m}) \cong H^i(\MMb_0(n)_R';g^*\omega^{\tensor m})$$
 and we will work throughout with the latter.

 Let $p$ be a prime that is relatively prime to $6n$ and such that $p^2-1$ is neither divisible by $9$ nor by $16$; for example, we can take $p\equiv 2 \mod 9$ and $p\equiv 3 \mod 16$. Conrad constructs in \cite{Con07} a moduli stack $\MMb(\Gamma_1(p; n))$ classifying generalized elliptic curves $E$ with both a point $P$ of exact order $p$ and a cyclic subgroup $C$ of order $n$ in the smooth locus such that the subgroup generated by $P$ and $C$ meets all irreducible components of all geometric fibers of $E$. By \cite[Lemma 4.2.3]{Con07}, there is a finite ``contraction map'' $\MMb(\Gamma_1(p;n))_R \to \MMb_1(p)_R$. As the target has cohomological dimension $1$ by Proposition \ref{prop:basicprops}, the same is true for the source as well. Because $H^1(\MMb_1(pn)_R; g^*\omega^{\tensor m}) = 0$ for $m\geq 2$, the descent spectral sequence for the Galois cover $\MMb_1(pn)_R \to \MMb(\Gamma_1(p;n))_R$ collapses to isomorphisms  
 $$H^i(\MMb(\Gamma_1(p;n))_R; g^*\omega^{\tensor m}) \cong H^i((\Z/n)^\times; H^0(\MMb_1(pn)_R; g^*\omega^{\tensor m}))$$ 
 for $m\geq 2$. The same argument as in the second part of Proposition \ref{prop:coh} shows that these groups vanish for $i=1$ as they do it for $i>1$. 
 
 We claim that the contraction map $c\colon \MMb(\Gamma_1(p;n))_R \to \MMb_0(n)_R'$ (constructed using \cite[IV.1.2]{D-R73}) is finite and flat. Indeed, the contraction $\MMb_1(pn)_R \to \MMb_1(n)_R$ is finite as both are finite over $\MMb_{ell,R}$. Considering the diagram, 
\[
\xymatrix{
\MMb_1(pn)_R \ar[r]\ar[d] & \MMb(\Gamma_1(p;n))_R \ar[d]^c \\
\MMb_1(n)_R \ar[r] & \MMb_0(n)_R',
}
\] 
we see that $\MMb_1(pn)_R \to \MMb_0(n)_R'$ is finite and thus by Chevalley's theorem \cite[Corollary 12.40]{G-W10} also $c$ is finite; here we use that $\MMb(\Gamma_1(p;n))_R$ is a scheme. By \cite[Theorem 4.1.1]{Con07}, again both source and target of $c$ are regular, and we argue as in the first part of Proposition \ref{prop:basicprops} to deduce that $c$ is flat.
 
 On the part classifying smooth elliptic curves, one easily counts that $c$ has degree $p^2-1$. Thus, for every vector bundle $\FF$ with an $\OO_{\MMb_0(n)}$-algebra structure on $\MMb_0(n)_R'$ the composite
 \[H^i(\MMb_0(n)_R'; \FF) \to H^i(\MMb_0(n)_R'; c_*c^*\FF)\cong H^i(\MMb(\Gamma_1(p;n))_R; c^*\FF) \xrightarrow{\Tr_{c_*c^*\FF}} H^i(\MMb_0(n)_R'; \FF)\]
 is multiplication by $p^2-1$ and the first result follows. 
 
 Let $R$ be $3$-local. The nonvanishing locus $D(c_4)$ of $c_4$ on $\MMb_{ell, R}$ coincides with the nonvanishing locus of the $j$-invariant. Thus, the coarse moduli space of $D(c_4)$ is the complement of $\{0\}$ in $\mathbb{P}^1_R$, which is isomorphic to $\A^1_R$. Moreover, $D(c_4)$ is tame by \cite[Theorem III.10.1]{Sil09}. Thus, for every quasi-coherent sheaf $\FF$ on $D(c_4)$ the cohomology groups $H^i(D(c_4);\FF)$ vanish for $i>0$. As the non-vanishing locus of $g^*c_4$ on $\MMb_0(n)_R$ is finite over $D(c_4)$, we see
 $$H^i(D(g^*c_4); g^*\omega^{\tensor m}) \cong H^i(D(c_4); g_*g^*\omega^{\tensor m}) = 0$$
 for $i>0$. 
 But $H^i(D(g^*c_4); \omega^{\tensor *}) \cong H^i(\MMb_0(n)_R; \omega^{\tensor *})[c_4^{-1}]$ and we see that for $i>0$ the cohomology group $H^i(\MMb_0(n)_R;g^*\omega^{\tensor m})$ is $c_4$-power torsion. 
 
 It follows from Example \ref{exa:moduli} that $\widetilde{M}_*(\Gamma(2);R) = H^0(\MM(2)_R;\omega^{\tensor *}) \cong R[x_2,y_2, \Delta^{-1}]$. The action of $S_3 \cong GL_2(\Z/2)$ on $\MM(2)$ induces an $S_3$-action on this ring of modular forms and Stojanoska computed in \cite[Lemma 7.3]{Sto12} that there is a transposition $\tau\in S_3$ interchanging $x_2$ and $y_2$ and a $3$-cycle $\sigma$ with $\sigma(x_2) =y_2-x_2$ and $\sigma(y_2) = -x_2$. Using Lemma \ref{lem:transfer}, one easily calculates that the transfer
 $$\Tr_f = \sum_{g\in S_3}l_g\colon \widetilde{M}_*(\Gamma(2);R) \cong H^0(\MM^1_{ell,R};f_*\OO_{\MM^1(2)_R}) \to H^0(\MM^1_{ell,R};\OO_{\MM^1_{ell,R}}) \cong \widetilde{M}_*^R$$
 sends $4x_2^2$ to $c_4$ and $-32x_2^2y_2$ to $c_6$. Here, $\MM^1(2)$ classifies elliptic curves with level-$2$ structure and a choice of invariant differential and $f\colon \MM^1(2)_R \to \MM^1_{ell,R}$ is the projection. Consider the pullback square
 \[
  \xymatrix{\PP^1 \ar[d]^{f'}\ar[r]^{g'} & \MM^1(2)_R \ar[d]^f \\
  \MM^1_0(n)_R\ar[r]^g &\MM^1_{ell,R}
  }
 \]
 By naturality, $g^*\Tr_f(z) = \Tr_{f'}((g')^*z)$ for $z\in H^0(\MM^1(2)_R, \OO_{\MM^1(2)_R})$. Moreover, 
 $$\Tr_{f'}(a)\cdot b = \Tr_{f'}(ab)$$
  for $a \in H^0(\PP^1; \OO_{\PP^1})$ and $b \in H^*(\MM^1_0(n);\OO_{\MM^1_0(n)})$ by the $\OO$-linearity of the transfer. This implies that for any such $b$, the classes $c_4b$ and $c_6b$ are in the image of the transfer from $\PP^1$. But as $g$ is finite and $\MM^1(2)$ is affine, $\PP^1$ is affine as well and thus $c_4b$ and $c_6b$ have to vanish if the cohomological degree of $b$ is at least $1$.  
\end{proof}

\section{The existence of decompositions}\label{sec:decexistence}
Throughout this section let $R$ be a $\Z[\frac1n]$-algebra in which $2$ or $3$ is invertible. Set 
\[
 \MMb' = \MMb_R' = \begin{cases}
                    \MMb_1(3)_R & \text{ if }\frac13\in R, \text{ but }\frac12\notin R\\
                    \MMb_1(2)_R & \text{ if }\frac12\in R, \text{ but }\frac13\notin R\\
                    \MMb_{ell,R} & \text{ if }\frac16\in R.
                   \end{cases}
\]
Denote the projection map $\MMb' \to \MMb_{ell,R}$ by $f$ and denote by $\MM'$ the preimage of $\MM_{ell,R}$. Sometimes we also use the notations $\MMb_R'$ and $f_R$ if we want to stress the dependence on $R$. Furthermore let $\Gamma \in \{\Gamma_1(n), \Gamma(n), \Gamma_0(n)\}$ throughout this section be tame with respect to $R$ and denote by $g$ the projection map $\MMb(\Gamma)_R \to \MMb_{ell,R}$, where we assume $R$ to contain a primitive $n$-th root of unity if $\Gamma = \Gamma(n)$. Note that $g_*\OO_{\MMb(\Gamma)_R}$ is a vector bundle as $g$ is finite and flat by \ref{prop:basicprops}. We want to prove the following theorem. 

\begin{thm}
 Let $R$ be a field, a local ring or a $\Z[\frac16]$-algebra and assume $R$ to be normal and noetherian. The vector bundle $g_*\OO_{\MMb(\Gamma)_R}$ splits into vector bundles of the form $f_*\OO_{\MMb'}\tensor \omega^{\tensor k}$ if and only if $H^1(\MMb(\Gamma)_R;g^*\omega)$ is a free $R$-module. The latter conditions holds if $H^1(\MMb(\Gamma); g^*\omega)$ has no $l$-torsion for $l$ not invertible or $0$ in $R$.  
\end{thm}

We will first prove these over a field. Afterward, we will use our results on vector bundles on weighted projective lines to deduce it also over rings under the given conditions.

\subsection{The vector bundle $f_*\OO_{\MMb_R'}$}
The aim of this subsection is to show that $f_*\OO_{\MMb_R'}$ is indecomposable, but decomposes after pullback to $\MMb(\Gamma)_R$; here we use the notation from the beginning of this section. We will need the following standard base change lemma and will often use it implicitly.
\begin{lemma}\label{lem:pushpull}
 Let 
 \[
  \xymatrix{X' \ar[r]^{v}\ar[d]^{q} & X \ar[d]^p \\
  Y' \ar[r]^u & Y }
 \]
 be a cartesian diagram of Deligne--Mumford stacks, where $p$ is representable and affine, and let $\FF$ be a quasi-coherent sheaf on $X$. Then the natural map 
 $$u^*p_*\FF \to q_*v^*\FF$$
 is an isomorphism.
\end{lemma}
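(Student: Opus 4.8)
The statement to prove is Lemma \ref{lem:pushpull}: for a cartesian diagram of Deligne--Mumford stacks with $p$ representable and affine, the natural base-change map $u^*p_*\FF \to q_*v^*\FF$ is an isomorphism for quasi-coherent $\FF$.

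The plan is to reduce the problem to a statement about affine schemes, where it becomes the elementary fact that tensoring commutes with pushforward along maps of affines. First I would pick a smooth (or \'etale) surjection $T \to Y$ from a scheme $T$, and then a smooth surjection $T' \to Y'\times_Y T$ from a scheme $T'$; since $p$ is representable and affine, the pullbacks $X\times_Y T$ and $X'\times_{Y'} T'$ are (affine) schemes over $T$ and $T'$ respectively. Because quasi-coherent sheaves, their pushforwards along representable affine morphisms, and the formation of the base-change map are all defined by descent along smooth covers, it suffices to check that the base-change map becomes an isomorphism after pulling back along $T' \to Y'$. This reduces us to the cartesian square of schemes
\[
\xymatrix{X\times_Y T' \ar[r]\ar[d] & X\times_Y T \ar[d] \\ T' \ar[r] & T}
\]
with the vertical maps affine.

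Next I would reduce further to the case where $T$ and $T'$ are affine, by working locally on $T$ and then on $T'$: both sides of the putative isomorphism are quasi-coherent and their formation commutes with restriction to opens, and being an isomorphism is local on the target $T'$. So we are reduced to: a cartesian square of affine schemes $\Spec B' \to \Spec B$, $\Spec A \to \Spec B$ with $\Spec A\times_B B' = \Spec(A\tensor_B B')$, and $\FF = \widetilde{M}$ for an $A$-module $M$; the claim is that the natural map $B'\tensor_B M \to A\tensor_B B' \tensor_A M$ is an isomorphism of $B'$-modules, which is immediate since both are canonically $M\tensor_B B'$. (One should note that affineness of $p$ is what guarantees $p_*\FF$ is quasi-coherent and that the relevant pushforwards are computed by the module-level statement with no higher-cohomology corrections; this is where the hypothesis is used.)

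The main obstacle — or rather the only point requiring care — is the bookkeeping of the descent reduction: one must make sure the covers are chosen compatibly (i.e.\ $T' \to Y'$ together with $T' \to Y'\times_Y T \to T$ so that the two squares stack up into one cartesian square) and that the base-change map is genuinely the one induced by functoriality, so that checking it on the cover suffices. None of this is deep; it is the standard argument that cohomology and base change in degree zero holds for affine morphisms, promoted to stacks via smooth descent. I would cite \cite{STACKS} for the scheme-level statement and for the descent formalism rather than spelling it out.
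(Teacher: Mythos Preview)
Your argument is correct and follows essentially the same route as the paper: reduce via a smooth (in the paper, \'etale) cover to the case where $Y$ and $Y'$ are affine schemes, whence $X$ and $X'$ are affine because $p$ is, and then the statement becomes the tautology $M\tensor_A A' \cong M\tensor_B (B\tensor_A A')$. The paper streamlines the descent bookkeeping by first observing that the lemma is automatic when $u$ itself is \'etale (both sides compute $\FF(U\times_Y X)$ on an \'etale $U\to Y'$), so one may pass directly to an \'etale-local situation; your two-step choice of $T$ and then $T'$ achieves the same reduction with slightly more explicit bookkeeping.
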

\begin{proof}
Note first that the lemma is true without any assumptions on $p$ if $u$ is \'etale because on $U\to Y'$ \'etale, both source and target can be identified with $\FF(U\times_Y X)$. Thus we can work \'etale locally and assume that $Y = \Spec A$ and $Y' = \Spec A'$ are affine schemes and hence also $X = \Spec B$ and $X' = \Spec B\tensor_{A}A'$. If $\FF$ corresponds to the $B$-module $M$, our assertion just becomes
\[M\tensor_A A' \cong M\tensor_B (B\tensor_A A').\qedhere\]
\end{proof}

\begin{prop}\label{lem:indec}
 The vector bundle $(f_R)_*\OO_{\MM'_R}$ is indecomposable. 
\end{prop}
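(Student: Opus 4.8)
The plan is to localize the question at the supersingular point of $\MM_{ell}$ and translate indecomposability into a statement about permutation modules in modular representation theory.

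First I would dispose of the easy case: if $\tfrac16\in R$ then $\MMb'=\MMb_{ell,R}$, the map $f$ is the identity, and $(f_R)_*\OO_{\MM'_R}=\OO_{\MM_{ell,R}}$ is a line bundle, hence indecomposable. So I may assume exactly one of $2,3$ is invertible; I treat the case $\tfrac12\in R$ with $3$ not invertible in $R$, so that $\MM'=\MM_1(2)_R$, the case $\tfrac13\in R$ with $2$ not invertible (with $\MM_1(2)$, $3$, $S_3$ replaced by $\MM_1(3)$, $2$, $SL_2(\F_3)$) being entirely parallel. Since $f$ is finite flat, $(f_R)_*$ commutes with base change; as $3$ is not invertible in $R$ there is a ring map $R\to K$ with $K=\overline{\F}_3$, and a nontrivial decomposition of $(f_R)_*\OO_{\MM'_R}$ over a connected base restricts to a nontrivial decomposition of $(f_K)_*\OO_{\MM_1(2)_K}$ (a nonzero summand of a vector bundle on the connected stack $\MM_{ell,K}$ has positive rank everywhere, so stays nonzero after base change). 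Hence it suffices to prove the proposition for $K=\overline{\F}_3$, resp.\ $K=\overline{\F}_2$ in the parallel case.

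Now I would use the local structure of $\MM_{ell,K}$ at the supersingular curve. Let $E_0/K$ be the supersingular elliptic curve ($j=0$) and $\Gamma=\Aut(E_0)$, of order $12$ for $K=\overline{\F}_3$ and of order $24$ for $K=\overline{\F}_2$; the completed local ring of $\MM_{ell,K}$ at $[E_0]$ is $K[[t]]$ with its $\Gamma$-action, and the corresponding formal neighbourhood is the quotient stack $\Spec K[[t]]/\Gamma$. Since $\MM_1(n)\to\MM_{ell}$ is finite \'etale ($n$ invertible) and $K[[t]]$ is Henselian local, the pullback of $\MM_1(n)_K$ to $\Spec K[[t]]$ is a disjoint union of copies of $\Spec K[[t]]$ indexed by the points of exact order $n$ of $E_0$, permuted by $\Gamma$ via its action on $E_0[n]$. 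Thus the restriction of $(f_K)_*\OO_{\MM_1(n)_K}$ to this neighbourhood is the $\Gamma$-equivariant module $K[[t]]\tensor_K V$, where $V=K[E_0[n]^{\mathrm{exact}}]$ is the permutation $K[\Gamma]$-module on the set of exact-order-$n$ points of $E_0$. Because idempotents in $\End_\Gamma(K[[t]]\tensor_K V)$ reduce modulo $t$ to idempotents in $\End_{K[\Gamma]}(V)$ and nilpotent modifications lift over the Henselian ring $K[[t]]$, the equivariant module $K[[t]]\tensor_K V$ is indecomposable once $V$ is an indecomposable $K[\Gamma]$-module; and then so is $(f_K)_*\OO_{\MM_1(n)_K}$, since any nontrivial global decomposition would restrict to a nontrivial equivariant one here. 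So the statement reduces to the indecomposability of $V$.

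The crux — and the step I expect to be the real content — is this last indecomposability. For $K=\overline{\F}_3$, $n=2$: the three nonzero $2$-torsion points of $E_0$ are permuted by $\Gamma$ through the surjection $\Gamma\twoheadrightarrow S_3$, so $\Gamma$-submodules of $V$ are exactly $S_3$-submodules, and $V=\Ind_{C_2}^{S_3}K$; one computes $\End_{S_3}(V)\cong K[T]/(T^2-T-2)$, which over $\overline{\F}_3$ is $K[T]/(T+1)^2$, a local ring, so $V$ is indecomposable. For $K=\overline{\F}_2$, $n=3$: here $\Gamma\cong SL_2(\F_3)$ acts on $E_0[3]\cong\F_3^2$ through the standard representation, the eight nonzero vectors form the transitive $\Gamma$-set $\Gamma/C_3$ with $C_3$ a Sylow $3$-subgroup, and $V=\Ind_{C_3}^\Gamma K$. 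As $3$ is invertible in $K$, the trivial module is a summand of $K[C_3]$, hence $V$ is a projective $K[\Gamma]$-module of dimension $|\Gamma|/|C_3|=8$; since the normal Sylow $2$-subgroup $Q_8$ acts trivially on each of the three (one-dimensional) simple $K[\Gamma]$-modules and projectives are free over $K[Q_8]$, every indecomposable projective has dimension divisible by $8$, and as they sum to $\dim K[\Gamma]=24$ each has dimension exactly $8$; therefore the $8$-dimensional projective $V$ is indecomposable. The underlying reason the statement holds is that the degrees involved ($3$ for $\MM_1(2)$, $8$ for $\MM_1(3)$) are not invertible in characteristic $3$ and $2$ respectively, which is precisely the restriction built into the hypotheses; everything else in the argument is formal.
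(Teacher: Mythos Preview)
Your approach is correct and follows the same overall strategy as the paper: reduce to an algebraically closed field $k$ of characteristic $p$ ($p=3$ for $\MM_1(2)$, $p=2$ for $\MM_1(3)$), pull back to the supersingular point with automorphism group $\Gamma$, and show that the resulting permutation $k[\Gamma]$-module $V$ is indecomposable. The paper pulls back directly to the residual gerbe $\Spec k/\Gamma$ rather than to the formal neighbourhood $\Spec k[[t]]/\Gamma$, which bypasses your Henselian lifting step; but that step is fine (a decomposition $M=A\oplus B$ of $\Gamma$-equivariant $k[[t]]$-modules reduces mod the $\Gamma$-stable maximal ideal to a decomposition of $V$, and Nakayama finishes), so this is a minor variation.

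Where you and the paper genuinely differ is in the representation-theoretic endgame. You handle the two primes by separate ad hoc arguments: computing the Hecke algebra $\End_{S_3}(\Ind_{C_2}^{S_3}k)$ to be local in characteristic $3$, and counting dimensions of indecomposable projectives for $SL_2(\F_3)$ in characteristic $2$. The paper instead gives one uniform argument: restrict $V$ to a Sylow $p$-subgroup $P\subset\Gamma$ (namely $Q_8\subset SL_2(\F_3)$ for $p=2$, and $C_3\subset\Gamma$ for $p=3$), observe that $P$ acts freely and transitively on the set of level structures so that $V|_P\cong k[P]$ is the regular representation, and then use that $k[P]$ is indecomposable for any finite $p$-group $P$ in characteristic $p$ because $k[P]^P\cong k$ while every nonzero summand would contribute a nonzero fixed vector. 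This is shorter and treats both primes at once; your arguments are valid and even yield a bit more structure (e.g.\ that $V$ is projective in characteristic $2$), but the paper's route is the more economical one.
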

\begin{proof}
 If $\frac16\in R$, the result is clear. Assume $\frac12\notin R$ so that $\MM_R' = \MM_1(3)_R$ and let $R \to k$ be a morphism to an algebraically closed field of characteristic $2$. 
 Consider the elliptic curve $E\colon y^2 +y = x^3$ over $k$. This has, according to \cite{Sil09}, III.10.1, automorphism group $G$ of order $24$.  By \cite[2.7.2]{K-M85}, the morphism $G\to GL_2(\F_3)$ (given by the operation of $G$ on $E[3]$) is injective. Using elementary group theory, $GL_2(\F_3)$ has a unique subgroup of order $24$, namely $SL_2(\F_3)$; thus $G$ embeds onto $SL_2(\F_3)$. This induces a map $e\colon \Spec k / SL_2(\F_3) \to \MM_{ell}$. 
 
Pulling $(f_R)_*\OO_{\MM'_R}$ back along this map gives an $8$-dimensional $k$-vector space $V$ with $SL_2(\F_3)$-action; e.g.\ using Lemma \ref{lem:pushpull} this can be identified with the permutation representation defined by the action of $SL_2(\F_3)$ on the eight points of exact order $3$ in $\F_3^2$. The quaternion subgroup $Q\subset SL_2(\F_3)$ acts freely and transitively on these points; thus, $V$ restricted to $Q$ is isomorphic to the regular representation of $Q$. 
 
 The regular representation $K[P]$ of a finite $p$-group $P$ over a field $K$ of characteristic $p$ is always indecomposable. Indeed, $K[P]^P \cong K$, but every $K$-representation of $P$ has a 
nonzero $P$-fixed vector by \cite[Prop 26]{Ser77}. Thus every nontrivial decomposition of $K[P]$ would yield that $\dim K[P]^P \geq 2$. 

This means that $e^*(f_R)_*\OO_{\MM'_R}$ is indecomposable and hence $(f_R)_*\OO_{\MM'_R}$ as well if $\frac12 \notin R$.  
 If $\frac13\notin R$, we can either do an analogous argument or cite \cite[Cor 4.8]{Mei13}. 
\end{proof}

\begin{prop}\label{lem:splitting}
For $g\colon \MMb(\Gamma)_R \to \MMb_{ell,R}$, the vector bundle $g^*f_*\OO_{\MMb'_R}$ is a direct summand of a sum of line bundles of the form $g^*\omega^{\tensor n}$. If $\frac12\in R$, then $g^*f_*\OO_{\MMb'}$ is the sum of such line bundles itself.
\end{prop}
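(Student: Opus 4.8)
The plan is to reduce to a computation on the weighted projective stack $\MMb'_R$ itself, using that $\MMb'_R$ is one of the weighted projective lines $\PP_R(a,b)$ from Examples~\ref{exa:moduli} (namely $\PP(4,6)$, $\PP(2,4)$ or $\PP(1,3)$ according to which of $\frac16,\frac12,\frac13$ lies in $R$), and that on these stacks the pushforward $f_*\OO_{\MMb'}$ is the pushforward of the structure sheaf along a finite flat map to $\MMb_{ell,R}\simeq\PP_R(4,6)$. First I would observe that, since $g$ is representable and affine, base change (Lemma~\ref{lem:pushpull}) gives $g^*f_*\OO_{\MMb'_R}\cong (g')_*\OO_{\YY}$, where $\YY=\MMb'_R\times_{\MMb_{ell,R}}\MMb(\Gamma)_R$ and $g'\colon\YY\to\MMb(\Gamma)_R$ is the projection. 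Thus it suffices to exhibit $(g')_*\OO_{\YY}$ as a summand of a sum of $g^*\omega^{\tensor n}$'s, and the point is that this is visible already before pulling back along $g$: I would produce, on $\MMb_{ell,R}$ itself, a split surjection (or split injection) relating $f_*\OO_{\MMb'_R}$ and a sum of line bundles $\omega^{\tensor n}$, and then pull back along $g$.

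The key step is therefore the local claim on $\MMb_{ell,R}$. When $\frac16\in R$ we have $\MMb'_R=\MMb_{ell,R}$ and $f_*\OO_{\MMb'}=\OO$, so there is nothing to prove; the content is in the cases $\frac12\in R$ or $\frac13\in R$ (but not both). In the case $\frac12\in R$ the map $f\colon\MMb_1(2)_R\to\MMb_{ell,R}$ is a $\Sigma_3$-cover on the interior, and since $\frac12\in R$ we can split off a summand using the transfer of Lemma~\ref{lem:transfer}; more precisely I would show directly that $f_*\OO_{\MMb_1(2)}$ is already a sum of line bundles of the form $\omega^{\tensor n}$ — one writes $\MMb_1(2)\simeq\PP_R(2,4)$ with $f$ induced by a graded map of the associated graded rings, and computes the pushforward of $\OO$ under $\PP(2,4)\to\PP(4,6)$ as a module over the graded ring $R[c_4,c_6]$: it should come out as a free module on explicit generators in the relevant weights, hence $f_*\OO\cong\bigoplus_j\omega^{\tensor n_j}$. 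Pulling back along $g$ gives the second sentence of the proposition. In the case $\frac13\in R$ but $\frac12\notin R$, $f_*\OO_{\MMb_1(3)}$ need not split as a sum of line bundles — indeed Proposition~\ref{lem:indec} says it is indecomposable — so instead I would show it is a direct \emph{summand} of a sum of $\omega^{\tensor n}$'s on $\MMb_{ell,R}$, by exhibiting a surjection $\bigoplus_j\omega^{\tensor n_j}\twoheadrightarrow f_*\OO$ (choosing generators of the graded $R[c_4,c_6]$-module $f_*\OO$ in degrees $0$ through some bound) together with a splitting, the latter coming from an $R[c_4,c_6]$-linear retraction built from the transfer map $\Tr_f\colon f_*\OO\to\OO$ and multiplication by suitable modular forms — i.e. using that, locally where the relevant discriminant/resultant is invertible, the trace form is nondegenerate, so $\Tr_f$ together with the algebra structure gives projectors. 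Then pull back along $g$.

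The main obstacle I anticipate is making the splitting in the $\PP(1,3)\to\PP(4,6)$ case work \emph{globally over $R$} rather than just over $\Q$ or after inverting more primes: the trace form on $f_*\OO_{\MMb_1(3)}$ is degenerate exactly at the cusps and at the supersingular locus in characteristic $2$, so one cannot simply invert a discriminant. The way I would handle this is to not ask for a splitting as $\OO\oplus(\text{something})$ directly, but rather to use the explicit description $\MMb_1(3)\simeq\PP_R(1,3)$ and compute $f_*\OO$ as a graded $R[c_4,c_6]$-module explicitly (it is a rank-$4$ module, and one can write down generators and the module structure from the classical formulae expressing $c_4,c_6$ in terms of the weight-$1$ and weight-$3$ generators of $M_*(\Gamma_1(3);R)$); from that presentation one reads off directly a sum of line bundles surjecting onto it and checks the surjection is split by an explicit section. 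Alternatively — and this is probably cleaner — I would avoid the explicit computation entirely by noting it suffices to prove the summand statement after the faithfully flat base change $\MMb(\Gamma)_R\to\MMb_{ell,R}$, i.e. to prove directly that $g^*f_*\OO_{\MMb'_R}$ is a summand of a sum of $g^*\omega^{\tensor n}$'s, and here one can use that on $\MMb(\Gamma)_R$ the vector bundle $g^*f_*\OO_{\MMb'_R}\cong(g')_*\OO_\YY$ where now $\YY\to\MMb(\Gamma)_R$ is a finite flat map whose degree is a unit (it is $[SL_2(\Z):\Gamma']$-ish, coprime to the characteristic by tameness of $\Gamma'$), so the transfer splitting works without any locus being removed — $\Tr\colon(g')_*\OO_\YY\to\OO_{\MMb(\Gamma)_R}$ composed with the unit is multiplication by a unit — and what remains is only to see that $(g')_*\OO_\YY$ admits a surjection from a sum of line bundles $g^*\omega^{\tensor n}$, which follows because $g^*f_*\OO_{\MMb'_R}$ is generated in bounded degrees as a module over $M_*^R$-pulled-back, using the vanishing results of Proposition~\ref{prop:coh} and Proposition~\ref{prop:basicprops}.
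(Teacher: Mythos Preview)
Your proposal contains genuine gaps in both nontrivial cases.

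In the case $\tfrac12\in R$, $\tfrac13\notin R$ (so $\MMb'=\MMb_1(2)_R$), you attempt to show that $f_*\OO_{\MMb_1(2)}$ already decomposes into line bundles $\omega^{\tensor n}$ on $\MMb_{ell,R}$, by ``computing the pushforward of $\OO$ under $\PP(2,4)\to\PP(4,6)$ as a module over the graded ring $R[c_4,c_6]$.'' This cannot succeed: Proposition~\ref{lem:indec} shows precisely that $f_*\OO_{\MMb'_R}$ is \emph{indecomposable} when $3$ is not inverted. The underlying error is that $\MMb_{ell,R}\not\simeq\PP_R(4,6)$ unless $\tfrac16\in R$, so there is no dictionary between vector bundles on $\MMb_{ell,R}$ and graded $R[c_4,c_6]$-modules. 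The splitting you seek genuinely only exists after pulling back along $g$, and the obstruction is the $3$-torsion class $\nu\in H^1(\MMb_{ell};\omega^{\tensor 2})$.

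In the case $\tfrac13\in R$, $\tfrac12\notin R$ (so $\MMb'=\MMb_1(3)_R$), your ``cleaner'' alternative claims that the degree of $\YY\to\MMb(\Gamma)_R$ is a unit. But this degree equals $\deg(f\colon\MMb_1(3)\to\MMb_{ell})=[SL_2(\Z):\Gamma_1(3)]=8$, which is \emph{not} a unit when $\tfrac12\notin R$. (Symmetrically, $\deg(\MMb_1(2)\to\MMb_{ell})=3$ is not a unit in the other case.) So the transfer does not split the unit, and this route fails. Your first alternative --- a surjection from $\bigoplus\omega^{\tensor n_j}$ with a section built from $\Tr_f$ and multiplication by modular forms --- has no mechanism for producing a section of the whole surjection; $\Tr_f$ only lands in $\OO$, not in the sum.

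The paper's proof is quite different. It imports from \cite{Mathom} explicit two-step filtrations of $f_*\OO_{\MMb_1(2)}$ and of an auxiliary rank-$3$ bundle $\FF(Q)$, whose successive extensions are classified by (lifts of) the torsion classes $\nu\in H^1(\MMb_{ell};\omega^{\tensor 2})$ and $\eta\in H^1(\MMb_{ell};\omega)$. One then shows these classes die after pulling back along $g$: $g^*\nu=0$ because $H^1(\MMb(\Gamma)_R;g^*\omega^{\tensor m})=0$ for $m\geq 2$ (Proposition~\ref{prop:coh}), and $g^*\eta=0$ by Proposition~\ref{prop:eta}, which rests on lifting the Hasse invariant (Appendix~\ref{app:Hasse}). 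This gives $g^*f_*\OO_{\MMb_1(2)}\cong\OO\oplus g^*\omega^{\tensor(-2)}\oplus g^*\omega^{\tensor(-4)}$ directly, and $g^*\FF(Q)\cong\OO\oplus g^*\omega^{\tensor(-1)}\oplus g^*\omega^{\tensor(-3)}$. Finally, $f_*\OO_{\MMb_1(3)}$ is identified with the cokernel of the coevaluation $\OO\to\FF(Q)\otimes\FF(Q)^\vee$, which is split because evaluation $\circ$ coevaluation is multiplication by $\rk\FF(Q)=3$ --- and $3$ \emph{is} a unit here. Hence $g^*f_*\OO_{\MMb_1(3)}$ is a summand of $g^*(\FF(Q)\otimes\FF(Q)^\vee)$, a sum of $g^*\omega^{\tensor n}$'s.
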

\begin{proof}
The case $\frac16\in R$ is clear. Assume $\frac13 \notin R$ with $\MMb' = \MMb_1(2)_R$. We will leave the base change to $R$ implicit in the following to simplify notation. By \cite[Prop 4.14]{Mathom}, there are extensions
  \[ 0 \to \OO_{\MMb_{ell}} \to f_*\OO_{\MMb_1(2)} \to E_{\nu} \tensor \omega^{\tensor (-2)} \to 0\]
  and 
  \[0 \to \OO_{\MMb_{ell}} \to E_\nu \to \omega^{\tensor (-2)} \to 0\]
  classified by $\tilde{\nu} \in \Ext^1(E_\nu \tensor \omega^{\tensor (-2)}, \OO_{\MMb_{ell}})$ and $\nu \in \Ext^1(\omega^{\tensor (-2)},\OO_{\MMb_{ell}})$ respectively, where $\nu$ is as in Proposition \ref{prop:cohmell} and $\tilde{\nu}$ is a lift of $\nu$. The classes $g^*\nu$ and $g^*\tilde{\nu}$ are zero by Proposition \ref{prop:coh}. Thus $g^*f_*\OO_{\MMb_1(2)}$ splits into line bundles of the form $g^*\omega^{\tensor n}$. 
  
  The same argument -- only more complicated -- works if $\frac12 \notin R$ and $\MMb'= \MMb_1(3)_R$. Again, we will leave the base change to $R$ implicit. In \cite[Section 4.1]{Mathom}, Mathew constructs a vector bundle $\FF(Q)$ on $\MMb_{ell}$, which arises via two short exact sequences
  $$ 0 \to E_{\eta} \to \FF(Q) \to \omega^{\tensor (-3)} \to 0$$
  and
  $$ 0\to \OO_{\MMb_{ell}} \to E_{\eta} \to \omega^{\tensor (-1)} \to 0.$$
  The latter extension is classified by $\eta \in H^1(\MMb_{ell}; \omega) \cong \Ext^1(\omega^{\tensor (-1)}, \OO_{\MMb_{ell}})$, while the former is classified by a lift $\tilde{\nu}\in\Ext^1(\omega^{\tensor (-3)}, E_\eta)$ of $\nu$. By Proposition \ref{prop:eta}, we know that $g^*\eta = 0$ so that $g^*E_{\eta} \cong \OO_{\MMb(\Gamma)}\oplus g^*\omega^{\tensor(-1)}$. The other extension splits as well because by Proposition \ref{prop:coh} we see that $\Ext^1(g^*\omega^{\tensor (-3)}, g^*E_{\eta}) = 0$. Thus, 
  $$g^*\FF(Q) \cong \OO_{\MMb(\Gamma)}\oplus g^*\omega^{\tensor(-1)} \oplus g^*\omega^{\tensor (-3)}.$$ 
  
    By \cite[Prop 4.7, Cor 4.11]{Mathom}, the cokernel of the coevaluation map 
    $$q\colon \OO_{\MMb_{ell}} \to \FF(Q) \tensor \FF(Q)^{\vee}$$
    is isomorphic to $f_*\OO_{\MMb_1(3)}$.
    The composition of $q$ with the evaluation map $\FF(Q) \tensor \FF(Q)^{\vee} \to \OO_{\MMb_{ell}}$ equals multiplication by $3 = \rk(\FF(Q))$, which is invertible. Hence, $f_*\OO_{\MMb_1(3)}$ splits off from $\FF(Q) \tensor \FF(Q)^{\vee}$ as a direct summand. Thus, $g^*f_*\OO_{\MMb_1(3)}$ is a direct summand of a sum of line bundles of the form $g^*\omega^{\tensor i}$. 
\end{proof}

\subsection{Decompositions over a field}\label{sec:workingfield}
In this section, we let $R = k$ be a field and $\MMb'$ and $\MMb(\Gamma) = \MMb(\Gamma)_k$ for $\Gamma$ tame as before. Our goal is to show the following proposition.

\begin{prop}\label{prop:split}
Let $\EE$ be a vector bundle on $\MMb(\Gamma)$. Then $g_*\EE$ decomposes into a direct sum of vector bundles of the form $f_*\OO_{\MMb'} \tensor \omega^{\tensor i}$. 
\end{prop}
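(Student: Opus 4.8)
The plan is to exploit the fact that $\MMb(\Gamma)_k$ is a smooth proper curve (or a tame weighted projective stack for small $n$), so vector bundles on it are well understood, combined with a Krull--Schmidt argument on $\MMb_{ell,k}$. First I would reduce to the case of a line bundle: since $g$ is finite and flat, $g_*$ commutes with the relevant operations, and every vector bundle $\EE$ on $\MMb(\Gamma)_k$ — being a stack whose coarse space is $\mathbb{P}^1_k$ — should split (after possibly a further finite cover handled by the level-$n$ structure, or directly using the structure of $\MMb(\Gamma)_k$ as established in Proposition \ref{prop:basicprops}) as a sum of line bundles of the form $g^*\omega^{\tensor i}$ tensored with line bundles pulled back from elsewhere; in any case it suffices to treat $\EE = g^*\omega^{\tensor j}\tensor \LL$ for $\LL$ a line bundle on $\MMb(\Gamma)_k$, or even just to understand $g_*\OO_{\MMb(\Gamma)_k}$ up to twist, since $g_*(g^*\omega^{\tensor i}\tensor \FF) \cong \omega^{\tensor i}\tensor g_*\FF$ by the projection formula.

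Next I would analyze $g_*\OO_{\MMb(\Gamma)_k}$ as a vector bundle on $\MMb_{ell,k}$. The key input is the pair of results already proved: Proposition \ref{lem:splitting}, which says $g^*f_*\OO_{\MMb'_k}$ splits into line bundles $g^*\omega^{\tensor ?}$, and Proposition \ref{lem:indec}, which says $f_*\OO_{\MMb'_k}$ is indecomposable. The strategy is: push the sheaf $g^*\omega^{\tensor *}$ on $\MMb(\Gamma)_k$ forward through the factorization $\MMb(\Gamma)_k \to \MMb'_k \to \MMb_{ell,k}$. Write $h\colon \MMb(\Gamma)_k \to \MMb'_k$ for the first map; then $g_*\EE = f_* h_* \EE$. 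On $\MMb'_k$ — which is either $\MMb_{ell,k}$ with $\tfrac16\in k$, or $\mathbb{P}(2,4)_k$, or $\mathbb{P}(1,3)_k$, i.e.\ a weighted projective line that is \emph{tame} over $k$ — every vector bundle splits as a sum of line bundles $\OO(a)$ by the classification in Appendix \ref{sec:weighted} (Theorem \ref{thm:fundamentalweighted} and its companions). So $h_*\EE$ splits into line bundles $\omega^{\tensor a}$ on $\MMb'_k$ (using that $\Pic(\MMb'_k)$ is generated by $\OO(1)=\omega$), and then $g_*\EE = f_*h_*\EE$ splits into copies of $f_*\omega^{\tensor a} \cong \omega^{\tensor a}\tensor f_*\OO_{\MMb'_k}$ by the projection formula, which is exactly the desired form.

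The one remaining subtlety, and the step I expect to be the main obstacle, is showing $h_*\EE$ really is a \emph{vector bundle} and that its summands are line bundles of the claimed twisted form — i.e.\ that $h\colon \MMb(\Gamma)_k \to \MMb'_k$ is finite and flat so that $h_*$ preserves vector bundles, and that the Picard group of $\MMb'_k$ is cyclic generated by $\omega$. Finiteness and flatness of $h$ follow as in Proposition \ref{prop:basicprops}(1): both stacks are smooth curves (or smooth DM stacks of dimension $1$) over the field $k$, $h$ is representable (it sits over the finite representable map $g$), and a finite map between regular DM stacks of the same dimension is flat; finiteness itself follows since $h$ is proper (both are proper over $k$) and quasi-finite (again because $g$ is). Properness of $h$ uses that $\MMb'_k$ is separated and $\MMb(\Gamma)_k$ proper. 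That $\Pic(\MMb'_k)\cong \Z\langle\omega\rangle$ is part of the weighted-projective-line package in the appendix (or trivial when $\MMb'_k=\MMb_{ell,k}$ with $\tfrac16$ invertible, where $\Pic \cong \Z$ generated by $\omega$ by \cite{F-O10}). Once these structural facts are in hand, the decomposition is immediate; the characteristic-$2$ versus odd-characteristic distinction does not even enter here, as it is already absorbed into the definition of $\MMb'$.
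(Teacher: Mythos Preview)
There is a genuine gap in your argument: the factorization $g = f\circ h$ through a map $h\colon \MMb(\Gamma)_k \to \MMb'_k$ over $\MMb_{ell,k}$ simply does not exist in the cases that matter. When $\Char(k)=3$ we have $\MMb'_k = \MMb_1(2)_k$, and when $\Char(k)=2$ we have $\MMb'_k = \MMb_1(3)_k$; but for, say, $\Gamma = \Gamma_1(5)$ there is no morphism $\MMb_1(5)_k \to \MMb_1(2)_k$ or $\MMb_1(5)_k \to \MMb_1(3)_k$ over $\MMb_{ell,k}$, because a $\Gamma_1(5)$-structure does not canonically determine a point of order $2$ or $3$. (Only when $\tfrac16\in k$ is $\MMb'_k = \MMb_{ell,k}$ and the factorization trivial --- and that case was already dispatched before the proof.) So the core of your second paragraph, namely $g_*\EE = f_*h_*\EE$ and then decomposing $h_*\EE$ on the weighted projective line $\MMb'_k$, cannot get off the ground.

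The paper circumvents exactly this obstruction by working with the \emph{fibre product} $\Yb = \MMb(\Gamma)_k \times_{\MMb_{ell,k}} \MMb'_k$ instead of a (nonexistent) direct map. One then has $g_*f'_*(f')^*\EE \cong f_*g'_*(f')^*\EE$, and the right-hand side decomposes into the desired pieces because $g'_*(f')^*\EE$ lives on $\MMb'_k$. The remaining work is to see that $g_*\EE$ (up to a twist by $\omega$) occurs as a \emph{summand} of this larger bundle: this uses Proposition~\ref{lem:splitting} to split $f'_*\OO_{\Yb} \cong g^*f_*\OO_{\MMb'_k}$ into copies of $g^*\omega^{\tensor ?}$, the projection formula, and then the Krull--Schmidt theorem~\ref{prop:Krull-Schmidt} together with the indecomposability of $f_*\OO_{\MMb'_k}$ (Proposition~\ref{lem:indec}) to pull the summand out. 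Your outline invokes both of these propositions but never actually needs them, which is a sign that the argument as written is too strong to be correct.
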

If $\Char(k)$ is not $2$ or $3$, the proof is easy. In this case, we have an equivalence $\MMb_{ell,k} \simeq \PP_k(4,6)$ with $\omega$ corresponding to $\OO(1)$. By Proposition \ref{prop:ClassField}, \emph{every} vector bundle on $\PP_k(4,6)$ decomposes into a sum of the line bundles $\OO(i)$. Thus, we will assume that $k$ has characteristic $l=2$ or $3$ in the following. 

Our strategy will be to analyze the consequences of Lemma \ref{lem:pushpull} in the case of the pullback square
 \[\xymatrix{
  \Yb = \MMb(\Gamma) \times_{\MMb_{ell,k}} \MMb'_k \ar[r]^-{g'} \ar[d]^{f'} & \MMb_k' \ar[d]^f \\
  \MMb(\Gamma) \ar[r]^{g} &  \MMb_{ell,k}
 }\]
 using a Krull--Schmidt theorem. 

\begin{prop}[Krull--Schmidt]\label{prop:Krull-Schmidt}
 Let $\XX$ be a proper Artin stack over a field $k$. Then the Krull--Schmidt theorem holds for coherent sheaves on $\XX$. This means that every coherent sheaf on $\XX$ decomposes into finitely many indecomposables and that this decomposition is unique up to permutation of the summands.
\end{prop}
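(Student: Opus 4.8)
The statement to be proved is the Krull--Schmidt theorem for coherent sheaves on a proper Artin stack $\XX$ over a field $k$. The plan is to reduce this to the classical Krull--Schmidt theorem for the category of finitely generated modules over a (possibly non-commutative) Artinian ring, which in turn follows from the Fitting lemma / Azumaya's theorem: a decomposition into indecomposables is unique as soon as the endomorphism ring of each indecomposable summand is local. So the heart of the matter is to establish two things about $\Coh(\XX)$: (i) every object decomposes into finitely many indecomposables, and (ii) the endomorphism ring $\End_{\OO_\XX}(\EE)$ of any coherent sheaf $\EE$ is a finite-dimensional $k$-algebra, hence Artinian, so that $\EE$ indecomposable implies $\End(\EE)$ local (a finite-dimensional algebra with no nontrivial idempotents is local).

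First I would argue that $\Hom_{\OO_\XX}(\EE,\FF)$ is a finite-dimensional $k$-vector space for all coherent $\EE,\FF$. Since $\XX$ is proper over $k$, the coherence theorem for proper (Artin) stacks --- for instance the version in the Stacks Project or in Olsson's work, or one can cite Faltings--Chai / Laumon--Moret-Bailly --- gives that $\Hom_{\OO_\XX}(\EE,\FF) = H^0(\XX;\mathcal{H}om_{\OO_\XX}(\EE,\FF))$ is a finitely generated $k$-module, i.e.\ finite-dimensional, using that $\mathcal{H}om(\EE,\FF)$ is again coherent. In particular $\End(\EE)$ is a finite-dimensional $k$-algebra. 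This immediately yields the existence part as well: if $\EE$ admitted an infinite descending chain of direct summands, one would produce an infinite strictly decreasing chain of idempotents, contradicting finite-dimensionality of $\End(\EE)$; equivalently, one does noetherian induction on the length/dimension-theoretic invariant given by $\dim_k \End(\EE)$, noting a nontrivial splitting $\EE = \EE_1 \oplus \EE_2$ forces $\dim_k\End(\EE_i) < \dim_k\End(\EE)$ since $\End(\EE)$ contains $\End(\EE_1)\times\End(\EE_2)$ properly. So every coherent sheaf is a finite direct sum of indecomposables.

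For uniqueness, I would invoke the abstract Krull--Schmidt/Azumaya theorem: in any additive category in which idempotents split and every object is a finite sum of objects with local endomorphism rings, such decompositions are unique up to permutation and isomorphism. Idempotents split in $\Coh(\XX)$ because it is an abelian category (kernels of idempotents exist). And for an indecomposable $\EE$, the algebra $\End(\EE)$ is finite-dimensional over $k$ with no idempotents other than $0,1$; a finite-dimensional $k$-algebra whose only idempotents are $0$ and $1$ is local (its Jacobson radical is nilpotent and the quotient, being semisimple with a connected idempotent structure, is a division ring). Assembling these facts gives the theorem.

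The main obstacle is the input (i): one needs that $\Hom$-spaces between coherent sheaves on a proper Artin stack are finite-dimensional. For Deligne--Mumford stacks (which is all that is actually used later in the paper) this is standard and follows from properness plus the finiteness theorem for coherent cohomology; for general proper Artin stacks one should cite the appropriate coherence result in the literature (e.g.\ Olsson, or the Stacks Project tag on proper pushforward of coherent sheaves, possibly with a tameness or boundedness hypothesis). Everything else --- splitting of idempotents, the passage from finite-dimensional endomorphism algebra without idempotents to locality, and the abstract uniqueness statement --- is formal. I would therefore structure the write-up as: (1) finiteness of $\Hom$; (2) existence of a finite indecomposable decomposition by induction on $\dim_k\End$; (3) locality of $\End$ of an indecomposable; (4) cite Azumaya's theorem to conclude uniqueness.
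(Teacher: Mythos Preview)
Your proposal is correct and follows essentially the same route as the paper: both arguments reduce to the finite-dimensionality of $\Hom$-spaces (via coherence of $\mathcal{H}om$ and the finiteness theorem for cohomology of coherent sheaves on proper Artin stacks, which the paper attributes to Faltings) and then invoke the abstract Krull--Schmidt theorem for $k$-linear abelian categories with finite-dimensional $\Hom$'s (the paper cites Atiyah, you spell out the Azumaya/local-endomorphism-ring argument that underlies it). The only difference is packaging: you unpack what the paper outsources to two citations.
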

\begin{proof}
 As shown by Atiyah in \cite{Ati56}, a $k$-linear abelian category has a Krull--Schmidt theorem if all $\Hom$-vector spaces are finite-dimensional. By a theorem of Faltings \cite{Fal03}, the global sections of any coherent sheaf on $\XX$ form a finite-dimensional $k$-vector space. We can apply this to the Hom-sheaf $\mathcal{H}om_{\OO_\XX}(\FF,\GG)$ for two coherent $\OO_\XX$-modules, which is coherent itself.
\end{proof}

\begin{example}\label{exa:KS}
 We give a counterexample to the Krull--Schmidt theorem if we do not assume properness. It follows from \cite[Prop 4.14]{Mathom} that $f_*\OO_{\MM_1(2)}$ splits as $\OO_{\MM_{ell}} \oplus \omega^{\tensor 2}\oplus \omega^{\tensor 4}$ after rationalization, where we denote in this example by $f$ the projection $\MM_1(2) \to \MM_{ell}$. By \cite[Sec 1.3]{Beh06}, there is an equivalence 
 $$\MM_1(2) \simeq \Spec \Z[\tfrac12][b_2,b_4][\Delta^{-1}]/\G_m,$$ 
 where $\Delta = \frac14b_4^2(b_2^2-32b_4)$. As $\Delta$ is divisible by $b_4$, the ring $\Z[\tfrac12][b_2,b_4][\Delta^{-1}]$ is $4$-periodic and we deduce that $f^*\omega^{\tensor 4} \cong \OO_{\MM_1(2)}$. In particular, it follows that 
$$\OO_{\MM_{ell}} \oplus \omega^{\tensor 2}\oplus \omega^{\tensor 4} \cong f_*\OO_{\MM_1(2)} \cong f_*\OO_{\MM_1(2)} \tensor \omega^{\tensor 4} \cong \omega^{\tensor 4}\oplus \omega^{\tensor 6} \oplus \omega^{\tensor 8}$$
on $\MM_{ell,\Q}$, contradicting a possible Krull--Schmidt theorem in the uncompactified case. With some extra work one can remove the summand $\omega^{\tensor 4}$ from both sides. 
\end{example}

\begin{proof}[Proof of Proposition \ref{prop:split}:]
 Consider again the pullback diagram
 \[\xymatrix{
  \Yb = \MMb(\Gamma)_k \times_{\MMb_{ell,k}} \MMb'_k \ar[r]^-{g'} \ar[d]^{f'} & \MMb'_k \ar[d]^f \\
  \MMb(\Gamma)_k \ar[r]^{g} &  \MMb_{ell,k}.
 }\]
 We have an isomorphism
$$g_*f'_*(f')^*\EE \cong f_*g'_* (f')^*\EE.$$
 Every vector bundle on $\MMb'_k$ decomposes into line bundles of the form $f^*\omega^{\tensor m}$ by the Examples \ref{exa:moduli} and Proposition \ref{prop:ClassField}. By the projection formula, $f_*g'_* (f')^*\EE$ is thus a sum of vector bundles of the form $f_*\OO_{\MMb'_k} \tensor \omega^{\tensor m}$. As these are indecomposable by Proposition \ref{lem:indec}, this is also the decomposition of $g_*f'_*(f')^*\EE$ into indecomposables. By the Krull--Schmidt theorem \ref{prop:Krull-Schmidt} it is thus enough to show that $g_*\EE \tensor \omega^{\tensor n}$ is a summand of $g_*f'_*(f')^*\EE$ for some $n\in\Z$. 
 
 Note first that by Proposition \ref{lem:splitting} and Lemma \ref{lem:pushpull}, we can write $g^*f_*\OO_{\MMb_k'} \cong f'_*\OO_{\Yb}$ as a direct summand of a vector bundle of the form $\bigoplus_i g^*\omega^{\tensor n_i}$. By the Krull--Schmidt theorem, $f'_*\OO_{\Yb}$ is actually itself of this form.  The projection formula implies a chain of isomorphisms
 \begin{align*}
  g_*(f')_*(f')^*\EE &\cong g_*((f')_*\OO_{\Yb}\tensor \EE) \\
		   &\cong g_*(\bigoplus_i g^*\omega^{\tensor n_i} \tensor \EE) \\
		   &\cong \bigoplus_i (g_*\EE) \tensor \omega^{\tensor n_i},
 \end{align*}
The result follows. 
\end{proof}

\begin{remark}
 The explicit decomposition (i.e.\ the powers of $\omega$) can be worked out by comparing dimensions of spaces of modular forms, as we will do later in Section \ref{sec:dec}. 
\end{remark}

\subsection{Integral decompositions}
Let $n\geq 2$ and $R$ be again a $\Z[\frac1n]$-algebra in which $2$ or $3$ is invertible, and assume that $\Gamma \in \{\Gamma_1(n),\Gamma_0(n), \Gamma(n)\}$ is tame as in Definition \ref{def:tame}.

\begin{lemma}\label{lem:Rfreeness}
 The $R$-modules $H^i(\MMb(\Gamma)_R;g^*\omega^{\tensor m})$ are free for $i=0,1$ unless $m=1$. Moreover $H^i(\MMb(\Gamma)_R;g^*\omega)$ is free for $i=0$ and $i=1$ if and only if we can write the torsion part of $H^1(\MMb(\Gamma);g^*\omega)$ as a sum of cyclic groups of the form $\Z/k$, where $k$ is invertible in $R$ or $k=0$ in $R$. 
\end{lemma}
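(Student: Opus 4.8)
The plan is to reduce everything to the ``universal'' base ring $R_0=\Z[\tfrac1n]$ (or $R_0=\Z[\tfrac1n,\zeta_n]$ when $\Gamma=\Gamma(n)$), which is a Dedekind domain, even a PID in the first case, and then to base change. Since $\MMb(\Gamma)_{R_0}$ is smooth and proper over $\Spec R_0$ by Proposition~\ref{prop:basicprops} and $g^*\omega^{\tensor m}$ is a line bundle on it, hence flat over $R_0$, the complex $C^\bullet_m:=R\Gamma(\MMb(\Gamma)_{R_0};g^*\omega^{\tensor m})$ is perfect; and because $H^i(\MMb(\Gamma)_k;g^*\omega^{\tensor m})=0$ for $i\geq 2$ and every field $k$ by Proposition~\ref{prop:basicprops} (and trivially for $i<0$), it has Tor-amplitude in $[0,1]$ and is therefore quasi-isomorphic to a two-term complex $[P^0\xrightarrow{d}P^1]$ of finitely generated projective $R_0$-modules in degrees $0$ and $1$, free if $R_0=\Z[\tfrac1n]$. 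Cohomology and base change then gives, for every $R_0$-algebra $R$,
\[
 H^0(\MMb(\Gamma)_R;g^*\omega^{\tensor m})=\ker(d\tensor_{R_0}R),\qquad H^1(\MMb(\Gamma)_R;g^*\omega^{\tensor m})=\coker(d\tensor_{R_0}R).
\]
In the non-representable cases this needs a preliminary step: for the weighted projective stack instances of $\MMb(\Gamma)$ one descends to the coarse space using tameness, and for $\MMb_0(n)$ with $n$ not squarefree one replaces $\MMb_0(n)$ by $\MMb_0(n)'$ via Proposition~\ref{prop:square} and uses $(\Z/n)^\times$-descent, which commutes with base change since $\phi(n)$ is invertible.

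For $m\neq1$, Proposition~\ref{prop:coh} (torsion-freeness of $H^1$ for $m\neq1$ over the torsion-free ring $R_0$) tells us that $H^1(C^\bullet_m)=\coker d$ is finitely generated torsion-free, hence projective over the Dedekind domain $R_0$. Thus the short exact sequences $0\to\im d\to P^1\to\coker d\to0$ and $0\to\ker d\to P^0\to\im d\to0$ split, so $C^\bullet_m$ is homotopy equivalent to $H^0(C^\bullet_m)$ placed in degree $0$, plus $H^1(C^\bullet_m)$ placed in degree $1$, plus a contractible complex, all terms projective over $R_0$. Tensoring with $R$ gives $H^i(\MMb(\Gamma)_R;g^*\omega^{\tensor m})\cong H^i(C^\bullet_m)\tensor_{R_0}R$, which is free over $R$ whenever $H^i(C^\bullet_m)$ is free over $R_0$ --- automatic for $R_0=\Z[\tfrac1n]$. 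For $\Gamma=\Gamma(n)$ one still has to argue that these finitely generated projective $\Z[\tfrac1n,\zeta_n]$-modules are actually free (for instance via the $q$-expansion principle together with a rational cusp of $\MMb(n)$), or one is content with projectivity, which already coincides with freeness over the local rings and fields relevant to the applications.

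For $m=1$ we exploit that $R_0=\Z[\tfrac1n]$ is a PID: by Smith normal form we may choose bases so that $d$ is diagonal, whence $C^\bullet_1$ is homotopy equivalent to the sum of $a$ copies of $R_0$ in degree $0$, of $b$ copies of $R_0$ in degree $1$, of finitely many complexes $[R_0\xrightarrow{k_j}R_0]$ in degrees $0,1$, and of a contractible complex, where $a=\rk H^0$, $b=\rk H^1$, and the $k_j$ are precisely the invariant factors of the torsion subgroup $T=\bigoplus_j\Z/k_j$ of $H^1(\MMb(\Gamma);g^*\omega)$ --- note that $H^0$ is torsion-free, being a submodule of $P^0$. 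Base changing,
\[
 H^0(\MMb(\Gamma)_R;g^*\omega)=R^a\oplus\bigoplus_j R[k_j],\qquad H^1(\MMb(\Gamma)_R;g^*\omega)=R^b\oplus\bigoplus_j R/k_jR.
\]
An elementary localization argument shows that a cyclic module $R/k_jR$ (equivalently the $k_j$-torsion $R[k_j]$) is free over $R$ exactly when $k_j$ is invertible or $0$ in $R$, so both displayed modules are free if and only if every $k_j$ has this property. Since any decomposition of the finite group $T$ into cyclic groups can be refined to prime powers by the Chinese remainder theorem and then recombined, $T$ admits a decomposition into groups $\Z/k$ with every $k$ invertible or $0$ in $R$ precisely when every invariant factor $k_j$ is invertible or $0$ in $R$; this is the asserted criterion. (One may assume $R$ connected in the last step, reducing a general $R$ to its localizations, or in the normal noetherian case to its finitely many domain factors.)

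The main obstacle is the first paragraph: setting up cohomology and base change uniformly for all the stacks $\MMb(\Gamma)$, in particular handling the weighted projective stack cases and the non-squarefree $\MMb_0(n)$ through tameness, coarse spaces and Proposition~\ref{prop:square}, while checking that these reductions are compatible with arbitrary base change. The only other point needing care is the passage from projective to free $R_0$-modules in the $\Gamma(n)$ case, together with the correct (product-sensitive) reading of the freeness criterion over a disconnected base.
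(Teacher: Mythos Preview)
Your proposal is correct and follows the same overall strategy as the paper: reduce to the universal base $\Z[\tfrac1n]$ via cohomology and base change, and then use that $H^1(\MMb(\Gamma);g^*\omega^{\tensor m})$ is torsion-free for $m\neq 1$ (Proposition~\ref{prop:coh}). The paper packages the base change step via the K\"unneth spectral sequence of Lemma~\ref{lem:basechange}, which immediately yields $H^1(\MMb(\Gamma)_R;g^*\omega^{\tensor m})\cong H^1(\MMb(\Gamma);g^*\omega^{\tensor m})\tensor R$ together with the short exact sequence $0\to H^0\tensor R\to H^0_R\to\Tor(H^1,R)\to 0$; you recover the same information through the perfect-complex formalism and Smith normal form. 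Where the two arguments genuinely diverge is in the converse direction for $m=1$: the paper simply cites a theorem of Kaplansky (and a MathOverflow post) for the implication ``$A\tensor R$ free $\Rightarrow$ the torsion of $A$ decomposes into $\Z/k$'s with each $k$ invertible or zero in $R$'', whereas your Smith normal form computation produces the explicit formulas $H^0_R\cong R^a\oplus\bigoplus_j R[k_j]$ and $H^1_R\cong R^b\oplus\bigoplus_j R/k_jR$ and reads off the criterion directly. This is more elementary and self-contained than the paper's citation, at the cost of setting up the two-term perfect complex carefully for the stacky cases. Your caveats about the non-representable instances and about projective versus free in the $\Gamma(n)$ case over $\Z[\tfrac1n,\zeta_n]$ are legitimate bookkeeping points that the paper's proof passes over without comment; they do not affect the applications, which take $R$ local.
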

\begin{proof}
	 As $H^i(\MMb(\Gamma); g^*\omega^{\tensor m})$ vanishes for $i>1$ by Proposition \ref{prop:basicprops}, the K\"unneth spectral sequence Lemma \ref{lem:basechange} implies
 \begin{align}\label{eq:H1}H^1(\MMb(\Gamma)_R; g^*\omega^{\tensor m}) \cong H^1(\MMb(\Gamma); g^*\omega^{\tensor m})\tensor R\end{align}
 and a short exact sequence
 \begin{align}\label{eq:H0}0 \to H^0(\MMb(\Gamma);g^*\omega^{\tensor m}) \tensor R \to H^0(\MMb(\Gamma)_R; g^*\omega^{\tensor m}) \to \Tor_\Z(H^1(\MMb(\Gamma);g^*\omega^{\tensor m}), R) \to 0.\end{align}
 By Proposition \ref{prop:coh}, we know that $H^1(\MMb(\Gamma); g^*\omega^{\tensor m})$ is torsionfree for $m\neq 1$ and hence a free $\Z[\frac1n]$-module and clearly the same is true for $H^0(\MMb(\Gamma);g^*\omega^{\tensor m})$ for all $m$. Hence, $H^i(\MMb(\Gamma)_R; g^*\omega^{\tensor m})$ is a free $R$-module for $i=0,1$ and $m\neq 1$. 
 
If we can write the torsion part of the finitely generated $\Z[\frac1n]$-module $A := H^1(\MMb(\Gamma);g^*\omega)$ as a sum of cyclic groups of the form $\Z/k$, where $k$ is invertible in $R$ or $k=0$ in $R$, then clearly $H^i(\MMb(\Gamma)_R, g^*\omega)$ is $R$-free for $i=0,1$ by Equations \eqref{eq:H1} and \eqref{eq:H0}. On the other hand, if $A\tensor R$ is $R$-free, then we can write the torsion part of $A$ as a sum of cyclic groups of the form $\Z/k$, where $k$ is invertible in $R$ or $k=0$ in $R$. This follows from \cite[Theorem 9.3]{Kap49}; for a direct proof see \cite{Guy}.
\end{proof}

Consider the projection $g\colon \MMb(\Gamma)_R \to \MMb_{ell,R}$ as above and again the diagram
\[\xymatrix{\Yb = \MMb(\Gamma)_R \times_{\MMb_{ell,R}}\MMb_R' \ar[d]^{f'}\ar[r]^-  {g'} & \MMb'_R \ar[d]^f \\
\MMb(\Gamma)_R \ar[r]^{g} & \MMb_{ell,R},
 }
\]
where $\MMb'_R$ is still as at the beginning of this section.

\begin{thm}\label{thm:deccompact}
 Assume that $R$ is a normal noetherian local $\Z[\frac1n]$-algebra with residue field $k$ of characteristic $l\geq 0$. Then $g_*\OO_{\MMb(\Gamma)_R}$ decomposes into a sum of vector bundles of the form $f_*\OO_{\MMb_R'}\tensor \omega^{\tensor m}$ if and only
 \begin{itemize}
     \item $l=0$, or
     \item $l>0$ and $H^1(\MMb(\Gamma); g^*\omega)$ does not contain $l$-torsion, or
     \item $l>0$ and $R$ is an $\F_l$-algebra.
 \end{itemize}
\end{thm}
\begin{proof}
 If $g_*\OO_{\MMb(\Gamma)_R}$ decomposes in the prescribed manner, $H^i(\MMb(\Gamma)_R, g^*\omega)$ is free for $i=0,1$ and we can apply the lemma above. Note here that $l = 0$ in $R$ if $l^k = 0$ in $R$ because $R$ is normal and in particular reduced. 

 For the opposite direction, we obtain from Proposition \ref{prop:split} an isomorphism
 \[\overline{h}\colon g_*\OO_{\MMb(\Gamma)_k} \to f_*\FF_k\]
 for some $\FF = \bigoplus_{i\in\Z}\bigoplus_{m_i} f^* \omega^{\tensor i}$. By adjunction, this corresponds to a morphism
 \[\overline{\varphi}\colon (g')_*\OO_{\Yb_k} \cong f^*g_*\OO_{\MMb(\Gamma)_k} \to \FF_k,\]
 where we use Lemma \ref{lem:pushpull} again. By Proposition \ref{prop:basicprops}, $g$ and hence $g'$ is finite and flat and thus $(g')_*\OO_{\Yb}$ is a vector bundle. We want to show that it decomposes as a sum $\GG = \bigoplus_{i\in \Z}\bigoplus_{n_i}f^*\omega^{\tensor i}$ for some $n_i$. As $\MMb_R'$ is a weighted projective line by Examples \ref{exa:moduli}, Proposition \ref{thm:VectorBundle} implies that it is enough to show that $H^i(\MMb_R';(g')_*\OO_{\Yb}\tensor f^*\omega^{\tensor j})$ is a free $R$-module for $i=0,1$ and all $j\in\Z$. We have the following chain of isomorphisms:
 \begin{align*}
  H^i(\MMb';(g')_*\OO_{\Yb}\tensor f^*\omega^{\tensor j}) &\cong H^i(\MMb_{ell,R};f_*(g')_*(g')^*f^*\omega^{\tensor j}) \\
  &\cong H^i(\MMb_{ell,R};g_*(f')_*(g')^*f^*\omega^{\tensor j}) \\
  &\cong  H^i(\MMb_{ell,R};g_*g^*f_*f^*\omega^{\tensor j})\\
  &\cong H^i(\MMb(\Gamma)_R;g^*f_*f^*\omega^{\tensor j})\\
  &\cong H^i(\MMb(\Gamma)_R;(g^*f_*\OO_{\MMb'_R}) \tensor g^*\omega^{\tensor j})
 \end{align*}
By Proposition \ref{lem:splitting}, this $R$-module is a direct summand of a sum of terms of the form $H^i(\MMb(\Gamma)_R; g^*\omega^{\tensor n})$. By Lemma \ref{lem:Rfreeness}, these $R$-modules are free and thus $H^i(\MMb';(g')_*\OO_{\Yb}\tensor f^*\omega^{\tensor j})$ is a free $R$-module as well since $R$ is local. Thus, $(g')_*\OO_{\Yb}$ is of the form $\GG$.
 
 We claim that the morphism
 \[\Hom_{\OO_{\MMb'_R}}(\GG, \FF) \to \Hom_{\OO_{\MMb'_k}}(\GG_k, \FF_k)\]
 is surjective. This surjectivity follows from 
$$H^0(\MMb_R'; f^*\omega^{\tensor i}) \to H^0(\MMb_R'; f^*\omega^{\tensor i}) \tensor_R k \cong H^0(\MMb'_k; f^*\omega^{\tensor i}_k) $$ being surjective, which in turn is true as $H^1(\MMb_R'; f^*\omega^{\tensor i})$ is $R$-free. 
 
 Thus, we can choose a map $(g')_*\OO_{\Yb} \to \FF$, which reduces to $\overline{\varphi}$ over $k$. By tracing through the adjunctions, this corresponds to a map $h\colon g_*\OO_{\MMb(\Gamma)_R} \to f_*\FF$ whose restriction to $\MMb_{ell,k}$ agrees with $\overline{h}$. We want to show that $h$ is an isomorphism and it suffices to check this after pullback to an fpqc cover, e.g.\ to $\MMb_1(5)_R$ or $\MMb_1(6)_R$, which are isomorphic to $\mathbb{P}^1_R$ (see Example \ref{exa:5-12}). Thus, $h$ is a morphism between vector bundles on $\mathbb{P}^1_R$ whose restriction to $\mathbb{P}^1_k$ is an isomorphism. We know that $h$ is an isomorphism on an open subset of $\mathbb{P}^1_R$ that contains the special fiber; its complement is a closed subset $A$. The image of $A$ under $\mathbb{P}^1_R \to \Spec R$ is closed and thus empty as it cannot contain the closed point. Thus, $A$ is empty as well and $h$ is an isomorphism. 
\end{proof}

\begin{remark}\label{rem:Z16}
This theorem is also true if $R$ is not local but a normal noetherian $\Z[\frac16]$-algebra, where we assume instead that we can write the torsion part of $H^1(\MMb(\Gamma); g^*\omega)$ as a sum of cyclic groups of the form $\Z/a$ such that $a$ is invertible or zero in $R$. Indeed, $\MMb_{ell,R} \simeq \PP_R(4,6)$ in this case and we can directly apply Theorem \ref{thm:VectorBundle} using Lemma \ref{lem:Rfreeness}. Note further that if the conclusion of the theorem above is true for some $\Z[\frac1n]$-algebra $R$, then it is also true for every algebra over $R$ by base change. The crucial cases are quotients and localizations of the integers. 
\end{remark}

\begin{remark}\label{rem:cuspy}
Consider the short exact sequence
\begin{align}\label{eq:fundamentalexact}0 \to H^0(\MMb(\Gamma); g^*\omega^{\tensor i})/l \to  H^0(\MMb(\Gamma)_{\F_l}; g^*\omega^{\tensor i}) \to H^1(\MMb(\Gamma); g^*\omega^{\tensor i})[l] \to 0\end{align}
associated with
$$0 \to \omega^{\tensor i} \xrightarrow{l} \omega^{\tensor i} \to \omega^{\tensor i}/l\to 0.$$
We see that $H^1(\MMb(\Gamma); g^*\omega)$ is $l$-torsionfree if and only if the map 		
$$M_1(\Gamma; \Z_{(l)}) \to M_1(\Gamma; \F_l)$$
is surjective, i.e.\ if every mod-$l$ modular form of weight $1$ for $\Gamma$ can be lifted to a characteristic zero form of the same kind. If there are non-liftable modular forms of this kind, there are indeed also mod-$l$ \emph{cusp} forms of weight $1$ non-liftable to characteristic zero cusp forms (of the same level), at least if $\Gamma$ is tame. Indeed by the Semicontinuity Theorem (see \cite[Section 5]{Mum08} or \cite[Appendix A.1]{Bro12} for tame stacks), $H^1(\MMb(\Gamma); g^*\omega)$ having $l$-torsion implies that the rank of $H^1(\MMb(\Gamma)_{\F_l}; g^*\omega)$ is bigger than that of $H^1(\MMb(\Gamma)_{\C};g^*\omega)$ and these ranks agree with those of $\F_l$-valued and $\C$-valued cusp forms of weight $1$ by Corollary \ref{cor:cusps}.

As in \cite[Lemma 2]{Buz}, it is easy to show by hand that non-liftable mod-$l$ weight $1$ cusp forms do not exist for $\MMb(\Gamma)  = \MMb_1(n)$ and $n\leq 28$. Further explorations benefit from computer help and were done in \cite{Edi06}, \cite{Buz}, \cite{SchThesis} and \cite{Sch14}. Some small examples from these sources where $l$-torsion occurs in $H^1(\MMb_1(n); g^*\omega)$ are 
 $$ (l,n) = (2,1429),\quad (3,74),\quad (3,133),\quad (5, 141) \;\text{ and }\; (199,82).$$
In Appendix \ref{app:CuspForms}, we will show that the smallest (odd) $n$ with $2$-torsion in $H^1(\MMb_1(n); g^*\omega)$ is $65$. More precisely, a \texttt{SAGE}-computation will show that $S_1(\Gamma_1(n);\F_2)$ is $2$-dimensional, while a \texttt{MAGMA}-computation shows that $S_1(\Gamma_1(n);\Q) = 0$. 

 There is evidence \cite{Sch14} that the torsion in $H^1(\MMb_1(n);g^*\omega)$ grows at least exponentially in $n$. 
\end{remark}

\section{Computing the decompositions}\label{sec:dec}
In this section, we will be more concrete and actually give formulas how to decompose vector bundles. As we will mostly work over a field of characteristic not $2$ or $3$, the hard work of the last section is almost entirely unnecessary for this section. But the results of this section have strong implications for the integral decompositions from the last section. We will start with some recollections and results about dimensions of spaces of modular forms. 

\subsection{Dimensions of spaces of modular forms}
Let $K$ be a field of characteristic not dividing $n$ and $\Gamma$ be one of the congruence subgroups $\Gamma_0(n)$, $\Gamma_1(n)$ or $\Gamma(n)$. Denote by $m_i^K$ the dimension of the space of weight $i$ modular forms for $\Gamma$ and by $s_i^K$ the dimension of the space of cusp forms for the same weight and group. Note $m_i^K = m_i^L$ and $s_i^K = s_i^L$ if $K\subset L$ is a field extension. 
\begin{lemma}\label{lem:inequality}
 We have the inequalities $m_i^K\geq m_i^{\C}$ (with equality for $i\geq 2$ if $\Gamma$ is tame).
\end{lemma}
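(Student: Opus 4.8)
The plan is to obtain these statements from base change and semicontinuity for the cohomology of the sheaves $g^*\omega^{\tensor i}$ on $\MMb(\Gamma)$, together with Serre duality as packaged in Proposition \ref{cor:cusps}. First, since $m_i^K = m_i^L$ and $s_i^K = s_i^L$ under a field extension $K \subset L$ and every field contains either $\F_p$ or $\Q$, it suffices to treat $K = \F_p$ for $p \nmid n$ (recall $n$ is invertible, so $\Char K \nmid n$ is the standing hypothesis) and $K = \Q$, and for the latter the comparison with $\C$ is an isomorphism. So the content is really the inequality $\dim_{\F_p} H^0(\MMb(\Gamma)_{\F_p}; g^*\omega^{\tensor i}) \geq \dim_{\C} H^0(\MMb(\Gamma)_{\C}; g^*\omega^{\tensor i})$, with equality claimed for $i \geq 2$ when $\Gamma$ is tame.

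The key steps: I would work over $R = \Z[\frac1n]$ (or $\Z[\frac1n,\zeta_n]$ in the $\Gamma(n)$ case) and apply the Semicontinuity Theorem to the flat proper morphism $\MMb(\Gamma)_R \to \Spec R$ — more precisely, in the tame representable cases this is an honest projective scheme over $R$ and classical semicontinuity applies, while in general one uses semicontinuity for tame stacks as in \cite[Appendix A.1]{Bro12} (referenced already in Remark \ref{rem:cuspy}), noting the non-representable tame cases are the explicit weighted projective stacks of Examples \ref{exa:moduli}, where the claim can be checked directly. Semicontinuity gives that $i \mapsto \dim_{k(\mathfrak p)} H^j(\MMb(\Gamma)_{k(\mathfrak p)}; g^*\omega^{\tensor i})$ is upper semicontinuous on $\Spec R$ for each $j$; applied at $j = 0$ and comparing the special point $(p)$ with the generic point gives $m_i^{\F_p} \geq m_i^{\Q} = m_i^{\C}$. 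For the equality when $i \geq 2$: here Proposition \ref{prop:coh}\eqref{lem:vanishingcoh} gives $H^1(\MMb(\Gamma)_R; g^*\omega^{\tensor i}) = 0$, so by cohomology and base change $H^0$ is a locally free $R$-module whose formation commutes with base change, hence its rank is constant on $\Spec R$; in particular $m_i^{\F_p} = m_i^{\Q} = m_i^{\C}$.

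For the cusp form statement (implicitly part of the same lemma, since $s_i$ is mentioned), one argues identically using Proposition \ref{cor:cusps}, which identifies weight-$i$ cusp forms with $H^0(\MMb(\Gamma); \Omega^1 \tensor g^*\omega^{\tensor(i-2)})$, i.e.\ with sections of another line bundle to which semicontinuity (and, for $i \geq 2$ tame, the vanishing of the corresponding $H^1$ via Serre duality and negative degree as in the proof of Proposition \ref{prop:coh}\eqref{lem:vanishingcoh}) applies the same way.

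The main obstacle I expect is purely bookkeeping about which form of semicontinuity and base change is legitimately available for the non-representable stacks in the list: for the tame Deligne–Mumford stacks appearing here one must either invoke the version for tame stacks (cohomology and base change holds because $p_*$ on the coarse space is exact, by \cite{AOV08} as recalled just before Lemma \ref{lem:tame}) or simply dispatch the finitely many small-level weighted-projective cases by hand using the explicit description of $g_*\OO$; neither is deep, but the statement has to be phrased so that it covers all $\Gamma$ and all characteristics uniformly. There is also the minor point that one wants the inequality for \emph{all} fields $K$, which is handled at the outset by the reduction to $\F_p$ and $\Q$ via the invariance of $m_i$ and $s_i$ under field extension.
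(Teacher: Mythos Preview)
Your argument is correct and reaches the same conclusion, but the route differs from the paper's. The paper avoids invoking semicontinuity for stacks altogether: after the same reduction to $K = \F_l$, it uses the short exact sequence \eqref{eq:fundamentalexact}
\[
0 \to M/l \to M_i(\Gamma; \F_l) \to H^1(\MMb(\Gamma); g^*\omega^{\tensor i})[l] \to 0,
\]
where $M = H^0(\MMb(\Gamma); g^*\omega^{\tensor i})$, coming directly from the long exact sequence for multiplication by $l$ on $g^*\omega^{\tensor i}$. Since $M$ is torsionfree, $\dim_{\F_l} M/l = \dim_{\C} M\tensor\C = m_i^{\C}$, so the injection on the left gives $m_i^{\F_l} \geq m_i^{\C}$; the equality for $i\geq 2$ in the tame case is then immediate from $H^1 = 0$. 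This is more elementary than your approach --- it uses only the long exact cohomology sequence, sidestepping the bookkeeping you (correctly) flag about semicontinuity on non-representable stacks --- and it also identifies the defect $m_i^{\F_l} - m_i^{\C}$ explicitly as the $l$-torsion in $H^1$, which is exploited elsewhere in the paper (cf.\ Remark \ref{rem:cuspy}). Your cohomology-and-base-change argument for the equality is equivalent to the paper's once $H^1$ vanishes. Finally, the lemma as stated concerns only $m_i$; your paragraph on $s_i$ is not needed here.
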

\begin{proof}
As $m_i^K = m_i^{\C}$ if $K$ has characteristic zero, we can assume that $K$ has characteristic $l>0$ and actually that $K = \F_l$. Set 
$$M = M_i(\Gamma; \Z[\frac1n]) = H^0(\MMb(\Gamma); g^*\omega^{\tensor i}).$$ 
By definition, $m_i^{\C}$ equals $\dim_{\C} M \tensor \C$. Note further that $M$ is torsionfree so that $\dim_{\C} M \tensor \C = \dim_{\F_l} M/l$. 
The exact sequence \eqref{eq:fundamentalexact} reads in our notation as
$$0 \to M/l \to  M_i(\Gamma; \F_l) \to H^1(\MMb(\Gamma); g^*\omega^{\tensor i})[l] \to 0.$$

Thus, 
$$m_i^{\F_l} \geq \dim_{\F_l} M/l = m_i^{\C}$$
with equality for $i \geq 2$ and $\Gamma$ tame as then $H^1(\MMb(\Gamma); g^*\omega^{\tensor i}) = 0$ by Proposition \ref{prop:coh}. 
\end{proof}
From now on we will just write $m_i = m_i^K$ and $s_i = s_i^K$. 

We will need the following standard fact, which follows directly from Riemann--Roch, Proposition \ref{prop:coh} and Corollary \ref{cor:cusps}. 

\begin{prop}\label{prop:riemann}
Assume that $\MMb(\Gamma)_K$ is representable. Let $g(\Gamma) = s_2$ be the genus of $\MMb(\Gamma)_K$. Then 
$$\deg(g^*\omega)i + 1-g(\Gamma) = \begin{cases}m_1-s_1 & \text{ if }i =1\\
m_i & \text{ if } i\geq 2.
\end{cases}.$$
\end{prop}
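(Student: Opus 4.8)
The plan is to invoke the Riemann--Roch theorem for the line bundle $g^*\omega^{\tensor i}$ on the smooth projective curve $\MMb(\Gamma)_K$, whose genus is $g(\Gamma)$. Recall that for a line bundle $\LL$ of degree $d$ on a smooth projective curve $X/K$ of genus $g$, one has
\[
h^0(X;\LL) - h^1(X;\LL) = d + 1 - g.
\]
Applying this with $\LL = g^*\omega^{\tensor i}$ gives $h^0(\MMb(\Gamma)_K; g^*\omega^{\tensor i}) - h^1(\MMb(\Gamma)_K; g^*\omega^{\tensor i}) = \deg(g^*\omega)\cdot i + 1 - g(\Gamma)$, since $g^*\omega^{\tensor i}$ has degree $\deg(g^*\omega)\cdot i$. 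It remains to identify the two cohomological terms with the quantities appearing in the statement.

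First I would handle the case $i \geq 2$. By Proposition \ref{prop:coh}\eqref{lem:vanishingcoh}, $H^1(\MMb(\Gamma)_K; g^*\omega^{\tensor i}) = 0$ for $i \geq 2$ (note $\Gamma$ is assumed tame here, which is implicit since $\MMb(\Gamma)_K$ is representable and $K$ a field — in fact over a field Proposition \ref{prop:coh} applies as $\Gamma$ is tame), so Riemann--Roch reduces to $m_i = h^0(\MMb(\Gamma)_K; g^*\omega^{\tensor i}) = \deg(g^*\omega)\cdot i + 1 - g(\Gamma)$, which is exactly the claimed formula. Here I use that $m_i$ is by definition $\dim_K H^0(\MMb(\Gamma)_K; g^*\omega^{\tensor i})$, the dimension of the space of weight-$i$ modular forms.

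For the case $i = 1$, the $H^1$ term need not vanish, so I must identify it. By Proposition \ref{cor:cusps}, the space of weight-$i$ cusp forms is isomorphic to $\Hom_K(H^1(\MMb(\Gamma)_K; g^*\omega^{\tensor (2-i)}), K)$ via Grothendieck (Serre) duality; taking $i = 1$ gives $s_1 = \dim_K H^1(\MMb(\Gamma)_K; g^*\omega)$. Substituting $h^0 = m_1$ and $h^1 = s_1$ into Riemann--Roch yields $m_1 - s_1 = \deg(g^*\omega)\cdot 1 + 1 - g(\Gamma)$, as claimed. Finally, the identification $g(\Gamma) = s_2$ is precisely Proposition \ref{cor:cusps} applied with $i = 2$: $s_2 = \dim_K H^1(\MMb(\Gamma)_K; g^*\omega^{\tensor 0}) = \dim_K H^1(\MMb(\Gamma)_K; \OO) = g(\Gamma)$, the genus. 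There is no real obstacle here — the statement is essentially a bookkeeping exercise assembling Riemann--Roch with the cohomological vanishing of Proposition \ref{prop:coh} and the duality identification of cusp forms in Proposition \ref{cor:cusps}; the only point requiring a modicum of care is making sure the degree of $g^*\omega^{\tensor i}$ is $i\cdot\deg(g^*\omega)$ and that representability of $\MMb(\Gamma)_K$ is what licenses the use of classical Riemann--Roch and Serre duality on a scheme.
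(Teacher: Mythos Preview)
Your proof is correct and follows exactly the approach the paper indicates: the statement is a direct consequence of Riemann--Roch on the curve $\MMb(\Gamma)_K$, combined with the vanishing $H^1(\MMb(\Gamma)_K;g^*\omega^{\tensor i})=0$ for $i\ge 2$ from Proposition~\ref{prop:coh} and the Serre-duality identification of cusp forms from Proposition~\ref{cor:cusps}. One small remark: your parenthetical about tameness is slightly roundabout---the point is simply that representability forces $\Gamma$ to be $\Gamma_1(n)$ or $\Gamma(n)$ (since $\MMb_0(n)$ always carries the $[-1]$-automorphism), and these are tame by definition---but the conclusion is correct and the argument goes through.
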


Let us be more explicit about $\deg(f_n^*\omega)$ for $\Gamma = \Gamma_1(n)$. 

\begin{lemma}\label{lem:degomega}
The degree $\deg (f_n)^*\omega$ equals $\frac1{24} d_n$ for $d_n$ the degree of the map $f_n\colon \MM_1(n)_K \to \MM_{ell, K}$. We have 
\begin{align*}
 d_n &= \sum_{d|n}d\phi(d)\phi(n/d) \\
 &= n^2\prod_{p|n}(1-\frac1{p^2})
\end{align*}
\end{lemma}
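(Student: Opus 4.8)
The plan is to compute $\deg(f_n)^*\omega$ by relating it to the degree of the covering map $f_n\colon \MM_1(n)_K \to \MM_{ell,K}$ and then to compute that degree combinatorially. First I would recall that $\omega$ on $\MMb_{ell,K}$ has degree $\frac{1}{24}$ (this is \cite[VI.4.4]{D-R73}, already cited in the proof of Proposition \ref{prop:coh}), where the degree is taken in the sense of the Picard group of the stack; equivalently, $\omega^{\tensor 24} \cong \OO(\{\text{cusp}\})$ has a single simple zero at the unique cusp, and the automorphism group there has order $12$, giving the total degree $\frac{1}{24}$ after accounting for the $\Z/2$ generic automorphisms. Since $f_n\colon \MMb_1(n)_K \to \MMb_{ell,K}$ is finite and flat of degree $d_n$ (the degree of the map on the interior, which is the same as on the compactification since $f_n$ is finite flat), and $(f_n)^*\omega$ pulls back, we get $\deg (f_n)^*\omega = d_n \cdot \deg \omega = \frac{1}{24} d_n$. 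This is essentially bookkeeping once the degree-$\frac{1}{24}$ fact and finite-flatness (Proposition \ref{prop:basicprops}) are in hand; the only subtlety is that for small $n$ the source is a stack and one must interpret "degree" correctly, but the multiplicativity of degree under finite flat maps of one-dimensional proper DM stacks handles this uniformly.

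Next I would compute $d_n$, the number of sheets of $\MM_1(n) \to \MM_{ell}$ over a generic geometric point, i.e.\ the number of points of exact order $n$ on a generic elliptic curve $E$ over $K$ (with $n$ invertible). Over an algebraically closed field $E[n] \cong (\Z/n)^2$, so the number of points of \emph{exact} order $n$ is the number of elements of order exactly $n$ in $(\Z/n)^2$. I would count these by the standard sum over subgroups: grouping a point $P$ of order $n$ by the cyclic subgroup $\langle P\rangle$ it generates, each cyclic subgroup of order $d$ in $(\Z/n)^2$ contains $\phi(d)$ generators, but I actually want order exactly $n$. A cleaner route: the number of surjections $(\Z/n)^2 \twoheadrightarrow$ (something of order $n$) is awkward, so instead I would directly write the number of order-$n$ elements of $(\Z/n)^2$ as $\sum_{d\mid n} d\,\phi(d)\,\phi(n/d)$ by a component-wise CRT argument (reduce to prime powers $n = p^k$ where the count of exact-order-$p^k$ elements in $(\Z/p^k)^2$ is $p^{2k} - p^{2k-2}$, and check the formula $\sum_{j\le k} p^j \phi(p^j)\phi(p^{k-j})$ telescopes to the same value), then multiply over primes $p\mid n$ using multiplicativity of both sides.

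The passage from $d_n = \sum_{d\mid n} d\,\phi(d)\,\phi(n/d)$ to the product formula $n^2\prod_{p\mid n}(1 - \frac{1}{p^2})$ is then a routine multiplicativity check: both $n \mapsto d_n$ (as a Dirichlet-type convolution of multiplicative functions) and $n\mapsto n^2\prod_{p\mid n}(1-p^{-2})$ are multiplicative, so it suffices to verify equality at prime powers $n = p^k$, where the left side is $\sum_{j=0}^{k} p^j \phi(p^j)\phi(p^{k-j})$ and the right side is $p^{2k}(1-p^{-2}) = p^{2k} - p^{2k-2}$; expanding $\phi(p^j) = p^j - p^{j-1}$ (with $\phi(1)=1$) and summing the resulting geometric-type series gives the claim. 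I expect the main obstacle to be purely expository rather than mathematical: making sure the normalization "$\deg\omega = \frac{1}{24}$ as a line bundle on the stack $\MMb_{ell}$" is stated compatibly with how degrees of line bundles on the (possibly stacky) curves $\MMb_1(n)_K$ are used elsewhere in Section \ref{sec:dec}, and confirming that $d_n$ as defined (degree of the coarse/stack map on interiors) really is the generic fiber cardinality computed above — both of which are standard but deserve a sentence of care.
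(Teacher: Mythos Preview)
Your proposal is correct and follows essentially the same line as the paper: both invoke $\deg\omega = \tfrac{1}{24}$ from \cite[VI.4.4]{D-R73} and the multiplicativity of degree under the finite flat map $f_n$ to obtain $\deg(f_n)^*\omega = \tfrac{1}{24}d_n$. For the formulae for $d_n$ the paper simply cites \cite[Sec.\ 3.8--3.9]{D-S05} (adding the remark that the stack degree is twice the coarse-moduli degree because the generic point of $\MM_{ell}$ has automorphism group of order $2$, and that the count is valid in all characteristics by finite flatness over $\Z[\tfrac1n]$), whereas you compute $d_n$ directly as the number of elements of exact order $n$ in $(\Z/n)^2$ and verify the two expressions by reducing to prime powers via multiplicativity; this is a self-contained variant of the same argument and avoids the factor-of-$2$ bookkeeping since you never pass through coarse moduli.
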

\begin{proof}
By \cite[VI.4.4]{D-R73}, the line bundle $\omega$ has degree $\frac1{24}$. As  $\deg (f_n)^*\omega = \deg(f_n) \deg \omega$, the first result follows. For the formulas for $d_n$ see \cite[Sec 3.8+3.9]{D-S05}; note that the map of stacks has twice the degree of the map of coarse moduli spaces as the generic point of $\MM_{ell}$ has automorphism group of order $2$ and that the formulas from \cite{D-S05} are actually valid in all characteristics as $\MM_1(n) \to \MM_{ell,\Z[\frac1n]}$ is finite and flat. 
\end{proof}

It is not quite obvious from Proposition \ref{prop:riemann} how to obtain even a good lower bound on $m_1$. We have the following simple observation.
\begin{lemma}\label{lem:m1bound}
 Let $\Gamma$ be $\Gamma_1(n)$ for $n\geq 5$ or $\Gamma(n)$ for $n\geq 3$. Then $m_1 \geq 2$. 
\end{lemma}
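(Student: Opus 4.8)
The plan is to produce two linearly independent weight-$1$ modular forms for $\Gamma$ by an explicit geometric construction, exploiting the fact that $\MMb(\Gamma)_K$ is a curve with at least two cusps. First I would recall that for $\Gamma = \Gamma_1(n)$ with $n\geq 5$ or $\Gamma(n)$ with $n \geq 3$ the stack $\MMb(\Gamma)_K$ is a smooth projective curve over $K$ by Proposition \ref{prop:basicprops}, and that $g^*\omega$ is a line bundle on it. By Proposition \ref{cor:cusps} the space of weight-$1$ cusp forms is dual to $H^1(\MMb(\Gamma)_K; g^*\omega)$, so $m_1 = s_1 + \deg(g^*\omega) + 1 - g(\Gamma)$ by Proposition \ref{prop:riemann}; thus it suffices to bound $\deg(g^*\omega) - g(\Gamma)$ from below together with $s_1\geq 0$, or alternatively to exhibit sections directly. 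The cleanest route is the direct one: I would observe that $g^*\omega$ has a section whose divisor is supported on the cusps (a multiple of the cusps corresponds to a power of $\omega$, since $\OO(\cusps)$ is a power of $g^*\omega$ by the proof of Corollary \ref{cor:Omegaomega} — indeed $\omega^{\otimes 12}\cong\OO(\{\infty\})$ on $\MMb_{ell}$, pulled back), so the number of cusps controls how many independent sections of powers of $g^*\omega$ we can manufacture.

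Concretely, here is the argument I would carry out. The constant function $1$ gives one nonzero element of $M_1(\Gamma;K)$ only if $g^*\omega$ is trivial, which it is not; so instead I would argue as follows. Choose a cusp $c_0$ of $\MMb(\Gamma)_K$. Since $\MMb(\Gamma)_K$ has at least two cusps when $n\geq 5$ (for $\Gamma_1(n)$) or $n\geq 3$ (for $\Gamma(n)$) — this is classical and can be read off from the description of cusps as $\Gamma$-orbits on $\Prj^1(\Q)$, or from the fact that a N\'eron polygon with the given level structure exists over $\Q$ in more than one isomorphism class — I get a second cusp $c_1$. Now $H^0(\MMb(\Gamma)_K; g^*\omega)$ contains the image of $H^0(\MMb_{ell,K};\omega)$, which is one-dimensional over $K$ generated by... wait, $H^0(\MMb_{ell};\omega) = 0$ since there are no weight-$1$ modular forms for $SL_2(\Z)$. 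So that contributes nothing. Instead I would use that $\dim_K H^0(\MMb(\Gamma)_K; g^*\omega) = m_1 \geq m_1^{\C}$ by Lemma \ref{lem:inequality}, reducing the claim to characteristic zero, where it is classical: over $\C$, for $\Gamma_1(n)$ with $n\geq 5$ the Eisenstein subspace of weight-$1$ forms already has dimension equal to half the number of regular cusps, which is at least $2$; explicitly, the weight-$1$ Eisenstein series $E_1^{\psi,\varphi}$ attached to pairs of Dirichlet characters give independent forms. So the main step is: reduce to $\C$ via $m_1 \geq m_1^{\C}$, then invoke the dimension formula for weight-$1$ Eisenstein series (or the genus/cusp count in Proposition \ref{prop:riemann}).

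I expect the main obstacle to be pinning down the characteristic-zero lower bound $m_1^{\C}\geq 2$ cleanly, since weight $1$ is exactly the weight where Riemann--Roch does not immediately give the answer (the line bundle $g^*\omega$ has degree roughly equal to $g(\Gamma)$, so $h^0 - h^1 = \deg - g + 1$ is small and one genuinely needs information about $h^1 = s_1$, the space of weight-$1$ cusp forms, which can be nonzero and is subtle). The way around this, which I believe is what the paper intends, is not to use cusp forms at all but to count \emph{Eisenstein} series: the full space $M_1(\Gamma;\C)$ contains the Eisenstein subspace, whose dimension is $\tfrac12(\varepsilon_\infty^{\mathrm{reg}} - \delta)$ in standard notation, and for $\Gamma_1(n)$, $n\geq 5$, resp. $\Gamma(n)$, $n\geq 3$, this is $\geq 2$. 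I would therefore structure the write-up as: (i) by Lemma \ref{lem:inequality} it suffices to treat $K=\C$; (ii) over $\C$, $M_1(\Gamma;\C)$ contains the Eisenstein subspace; (iii) for these $\Gamma$ the number of regular cusps is large enough that this subspace has dimension at least $2$. Alternatively, a slicker and more self-contained version available to this paper: pull back along $\MMb(\Gamma)_K \to \MMb_1(q)_K$ for $q\in\{5,6\}$ using Corollary \ref{cor:456}-type maps, where $\MMb_1(q)\cong \Prj^1$ and one can write down two weight-$1$ forms by hand from the presentation in Example \ref{exa:moduli} / Example \ref{exa:5-12}, then check they remain independent after pullback (injectivity of pullback on global sections, which holds since $g$ is finite flat with geometrically connected fibers so $g_*\OO = \OO\oplus(\text{stuff})$). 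I would likely present whichever of these the subsequent text sets up, but flag that the content is entirely classical dimension-counting for weight-$1$ Eisenstein series.
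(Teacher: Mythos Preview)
Your main line of argument---reduce to $K=\C$ via Lemma \ref{lem:inequality}, then bound $m_1^{\C}$ from below by the dimension of the weight-$1$ Eisenstein subspace, which in turn is controlled by the number of regular cusps---is exactly the paper's proof: it cites Diamond--Shurman for $2m_1^{\C}\geq \#(\text{regular cusps})$, observes that all cusps are regular for these $\Gamma$, and that there are at least four of them. One caution about your proposed alternative: there is no natural map $\MMb(\Gamma)_K \to \MMb_1(q)_K$ for $q\in\{5,6\}$ unless $q$ divides $n$, so the pullback-from-$\MMb_1(5)$ idea does not go through in general (Corollary \ref{cor:456} concerns decompositions of pushforward bundles, not morphisms of stacks).
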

\begin{proof}
 By Theorem 3.6.1 from \cite{D-S05} the quantity $2m_1^{\C}$ is at least the number of regular cusps for $\Gamma$ and p.\ 103 of loc.\ cit.\ shows that in our cases all cusps are regular. By \cite[Figure 3.3]{D-S05} the number of cusps is at least $4$ for our choices of $\Gamma$ and thus $m_1^{\C}\geq 2$. Lemma \ref{lem:inequality} implies our result. 
\end{proof}

We need the following (probably standard) lemma, which was proven jointly with Viktoriya Ozornova. 

\begin{lemma}
 Let $K$ be an algebraically closed field and $A$ a $\Z_{\geq 0}$-graded integral domain over $K$. Set $m_i = \dim_K A_i$ and assume that $m_0 = 1$. Then $m_2 \geq 2m_1-1$.\footnote{We allow ourselves the abuse of the notation $m_i$ as we will take $A$ to be a ring of modular forms later.}
\end{lemma}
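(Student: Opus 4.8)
The plan is to translate the statement into an upper bound on the space of quadratic relations in $A$, and then to control that space geometrically by repeatedly projecting the projective variety $\mathrm{Proj}(A)$ from one of its smooth points.

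If $m_1$ is infinite the bound is vacuous (and in any case, for $0\ne x\in A_1$ the products $\{xe : e\in A_1\}$ are independent in $A_2$, so $m_2\ge m_1$), so assume $n:=m_1<\infty$. If $n\le 1$ the claim is immediate (for $n=1$ pick $0\ne x\in A_1$; then $x^2\ne 0$ since $A$ is a domain), so assume $n\ge 2$. Replacing $A$ by the graded subalgebra $K[A_1]$ generated by $A_1$ can only shrink $A_2$, and preserves "integral domain, $A_0=K$, $\dim_K A_1=n$", so I may assume $A$ is standard graded. Then $A_2=\mathrm{Sym}^2(A_1)/J$, where $J=\ker(\mathrm{Sym}^2 A_1\to A_2)$ is the space of quadrics vanishing on $X:=\mathrm{Proj}(A)\subseteq\mathbb{P}^{n-1}$; since $\dim_K\mathrm{Sym}^2 A_1=\binom{n+1}{2}$ and $\binom{n+1}{2}-\binom{n-1}{2}=2n-1$, the inequality $m_2\ge 2n-1$ is equivalent to $\dim_K J\le\binom{n-1}{2}$. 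Here $X$ is integral (as $A$ is a domain), non-degenerate, i.e.\ contained in no hyperplane (as $\mathrm{Sym}(A_1)\to A$ is an isomorphism in degree $1$), and of dimension $r\ge 1$ (a zero-dimensional integral non-degenerate closed subscheme of $\mathbb{P}^{n-1}$ over the algebraically closed field $K$ would be a single $K$-rational point, forcing $n-1=0$). If $X=\mathbb{P}^{n-1}$ (equivalently $A$ is a polynomial ring) then $J=0$ and we are done, so assume in addition $r<n-1$.

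The core step is the following recursive inequality. Pick a smooth $K$-point $p$ of $X$ --- it exists since $X$ is geometrically reduced over $K=\overline K$ --- and let $X'=\overline{\pi_p(X\setminus\{p\})}\subseteq\mathbb{P}^{n-2}$ be the closure of the image of linear projection away from $p$. I would check that $X'$ is again integral, non-degenerate, and of the same dimension $r$: non-degeneracy because $X'\subseteq H'$ would force $X$ into the hyperplane spanned by $p$ and $H'$; and dimension, because a drop in dimension would make $X$ a cone with vertex $p$, hence (as $p$ is smooth) a linear subspace, contradicting non-degeneracy together with $X\ne\mathbb{P}^{n-1}$. Let $A'\subseteq A$ be the standard graded subalgebra generated by a hyperplane $W\subseteq A_1$ adapted to this projection, so $A'_1=W$, $\mathrm{Proj}(A')=X'$, and $J':=\ker(\mathrm{Sym}^2 W\to A'_2)=J\cap\mathrm{Sym}^2 W$. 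Now every $Q\in J$ vanishes at $p$, since $p\in X$; and, crucially, since $Q$ contains $X$ and $X$ is smooth at $p$, the differential $dQ|_p$ vanishes on the $r$-dimensional tangent space $T_pX$ (if $Q$ is smooth at $p$ then $T_pX\subseteq T_pQ=\ker dQ|_p$; if $Q$ is singular at $p$ then $dQ|_p=0$). Writing $\mathrm{Sym}^2 A_1=\mathrm{Sym}^2 W\oplus(W\cdot x_n)\oplus Kx_n^2$ for a splitting $A_1=W\oplus Kx_n$ determined by $p$, vanishing at $p$ kills the $Kx_n^2$-component and the tangent constraint confines the $(W\cdot x_n)$-component, under $W\cdot x_n\cong W$, to an $(n-1-r)$-dimensional subspace of $W$. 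Since the summand $\mathrm{Sym}^2 W$ meets $J$ precisely in $J'$, this gives
\[ \dim_K J \ \le\ \dim_K J' + (n-1-r). \]

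Iterating --- each step lowers the ambient projective dimension by $1$ while keeping the variety integral, non-degenerate, and of dimension $r$ --- after $n-1-r$ steps the variety fills its ambient $\mathbb{P}^r$ and its quadric ideal vanishes. Summing the increments $(n-1-r),(n-2-r),\dots,1$ yields
\[ \dim_K J \ \le\ \sum_{j=1}^{n-1-r} j \ =\ \binom{n-r}{2}\ \le\ \binom{n-1}{2}, \]
using $r\ge 1$ for the last inequality; this is exactly the bound needed, so $m_2\ge 2n-1$. The step I expect to require the most care is the projection lemma: verifying that projection from a smooth point preserves integrality, non-degeneracy and dimension, and --- the truly load-bearing point --- exploiting the first-order (tangent-line) vanishing of quadrics through a smooth point of $X$ to obtain the sharp increment $n-1-r$; merely using that $Q$ vanishes at $p$ would give the increment $n-1$ and only the far weaker conclusion $m_2\ge n$. (A less self-contained alternative replaces the projection recursion by cutting $X$ down to $\deg X$ points in linearly general position via general hyperplane sections and invoking Castelnuovo's lemma on the conditions such points impose on quadrics.)
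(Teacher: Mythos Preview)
Your argument is correct and genuinely different from the paper's. The paper reduces (as you do) to bounding the space $R$ of quadratic relations, but then proceeds in one shot: it considers the map $\kappa\colon \mathbb{P}^{n-1}\times\mathbb{P}^{n-1}\to \mathbb{P}(\mathrm{Sym}^2 K^n)$ sending $([v],[w])$ to the class of the product $vw$ of linear forms. Since $A$ is a domain, no nonzero quadratic relation factors as a product of linear forms, so $\mathrm{im}(\kappa)\cap\mathbb{P}(R)=\varnothing$; as $\mathrm{im}(\kappa)$ is a projective variety of dimension $2n-2$, the projective dimension theorem forces $\dim R\le\binom{n+1}{2}-(2n-1)$, i.e.\ $m_2\ge 2n-1$.

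Your route via iterated projection from smooth points is more classical projective geometry and slightly more laborious to set up (one must check that projection preserves integrality, non-degeneracy and dimension, and that $I\cap K[W]$ really is the ideal of $X'$---all true, as you indicate). In return it yields the sharper bound $\dim J\le\binom{n-r}{2}$ with $r=\dim X$, recovering the paper's inequality only after the crude estimate $r\ge 1$. The paper's Segre-type argument is shorter and uses only that no relation is a product of two linear forms, never touching smoothness or tangent spaces; your approach extracts more from the geometry of $X$ at a smooth point, which is why it sees the dimension $r$.
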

\begin{proof}
 We will work throughout this proof over the field $K$ and set $n = m_1$. Let $P =K[x_1,\dots, x_n]$ with the $x_i$ of degree $1$. Without loss of generality we can assume that $A$ is generated in degree $1$ and can thus be written as $P/I$ with $I$ a homogeneous ideal generated in degrees $\geq 2$. An element in $P_2$ can be written as $\sum_{i\leq j} a_{ij}x_ix_j$ and thus we can view it as an upper triangular matrix $(a_{ij})$. Let $R$ be the space of upper triangular matrices associated with the elements of $P_2 \cap I$. For an arbitrary matrix $(a_{ij})$ we set 
 $U((a_{ij})) = (m_{ij})$ with 
 $$m_{ij} = \begin{cases}
  a_{ii} &\text{ if }i=j \\
  a_{ij}+a_{ji} &\text { if }i<j\\
  0 & \text{ else.}
 \end{cases}$$
 Thus, $U$ defines a linear map $\Mat_{n\times n} \to \UpT$ from all $n\times n$-matrices to the upper triangular ones. If we view linear homogeneous polynomials in the $x_i$ as column vectors $\mathbf{v},\mathbf{w}$ in $P_1 = A_1$, then their product corresponds to the upper triangular matrix $U(\mathbf{v}\mathbf{w}^T)$. As $A$ is an integral domain,
 no nonzero element in $R$ is of this form. 
 
  We can reformulate this in terms of the Segre embedding. Recall that this is the map $\iota\colon \mathbb{P}^{n-1}\times \mathbb{P}^{n-1} \to \mathbb{P}(\Mat_{n\times n})$ sending $([\mathbf{v}],[\mathbf{w}])$ to $[\mathbf{v}\mathbf{w}^T]$. Let $V = \mathbb{P}(\Mat_{n\times n}) \setminus \mathbb{P}(\ker(U))$. As $P$ is an integral domain, $\iota$ factors through $V$. Furthermore, $U$ defines an algebraic map $V \to \mathbb{P}(\UpT)$. The composite map $\kappa\colon \mathbb{P}^{n-1}\times \mathbb{P}^{n-1} \to \mathbb{P}(\UpT)$ is proper as the source is proper over $K$. Furthermore, $\kappa$ is quasifinite because $P$ is a unique factorization domain and thus every point in $\mathbb{P}(\UpT)$ has at most two preimages in $\mathbb{P}^{n-1}\times \mathbb{P}^{n-1}$. Thus, $\kappa$ defines a finite map $ \mathbb{P}^{n-1}\times \mathbb{P}^{n-1} \to \im(\kappa)$ and thus $\im(\kappa)$ is a projective variety of dimension $2n-2$. As $\im(\kappa)\cap \mathbb{P}(R) = \varnothing$, it follows by \cite[Thm I.7.2]{Har77} 
that $R$ has dimension at most $\dim_K \UpT - (2n-1)$ and thus that 
  \[m_2 = \dim_K \UpT - \dim_K R \geq 2n-1. \qedhere\] 
\end{proof}

Together with Proposition \ref{prop:irreducible}, this directly implies the following proposition.
\begin{prop}\label{prop:m2m1}
 We always have $m_2 \geq 2m_1 -1$. \hfill $\square$
\end{prop}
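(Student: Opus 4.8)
The plan is to read this off from the preceding lemma, applied to a ring of modular forms over an algebraically closed field. Since $m_i = m_i^K$ is unchanged under field extension, I would first pass to the algebraic closure $\overline{K}$ of $K$; note that $\overline{K}$ automatically contains a primitive $n$-th root of unity (as $n$ is invertible in $K$), so in the case $\Gamma = \Gamma(n)$ the ring $M_*(\Gamma;\overline{K})$ is defined with the conventions of Definition \ref{def:tame}, and by flat base change along $\Spec\overline{K}\to\Spec K$ its graded pieces still have dimensions $m_i$.

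Set $A = M_*(\Gamma;\overline{K})$. By Proposition \ref{prop:coh} (together with Proposition \ref{prop:square} when $\Gamma = \Gamma_0(n)$, to reduce to the always-tame case $\Gamma_1(n)$) there are no modular forms of negative weight, so $A$ is $\Z_{\geq 0}$-graded. By Proposition \ref{prop:irreducible}, $A$ is an integral domain and the constants give an isomorphism $\overline{K}\xrightarrow{\cong}A_0$, so $\dim_{\overline{K}}A_0 = 1$; the hypotheses of that proposition ($\overline{K}$ noetherian, integral, a $\Z[\frac1n]$-algebra containing $\zeta_n$ when $\Gamma = \Gamma(n)$) are all met. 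Finally, each $A_i$ is finite-dimensional: since $g$ is finite and flat (Proposition \ref{prop:basicprops}), the sheaf $g_*\OO_{\MMb(\Gamma)_{\overline{K}}}$ is a vector bundle, so $A_i \cong H^0(\MMb_{ell,\overline{K}};\omega^{\tensor i}\tensor g_*\OO_{\MMb(\Gamma)_{\overline{K}}})$ is the space of global sections of a coherent sheaf on the proper stack $\MMb_{ell,\overline{K}}$, hence finite-dimensional over $\overline{K}$. The previous lemma now applies to $A$ and yields $m_2 = \dim_{\overline{K}}A_2 \geq 2\dim_{\overline{K}}A_1 - 1 = 2m_1 - 1$.

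There is no genuine obstacle here: the argument is entirely a matter of checking that the bookkeeping — base change to $\overline{K}$, the hypotheses of Propositions \ref{prop:irreducible} and \ref{prop:coh}, and finiteness of the graded pieces — goes through, which is why the statement can be said to follow directly from the lemma and Proposition \ref{prop:irreducible}.
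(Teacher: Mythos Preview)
Your proof is correct and follows exactly the approach the paper intends: the paper simply states that the proposition follows from the preceding lemma together with Proposition \ref{prop:irreducible}. You have spelled out the details the paper leaves implicit---base change to $\overline{K}$, the $\Z_{\geq 0}$-grading via Proposition \ref{prop:coh} (and the reduction to $\Gamma_1(n)$ via Proposition \ref{prop:square} in the non-tame $\Gamma_0(n)$ case), and finite-dimensionality of the graded pieces---but none of this goes beyond the paper's own route.
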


\subsection{Decompositions into powers of $\omega$}\label{subsec:dec}
In this section, we will work (implicitly) over a field $K$ of characteristic not $2$ or $3$ until further notice. Let $\Gamma$ be again one of the congruence subgroups $\Gamma_0(n)$, $\Gamma_1(n)$ or $\Gamma(n)$. Let $g\colon \MMb(\Gamma) \to \MMb_{ell}$ be the projection.  Recall that $\MMb_{ell} \simeq \PP(4,6)$ with $\OO(1) \cong \omega$. By Proposition \ref{prop:basicprops}, the $\OO_{\MMb_{ell}}$-module $g_*\OO_{\MMb(\Gamma)}$ is locally free of finite rank. Thus it decomposes by Proposition \ref{prop:ClassField} as 
 \begin{align}\label{eq:dec} g_*\OO_{\MMb(\Gamma)} \cong \bigoplus_{i\in\Z} \bigoplus_{l_i} \omega^{\tensor (-i)}.\end{align}
 
 Our aim is to determine the sequence of $l_i$ (which is well-defined by the Krull--Schmidt Theorem \ref{prop:Krull-Schmidt}). We will sometimes call it the \emph{decomposition sequence} of $g_*\OO_{\MMb(\Gamma)}$. 
 
 \begin{prop}\label{prop:dec5}We have 
  \begin{enumerate}
   \item $l_i = 0$ for $i<0$ and $i> 11$,
   \item $l_i = m_i-m_{i-4}-m_{i-6}+m_{i-10}$ for $i\leq 11$; in particular, $l_i = m_i$ for $i\leq 3$,
   \item $l_{12-i} = s_i$ for $i\leq 4$,
   \item $l_{10}$ is the genus of $\MMb(\Gamma)$, i.e.\ $\dim_K H^0(\MMb(\Gamma); \Omega^1_{\MMb(\Gamma)/K})$. 
  \end{enumerate}
 \end{prop}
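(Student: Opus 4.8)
The strategy is to extract the decomposition sequence $(l_i)$ from cohomological invariants of $g_*\OO_{\MMb(\Gamma)}$, using the explicit cohomology of line bundles on $\PP(4,6)$ computed (essentially) via Theorem \ref{thm:fundamentalweighted} and the identifications of modular forms and cusp forms from Propositions \ref{prop:coh} and \ref{cor:cusps}. First I would record the two basic facts about $\omega^{\tensor j}$ on $\MMb_{ell,K}\simeq\PP(4,6)$: that $H^0(\MMb_{ell};\omega^{\tensor j})$ has dimension equal to the number of pairs $(a,b)\in\Z_{\geq0}^2$ with $4a+6b=j$, and that by Serre duality on the weighted projective line (with dualizing sheaf $\omega^{\tensor(-10)}$ by Lemma \ref{lem:Omega1}) one has $H^1(\MMb_{ell};\omega^{\tensor j})\cong H^0(\MMb_{ell};\omega^{\tensor(-10-j)})^{\vee}$, which vanishes for $j\geq -9$ and in particular for all $j\geq 0$. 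Since $g$ is finite, $H^i(\MMb(\Gamma);g^*\omega^{\tensor j})\cong H^i(\MMb_{ell};\omega^{\tensor j}\tensor g_*\OO_{\MMb(\Gamma)})$, and feeding in the decomposition \eqref{eq:dec} gives
\[
 m_j \;=\; \dim_K H^0(\MMb(\Gamma);g^*\omega^{\tensor j}) \;=\; \sum_{i} l_i\cdot\dim_K H^0(\MMb_{ell};\omega^{\tensor(j-i)})
\]
for all $j\geq 0$ (using $H^1$-vanishing, which holds once $j\geq 0$ since then $j-i\geq -11$; one must be slightly careful, but for $j\geq 2$ Proposition \ref{prop:coh} already gives $H^1(\MMb(\Gamma);g^*\omega^{\tensor j})=0$ directly, and the small cases $j=0,1$ are handled by Propositions \ref{prop:irreducible} and by inspection).

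From here the four claims follow by elementary bookkeeping with the generating function. The Poincaré series of $K[c_4,c_6]=\bigoplus_j H^0(\MMb_{ell};\omega^{\tensor j})$ is $1/((1-t^4)(1-t^6))$, so the displayed identity reads $\sum_j m_j t^j = \big(\sum_i l_i t^i\big)\cdot\frac{1}{(1-t^4)(1-t^6)}$, i.e.\ $\sum_i l_i t^i = (1-t^4)(1-t^6)\sum_j m_j t^j$. Expanding $(1-t^4)(1-t^6)=1-t^4-t^6+t^{10}$ gives immediately $l_i=m_i-m_{i-4}-m_{i-6}+m_{i-10}$, which is (2); since $m_i=0$ for $i<0$ and $m_0=1$, this yields $l_i=m_i$ for $i\leq 3$. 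For (1): $l_i=0$ for $i<0$ is clear from the same formula ($m_{i}$ vanishes in negative degrees), and $l_i=0$ for $i>11$ requires the stability statement that for large $j$ the ratio $m_j/(\text{number of }(a,b))$ stabilizes — concretely, one shows $g_*\OO_{\MMb(\Gamma)}$ has no summand $\omega^{\tensor(-i)}$ with $i>11$ because such a summand would contribute a class to $H^1(\MMb(\Gamma);g^*\omega^{\tensor j})$ for $j$ with $-10-(j-i)\geq 0$, i.e.\ would force $H^1(\MMb(\Gamma);g^*\omega^{\tensor j})\neq 0$ for some $j\geq 2$, contradicting Proposition \ref{prop:coh}; equivalently, dualize using Corollary \ref{cor:cusps}. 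The cleanest route is: $l_{12-i}$ records the multiplicity of $\omega^{\tensor(i-12)}$, and by Serre/Grothendieck duality on $\MMb(\Gamma)$ together with $\Omega^1_{\MMb(\Gamma)}\cong g^*\omega^{\tensor 2}\tensor\OO(-\cusps)$ (Corollary \ref{cor:Omegaomega}), the multiplicity of $\omega^{\tensor(-i)}$ in $g_*\OO_{\MMb(\Gamma)}$ equals the multiplicity of $\omega^{\tensor(i-12)}\tensor$(twist) in the dual, hence $l_{12-i}$ is computed by $H^1(\MMb(\Gamma);g^*\omega^{\tensor(i)})$-type data; this gives $l_{12-i}=s_i$ for $i\leq 4$ (claim (3)) once one checks $H^1(\MMb(\Gamma);g^*\omega^{\tensor i})\cong S_i(\Gamma;K)^{\vee}$ in that range via Proposition \ref{cor:cusps}. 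Claim (4) is the case $i=2$ of (3): $l_{10}=s_2=\dim_K H^0(\MMb(\Gamma);\Omega^1_{\MMb(\Gamma)/K})$, the genus.

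The main obstacle I anticipate is \textbf{making the duality bookkeeping in (3) and the vanishing in (1) rigorous rather than heuristic}. The identity $\sum_i l_i t^i=(1-t^4)(1-t^6)\sum_j m_j t^j$ only determines the $l_i$ once we know a priori that the decomposition \eqref{eq:dec} involves only finitely many $i$, and pinning the top of the range at $i=11$ (equivalently, that $s_i=0$ for $i\geq 5$ contributes nothing and $\Omega^1$-duality caps things at degree $10$) needs the precise statement that $H^1$ of $g^*\omega^{\tensor j}$ is dual to weight-$(2-j)$ cusp forms, which vanish for $j\leq 0$ except that weight-$1$ and weight-$0$ contributions must be tracked. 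So the careful part is: (i) set up the duality $H^1(\MMb(\Gamma);g^*\omega^{\tensor j})^{\vee}\cong H^0(\MMb(\Gamma);g^*\omega^{\tensor(2-j)}\tensor\OO(-\cusps))=S_{2-j}(\Gamma;K)$ from Corollary \ref{cor:cusps}, valid since $\MMb(\Gamma)_K$ is representable (the non-representable small-level cases $n\leq 4$ being handled separately, or reduced to the representable ones by a finite cover as elsewhere in the paper); (ii) deduce that the multiplicity of $\omega^{\tensor(-i)}$ in $g_*\OO_{\MMb(\Gamma)}$, read off as $\dim H^1(\MMb(\Gamma);g^*\omega^{\tensor(i-10)})-$(correction from lower summands), is governed by $s_{12-i}$; (iii) conclude both the vanishing $l_i=0$ for $i>11$ (since $s_{j}=0$ for $j<0$) and the formula $l_{12-i}=s_i$ for $i\leq 4$ (where the "correction from lower summands" vanishes because those would be $\omega^{\tensor(-i')}$ with $i'>i\geq 12-4=8$, already excluded or accounted for, so a short downward induction on $i$ closes it). Everything else — claims (2) and the first half of (1) and claim (4) — is then a one-line consequence of the generating-function identity and Proposition \ref{prop:irreducible}.
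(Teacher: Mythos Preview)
Your approach is correct and essentially matches the paper's: both extract the $l_i$ from the convolution $m_k=\sum_i l_i d_{k-i}$ (your generating-function identity is just this repackaged) and handle the upper vanishing in (1) and the identification in (3) via Serre duality together with Proposition~\ref{prop:coh} and Corollary~\ref{cor:cusps}. The one organizational difference is that the paper dualizes on $\MMb_{ell}$ first, computing $\dim_K H^0(\MMb_{ell};(g_*\OO_{\MMb(\Gamma)})^\vee\tensor\omega^{\tensor(-i)})$, which equals $l_i$ on the nose for $i\geq 8$ (since $d_k=0$ for $0<k<4$ and $l_j=0$ for $j>11$), thereby sidestepping the ``correction from lower summands'' downward induction you anticipate in (3).
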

 \begin{proof}
  The number $m_k$ is by definition the dimension of 
  \[H^0(\MMb(\Gamma); g^*\omega^{\tensor k}) \cong H^0(\MMb_{ell}; g_*\OO_{\MMb(\Gamma)}\tensor \omega^{\tensor k}).\]
  Denote by $d_i = \dim_K H^0(\MMb_{ell};\omega^{\tensor i})$ the dimension of the space of holomorphic modular forms of weight $i$ for $SL_2\Z$. Then \eqref{eq:dec} implies that
  \begin{align}\label{sum}m_k = \sum_{i\in \Z} l_id_{k-i}.\end{align}
  In particular, $l_i = 0$ for $i<0$ because $m_i \geq l_i = l_id_{i-i}$ and there are no modular forms of negative weight (Proposition \ref{prop:coh}). 
  
  To get more precise results, we want to use Serre duality on $\MMb_{ell}$ and $\MMb(\Gamma)$. By Theorem \ref{thm:fundamentalweighted}, the stack $\MMb_{ell} \simeq \PP_K(4,6)$ has dualizing sheaf $\omega^{\tensor (-10)}$. Using this and that $g$ is affine and thus $g_*$ is exact, we get the following chain of isomorphisms: 
  
  \begin{align*}
   H^0(\MMb_{ell}; (g_*\OO_{\MMb(\Gamma)})^\vee \tensor \omega^{\tensor -10-k}) &\cong H^0(\MMb_{ell}; (g_*\OO_{\MMb(\Gamma)}\tensor \omega^{\tensor k})^\vee \tensor \omega^{\tensor -10}) \\
   &\cong H^1(\MMb_{ell}; g_*\OO_{\MMb(\Gamma)}\tensor \omega^{\tensor k})^{\vee} \\
   &\cong H^1(\MMb_{ell}; g_*(\OO_{\MMb(\Gamma)}\tensor g^*\omega^{\tensor k}))^{\vee}\\
   &\cong H^1(\MMb(\Gamma); g^*\omega^{\tensor k})^{\vee} \\
   &\cong H^0(\MMb(\Gamma); \Omega^1_{\MMb(\Gamma)/K} \tensor g^*\omega^{\tensor -k})
  \end{align*}
 By Proposition \ref{prop:coh}, this vanishes for $k\geq 2$.
  
  Since the rank of $g_*\OO_{\MMb(\Gamma)}$ is finite, only finitely many $l_i$ can be nonzero. Let $j$ be the largest index such that $l_j \neq 0$. Then $H^0(\MMb_{ell};(g_*\OO_{\MMb(\Gamma)})^\vee \tensor \omega^{\tensor -j})$ has dimension $l_j$. In particular, we see by the computation above that $j\leq 11$, proving part (1) of our proposition. Using that the ring of holomorphic modular forms for $SL_2\Z$ is isomorphic to $K[c_4,c_6]$ and thus $d_0 = d_4 = d_6 = d_8 = d_{10} = 1$ and $d_i = 0$ for all other $i\leq 11$, we obtain the recursive equation
  $$l_i = m_i-l_{i-4}-l_{i-6}- l_{i-8} - l_{i-10}$$
  from Equation (\ref{sum}). Part (2) of our proposition follows by a straightforward computation.
  
  The equality $\dim_K H^0(\MMb_{ell};(g_*\OO_{\MMb(\Gamma)})^\vee \tensor \omega^{\tensor -i}) = l_i$ holds even for all $i\geq j-3$ (and in particular for $i\geq 8$) as $d_k = 0$ for $0<k<4$. Part (3) follows then from Corollary \ref{cor:cusps}. Part (4) follows from the previous computations and the definition of the genus. 
 \end{proof}
 
 \begin{example}\label{exa:m12}
  Let us consider the case $n=2$. By Example \ref{exa:moduli}, the ring of modular forms for $\Gamma_1(2)$ is isomorphic to $K[b_2, b_4]$. Furthermore, we know that the rank of $(f_2)_*\OO_{\MMb_1(2)}$ is $3$. Thus, there can be at most three nonzero $l_i$ and these are $l_0 =l_2 = l_4 = 1$. In other words: $(f_2)_*\OO_{\MMb_1(2)} \cong \OO_{\MMb_{ell}} \oplus \omega^{\tensor -2} \oplus \omega^{\tensor -4}$.
 \end{example}
 
 \begin{example}\label{exa:m13}
  Now consider the case $n=3$. By Example \ref{exa:moduli}, the ring of modular forms for $\Gamma_1(3)$ is isomorphic to $K[a_1,a_3]$. By the last proposition, it follows easily that in this case
  $$l_0 =1,\; l_1=1,\; l_2=1,\; l_3=2,\; l_4=1,\; l_5 =1,\; l_6 = 1$$
  and all the other $l_i$ are zero. 
 \end{example}

\subsection{Refined decompositions}
We will use the notation of the last subsection and start with the following corollary to Proposition \ref{prop:dec5}.
 \begin{cor}\label{cor:dec2}
 Let $\Gamma$ be $\Gamma_1(n)$ for $n\geq 5$ or $\Gamma(n)$ for $n\geq 3$. We have a decomposition
 \[g_*\OO_{\MMb(\Gamma)} \cong \bigoplus_{i\in\Z} \bigoplus_{k_i} (f_3)_*\OO_{\MMb_1(3)} \tensor \omega^{\tensor -i}.\]
 The $k_i$ are uniquely determined and satisfy
 \begin{enumerate}
  \item $k_i = 0$ for $i<0$ and $i>5$,
  \item $k_i = m_i-m_{i-1}-m_{i-3}+m_{i-4}$,
  \item $k_5 = s_1$ and $k_4 =s_2-s_1$,
  \item $k_0 + k_3= k_1 +k_4 = k_2+k_5$.
 \end{enumerate}
\end{cor}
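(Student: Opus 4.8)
The plan is to deduce the corollary from the decomposition of $g_*\OO_{\MMb(\Gamma)}$ into powers of $\omega$ given by Proposition~\ref{prop:dec5}, together with the explicit shape of $(f_3)_*\OO_{\MMb_1(3)}$ recorded in Example~\ref{exa:m13}, by a Hilbert series computation. Write $g_*\OO_{\MMb(\Gamma)}\cong\bigoplus_i\bigoplus_{l_i}\omega^{\tensor(-i)}$ as in Proposition~\ref{prop:dec5}, note that Example~\ref{exa:m13} gives $(f_3)_*\OO_{\MMb_1(3)}\cong\bigoplus_j\bigoplus_{\sigma_j}\omega^{\tensor(-j)}$ with $(\sigma_0,\dots,\sigma_6)=(1,1,1,2,1,1,1)$ and $\sigma_j=0$ otherwise, and \emph{define} $k_i:=m_i-m_{i-1}-m_{i-3}+m_{i-4}$, so that (2) holds by fiat. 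The first step is the identity $\sum_i l_i t^i=\bigl(\sum_i k_i t^i\bigr)\cdot(1+t+t^2+2t^3+t^4+t^5+t^6)$ of power series (equivalently, of polynomials). It follows from three observations: the dimensions $d_k=\dim_K H^0(\MMb_{ell,K};\omega^{\tensor k})$ have generating function $1/((1-t^4)(1-t^6))$ because $M_*(SL_2(\Z);K)\cong K[c_4,c_6]$; the Hilbert series of $M_*(\Gamma;K)=\bigoplus_k H^0(\MMb_{ell,K};g_*\OO_{\MMb(\Gamma)}\tensor\omega^{\tensor k})$ is $\bigl(\sum_i l_i t^i\bigr)/((1-t^4)(1-t^6))$, and likewise the Hilbert series $1/((1-t)(1-t^3))$ of $M_*(\Gamma_1(3);K)\cong K[a_1,a_3]$ (with $a_1$ of degree $1$ and $a_3$ of degree $3$) equals $\bigl(\sum_j\sigma_j t^j\bigr)/((1-t^4)(1-t^6))$, so $\sum_j\sigma_j t^j=(1-t^4)(1-t^6)/((1-t)(1-t^3))=1+t+t^2+2t^3+t^4+t^5+t^6$; and hence $\bigl(\sum k_i t^i\bigr)\bigl(\sum\sigma_j t^j\bigr)=\bigl(\sum m_k t^k\bigr)(1-t)(1-t^3)\cdot\frac{(1-t^4)(1-t^6)}{(1-t)(1-t^3)}=\bigl(\sum m_k t^k\bigr)(1-t^4)(1-t^6)=\sum l_i t^i$.

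Next I would establish (1). For $i<0$ all four terms in the formula for $k_i$ vanish. For $i\ge 6$ the indices $i,i-1,i-3,i-4$ are all $\ge 2$, so Proposition~\ref{prop:riemann} — applicable since $\MMb(\Gamma)_K$ is representable for $\Gamma=\Gamma_1(n)$, $n\ge 5$, or $\Gamma(n)$, $n\ge 3$, by Proposition~\ref{prop:basicprops} — gives $m_j=\deg(g^*\omega)\cdot j+1-g(\Gamma)$ for each of them, and the alternating sum collapses to $0$. Thus $\sum_i k_i t^i$ is supported in degrees $0,\dots,5$, which is (1). Comparing the coefficients of $t^{11}$, $t^{10}$, $t^9$ in the displayed identity (using $k_i=0$ for $i\ge 6$) gives $l_{11}=k_5$, $l_{10}=k_4+k_5$, $l_9=k_3+k_4+k_5$, and Proposition~\ref{prop:dec5}(3), which says $l_{12-i}=s_i$ for $i\le 4$, turns these into $k_5=s_1$, $k_4=s_2-s_1$ and $k_3=s_3-s_2$; in particular (3) holds.

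The heart of the argument is the non-negativity of $k_0,\dots,k_5$, which is precisely what makes the repackaging legitimate. We have $k_0=m_0=1$ by Proposition~\ref{prop:irreducible}, $k_1=m_1-1\ge 0$ by Lemma~\ref{lem:m1bound}, and $k_2=m_2-m_1\ge 0$ because $m_2\ge 2m_1-1\ge m_1$ by Proposition~\ref{prop:m2m1}. For $k_3,k_4,k_5$ I would use the formulas of the previous paragraph: $k_5=s_1\ge 0$ is trivial, while $k_4=s_2-s_1\ge 0$ and $k_3=s_3-s_2\ge 0$ because multiplication by any fixed nonzero weight-$1$ modular form (one exists since $m_1\ge 2$) is injective on $S_1(\Gamma;K)$ with image in $S_2(\Gamma;K)$, and likewise $S_2(\Gamma;K)\hookrightarrow S_3(\Gamma;K)$ — here one uses that $M_*(\Gamma;K)$ is an integral domain (Proposition~\ref{prop:irreducible}) and that the cusp forms form an ideal in it. Granting non-negativity, $g_*\OO_{\MMb(\Gamma)}\cong\bigoplus_i\bigoplus_{l_i}\omega^{\tensor(-i)}\cong\bigoplus_i\bigoplus_{k_i}\Bigl(\bigoplus_j\bigoplus_{\sigma_j}\omega^{\tensor(-i-j)}\Bigr)\cong\bigoplus_i\bigoplus_{k_i}(f_3)_*\OO_{\MMb_1(3)}\tensor\omega^{\tensor(-i)}$ by Example~\ref{exa:m13}; the $k_i$ are uniquely determined because the $l_i$ are (Krull--Schmidt, Proposition~\ref{prop:Krull-Schmidt}) and $\sigma_0=1$. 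Finally (4) is a one-line check: substituting the definition of $k_i$ gives $k_i+k_{i+3}=m_{i+3}-m_{i+2}-m_{i-3}+m_{i-4}$, which for $i=0,1,2$ equals $m_{i+3}-m_{i+2}$ (the negative-index terms vanish), and by Proposition~\ref{prop:riemann} the differences $m_3-m_2$, $m_4-m_3$, $m_5-m_4$ all equal $\deg(g^*\omega)$.

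The step I expect to be the main obstacle is the non-negativity of the $k_i$: everything else is Riemann--Roch plus manipulation of Hilbert series, whereas $k_2\ge 0$ rests on the geometric input $m_2\ge 2m_1-1$ (Proposition~\ref{prop:m2m1}), and $k_1,k_3,k_4\ge 0$ on the existence of a nonzero weight-$1$ form (Lemma~\ref{lem:m1bound}). One should also check that the decomposition sequence $(1,1,1,2,1,1,1)$ of Example~\ref{exa:m13} is valid over the field $K$ under consideration; it is, being computed from the characteristic-independent Hilbert series of $K[a_1,a_3]$.
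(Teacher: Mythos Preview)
Your argument is correct and follows the same overall strategy as the paper's proof: define $k_i$ by the formula in (2), verify the vanishing in (1) via Riemann--Roch, establish non-negativity of each $k_i$, and check the convolution identity with the decomposition sequence $(1,1,1,2,1,1,1)$ of $(f_3)_*\OO_{\MMb_1(3)}$ from Example~\ref{exa:m13}. Your generating-function packaging is a bit slicker than the paper's direct formula manipulation, but the substance is the same.

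Two small tactical differences are worth noting. For $k_3\ge 0$ you extract $k_3=s_3-s_2$ from the top end of the polynomial identity and use injectivity of multiplication by a weight-$1$ form on cusp forms, whereas the paper computes $k_3=m_3-m_2-1=\deg(g^*\omega)-1$ directly and appeals to integrality of the degree; both are valid and yield the same number. For $k_2=m_2-m_1\ge 0$ you invoke Proposition~\ref{prop:m2m1}, which is overkill here---the paper simply uses $m_i\ge m_{i-1}$ via multiplication by a nonzero weight-$1$ form (the sharper bound $m_2\ge 2m_1-1$ only becomes essential in Corollary~\ref{cor:decwild}).
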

\begin{proof}
First we want to show the existence of a decomposition of the form 
\begin{align}\label{eq:1(3)}g_*\OO_{\MMb(\Gamma)} \cong \bigoplus_{i\in\Z} \bigoplus_{k_i} (f_3)_*\OO_{\MMb_1(3)} \tensor \omega^{\tensor -i}.
\end{align}
To that purpose set $k_i = m_i-m_{i-1}-m_{i-3}+m_{i-4}$. We start by showing that $k_i\geq 0$. The two essential properties of the sequence $m_i$ we need is that there are constants $a>0$ and $b$ such that $m_i = ai +b$ for all $i\geq 2$ by Proposition \ref{prop:riemann} and that $m_1 \neq 0$ by Lemma \ref{lem:m1bound}. 

We see directly that $k_i = 0$ for $i>5$ or for $i<0$. As the ring of modular forms has no zero divisors, we also see that $m_i \geq m_{i-1}$ and likewise $s_i\geq s_{i-1}$ by multiplying with a nonzero modular form of weight $1$. This implies $k_i \geq 0$ for $i\leq 2$. We have 
$$k_3 = m_3-m_2-m_0 = a -1 \geq 0$$
using that $a \in\Z$. 
From Proposition \ref{prop:riemann}, we can compute $k_5 = s_1$ and $k_4 = s_2-s_1$, which are also clearly at least $0$. By Example \ref{exa:m13}, we just have to check now that 
$$l_i = k_i+k_{i-1}+k_{i-2}+2k_{i-3}+k_{i-4}+k_{i-5}+k_{i-6},$$
with the $l_i$ as in the last section, which is a straightforward computation from Proposition \ref{prop:dec5}. Thus, we obtain the existence of a decomposition into summands of the form $(f_3)_*\OO_{\MMb_1(3)} \tensor \omega^{\tensor -i}$.  

Next we show that the formula from item 2 has to hold for every decomposition as in Equation \eqref{eq:1(3)}. This follows by a straightforward computation from $k_i = 0$ for $i>5$ or for $i<0$ (as follows from Proposition \ref{prop:dec5} and Example \ref{exa:m12}) and from the equation
\[k_i = m_i-k_{i-1}-k_{i-2} -2k_{i-3}-2k_{i-4}-2k_{i-5},\]
which we obtain from the dimension of the space of modular forms for $\Gamma_1(3)$ being $1,1,1,2,2$ and $2$ in weights $0$ to $5$, respectively. 
 Item 2 implies the following equations:
 \begin{align*}
  k_0+k_3&= m_3-m_2 \\
  k_1 +k_4 &= m_4-m_3 \\
  k_2+k_5 &= m_5-m_4
 \end{align*}
As the function $i\mapsto m_i$ is affine linear in $i$ for $i\geq 2$, this implies item 4. 
\end{proof}

Using item (4) of the last corollary, we can actually prove a stronger result.

\begin{cor}\label{cor:decwild}
 Let $\Gamma$ be $\Gamma_1(n)$ for $n\geq 5$ or $\Gamma(n)$ for $n\geq 3$.
 We have a decomposition
 \[g_*\OO_{\MMb(\Gamma)} \cong \bigoplus_{i\in\Z} \bigoplus_{\kappa_i} (f_q)_*\OO_{\MMb_1(q)} \tensor \omega^{\tensor -i},\]
 for $q=4$, $5$ and $6$, where we assume $\Char(K) \neq 5$ if $q=5$. In the case of $q=5$ or $6$, we have the formulae
 $$\kappa_0 = 1,\, \kappa_1 = m_1- 2,\, \kappa_2 = m_2 -2m_1+1,\, \kappa_3 = s_1,\, \kappa_i = 0 \text{ for }i\geq 4.$$
\end{cor}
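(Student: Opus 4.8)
The plan is to reduce the statement to a Poincar\'e-series identity on $\MMb_{ell,K}\simeq\PP_K(4,6)$ and then to read off non-negativity of the multiplicities from the dimension bounds proved above. First I would record the following mechanism. Since $\omega$ corresponds to $\OO(1)$, the bundles $g_*\OO_{\MMb(\Gamma)}$ and $(f_q)_*\OO_{\MMb_1(q)}$ --- and any finite sum of shifts of the latter --- split into line bundles $\omega^{\tensor j}$ by Proposition \ref{prop:ClassField}, and a decomposed bundle on $\PP_K(4,6)$ is determined up to isomorphism by the function $k\mapsto\dim_K H^0(\MMb_{ell,K};-\tensor\omega^{\tensor k})$: this function is the convolution of the multiplicity sequence with $k\mapsto\dim_K H^0(\MMb_{ell,K};\omega^{\tensor k})$, whose generating function $\frac{1}{(1-t^4)(1-t^6)}$ is a unit in $\Z[[t]]$. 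Hence, writing $H_\Gamma(t)=\sum_k m_k t^k$ for the Poincar\'e series of $M_*(\Gamma;K)=H^0(\MMb_{ell,K};g_*\OO_{\MMb(\Gamma)}\tensor\omega^{\tensor *})$ and likewise $H_{\Gamma_1(q)}(t)$, the bundle $g_*\OO_{\MMb(\Gamma)}$ admits a decomposition $\bigoplus_i\bigoplus_{\kappa_i}(f_q)_*\OO_{\MMb_1(q)}\tensor\omega^{\tensor -i}$ with $\kappa_i\geq 0$ (necessarily with uniquely determined $\kappa_i$) as soon as the series $\sum_i\kappa_i t^i=H_\Gamma(t)/H_{\Gamma_1(q)}(t)$ is a polynomial with non-negative coefficients.

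For the denominators: by Examples \ref{exa:moduli}, \ref{exa:5-12} and Lemma \ref{lem:degomega}, $M_*(\Gamma_1(4);K)\cong K[a_1,a_2]$ with $a_1,a_2$ in weights $1,2$, so $H_{\Gamma_1(4)}(t)=\frac{1}{(1-t)(1-t^2)}$, while $\MMb_1(q)_K\simeq\mathbb{P}^1_K$ with $\omega$ of degree $1$ for $q=5,6$ (for $q=5$ this uses $\Char K\neq 5$), so $H_{\Gamma_1(5)}(t)=H_{\Gamma_1(6)}(t)=\frac{1}{(1-t)^2}$. Therefore $\sum_i\kappa_i t^i$ equals $(1-t)(1-t^2)H_\Gamma(t)$ for $q=4$ and $(1-t)^2 H_\Gamma(t)$ for $q=5,6$. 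By Proposition \ref{prop:riemann}, $m_i=\deg(g^*\omega)\,i+1-g(\Gamma)$ for $i\geq 2$, so $(m_i)$ is eventually affine-linear; this forces the above products to be genuine polynomials, of degree $\leq 4$ respectively $\leq 3$, with coefficients $\kappa_i=m_i-m_{i-1}-m_{i-2}+m_{i-3}$ respectively $\kappa_i=m_i-2m_{i-1}+m_{i-2}$. A direct computation, using Proposition \ref{prop:riemann} again at weights $1,2,3$ to rewrite the low coefficients through $s_1,s_2$, gives for $q=5,6$ exactly $\kappa_0=1$, $\kappa_1=m_1-2$, $\kappa_2=m_2-2m_1+1$, $\kappa_3=s_1$ and $\kappa_i=0$ otherwise, and for $q=4$ the values $\kappa_0=1$, $\kappa_1=m_1-1$, $\kappa_2=m_2-m_1-1$, $\kappa_3=s_2-s_1$, $\kappa_4=s_1$, the rest zero. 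Here $\kappa_0$, $\kappa_3$ and (for $q=4$) $\kappa_4$ are visibly non-negative, $\kappa_1\geq 0$ is Lemma \ref{lem:m1bound}, and $\kappa_2\geq 0$ is Proposition \ref{prop:m2m1} (for $q=4$ together with $m_1\geq 2$, which gives $m_2\geq 2m_1-1\geq m_1+1$). This yields the decomposition and the displayed formulae.

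Finally I would note the interpretation through item (4) of Corollary \ref{cor:dec2} for $q=5,6$, which is the route hinted at there: comparing decomposition sequences on $\PP_K(4,6)$ shows $(f_5)_*\OO_{\MMb_1(5)}\cong(f_6)_*\OO_{\MMb_1(6)}\cong\bigoplus_{j=0}^{2}(f_3)_*\OO_{\MMb_1(3)}\tensor\omega^{\tensor -j}$, so re-packaging the $(f_3)_*$-decomposition of Corollary \ref{cor:dec2} into $(f_q)_*$-blocks amounts to solving $\kappa_i+\kappa_{i-1}+\kappa_{i-2}=k_i$ for $\kappa_i\geq 0$ of finite support. The two constraints $\kappa_4=\kappa_5=0$ forced by finiteness are precisely the equalities $k_0+k_3=k_1+k_4=k_2+k_5$ of item (4), and then $\kappa_0=k_0=1$, $\kappa_3=k_5=s_1$, $\kappa_1=k_1-k_0=m_1-2$ and $\kappa_2=k_2-k_1=m_2-2m_1+1$, non-negative by Corollary \ref{cor:dec2}(2),(3), Lemma \ref{lem:m1bound} and Proposition \ref{prop:m2m1}. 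The main obstacle throughout is this positivity of $\kappa_1$ and $\kappa_2$, i.e.\ the non-formal bounds $m_1\geq 2$ and $m_2\geq 2m_1-1$; everything else is bookkeeping with Poincar\'e series and Krull--Schmidt on $\PP_K(4,6)$.
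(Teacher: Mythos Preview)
Your argument is correct and follows essentially the same route as the paper. Both proofs reduce everything to a dimension count on $\PP_K(4,6)$ and both identify the same two non-formal inputs, namely $m_1\geq 2$ (Lemma \ref{lem:m1bound}) and $m_2\geq 2m_1-1$ (Proposition \ref{prop:m2m1}); your final paragraph is in fact exactly the paper's argument. The only organisational difference is that the paper first passes through the $(f_3)_*$-decomposition of Corollary \ref{cor:dec2} and then re-packages, whereas you compute the quotient of Poincar\'e series directly---this is a cosmetic difference. One small slip: for $q=4$ you list $\kappa_3=s_2-s_1$ among the ``visibly non-negative'' quantities, but that inequality needs the observation (used in the proof of Corollary \ref{cor:dec2}) that multiplication by a nonzero weight-$1$ form injects $S_1\hookrightarrow S_2$.
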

\begin{proof}
 From the formulae in Proposition \ref{prop:dec5}, it is straightforward to see that $(f_5)_*\OO_{\MMb_1(5)} \cong (f_6)_*\OO_{\MMb_1(6)}$ and that this decomposes as $$(f_4)_*\OO_{\MMb_1(4)}  \oplus (f_4)_*\OO_{\MMb_1(4)} \tensor \omega^{\tensor (-1)}.$$
 Thus it suffices to consider the case $q=5$. By the last corollary, we have a decomposition of the form
 \[g_*\OO_{\MMb(\Gamma)} \cong \bigoplus_{i\in\Z} \bigoplus_{k_i} (f_3)_*\OO_{\MMb_1(3)} \tensor \omega^{\tensor -i}.\]
 For example, if $\Gamma=\Gamma_1(5)$, we get $k_0 = k_1=k_2= 1$ and all other $k_i = 0$. In the general case, we set $\kappa_0 = k_0 = 1$, $\kappa_1 = k_1-k_0$, $\kappa_2 = k_2-k_1$ and $\kappa_3 = k_5$. We have to check that $k_i = \kappa_i + \kappa_{i-1} +\kappa_{i-2}$, which is a straightforward computation from item 4 of Corollary \ref{cor:dec2}. The inequalities $\kappa_i \geq 0$ are clear for $i= 0$ and $i=3$. For $i=1$ and $i=2$, it translates to $m_1 \geq 2$ and $m_2 \geq 2m_1-1$. These are exactly the statements of Lemma \ref{lem:m1bound} and Proposition \ref{prop:m2m1}.  
\end{proof}

The vector bundle $(f_4)_*\OO_{\MMb_1(4)}$ decomposes as 
$$(f_2)_*\OO_{\MMb_1(2)} \tensor (\OO_{\MMb_{ell}} \oplus \omega^{\tensor (-1)} \oplus \omega^{\tensor (-2)}).$$
Maybe the easiest way to see this is by noting that for $h\colon \MMb_1(4) \to \MMb_1(2)$ the projection, Examples \ref{exa:moduli} and Proposition \ref{prop:ClassField} imply that $h_*\OO_{\MMb_1(4)}$ splits into a sum of line bundles of the form $(f_2)^*\omega^{\tensor m}$. A simple dimension count for spaces of modular forms implies that $(h_*)\OO_{\MMb_1(4)} \cong \OO_{\MMb_1(2)} \oplus (f_2)^*\omega^{\tensor (-1)} \oplus (f_2)^*\omega^{\tensor (-2)}$. Thus, the last corollary implies:
 \begin{cor}\label{cor:dec3}
 Let $\Gamma$ be $\Gamma_1(n)$ for $n\geq 4$ or $\Gamma(n)$ for $n\geq 3$. Then we have a decomposition
 \[g_*\OO_{\MMb(\Gamma)} \cong \bigoplus_{i\in\Z} \bigoplus_{k_i} (f_2)_*\OO_{\MMb_1(2)} \tensor \omega^{\tensor -i}.\]
 The $k_i$ are uniquely determined and satisfy
 \begin{enumerate}
  \item $k_i = 0$ for $i<0$ and $i>7$,
  \item $k_i = m_i-m_{i-2}-m_{i-4}+m_{i-6}$; in particular, $k_i = m_i$ for $i\leq 1$,
  \item $k_7 = s_1$ and $k_6=s_2$ is the genus of $\MMb(\Gamma)$,
 \end{enumerate}
\end{cor}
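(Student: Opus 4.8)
The plan is to deduce the statement from the decompositions already in hand together with bookkeeping of Hilbert series. \emph{Existence:} for $\Gamma = \Gamma_1(4)$ the assertion is exactly the splitting of $(f_4)_*\OO_{\MMb_1(4)}$ into shifted copies of $(f_2)_*\OO_{\MMb_1(2)}$ recorded immediately above. For $\Gamma = \Gamma_1(n)$ with $n \geq 5$ or $\Gamma = \Gamma(n)$ with $n \geq 3$, I would apply Corollary \ref{cor:decwild} with $q = 4$ to get $g_*\OO_{\MMb(\Gamma)} \cong \bigoplus_{i\in\Z}\bigoplus_{\kappa_i}(f_4)_*\OO_{\MMb_1(4)}\tensor\omega^{\tensor -i}$, then substitute that same splitting of $(f_4)_*\OO_{\MMb_1(4)}$ into each summand and regroup; this gives a decomposition of the asserted form, so in particular the $k_i$ are non-negative, being genuine multiplicities.

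\emph{The formula, hence uniqueness.} Taking weight-$k$ global sections, using that $g$ is affine together with the projection formula, and recalling from Examples \ref{exa:moduli} that $M_*(\Gamma_1(2);K) \cong K[b_2,b_4]$ with $|b_2| = 2$ and $|b_4| = 4$ --- so that $\dim_K H^0(\MMb_{ell};(f_2)_*\OO_{\MMb_1(2)}\tensor\omega^{\tensor j})$ is the coefficient of $t^j$ in $\tfrac{1}{(1-t^2)(1-t^4)}$ --- the decomposition translates into the generating-function identity
\[
\sum_k m_k t^k \;=\; \Bigl(\sum_i k_i t^i\Bigr)\cdot\frac{1}{(1-t^2)(1-t^4)}.
\]
As the series on the right starts with $1$ it is invertible, so $\sum_i k_i t^i = \bigl(\sum_k m_k t^k\bigr)(1-t^2)(1-t^4)$ as formal power series (using $m_k = 0$ for $k<0$ from Proposition \ref{prop:coh}); comparing coefficients gives $k_i = m_i - m_{i-2} - m_{i-4} + m_{i-6}$, and for $i \leq 1$ the negative-index terms drop out, leaving $k_i = m_i$. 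This exhibits the $k_i$ as functions of the $m_i$, which yields the claimed uniqueness.

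\emph{Vanishing ranges and the cusp identifications.} For $i < 0$ every $m_j$ in the formula has $j<0$, so $k_i = 0$. For $i > 7$ all of $i, i-2, i-4, i-6$ are $\geq 2$, where by Proposition \ref{prop:riemann} the function $j \mapsto m_j$ is affine-linear; the combination $m_i - m_{i-2} - m_{i-4} + m_{i-6}$ of an affine-linear function vanishes identically, so $k_i = 0$. Writing $m_j = \deg(g^*\omega)\,j + 1 - g(\Gamma)$ for $j \geq 2$ and substituting into the formulas for $k_6$ and $k_7$ gives $k_7 = s_1$ and $k_6 = g(\Gamma) = s_2$. Since Proposition \ref{prop:riemann} (and Corollary \ref{cor:decwild}) assume $\MMb(\Gamma)_K$ representable, the case $\Gamma = \Gamma_1(4)$ is done separately: there $M_*(\Gamma_1(4);K) \cong K[x_1,x_2]$ with $|x_1| = 1$, $|x_2| = 2$ by Examples \ref{exa:moduli}, and the formula gives decomposition sequence $(k_0, k_1, k_2, k_3, k_4, \dots) = (1,1,1,1,0,0,\dots)$, consistent with $\MMb_1(4)_K$ having genus $0$ and $s_1 = s_2 = 0$. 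The computations are routine; the one thing to watch is precisely this hypothesis-tracking, so that the single exceptional case $\Gamma_1(4)$ --- where $\MMb_1(4) \simeq \PP_K(1,2)$ and everything is explicit --- is not silently folded into the representable case.
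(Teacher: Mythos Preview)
Your proof is correct and follows essentially the same route as the paper: existence comes from Corollary~\ref{cor:decwild} with $q=4$ combined with the explicit splitting of $(f_4)_*\OO_{\MMb_1(4)}$, and uniqueness/formulae from the convolution identity relating the $m_k$ to the $k_i$ via the Hilbert series of $K[b_2,b_4]$. The only notable difference is that you compute $k_6=s_2$ directly from the formula $k_6=m_6-m_4-m_2+m_0$ together with Riemann--Roch and $m_0=1$, whereas the paper routes this through the identification $k_6=l_{10}$ and then invokes Proposition~\ref{prop:dec5}; your approach is a bit more self-contained, and your explicit handling of the non-representable case $\Gamma_1(4)$ is careful hypothesis-tracking that the paper leaves implicit.
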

\begin{proof}
The only things not straightforward to deduce from the last corollary are the uniqueness of the $k_i$ and $k_6 = s_2$. The uniqueness follows as the existence of the decomposition directly implies $k_i = 0$ for $i<0$ and from the equation
\[k_i = m_i-k_{i-2}-2k_{i-4} -2k_{i-6},\]
which we obtain from the dimension of the space of modular forms for $\Gamma_1(2)$ being $1,1,2$ and $2$ in weights $0$, $2$, $4$ and $6$ respectively and zero else in this range. Moreover, we see $k_6 = l_{11}$ by Example \ref{exa:m12} and $l_{11} = s_2$ by Proposition \ref{prop:dec5}. 
\end{proof}

We have obtained results for decompositions into $(f_q)_*\OO_{\MMb_1(q)}$ for all $q\leq 6$. For larger $q$ such decomposition results are impossible in general. 

\begin{cor}\label{cor:dectoowild}
 For every $q>6$, there is an arbitrarily large $n$ such that $(f_n)_*\OO_{\MMb(\Gamma_1(n))}$ does not decompose as
 $$\bigoplus_{i\in\Z} \bigoplus_{\kappa_i} (f_q)_*\OO_{\MMb_1(q)} \tensor \omega^{\tensor -i}.$$
\end{cor}
\begin{proof}
 Let $d_m$ be the degree of the map $\MM_1(m) \to \MM_{ell}$, which is also the rank of $(f_m)_*\OO_{\MMb_1(m)}$. By Lemma \ref{lem:degomega}, the function $d_m = d(\Gamma_1(m))$ is multiplicative and for primes $p$, we have $d_{p^k} = p^{2k-2}(p^2-1$); for example $d_5 = d_6 = 24$, $d_7 = d_8 = 48$, $d_9=72$ and $d_{12} = 96$. Moreover, $d_p > 24$ for primes $p>5$. These facts imply that $d_q>24$ for every $q>6$. 
 
 For a decomposition as in the statement of the corollary, it is necessary that $d_q$ divides $d_n$. Thus, we only need to show that for every $D>24$, there are infinitely many $p$ with $d_p$ not 
divisible by $D$. Every $D>24$ has a divisor of the form $d= 16$, $d=9$ or $d$ a prime that is at least $5$. Pick an $a$ that is coprime to $d$ and not congruent to $\pm 1 \mod d$; for $d=16$ we take $a=3$ and for $d=9$ we take $a=2$. By Dirichlet's prime number theorem, there are infinitely many primes $p$ such that $p\equiv a \mod d$. If $d$ is prime, this implies that $d$ does not divide $d_p = (p-1)(p+1)$. If $d =16$, this implies that $d_p \equiv 8 \mod d$, and for $d=9$, this implies $d_p \equiv 3\mod d$. In any case, $d$ does not divide $d_p$ for infinitely many primes $p$ and thus $D$ does not as well. 
\end{proof}

\begin{remark}
The only obstruction presented in the last proof for decomposing $(f_n)_*\OO_{\MMb_1(n)}$ into copies of $(f_m)_*\OO_{\MMb_1(m)}\tensor \omega^{\tensor k}$ was that $d_m|d_n$. But in general it is not true that $d_m|d_n$ implies the possibility of such a decomposition. For example $d_7|d_{31}$, but $(f_{31})_*\OO_{\MMb_1(n)}$ does not decompose into copies of $(f_7)_*\OO_{\MMb_1(7)}\tensor \omega^{\tensor k}$.
\end{remark}

So far, we only worked over a field. Now we demonstrate the implication for our integral decomposition results. 

\begin{thm}\label{thm:l}
 Let $l$ be a prime not dividing $n$ and $\Gamma$ either $\Gamma_1(n)$ for $n\geq 5$ or $\Gamma(n)$ for $n\geq 3$. Assume that $H^1(\MMb(\Gamma); g^*\omega)$ contains no $l$-torsion. Then for every $q$ of the form 
  \begin{itemize}
 	\item $1 \leq q \leq 6$ if $l>3$,
 	\item $q = 2, 4, 5$ if $l = 3$, and
 	\item $q = 3, 5$ if $l = 2$. 
 \end{itemize}
the vector bundle $g_*\OO_{\MMb(\Gamma)_{(l)}}$ decomposes into summands of the form $(f_q)_*\OO_{\MMb_1(q)_{(l)}}$. 

\end{thm}
\begin{proof}
 We do the proof in the case $l=3$, the other cases being similar. By Theorem \ref{thm:deccompact}, $g_*\OO_{\MMb(\Gamma)_{(l)}}$ decomposes as 
 \[\bigoplus_{i\in\Z} \bigoplus_{k_i} (f_2)_*\OO_{\MMb_1(2)} \tensor \omega^{\tensor -i}.\]
 By Corollary \ref{cor:decwild}, the result holds for $q = 4,5$ after base change to $\Q$. As the $k_i$ are uniquely determined as noted in Corollary \ref{cor:dec3}, this implies that $g_*\OO_{\MMb(\Gamma)_{(l)}}$ decomposes indeed into summands of the form 
 $$(f_2)_*\OO_{\MMb_1(2)} \tensor (\OO_{\MMb_{ell}} \oplus \omega^{\tensor (-1)} \oplus \omega^{\tensor (-2)}) \tensor \omega^{\tensor i} \cong (f_4)_*\OO_{\MMb_1(4)}\tensor \omega^{\tensor i}$$
 and similarly for $q = 5$. 
\end{proof}

Recall from Example \ref{exa:5-12} that $\MMb_1(5) \cong \mathbb{P}^1_{\Z[\frac15]}$. Thus, $\omega$ becomes trivial on $\MM_1(5) \subset \A^1_{\Z[\frac15]}$. This implies for $l$ and $\Gamma$ as in the last theorem that $g_*\OO_{\MM(\Gamma)}$ is isomorphic to $(f_5)_*\OO_{\MM_1(5)}^{\oplus \frac{d}{24}}$ for $d$ being the degree of $g\colon \MM(\Gamma) \to \MM_{ell}$. 

\section{Consequences for rings of modular forms}\label{sec:modforms}
The aim of this section is to apply the splittings of vector bundles proved in the last sections to rings of modular forms and thus complete our proofs of the claims in the introduction. As before, we will write sometimes $\MMb(\Gamma_1(n))$ for $\MMb_1(n)$ etc. and denote the map $\MMb(\Gamma)_R \to \MMb_{ell,R}$ by $g$ for any $\Z[\frac1n]$-algebra $R$. Let $\Gamma = \Gamma_1(n),\Gamma(n)$ or $\Gamma_0(n)$. Recall that we define the space of weight-$k$-modular forms for $\Gamma$ by 
$$M_k(\Gamma;R) = H^0(\MMb(\Gamma)_R; g^*\omega^{\tensor k}).$$
Likewise, we define the space of weight-$k$ weakly holomorphic modular forms for $\Gamma$ by 
$$\widetilde{M}_k(\Gamma;R) = H^0(\MM(\Gamma)_R; g^*\omega^{\tensor k}).$$
As before, we will write just $M^R_k$ and $\widetilde{M}^R_k$ if $\Gamma = SL_2(\Z)$. For example, we have
$$M_*^\Z \cong \Z[c_4,c_6,\Delta]/(c_4^3-c_6^2 = 1728\Delta)$$
as shown in \cite{Del75}.

\subsection{Results over a field}
We will need Proposition 6.2 from \cite{Del75}:
\begin{prop}\label{prop:dapres}
 We have $M^{\F_2}_* = \F_2[a_1,\Delta]$ and $M^{\F_3}_* = \F_3[b_2,\Delta]$. The classes $a_1$ and $b_2$ have the usual definitions in terms of the Weierstra{\ss} forms and can be identified (up to sign) with the respective Hasse invariants. 
\end{prop}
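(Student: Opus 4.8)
The final statement is \cite[Prop.\ 6.2]{Del75}; here is how I would reprove it. Fix $p\in\{2,3\}$ and write $A_p$ for the mod-$p$ Hasse invariant, a modular form of weight $p-1$ over $\F_p$; that $A_2=a_1$ and $A_3=b_2$ is checked in Appendix \ref{app:Hasse}, and is in any case forced by the Weierstra\ss\ transformation formulas, which reduce modulo $p$ to $ua_1'=a_1$ and $u^2b_2'=b_2$, exhibiting $a_1$ and $b_2$ as forms of weight $1$ and $2$. The first step is to note that $A_p$ and $\Delta$ lie in $M_*^{\F_p}$ and are algebraically independent over $\F_p$: the ring $M_*^{\F_p}$ is a domain, since $\MMb_{ell,\Fpb}$ is integral (as in the proof of Proposition \ref{prop:irreducible}), so in a homogeneous relation $\sum_i c_iA_p^{\,\ast}\Delta^i=0$ one evaluates at the cusp --- where $\Delta$ vanishes but $A_p$ is a unit, its $q$-expansion being $\equiv1\bmod p$ --- to get $c_0=0$, then divides by the non-zero-divisor $\Delta$ and iterates. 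Thus $\F_p[A_p,\Delta]\subseteq M_*^{\F_p}$ is a polynomial subalgebra, and it remains to check that the two graded rings have the same Hilbert series, i.e.\ $\dim_{\F_p}M_k^{\F_p}=\dim_{\F_p}\F_p[A_p,\Delta]_k$ for all $k$.

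The main route is to deduce this from the cohomology of $\MMb_{ell}$. The long exact sequence attached to $0\to\omega^{\tensor k}\xrightarrow{\,p\,}\omega^{\tensor k}\to\omega^{\tensor k}/p\to0$ on $\MMb_{ell}$, together with the torsion-freeness of $M_k^\Z$, gives $\dim_{\F_p}M_k^{\F_p}=\dim_\C M_k(SL_2(\Z);\C)+\dim_{\F_p}H^1(\MMb_{ell};\omega^{\tensor k})[p]$. The first term is classical. For the second, Proposition \ref{prop:cohmell} together with the computation of $H^1(\MMb_{ell};\omega^{\tensor\ast})$ in \cite{Konter}, \cite{Bau08} gives that the $3$-torsion of $H^1(\MMb_{ell};\omega^{\tensor k})$ has dimension $1$ for $k\equiv2\pmod{12}$, $k\geq2$, and $0$ otherwise, while the $2$-torsion has dimension $\lfloor k/12\rfloor+1-\dim_\C M_k(SL_2(\Z);\C)$. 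Plugging in and bookkeeping gives $\dim_{\F_2}M_k^{\F_2}=\lfloor k/12\rfloor+1$ for all $k\geq0$ and $\dim_{\F_3}M_k^{\F_3}$ equal to $\lfloor k/12\rfloor+1$ for $k$ even and $0$ for $k$ odd, which is exactly $\dim_{\F_p}\F_p[A_p,\Delta]_k$. (As a sanity check one has, modulo $p$, $c_4\equiv a_1^4$, $c_6\equiv a_1^6$ and $c_4\equiv b_2^2$, $c_6\equiv-b_2^3$, so that, using $1728\equiv0\bmod p$ and $M_*^\Z=\Z[c_4,c_6,\Delta]/(c_4^3-c_6^2-1728\Delta)$, the image of $M_*^\Z\otimes\F_p$ is $\F_p[c_4,c_6,\Delta]/(c_4^3-c_6^2)\subseteq\F_p[A_p,\Delta]$.)

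For $p=3$ there is a cleaner self-contained route I would also spell out. Since $-1\in\F_3$ is a primitive second root of unity, $\MMb(2)_{\F_3}$ is defined and $\MMb(2)_{\F_3}\to\MMb_{ell,\F_3}$ is an $S_3=GL_2(\F_2)$-Galois cover, so $M_*^{\F_3}=M_*(\Gamma(2);\F_3)^{S_3}$. By Examples \ref{exa:moduli}, $\MMb(2)\simeq\PP(2,2)$ and $M_*(\Gamma(2);\F_3)=\F_3[x_2,y_2]$ with $x_2,y_2$ in weight $2$, and by \cite[Lemma 7.3]{Sto12} (recalled in the proof of Proposition \ref{prop:torsion}) the $S_3$-action is generated by the transposition $\tau$ swapping $x_2,y_2$ and the $3$-cycle $\sigma\colon x_2\mapsto y_2-x_2$, $y_2\mapsto-x_2$. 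Passing to the basis $\ell=x_2+y_2$, $m=x_2-y_2$ one finds $\sigma(\ell)=\ell$, $\sigma(m)=m+2\ell$ (a transvection, as $\Char=3$) and $\tau(\ell)=\ell$, $\tau(m)=-m$. Hence $\F_3[x_2,y_2]^{\langle\sigma\rangle}=\F_3[\ell,N]$ with $N=\prod_{i\in\F_3}(m+i\ell)=m^3-\ell^2m$, and since $\tau(N)=-N$ and $2$ is invertible, $M_*^{\F_3}=\F_3[\ell,N]^{\langle\tau\rangle}=\F_3[\ell,N^2]$, whose degree-$12$ part (note $\ell,N$ have weights $2,6$) is $\langle\ell^6,N^2\rangle$. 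Finally $\ell$ has weight $2$ and $\ell^2=x_2^2-x_2y_2+y_2^2\equiv c_4\equiv b_2^2\pmod3$ (compute $c_4=\Tr_f(4x_2^2)$ from the orbit of $x_2$), so $\ell=\pm b_2$; and $\Delta$, being an $SL_2(\Z)$-form and hence $S_3$-invariant, lies in $\langle\ell^6,N^2\rangle$ with nonzero $N^2$-component, because $\Delta$ vanishes at the three distinct cusps while $\ell^6$ vanishes only at $x_2=-y_2$. Therefore $M_*^{\F_3}=\F_3[\ell,N^2]=\F_3[b_2,\Delta]$.

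The hard part, in either approach, is bounding $\dim_{\F_p}M_k^{\F_p}$ from above for all $k$: in the first route this is precisely the (nontrivial) computation of $H^1(\MMb_{ell};\omega^{\tensor\ast})$, and in the second it is carried by the modular invariant theory of $S_3$ over $\F_3$. There is no equally clean substitute at $p=2$, because a prime-to-$2$ level structure $\Gamma_1(m)$ only yields a $(\Z/m)^\times$-cover of $\MMb_0(m)$, not of $\MMb_{ell}$; for $p=2$ one must therefore invoke the cohomology computation or argue directly with $\MMb_{ell,\F_2}$ --- equivalently, with the invariants of the Weierstra\ss\ transformation group over $\F_2$ --- which is the route taken in \cite{Del75}.
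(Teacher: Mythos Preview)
The paper does not prove this proposition at all; it simply quotes \cite[Prop.~6.2]{Del75} and moves on. Your proposal therefore supplies content the paper omits, and both of your routes are essentially sound.

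Your first route --- algebraic independence of $A_p,\Delta$ via $q$-expansions, then matching Hilbert series using the full computation of $H^1(\MMb_{ell};\omega^{\tensor *})$ from \cite{Konter}, \cite{Bau08} --- is correct but heavy: those references compute $H^0$ and $H^1$ of the Weierstra{\ss} Hopf algebroid in one blow, so their $H^0$ already \emph{is} Deligne's theorem, and citing their $H^1$ to recover it is close to circular (not logically so, but morally). If you keep this route, flag that you are really just pointing to a different reference for the same computation.

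Your second route for $p=3$ is genuinely different from Deligne's direct Weierstra{\ss}-invariant argument and is self-contained: passing to the $S_3$-cover $\MMb(2)_{\F_3}\to\MMb_{ell,\F_3}$ reduces the problem to the modular invariant theory of a transvection-plus-reflection on $\F_3[x_2,y_2]$, which you handle cleanly. One small point: rather than arguing via ``$\Delta$ has three cusps while $\ell^6$ has one'', it is quicker and more robust to exhibit an elliptic curve over $\F_3$ with $b_2=0$ and $\Delta\neq 0$ (e.g.\ $y^2=x^3+x$, where $b_2=0$ and $\Delta\equiv -1$), which immediately rules out $\Delta\in\F_3\cdot\ell^6$ and hence forces $\F_3[b_2,\Delta]=\F_3[\ell,N^2]$. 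Your closing observation is also correct: at $p=2$ the analogous cover $\MMb(3)_{\F_2}\to\MMb_{ell,\F_2}$ is unavailable in the same form because $\F_2$ has no primitive cube root of unity, so one is pushed back to Deligne's original approach or an equivalent direct computation.
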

Note in particular that $M^{\F_2}_*$ is a finitely generated $M^\Z_*$-module with generating system $\{1,a_1,a_1^2, a_1^3\}$ as $c_4$ goes to $a_1^4$. Likewise $M^{\F_3}_*$ is a finitely generated $M^\Z_*$-module with generating system $\{1,b_2\}$ as $c_4$ goes to $b_2^2$. Note further that $M^{\F_p}_* \cong M^\Z_*/p \cong \F_p[c_4,c_6]$ for $p\geq 5$ as $H^1(\MMb_{ell}, \omega^{\tensor *})_{(p)} = 0$ for $p\geq 5$.

\begin{lemma}\label{lem:freeness}
 The ring $M_*(\Gamma_1(3);\F_2) = \F_2[a_1,a_3]$ is free of rank $4$ as a graded $M^{\F_2}_*$-module. The ring $M_*(\Gamma_1(2);\F_3) = \F_3[b_2,b_4]$ is free of rank $3$ as a graded $M^{\F_3}_*$-module.
\end{lemma}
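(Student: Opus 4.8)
The claim has two parallel halves; I will treat the case $M_*(\Gamma_1(3);\F_2) = \F_2[a_1,a_3]$ as a module over $M_*^{\F_2} = \F_2[a_1,\Delta]$, the other case being entirely analogous with $\F_3[b_2,b_4]$ over $\F_3[b_2,\Delta]$. The strategy is to exhibit an explicit basis and check freeness directly using the graded structure. Since $c_4 \mapsto a_1^4$ in $M_*^{\F_2} = \F_2[a_1,\Delta]$ (by Proposition \ref{prop:dapres}), the subring $M_*^{\F_2}$ sits inside $\F_2[a_1,a_3]$ in a way I can make completely concrete: I need the image of $\Delta$. Recall $\MMb_1(3) \simeq \PP_{\Z[\frac13]}(1,3)$ from Examples \ref{exa:moduli} with $a_1$ in weight $1$ and $a_3$ in weight $3$, and $\Delta = a_3^3(a_1^3 - 27a_3)$ (the standard discriminant of the Tate-normal cubic $y^2 + a_1xy + a_3 y = x^3$), which reduces mod $2$ to $\Delta \equiv a_3^3(a_1^3 + a_3) = a_1^3 a_3^3 + a_3^4$.

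\emph{Key steps.} First I would record that $\F_2[a_1,\Delta] \hookrightarrow \F_2[a_1,a_3]$ with $\Delta = a_1^3 a_3^3 + a_3^4$, and propose $\{1, a_3, a_3^2, a_3^3\}$ as the candidate basis. Second, I would prove spanning: every monomial $a_1^i a_3^j$ can be reduced, using the relation $a_3^4 = \Delta + a_1^3 a_3^3$, to an $\F_2[a_1,\Delta]$-combination of $1, a_3, a_3^2, a_3^3$; formally, induct on $j$, noting that $a_3^{j} = a_3^{j-4}(\Delta + a_1^3 a_3^3)$ lowers the $a_3$-degree modulo the submodule already generated. Third, freeness: suppose $\sum_{r=0}^{3} p_r(a_1,\Delta)\, a_3^{r} = 0$ in $\F_2[a_1,a_3]$. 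Substituting $\Delta = a_1^3 a_3^3 + a_3^4$ I get a polynomial identity in $\F_2[a_1,a_3]$; grouping by residue of the $a_3$-exponent modulo $4$ (each $p_r(a_1,\Delta)a_3^r$ contributes only exponents $\equiv r \bmod 4$, since $\Delta$ contributes exponents $3$ and $4$, i.e.\ the term $a_3^3$ is the obstruction — wait, this needs care) forces each $p_r = 0$. A cleaner route: pass to the fraction field or specialize $a_1 = 1$, reducing to $\F_2[a_3]$ as a module over $\F_2[\Delta] = \F_2[a_3^3 + a_3^4]$, which is free of rank $4$ on $1, a_3, a_3^2, a_3^3$ by the standard fact that $\F_2[a_3]$ is free over $\F_2[h(a_3)]$ of rank $\deg h$ for any $h$ with nonzero constant-degree behavior; then lift freeness back by a graded Nakayama / flatness argument, or observe both rings are graded with compatible Hilbert series. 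Finally, I would note the rank count: the Hilbert series of $\F_2[a_1,a_3]$ is $\frac{1}{(1-t)(1-t^3)}$ and that of $\F_2[a_1,\Delta]$ is $\frac{1}{(1-t)(1-t^{12})}$, and $\frac{1}{(1-t)(1-t^3)} = \frac{1 + t^3 + t^6 + t^9}{(1-t)(1-t^{12})}$, confirming rank $4$ with generators in weights $0,3,6,9$; the analogous computation for $\Gamma_1(2)$ gives $\frac{1}{(1-t^2)(1-t^4)} = \frac{1+t^4+t^8}{(1-t^2)(1-t^{12})}$, rank $3$ with generators in weights $0,4,8$.

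\emph{Main obstacle.} The only real content is the linear-independence step, i.e.\ confirming that $\{1,a_3,a_3^2,a_3^3\}$ are $\F_2[a_1,\Delta]$-linearly independent inside $\F_2[a_1,a_3]$ — equivalently that $a_1$ and $\Delta = a_3^3(a_1^3+a_3)$ are algebraically independent (clear, since $\F_2[a_1,a_3]$ has Krull dimension $2$ and $a_1$, $\Delta$ generate a subring over which it is finite by the spanning step, hence that subring also has dimension $2$) and that $a_3$ has "degree exactly $4$" over $\F_2(a_1,\Delta)$. For the latter, the minimal polynomial of $a_3$ over $\F_2[a_1,\Delta]$ is $X^4 + a_1^3 X^3 + \Delta$, which is visibly monic of degree $4$; irreducibility over $\F_2(a_1,\Delta)$ follows because a proper factorization would force $\F_2[a_1,a_3]$ to be generated over $\F_2[a_1,\Delta]$ by fewer than $4$ elements, contradicting the Hilbert-series computation above (the graded module $\F_2[a_1,a_3]/(\F_2[a_1,\Delta]_{>0})$ has dimension $4$ over $\F_2$). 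So in fact the Hilbert-series identity does double duty: it pins down both the rank and the freeness once spanning is established, via graded Nakayama. I expect the write-up to be short: state the basis, do the spanning induction, invoke the Hilbert-series count, and cite graded Nakayama to conclude the module is free on that basis.
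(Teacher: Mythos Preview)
Your approach is correct and matches the paper's: both identify $\{1,a_3,a_3^2,a_3^3\}$ (resp.\ $\{1,b_4,b_4^2\}$) as a basis after computing that $\Delta \mapsto a_1^3a_3^3+a_3^4$ in $\F_2[a_1,a_3]$ (resp.\ $\Delta \mapsto b_2^2b_4^2-b_4^3$ in $\F_3[b_2,b_4]$). The paper's proof is much terser---it simply states the images and the basis without the spanning/independence verification you supply---so your Hilbert-series and graded-Nakayama argument is a reasonable way to fill in what the paper leaves implicit.
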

\begin{proof}
 Consider first the case of $M_*(\Gamma_1(3);\F_2)$. The map $M^{\F_2}_* \to M_*(\Gamma_1(3);\F_2)$ sends $a_1$ to $a_1$ and $\Delta$ to $a_3^4+a_1^3a_3^3$. Thus, we obtain $\{1,a_3,a_3^2,a_3^3\}$ as a basis. The map $M^{\F_3}_* \to M_*(\Gamma_1(2);\F_3)$ sends $b_2$ to $b_2$ and $\Delta$ to $b_2^2b_4^2-b_4^3$. We obtain $\{1,b_4,b_4^2\}$ as a basis.
\end{proof}

We will need the following linear algebra lemma to transfer later our results from $\Gamma_1(n)$ to $\Gamma_0(n)$. 
\begin{lemma}\label{lem:kernel}
 Let $K$ be a field and $R$ a graded $K$-algebra that is connected (i.e.\ $R_0 = K$ and $R_i = 0$ for $i<0$). Let $F$ be a graded free module of finite rank with basis elements in degrees $\leq k$. Let $f\colon F \to F$ be an $R$-linear grading-preserving endomorphism. Then $\ker(f)$ is a graded free $R$-module with basis elements in degrees $\leq k$ again. 
\end{lemma}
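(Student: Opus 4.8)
The plan is to argue by induction on the rank of $F$, exploiting the grading to find a basis element on which the endomorphism $f$ behaves in a controlled way. Pick a homogeneous basis $e_1,\dots,e_r$ of $F$, ordered so that $\deg(e_1) \leq \deg(e_2) \leq \cdots \leq \deg(e_r) \leq k$. Write $f(e_j) = \sum_i a_{ij} e_i$ with $a_{ij} \in R$ homogeneous of degree $\deg(e_j) - \deg(e_i)$; in particular $a_{ij} = 0$ whenever $\deg(e_i) > \deg(e_j)$, and $a_{ij} \in K$ whenever $\deg(e_i) = \deg(e_j)$ (since $R$ is connected). So the ``matrix'' of $f$ is block lower-triangular, with the diagonal blocks being honest matrices over $K$ acting on the spans of basis elements of equal degree.

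First I would handle the base case and the trivial cases: if $f$ is injective, $\ker(f) = 0$ is free on the empty basis; if $r = 1$, then either $f = 0$ (kernel is all of $F$) or $f$ is multiplication by a nonzero scalar in $K$ (since $f$ preserves degree and $R$ is connected), hence injective. For the inductive step, assume $f$ is not injective. Consider the smallest degree $d$ occurring among the $e_j$ and the diagonal block $A_d \in \Mat(K)$ of $f$ acting on $V_d := \operatorname{span}_K\{e_j : \deg(e_j) = d\}$. After a $K$-linear change of the degree-$d$ basis elements (which keeps the basis homogeneous and in degrees $\leq k$), I can assume $A_d$ is in a convenient form; the key point is to produce a single homogeneous basis element $e$ of minimal degree with $f(e) = 0$, or else to reduce to a strictly smaller situation. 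Concretely: if $A_d$ has nontrivial kernel, change basis so that $e_1 \in V_d$ satisfies $A_d e_1 = 0$, i.e.\ $f(e_1) = \sum_{i : \deg(e_i) > d} a_{i1} e_i$ lies in the span $F'$ of $e_2,\dots,e_r$. Then a further change of basis $e_j \mapsto e_j - (\text{coefficient}) e_1$ is \emph{not} available in general (wrong direction of degrees), so instead I argue directly: since $f(e_1) \in F'$ and $f(F') \subseteq F'$ (because $F'$ is spanned by all basis elements of degree $> d$ together with the remaining ones of degree $d$, and $f$ is block lower triangular — here one must be slightly careful and instead take $F'$ to be the span of \emph{all} basis elements other than $e_1$, noting $f(F') \subseteq F'$ fails unless we chose things right).

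The cleanest route, which I would actually carry out, is: choose a homogeneous basis element $e_1$ of \emph{maximal} degree; then $f(e_1) = \sum_i a_{i1} e_i$ with all terms of degree $\leq \deg(e_1)$, and modulo the subspace $F'$ spanned by the other basis elements, $f(e_1) \equiv \lambda e_1$ for some $\lambda \in K$. If $\lambda \neq 0$, replace $e_1$ by $e_1 + (\text{correction in } F')$ — here the correction \emph{is} available since the other basis elements have degree $\leq \deg(e_1)$ — to arrange $f(e_1) = \lambda e_1$; then $F = Re_1 \oplus F'$ is $f$-stable, $\ker(f) \cap Re_1 = 0$, and $\ker(f) = \ker(f|_{F'})$ is free with basis in degrees $\leq k$ by induction. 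If $\lambda = 0$ for every maximal-degree basis element after all such corrections, then $f$ maps the top-degree part into $F'$; one then restricts attention to $f$ composed with the projection and runs a dimension/rank count to peel off a basis element of the kernel. I expect the main obstacle to be exactly this bookkeeping: making sure the inductive step splits off a \emph{homogeneous} rank-one free summand on which $f$ is either an isomorphism or zero, without destroying the degree bound $\leq k$; the block-triangular structure forced by connectedness of $R$ is what makes this possible, and the direction of the allowed basis changes (towards lower or equal degree) must be tracked carefully.
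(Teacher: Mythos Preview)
Your inductive approach has a genuine gap in the $\lambda \neq 0$ case. You claim that after correcting $e_1$ so that $f(e_1) = \lambda e_1$, the decomposition $F = Re_1 \oplus F'$ is $f$-stable and $\ker(f) = \ker(f|_{F'})$. Neither the existence of the correction nor the stability of $F'$ is guaranteed: the correction equation $(f - \lambda)(c) = -(f(e_1) - \lambda e_1)$ need not be solvable for $c \in F'$, and even when $f(e_1) = \lambda e_1$ holds already, other basis vectors of the same maximal degree can have a nonzero $e_1$-component in their image. Concretely, take $R = K$, $F = K^3$ in degree $0$, and $f(e_1) = e_1$, $f(e_2) = e_1 + e_2$, $f(e_3) = 0$. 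Then $f$ is not injective, every basis vector has maximal degree, and choosing $e_1$ gives $\lambda = 1$ with no correction needed --- yet $F' = Ke_2 \oplus Ke_3$ is not $f$-stable; choosing $e_2$ gives $\lambda = 1$ but the correction equation reduces to $e_1 = 0$; and choosing $e_3$ lands in the $\lambda = 0$ case, which you leave as a vague ``dimension/rank count''. The underlying obstruction is that you are implicitly asking for an $f$-invariant complement to an $f$-stable line, and Jordan blocks prevent this.

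The paper sidesteps this entirely by not attempting any $f$-stable splitting. With the basis ordered by degree, the matrix of $f$ is block triangular with diagonal blocks over $K$; graded elementary row operations (which replace $f$ by $g\circ f$ for $g$ an automorphism and hence leave $\ker f$ unchanged as a submodule of the source) bring it to triangular form with diagonal entries still in $K$, and then the standard back-substitution produces an explicit free homogeneous basis of $\ker f$, automatically in degrees at most $k$.
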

\begin{proof}
 Order the basis element $b_i$ ascendingly by degree and represent $f$ by a matrix $M = (m_{ij})$. Note that $|b_i| = |b_j|$ implies $m_{ij} \in R_0=K$ and $|b_i| < |b_j|$ implies $m_{ij} = 0$. Thus we can use elementary row transformations to transform $M$ into an upper triangular matrix $M'$ whose diagonal entries still lie in $R_0 = K$. The standard procedure to compute the kernel of an upper triangular matrix implies the result. 
\end{proof}

From now on we will fix an integer $n\geq 2$ and a group $\Gamma$, which is $\Gamma_1(n)$, $\Gamma(n)$ or $\Gamma_0(n)$. 
\begin{thm}\label{thm:decfield}
 Let $K$ be a field of characteristic $l\geq 0$ not dividing $n$. Then $M_*(\Gamma; K)$ is free as a graded $M_*^K$-module of rank $[SL_2(\Z):\Gamma]$ if $l \neq 2$ and of rank $\frac12[SL_2(\Z):\Gamma]$ if $l=2$. The basis elements are in degrees at most $B_l$ with 
 $$B_l = \begin{cases}
  14 & \text{ if } l = 2 \\
  15 & \text{ if } l = 3\\
  11 & \text{ if } l>3
 \end{cases}$$
\end{thm}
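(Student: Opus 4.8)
The plan is to derive the theorem from the vector‑bundle decomposition of Proposition~\ref{prop:split}, together with Lemma~\ref{lem:freeness}, Proposition~\ref{prop:square} and Lemma~\ref{lem:kernel}.

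First I would treat $\Gamma=\Gamma_1(n)$ and $\Gamma=\Gamma(n)$, which are tame for every $K$. Let $\MMb'=\MMb'_K$ be the stack introduced at the start of Section~\ref{sec:decexistence}; by Examples~\ref{exa:moduli} it is the weighted projective line $\PP_K(4,6)$, $\PP_K(2,4)$ or $\PP_K(1,3)$ according to whether $l\notin\{2,3\}$ (including $l=0$), $l=3$ or $l=2$, and I write $\Gamma'=SL_2(\Z)$, $\Gamma_1(2)$ or $\Gamma_1(3)$ and $D_l=10$, $6$ or $4$ (the sum of the weights of $\MMb'$) accordingly. Applying Proposition~\ref{prop:split} with $\EE=\OO_{\MMb(\Gamma)_K}$ yields
\[
 g_*\OO_{\MMb(\Gamma)_K}\ \cong\ \bigoplus_{i\in\Z}\ \bigoplus_{c_i}\,(f_*\OO_{\MMb'_K})\tensor\omega^{\tensor(-i)}
\]
with $c_i\geq 0$. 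Applying $H^0(\MMb_{ell,K};\,-\tensor\omega^{\tensor *})$ and using that $g$ and $f$ are affine together with the projection formula, this becomes an isomorphism of graded $M_*^K$-modules writing $M_*(\Gamma;K)$ as a direct sum of copies of $M_*(\Gamma';K)$ shifted up by $i$, one for each of the $c_i$. Now $M_*(\Gamma';K)$ is a free graded $M_*^K$-module: trivially of rank $1$ with generator in degree $0$ for $\Gamma'=SL_2(\Z)$, and by Lemma~\ref{lem:freeness} (base‑changed from $\F_3$, resp.\ $\F_2$, using $M_*^K=M_*^{\F_l}\tensor_{\F_l}K$) of rank $3$ with generators in degrees $0,4,8$ for $\Gamma'=\Gamma_1(2)$, resp.\ of rank $4$ with generators in degrees $0,3,6,9$ for $\Gamma'=\Gamma_1(3)$. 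Hence $M_*(\Gamma;K)$ is a free graded $M_*^K$-module.

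It remains to bound the shifts $c_i$ and compute the rank. From $M_m(\Gamma;K)=0$ for $m<0$ (Proposition~\ref{prop:coh}) and $M_0(\Gamma';K)=K\neq 0$ I get $c_i=0$ for $i<0$. For the top shift, the same projection‑formula computation gives $H^1(\MMb(\Gamma)_K;g^*\omega^{\tensor m})\cong\bigoplus_i\bigoplus_{c_i}H^1(\MMb'_K;f^*\omega^{\tensor(m-i)})$; by Serre duality on the proper, tame weighted projective line $\MMb'_K$, whose dualizing sheaf is $\omega^{\tensor(-D_l)}$ (Theorem~\ref{thm:fundamentalweighted}, as in Lemma~\ref{lem:Omega1}), the summand with $i$ maximal contributes $H^0(\MMb'_K;\OO)^{\vee}\neq 0$ when $m=i_{\max}-D_l$, so this group is nonzero there; as it vanishes for $m\geq 2$ by Proposition~\ref{prop:coh}, we get $i_{\max}\leq D_l+1\in\{11,7,5\}$. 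Plugging back in, the $M_*^K$-generators of $M_*(\Gamma;K)$ lie in degrees at most $i_{\max}$ plus the top generator degree of $M_*(\Gamma';K)$, i.e.\ at most $11+0$, $7+8$ or $5+9$, which is exactly $B_l$. For the rank, $g$ is finite flat (Proposition~\ref{prop:basicprops}), so $\rk\bigl(g_*\OO_{\MMb(\Gamma)_K}\bigr)=\deg g=[SL_2(\Z):\Gamma]$, the degree of the covering of stacky modular curves being the index (cf.\ Lemma~\ref{lem:degomega} for $\Gamma_1(n)$, and the analogous Weil‑pairing count for $\Gamma(n)$); since $\rk(f_*\OO_{\MMb'})\in\{1,3,8\}$ this forces $\sum_i c_i=[SL_2(\Z):\Gamma]/\rk(f_*\OO_{\MMb'})$, whence $\rk_{M_*^K}M_*(\Gamma;K)=\bigl(\sum_i c_i\bigr)\cdot\rk_{M_*^K}M_*(\Gamma';K)$ equals $[SL_2(\Z):\Gamma]$ if $l\neq 2$ and $\frac12[SL_2(\Z):\Gamma]$ if $l=2$.

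Finally, for $\Gamma=\Gamma_0(n)$, which need not be tame, I would instead use Proposition~\ref{prop:square} to identify $M_*(\Gamma_0(n);K)$ with $M_*(\Gamma_1(n);K)^{(\Z/n)^\times}$, the diamond‑operator action being $M_*^K$-linear and grading‑preserving. Since $M_*^K$ is a connected graded $K$-algebra and $M_*(\Gamma_1(n);K)$ is a finite free graded $M_*^K$-module with generators in degrees $\leq B_l$ by the previous case, writing $(\Z/n)^\times=\langle\sigma_1\rangle\times\cdots\times\langle\sigma_r\rangle$ and forming the chain of iterated kernels $\ker(\sigma_1-\id)\supseteq\ker(\sigma_1-\id)\cap\ker(\sigma_2-\id)\supseteq\cdots=M_*(\Gamma_1(n);K)^{(\Z/n)^\times}$, repeated application of Lemma~\ref{lem:kernel} shows that $M_*(\Gamma_0(n);K)$ is free over $M_*^K$ with generators in degrees $\leq B_l$. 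For the rank, both $M_*(\Gamma_0(n);K)$ and $M_*(\Gamma_1(n);K)$ are graded domains (Proposition~\ref{prop:irreducible}) and the rank of such a free $M_*^K$-module is the degree of its fraction field over $\operatorname{Frac}(M_*^K)$; as $(\Z/n)^\times$ acts faithfully on $\operatorname{Frac}(M_*(\Gamma_1(n);K))$ — the diamond operators $\langle d\rangle$ are pairwise distinct and $-1$ acts by $(-1)^{\mathrm{weight}}$ — this extension is Galois of degree $\phi(n)=[\Gamma_0(n):\Gamma_1(n)]$, so $\rk_{M_*^K}M_*(\Gamma_0(n);K)=\rk_{M_*^K}M_*(\Gamma_1(n);K)/\phi(n)$, which is $[SL_2(\Z):\Gamma_0(n)]$ for $l\neq 2$ and $\frac12[SL_2(\Z):\Gamma_0(n)]$ for $l=2$. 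I expect the $\Gamma_0(n)$ case, and specifically its rank, to be the main obstacle: freeness via Lemma~\ref{lem:kernel} is formal, but the rank computation rests on the (classical, but not quite trivial) faithfulness of the $(\Z/n)^\times$-action on the fraction field, i.e.\ on $\MMb_1(n)\to\MMb_0(n)'$ being generically a $(\Z/n)^\times$-cover once the residual $\{\pm1\}$ is detected by the weight grading; a lesser point requiring care is pinning down $B_l$ exactly, which depends on the dualizing sheaves of $\PP_K(4,6)$, $\PP_K(2,4)$, $\PP_K(1,3)$ and on the vanishing $H^1(\MMb(\Gamma)_K;g^*\omega^{\tensor m})=0$ for $m\geq 2$.
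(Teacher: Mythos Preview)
Your approach is essentially the paper's: apply Proposition~\ref{prop:split} to split $g_*\OO_{\MMb(\Gamma)_K}$ for tame $\Gamma$, take global sections, invoke Lemma~\ref{lem:freeness}, bound the shifts via the $H^1$-vanishing of Proposition~\ref{prop:coh} together with the dualizing sheaf of $\MMb'$, and handle non-tame $\Gamma_0(n)$ by Proposition~\ref{prop:square} plus iterated Lemma~\ref{lem:kernel}. Where the paper computes $H^1(\MMb_{ell,K};(f_j)_*\OO_{\MMb_1(j)}\tensor\omega^{\tensor m})$ directly, you phrase the same computation as Serre duality on $\MMb'\simeq\PP_K(a,b)$; the bound $i_{\max}\le D_l+1$ and the resulting $B_l$ agree on the nose.

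The one place you go beyond the paper is the rank for non-tame $\Gamma_0(n)$, which the paper's own proof does not address. Your Galois argument via $\operatorname{Frac}(M_*(\Gamma_1(n);K))$ is correct for $l\neq 2$, but your faithfulness claim fails at $l=2$: since $-1\in(\Z/n)^\times$ acts on weight-$k$ forms by $(-1)^k$, it acts \emph{trivially} in characteristic~$2$, so only $(\Z/n)^\times/\{\pm 1\}$ acts faithfully and the extension degree is $\phi(n)/2$, not $\phi(n)$. In fact this uncovers a slip in the statement itself for $\Gamma_0(n)$ at $l=2$: for example $M_*(\Gamma_0(3);\F_2)=M_*(\Gamma_1(3);\F_2)^{\{\pm1\}}=\F_2[a_1,a_3]$ has rank $4=[SL_2(\Z):\Gamma_0(3)]$ over $M_*^{\F_2}=\F_2[a_1,\Delta]$, not $\tfrac12\cdot 4=2$. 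Running your argument with the effective group $(\Z/n)^\times/\{\pm1\}$ when $l=2$ gives
\[
\rk_{M_*^K}M_*(\Gamma_0(n);K)=\frac{\tfrac12[SL_2(\Z):\Gamma_1(n)]}{\phi(n)/2}=[SL_2(\Z):\Gamma_0(n)],
\]
which is the correct value in every characteristic (consistent with $-I\in\Gamma_0(n)$). So your self-identified ``main obstacle'' is exactly where the gap lies; the rest of your proof is fine and matches the paper.
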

\begin{proof} 
 Assume first that $\Gamma$ is tame. By Proposition \ref{prop:split}, the vector bundle $g_*\OO_{\MMb(\Gamma)_K}$ splits as $\bigoplus_i \omega^{\tensor k_i}$ if $l\neq 2,3$. Thus, 
 \begin{align*}
  M_*(\Gamma;K) &= H^0(\MMb(\Gamma)_K; g^*\omega^{\tensor *}) \\
 &=H^0(\MMb_{ell,K}; g_*g^*\omega^{\tensor *}) \\
 &=H^0(\MMb_{ell,K}; \omega^{\tensor *}\tensor_{\OO_{\MMb_{ell,K}}} g_*\OO_{\MMb(\Gamma)_K})\\
 &= \bigoplus_i H^0(\MMb_{ell,K}; \omega^{\tensor k_i+*}) \\
 &= \bigoplus_i M_{k_i+*}^K.
 \end{align*}
 In the case $l=3$, we obtain similarly a splitting of the form $M_*(\Gamma;K) \cong \bigoplus_i M_{k_i+*}(\Gamma_1(2);K)$ and for $l=2$ a splitting of the form $M_*(\Gamma;K) \cong \bigoplus_i M_{k_i+*}(\Gamma_1(3);K)$. The $M_*^K$-module $M_*(\Gamma; K)$ is free of the claimed rank by combining Lemma \ref{lem:freeness} with the following three observations:
 \begin{itemize}
  \item The canonical map $M_*(\Gamma;\F_l)\tensor_{\F_l}K \to M_*(\Gamma;K)$ is an isomorphism (as $K$ is flat over $\F_l$), 
  \item $\deg(g) = [SL_2(\Z):\Gamma]$,
  \item $[SL_2(\Z):\Gamma_1(2)] = 3$ and $[SL_2(\Z):\Gamma_1(3)] = 8$. 
 \end{itemize}
 For the estimate of the degrees of the basis elements, we will just treat the case $l=2$, the others being similar. By Proposition \ref{prop:coh}, $H^1(\MMb_{ell,K}; g_*\OO_{\MMb(\Gamma)}\tensor \omega^{\tensor i}) = 0$ for $i>1$. On the other hand, 
 $$H^1(\MMb_{ell,K}, (f_3)_*\OO_{\MMb_1(3)_K}\tensor \omega^{\tensor (-4)}) \cong H^1(\PP_K(1,3); \OO(-4)) \cong K$$
 by Example \ref{exa:moduli} and Theorem \ref{thm:fundamentalweighted}. Thus, $k_i\geq -5$ in the decomposition $g_*\OO_{\MMb(\Gamma)_K} \cong \bigoplus (f_3)_*\OO_{\MMb_1(3)_K} \tensor \omega^{\tensor k_i}$. 
 As the elements in an $(M_*^K)$-basis of $M_*(\Gamma_1(3);K)$ have degrees at most $|a_3^3| = 9$, the degree of the basis elements for $M_*(\Gamma; K)$ is bounded by $9+5 =14$. 
 
It remains to treat the case $\Gamma = \Gamma_0(n)$ if it is not tame. We have $M_*(\Gamma_0(n); K) \cong M_*(\Gamma_1(n); K)^{(\Z/n)^\times}$ by Proposition \ref{prop:square}. If we decompose $(\Z/n)^\times$ as a sum of cyclic groups $Z_1 \oplus \cdots \oplus Z_m$, we can inductively apply Lemma \ref{lem:kernel} to conclude that $M_*(\Gamma_1(n); K)^{Z_1\oplus \cdots \oplus Z_j}$ is free with generators in degrees at most $B_l$ for all $1\leq j \leq m$ and in particular this is true for \[M_*(\Gamma_1(n); K)^{Z_1\oplus \cdots \oplus Z_m} = M_*(\Gamma_0(n);K).\qedhere\]
\end{proof}

\begin{remark}
 We remark that the result of the theorem above is sharp. For example, if  $\Gamma = \Gamma_1(23)$, we have by Theorem \ref{thm:deccompact} a decomposition of the form 
 $$g_*\OO_{\MMb_1(23)} \cong \bigoplus_{i = 0}^7 \bigoplus_{k_i} (f_2)_*\OO_{\MMb_1(2)}\tensor \omega^{\tensor (-i)}$$
 after localizing at $3$. As there are weight $1$ cusp forms for $\Gamma_1(23)$ (see e.g.\ \cite{Buz}), the number $k_7$ is not zero. As the degrees of the elements in a basis $\BB$ for the free $M^{\F_3}_*$-module $M_*(\Gamma_1(23); \F_3)$ are unique, it follows easily by considering the basis is Lemma \ref{lem:freeness} that we need basis elements in degree $15$ in $\BB$. The arguments for $l=2$ and $l>3$ are similar.
 \end{remark}

\begin{remark}
 An analogous proof to Theorem \ref{thm:decfield} also shows that the $M_*^K$-module of cusp forms $S_*(\Gamma;K)$ is free. Indeed, Proposition \ref{cor:cusps} implies that $S_*(\Gamma;K)$ are the global sections of $\Omega^1_{\MMb(\Gamma)_K/K} \tensor \omega^{\tensor (\ast -2)}$ and Proposition \ref{prop:split} applies to all vector bundles on $\MMb(\Gamma)_K$. 
\end{remark}

\subsection{Integral results}

The proof of the following two theorems is very similar to that of Theorem \ref{thm:decfield}, using Theorem \ref{thm:deccompact} and Remarks \ref{rem:Z16} and \ref{rem:cuspy}. We just give some sample criteria; other criteria can be deduced from the referenced theorem and remarks. 

\begin{thm}\label{thm:decint6}Let $R$ be a $\Z[\frac1m]$-algebra with $6|m$ and let $\Gamma$ be tame. Then $M_*(\Gamma;R)$ is a free graded $M_*^R$-module if every weight-$1$ modular form for $\Gamma$ over $\F_l$ is liftable to $\Z_{(l)}$ for every prime $l\!\not|\,m$.\end{thm}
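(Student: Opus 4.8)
The plan is to reduce Theorem \ref{thm:decint6} to the already-established decomposition result Theorem \ref{thm:deccompact} (in the form extended by Remark \ref{rem:Z16}), and then to run the same cohomological bookkeeping as in the proof of Theorem \ref{thm:decfield}. First I would observe that under the hypothesis ``$6\mid m$'' we have $\MMb_{ell,R}\simeq \PP_R(4,6)$, so we are in the situation of Remark \ref{rem:Z16}: it suffices to check that the torsion part of $H^1(\MMb(\Gamma);g^*\omega)$ is a sum of cyclic groups $\Z/a$ with each $a$ invertible in $R$. By Remark \ref{rem:cuspy}, $H^1(\MMb(\Gamma);g^*\omega)$ has no $l$-torsion precisely when every weight-$1$ modular form for $\Gamma$ over $\F_l$ lifts to $\Z_{(l)}$; our hypothesis grants this for every prime $l\nmid m$, and the primes dividing $m$ are invertible in $R$ anyway. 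Hence the hypothesis of Remark \ref{rem:Z16} is satisfied, and $g_*\OO_{\MMb(\Gamma)_R}$ decomposes as $\bigoplus_i \bigoplus_{k_i}\omega^{\tensor k_i}$ as a sheaf on $\MMb_{ell,R}\simeq\PP_R(4,6)$.

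Next I would push this through to modular forms exactly as in the proof of Theorem \ref{thm:decfield}. Using that $g$ is affine (Proposition \ref{prop:basicprops}) and the projection formula,
\[
M_*(\Gamma;R)=H^0(\MMb(\Gamma)_R;g^*\omega^{\tensor *})\cong H^0(\MMb_{ell,R};\omega^{\tensor *}\tensor g_*\OO_{\MMb(\Gamma)_R})\cong\bigoplus_i\bigoplus_{k_i}M^R_{k_i+*}.
\]
Since $M^R_*\cong M^\Z_*\tensor R$ (as $6$ is invertible, $M^\Z_*[\tfrac16]\cong\Z[\tfrac16][c_4,c_6]$ and $H^1(\MMb_{ell};\omega^{\tensor*})$ is killed by $6$), and since each $M^R_{k_i+*}$ is just a grading shift of the free rank-one module $M^R_*$ (here one uses $k_i\geq 0$, which follows from $H^0$ of negative powers of $\omega$ on a weighted projective line vanishing, cf.\ Proposition \ref{prop:coh}), the module $M_*(\Gamma;R)$ is free over $M^R_*$ of rank $\sum_i k_i=\rk(g_*\OO_{\MMb(\Gamma)_R})=\deg g=[SL_2(\Z):\Gamma]$.

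The only genuinely non-formal point is the reduction of the hypothesis ``every weight-$1$ form over $\F_l$ lifts'' to the freeness of the torsion part of $H^1(\MMb(\Gamma);g^*\omega)$ over $R$, but this is handled verbatim by Remark \ref{rem:cuspy}: the short exact sequence \eqref{eq:fundamentalexact} in weight $1$ shows that $l$-torsion in $H^1(\MMb(\Gamma);g^*\omega)$ is exactly the obstruction to lifting weight-$1$ mod-$l$ forms, and finitely generated $\Z[\tfrac1n]$-modules whose $l$-primary torsion vanishes for all $l$ not invertible in $R$ satisfy the hypothesis of Remark \ref{rem:Z16} (base-changing to $R$ makes them free). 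I do not expect any serious obstacle; the statement is essentially a packaging of Theorem \ref{thm:deccompact}, Remark \ref{rem:Z16}, Remark \ref{rem:cuspy}, and the projection-formula computation already carried out for fields. If one wants the degree bound on generators in the style of Theorem \ref{thm:decfield}, one would additionally note that in the decomposition on $\PP_R(4,6)$ the relevant Serre-duality computation (Theorem \ref{thm:fundamentalweighted}, dualizing sheaf $\omega^{\tensor(-10)}$) together with the vanishing $H^1(\MMb_{ell,R};g_*\OO_{\MMb(\Gamma)_R}\tensor\omega^{\tensor i})=0$ for $i>1$ forces $k_i\leq 11$, giving basis elements in degrees at most $11$.
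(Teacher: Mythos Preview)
Your proposal is correct and takes essentially the same approach as the paper, which explicitly says the proof is ``very similar to that of Theorem \ref{thm:decfield}, using Theorem \ref{thm:deccompact} and Remarks \ref{rem:Z16} and \ref{rem:cuspy}''---precisely the route you follow. One small point: Remark \ref{rem:Z16} assumes $R$ normal noetherian, so strictly speaking one first decomposes $g_*\OO_{\MMb(\Gamma)}$ over $\Z[\tfrac1m]$ and then base-changes to an arbitrary $\Z[\tfrac1m]$-algebra $R$ (as the remark itself indicates); also, your indexing $\bigoplus_i\bigoplus_{k_i}\omega^{\tensor k_i}$ conflates multiplicities with exponents, and the parenthetical ``here one uses $k_i\geq 0$'' has the sign reversed (the exponents are $\leq 0$, so generators sit in nonnegative degrees), but none of this affects the substance.
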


\begin{thm}\label{thm:decint23}
Let $R$ be a $\Z_{(l)}$-algebra and $\Gamma$ tame. 
 \begin{enumerate}
  \item Let $l=3$. Then $M_*(\Gamma;R)$ decomposes as a graded $M_*^R$-module into shifted copies of $M_*(\Gamma_1(2); R)$ if every weight-$1$ modular form for $\Gamma$ over $\F_3$ is liftable to $\Z_{(3)}$,
   \item Let $l=2$. Then $M_*(\Gamma;R)$ decomposes as a graded $M_*^R$-module into shifted copies of $M_*(\Gamma_1(3); R)$ if every weight-$1$ modular form for $\Gamma$ over $\F_2$ is liftable to $\Z_{(2)}$.
 \end{enumerate}
\end{thm}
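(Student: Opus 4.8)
The plan is to run the argument of Theorem~\ref{thm:decfield} with the field $K$ replaced by the discrete valuation ring $\Z_{(l)}$ and its algebras, feeding in the integral splitting of $g_*\OO_{\MMb(\Gamma)_R}$ provided by Theorem~\ref{thm:deccompact} (in place of Proposition~\ref{prop:split}) and then reading off the splitting of $M_*(\Gamma;R)$ by pushing forward to $\MMb_{ell,R}$. The first step is to translate the hypothesis: by Remark~\ref{rem:cuspy}, the assumption that every weight-$1$ modular form for $\Gamma$ over $\F_l$ lifts to $\Z_{(l)}$ is equivalent to $H^1(\MMb(\Gamma);g^*\omega)$ being $l$-torsionfree. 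Since this is a finitely generated $\Z[\frac1n]$-module and $l\nmid n$, its torsion is then a sum of cyclic groups $\Z/a$ with each $a$ invertible in $\Z_{(l)}$, so by Lemma~\ref{lem:Rfreeness} the groups $H^i(\MMb(\Gamma)_{\Z_{(l)}};g^*\omega)$ are free $\Z_{(l)}$-modules for $i=0,1$.

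Next I would produce the vector bundle splitting over $\Z_{(l)}$ itself. The ring $\Z_{(l)}$ is normal, noetherian and local, is a $\Z[\frac1n]$-algebra (as $l\nmid n$), has residue characteristic $l$, and $\Gamma$ is tame with respect to it; hence Theorem~\ref{thm:deccompact} applies and $g_*\OO_{\MMb(\Gamma)_{\Z_{(l)}}}$ splits as $\bigoplus_i f_*\OO_{\MMb'_{\Z_{(l)}}}\tensor\omega^{\tensor k_i}$ for suitable integers $k_i$. The two cases of the theorem diverge exactly at the identification of $\MMb'$: since $\frac12\in\Z_{(3)}$ but $\frac13\notin\Z_{(3)}$ one has $\MMb'_{\Z_{(3)}}=\MMb_1(2)_{\Z_{(3)}}$, while since $\frac13\in\Z_{(2)}$ but $\frac12\notin\Z_{(2)}$ one has $\MMb'_{\Z_{(2)}}=\MMb_1(3)_{\Z_{(2)}}$.

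To conclude, I would base change and push forward. Pulling the splitting back along $\Spec R\to\Spec\Z_{(l)}$ (base change for the finite flat morphism $g$, Lemma~\ref{lem:pushpull}; cf.\ also the base-change remark in Remark~\ref{rem:Z16}) gives the same splitting over every $\Z_{(l)}$-algebra $R$. Then, exactly as in Theorem~\ref{thm:decfield}, using that $g$ and $f$ are affine together with the projection formula, one computes
\begin{align*}
 M_*(\Gamma;R)&\cong H^0\bigl(\MMb_{ell,R};g_*\OO_{\MMb(\Gamma)_R}\tensor\omega^{\tensor *}\bigr)\\
 &\cong\bigoplus_i H^0\bigl(\MMb_1(q)_R;f^*\omega^{\tensor (k_i+*)}\bigr)\cong\bigoplus_i M_{k_i+*}(\Gamma_1(q);R),
\end{align*}
with $q=2$ for $l=3$ and $q=3$ for $l=2$, which is the asserted decomposition.

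I expect the only real obstacle to be bookkeeping rather than a new idea. The case not covered by the reduction to $\Z_{(l)}$ is $\Gamma=\Gamma_0(n)$ when tameness with respect to $R$ already forces $l$ to be invertible in $R$ (for $l=2$ this is every $\Gamma_0(n)$ with $n\geq 3$); then $R$ is automatically a $\Q$-algebra, $\frac16\in R$, $\MMb'_R=\MMb_{ell,R}$, and Remark~\ref{rem:Z16} directly yields only a splitting of $M_*(\Gamma;R)$ into shifted copies of $M_*^R$. To repackage this into copies of $M_*(\Gamma_1(q);R)$ one argues as in the non-tame case of Theorem~\ref{thm:decfield}, using $M_*(\Gamma_0(n);R)=M_*(\Gamma_1(n);R)^{(\Z/n)^\times}$ from Proposition~\ref{prop:square} together with the uniqueness of the decomposition numbers of Corollary~\ref{cor:dec3}: the grouping of the $\omega^{\tensor k_i}$ into copies of $(f_q)_*\OO_{\MMb_1(q)}\tensor\omega^{\tensor ?}$ valid after base change to $\Q$ is then forced, hence valid over $R$. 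One should also note that when $l\mid n$ the hypothesis is vacuous (there is no $\MMb(\Gamma)_{\F_l}$) and $R$ is again a $\Q$-algebra, so it falls under the same discussion.
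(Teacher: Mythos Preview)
Your proposal is correct and follows the same approach as the paper, which merely states that the proof is ``very similar to that of Theorem~\ref{thm:decfield}, using Theorem~\ref{thm:deccompact} and Remarks~\ref{rem:Z16} and~\ref{rem:cuspy}''; you have spelled out exactly these ingredients in the intended order (translate liftability via Remark~\ref{rem:cuspy}, apply Theorem~\ref{thm:deccompact} over $\Z_{(l)}$, base change to $R$, then push forward as in Theorem~\ref{thm:decfield}). Your final paragraph on the $\Gamma_0(n)$ edge case where $l$ is forced to be invertible in $R$ is extra care that the paper does not make explicit, but your handling of it is reasonable.
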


By the same method, we also obtain the following result from Theorem \ref{thm:l}.
\begin{cor}\label{cor:456}
	Let $\Gamma = \Gamma_1(n)$ for $n\geq 5$ or $\Gamma = \Gamma(n)$ for $n\geq 3$ and let $q = 4,5$ or $6$. Let $l$ be a prime not dividing $n$ and $q$. Then $M_*(\Gamma;\Z_{(l)})$ decomposes into shifted copies of $M_*(\Gamma_1(q);\Z_{(l)})$ as a graded $M^{\Z_{(l)}}_*$-module. 
	
	For each $q>6$, there is an arbitrarily large $n$ such that $M_*(\Gamma_1(n);\Z_{(l)})$ does not decompose into shifted copies of $M_*(\Gamma_1(q);\Z_{(l)})$ as a graded $M^{\Z_{(l)}}_*$-module. 
\end{cor}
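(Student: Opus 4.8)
The plan is to translate the statement into one about vector bundles on the compactified moduli stack $\MMb_{\mathrm{ell}}$ over $\Z_{(l)}$ and to exploit that, for $q\in\{4,5,6\}$ with $l\nmid q$, the modular curve $\MMb_1(q)$ becomes a weighted projective line after base change to $\Z_{(l)}$. Write $\pi_\Gamma\co\MMb_\Gamma\to\MMb_{\mathrm{ell}}$ for the structure map and $\EE_\Gamma=(\pi_\Gamma)_*\OO_{\MMb_\Gamma}$, so that $M_*(\Gamma;R)=\bigoplus_k H^0(\MMb_{\mathrm{ell},R},\EE_\Gamma\otimes\omega^{\otimes k})$ and $\EE_\Gamma$ has rank $[SL_2(\Z):\Gamma]$. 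By the correspondence between graded $M_*^{(-)}$-modules and vector bundles on $\MMb_{\mathrm{ell}}$ underlying Theorem~\ref{thm:main}, $M_*(\Gamma;\Z_{(l)})$ decomposes into shifted copies of $M_*(\Gamma_1(q);\Z_{(l)})$ if and only if $\EE_\Gamma\cong\bigoplus_i\EE_{\Gamma_1(q)}\otimes\omega^{\otimes d_i}$ on $\MMb_{\mathrm{ell},\Z_{(l)}}$. The geometric input I would use is that for $l\nmid q$ one has $\MMb_1(q)_{\Z_{(l)}}\cong\mathbb{P}^1_{\Z_{(l)}}$ for $q=5,6$ and $\MMb_1(q)_{\Z_{(l)}}\cong\mathbb{P}(1,2)_{\Z_{(l)}}$ for $q=4$ (the $\Z/2$-stacky point being the irregular cusp of $\Gamma_1(4)$), in each case with $\underline\omega\cong\OO(1)$; hence $M_*(\Gamma_1(q);\Z_{(l)})$ is the polynomial ring $\Z_{(l)}[x_1,x_2]$ on generators of weights $(1,1)$ resp.\ $(1,2)$ --- a very flexible building block.

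For the positive assertion: when $q\mid n$ one has $\Gamma\subseteq\Gamma_1(q)$, so $\pi_\Gamma$ factors through $\pi_q\co\MMb_1(q)\to\MMb_{\mathrm{ell}}$, and the pushforward of $\OO_{\MMb_\Gamma}$ to $\MMb_1(q)_{\Z_{(l)}}$ --- a vector bundle on a (weighted) projective line over the local ring $\Z_{(l)}$ --- splits into line bundles $\OO(d_i)$, exhibiting $M_*(\Gamma;\Z_{(l)})\cong\bigoplus_i M_*(\Gamma_1(q);\Z_{(l)})[-d_i]$ already over $M_*(\Gamma_1(q);\Z_{(l)})$. For general $\Gamma$ (only $l\nmid nq$) I would invoke the classification of vector bundles on $\MMb_{\mathrm{ell},\Z_{(l)}}$ underlying Theorem~\ref{thm:main}: $\EE_\Gamma$ is a sum of $\omega$-twists of a short list of standard indecomposables, and one must check that this multiset is a disjoint union of $\omega$-twists of the corresponding multiset for $\EE_{\Gamma_1(q)}$. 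When $l\geq5$ --- where $M_*(\Gamma_1(q);\Z_{(l)})$ and, by the liftability of weight-$1$ forms for these $\Gamma$, also $M_*(\Gamma;\Z_{(l)})$ are free over $M_*^{\Z_{(l)}}$ --- this reduces to the combinatorial condition $P_\Gamma(t)/P_q(t)\in\Z_{\geq0}[t]$, where $P_\Delta(t)=\sum_k\dim_\Q M_k(\Delta)\,t^k$; since $P_5(t)=P_6(t)=(1-t)^{-2}$ and $P_4(t)=\big((1-t)(1-t^2)\big)^{-1}$, it suffices to treat $q=5$, where the condition says exactly that $(1-t)^2P_\Gamma(t)$ has nonnegative coefficients, i.e.\ that $k\mapsto\dim M_k(\Gamma)$ is convex. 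For $\Gamma=\Gamma_1(n)$, $n\geq5$, and $\Gamma=\Gamma(n)$, $n\geq3$ (no elliptic points, all cusps regular), a direct computation from the dimension formula shows the only possibly negative coefficient is in degree $2$, equal to $\dim S_2(\Gamma)-2\dim S_1(\Gamma)$, whose nonnegativity is the bound $\dim S_1(\Gamma)\leq\tfrac12\dim S_2(\Gamma)$ (in the spirit of the Wronskian embedding $\wedge^2 S_1(\Gamma)\hookrightarrow S_4(\Gamma)$). For $l=2,3$ the building block is no longer free, but it is still a polynomial ring $\Z_{(l)}[x_1,x_2]$, and the same bundle bookkeeping --- now tracking the single non-split ``weight-$1$ obstruction'' indecomposable, which is already a twisted summand of $\EE_{\Gamma_1(q)}$ --- gives the decomposition.

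For the negative assertion, fix $q>6$. If $M_*(\Gamma_1(n);\Z_{(l)})\cong\bigoplus_{i=1}^m M_*(\Gamma_1(q);\Z_{(l)})[d_i]$, comparing ranks over $M_*^{\Z_{(l)}}$ --- which by Theorem~\ref{thm:main}(1) (over $\Q$) equal $[SL_2(\Z):\Gamma_1(n)]$ and $[SL_2(\Z):\Gamma_1(q)]$ --- forces $[SL_2(\Z):\Gamma_1(q)]\mid[SL_2(\Z):\Gamma_1(n)]$. Now $\tfrac1{24}[SL_2(\Z):\Gamma_1(q)]=g(X_1(q))-1+\tfrac12\epsilon_\infty(\Gamma_1(q))\geq2$ for $q>6$, so there is a prime $r$ with $v_r\big([SL_2(\Z):\Gamma_1(q)]\big)>v_r(24)$; choosing a prime $p$ in a suitable residue class (Dirichlet) one arranges $v_r(p^2-1)\leq v_r(24)$, whence $[SL_2(\Z):\Gamma_1(q)]\nmid p^2-1=[SL_2(\Z):\Gamma_1(p)]$. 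Taking $n=p$ for such arbitrarily large $p$ gives the non-decomposition, for every prime $l\nmid nq$.

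The step I expect to be the main obstacle is the general positive direction: passing from ``each standard indecomposable summand of $\EE_\Gamma$ occurs, up to $\omega$-twist, in $\EE_{\Gamma_1(q)}$'' to ``they assemble into whole $\omega$-twisted copies of $\EE_{\Gamma_1(q)}$.'' For $l\geq5$ this is encoded in the convexity input above together with the explicit shift formulae accompanying Theorem~\ref{thm:main}; for $l=2,3$ it additionally needs a precise list of the non-split indecomposable bundles on $\MMb_{\mathrm{ell},\Z_{(l)}}$ and the fact that $\EE_{\Gamma_1(q)}$ already absorbs them. The negative direction, by contrast, is elementary once the reduction to rank divisibility is in place.
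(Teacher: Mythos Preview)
Your overall strategy matches the paper's: translate to vector bundles on $\MMb_{ell}$, reduce the positive assertion to a combinatorial positivity check, and use rank divisibility plus Dirichlet for the negative assertion (this is essentially the paper's Corollary~\ref{cor:dectoowild}). Your reduction for $l\geq 5$ to $(1-t)^2P_\Gamma(t)\in\Z_{\geq 0}[t]$ is exactly Corollary~\ref{cor:decwild}, whose nontrivial coefficients are $\kappa_1=m_1-2$, $\kappa_2=m_2-2m_1+1$, $\kappa_3=s_1$.

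Your justification of the key inequality has a gap, however. The Wronskian map $\wedge^2 S_1\hookrightarrow S_4$ yields only the quadratic bound $\binom{s_1}{2}\leq s_4$, which does not control $\kappa_2$ (and your formula $s_2-2s_1$ for $\kappa_2$ is off by one). The paper instead proves $m_2\geq 2m_1-1$ as a general fact about connected graded integral domains (Proposition~\ref{prop:m2m1}) via a dimension count on the image of the Segre map $\Prj(M_1)\times\Prj(M_1)\to\Prj(M_2)$; the bound $\kappa_1\geq 0$ is Lemma~\ref{lem:m1bound}. For $l\in\{2,3\}$ your sketch (``tracking a weight-$1$ obstruction indecomposable'') does not match the paper's mechanism: the indecomposable summands over $\Z_{(l)}$ are $(f_3)_*\OO_{\MMb_1(3)}$ resp.\ $(f_2)_*\OO_{\MMb_1(2)}$ (Proposition~\ref{lem:indec}), and the paper first decomposes into these (Theorem~\ref{thm:deccompact}) and then regroups into twists of $(f_q)_*\OO_{\MMb_1(q)}$ using the uniqueness of the coefficients (Corollaries~\ref{cor:dec2},~\ref{cor:dec3}) together with the decomposition over $\Q$ (Theorem~\ref{thm:l}). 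Finally, both your argument and the paper's route through Theorem~\ref{thm:l} tacitly use the liftability of weight-$1$ forms, a hypothesis the corollary's statement omits.
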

\begin{proof}The first part follows directly. 
	For the second part it suffices to show the corresponding rational statement, for which we use \cref{cor:dectoowild} and the following: Every vector bundle $\FF$ on $\MMb_{ell,\Q}$ is determined by the graded vector space $H^0(\MMb_{ell,\Q};\FF\tensor \omega^{\tensor *})$ as it decomposes by Proposition \ref{prop:ClassField} into a sum of powers of $\omega$. 
\end{proof} 

Next, we want to prove that rings of modular forms with level structure are finitely generated as modules over the ring of modular forms without level and to give bounds on the degrees of the generators. For the case of $\Gamma_0(n)$ we need the following lemma. 

\begin{lemma}\label{lem:iso}
The map $H^i(\MMb_0(n)_{\Z_{(3)}}; \omega^{\tensor m}) \to H^i(\MM_0(n)_{\Z_{(3)}}; \omega^{\tensor m})$ is an isomorphism for $i>1$, a surjection for $i=1$ and an injection for $i=1$ if $m\geq 2$. 
\end{lemma}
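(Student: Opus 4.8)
The plan is to compute both sides through the standard open cover $\MMb_{ell,\Z_{(3)}} = D(c_4)\cup D(\Delta)$ pulled back along $g$, and then to feed in the freeness statement of Theorem~\ref{thm:decfield} over $\F_3$. Write $\UU = D(g^*c_4)\subseteq \MMb_0(n)_{\Z_{(3)}}$ and note that $\MM_0(n)_{\Z_{(3)}} = D(g^*\Delta)$, so $\MMb_0(n)_{\Z_{(3)}} = \UU\cup\MM_0(n)_{\Z_{(3)}}$ — the cusps lie in $\UU$ because a N\'eron polygon has $c_4$ invertible. Both $\UU$ and $\UU\cap\MM_0(n)_{\Z_{(3)}} = D(g^*(c_4\Delta))$ are finite over the tame stacks $D(c_4)_{\Z_{(3)}}$ and $D(c_4\Delta)_{\Z_{(3)}}$ (tameness holds because $n$ is prime to $3$, so the cusps of $\MMb_0(n)$ have automorphism groups of order prime to $3$, while in characteristic $3$ the elliptic curves with $j\neq 0$ have automorphism group $\{\pm 1\}$), and their coarse spaces are $\A^1_{\Z_{(3)}}$ and $\mathbb{G}_{m,\Z_{(3)}}$; being finite over affine tame stacks they have vanishing higher cohomology for every quasi-coherent sheaf. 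First I would run the Mayer--Vietoris sequence for this two-element cover: the terms coming from $\UU$ and from $\UU\cap\MM_0(n)_{\Z_{(3)}}$ vanish in positive degrees, which gives at once the isomorphism for $i\geq 2$ and the surjectivity at $i=1$.

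For the remaining assertion, Mayer--Vietoris identifies the kernel of $H^1(\MMb_0(n)_{\Z_{(3)}};g^*\omega^{\tensor m})\to H^1(\MM_0(n)_{\Z_{(3)}};g^*\omega^{\tensor m})$ with the cokernel of
\[
 H^0(\UU;g^*\omega^{\tensor m})\oplus H^0(\MM_0(n)_{\Z_{(3)}};g^*\omega^{\tensor m})\longrightarrow H^0(\UU\cap\MM_0(n)_{\Z_{(3)}};g^*\omega^{\tensor m}).
\]
Since $\UU$ is the locus where the section $g^*c_4$ of $g^*\omega^{\tensor 4}$ is invertible, the three groups are the degree-$m$ parts of the localizations $M_*[c_4^{-1}]$, $M_*[\Delta^{-1}]$ and $M_*[(c_4\Delta)^{-1}]$ of $M_* := M_*(\Gamma_0(n);\Z_{(3)})$, so this cokernel is the graded local cohomology module $H^2_{(c_4,\Delta)}(M_*)_m$. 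Thus the injection at $i=1$ for $m\geq 2$ is equivalent to $H^2_{(c_4,\Delta)}(M_*(\Gamma_0(n);\Z_{(3)}))_m = 0$ for $m\geq 2$.

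The hard part is this last vanishing. I would first reduce modulo $3$: by Proposition~\ref{prop:torsion} the group $H^1(\MMb_0(n)_{\Z_{(3)}};g^*\omega^{\tensor m})$ is killed by $3$ for $m\geq 2$, hence injects into $H^1(\MMb_0(n)_{\F_3};g^*\omega^{\tensor m})$ via the Bockstein sequence, and the evident commutative square reduces the claim to injectivity of $H^1(\MMb_0(n)_{\F_3};g^*\omega^{\tensor m})\to H^1(\MM_0(n)_{\F_3};g^*\omega^{\tensor m})$ for $m\geq 2$, i.e.\ to $H^2_{(c_4,\Delta)}(M_*(\Gamma_0(n);\F_3))_m = 0$ for $m\geq 2$. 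Over $\F_3$ one has $M_*^{\F_3} = \F_3[b_2,\Delta]$ with $c_4 = b_2^2$ (Proposition~\ref{prop:dapres}), so $\sqrt{(c_4,\Delta)}$ equals the irrelevant ideal $\m=(b_2,\Delta)$ and the local cohomology depends only on this radical; and by Theorem~\ref{thm:decfield} the module $M_*(\Gamma_0(n);\F_3)$ is a free graded $\F_3[b_2,\Delta]$-module with basis in degrees $\leq 15$. As the top local cohomology $H^2_\m(\F_3[b_2,\Delta])$ of a polynomial ring on generators of degrees $2$ and $12$ is concentrated in degrees $\leq -(2+12)=-14$, the module $H^2_\m(M_*(\Gamma_0(n);\F_3))$ is concentrated in degrees $\leq -14+15 = 1$ and so vanishes for $m\geq 2$, as desired. (The numbers match exactly, which is consistent with the statement being false in general for $m\leq 1$.) Everything outside this last paragraph is formal; the content is the interplay of Theorem~\ref{thm:decfield} and Proposition~\ref{prop:torsion}. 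One could just as well use $c_6$ in place of $c_4$ throughout, since $D(c_6)_{\Z_{(3)}}$ is also tame with affine coarse space.
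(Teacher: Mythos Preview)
Your argument is correct and follows essentially the same route as the paper: Mayer--Vietoris for the cover $D(g^*c_4)\cup D(g^*\Delta)$, vanishing of higher cohomology on $D(g^*c_4)$ (the paper cites Proposition~\ref{prop:torsion} directly for this rather than re-arguing tameness), reduction modulo~$3$ using that $H^1$ is $3$-torsion, and then the freeness statement of Theorem~\ref{thm:decfield} together with the degree bookkeeping in $\F_3[b_2,\Delta]$. Your packaging via local cohomology and the Bockstein injection is a little slicker than the paper's explicit diagram chase with $\psi_*$ and $\psi_*^{\F_3}$, but the content and the numerical endgame ($-14+15=1$, equivalently the paper's $15-13=2$) are identical.
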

\begin{proof}
We will work implicitly $3$-locally throughout this proof and use the abbreviations $\Gamma = \Gamma_0(n)$ and $M_*(\Gamma)$ for $M_*(\Gamma;\Z_{(3)})$. We consider the Mayer--Vietoris sequence for the cover $D(g^*c_4)\cup \MM_0(n) = \MMb_0(n)$. Using that 
$$H^i(D(g^*c_4); g^*\omega^{\tensor m}) = H^i(D(g^*c_4)\cap \MM_0(n); g^*\omega^{\tensor m}) = 0$$
 for $i>0$ by Proposition \ref{prop:torsion}, it reduces to isomorphisms
 $$H^i(\MMb_0(n); g^*\omega^{\tensor *}) \xrightarrow{\cong} H^i(\MM_0(n); g^*\omega^{\tensor *})$$
 for $i>1$ 
 and to an exact sequence
 
  \hspace*{-3.6cm}
\begin{tikzpicture}
\node (g1) {$M_*(\Gamma)[c_4^{-1}]\,\oplus\, M_*(\Gamma)[\Delta^{-1}]$};
\node (gb) [left=of g1] {$M_*(\Gamma)$};
\node (g13) [right=of g1] {$M_*(\Gamma)[(c_4\Delta)^{-1}]$};
\node (p1) [below=of g1] {$H^1(\MM_0(n); g^*\omega^{\tensor *})$};
\node (pT) [left=of p1] {$H^1(\MMb_0(n); g^*\omega^{\tensor *})$};
\node (p3) [right=of p1] {$0$.};
\draw[->]
(gb) edge node[auto] {$\alpha_*$} (g1)
(g1) edge node[auto] {$\psi_*$} (g13)
(g13) edge[out=-14,in=166] (pT)
(pT) edge node[auto] {$\phi_*$} (p1)
(p1) edge (p3);
\end{tikzpicture}

 It remains to show that $\phi_*$ is injective for $\ast \geq 2$ or equivalently that $\psi_*$ is surjective for $\ast \geq 2$. From now on, we will always assume $\ast \geq 2$. By Proposition \ref{prop:torsion}, $H^1(\MMb_0(n); g^*\omega^{\tensor *})$ is $3$-torsion and thus it suffices to show that $\coker(\psi_*)/3$ is zero. Clearly, $\psi_*/3$ factors through $\coker(\alpha_*/3)$. This motivates the following diagram with exact rows:
 
 \hspace{-0.8cm}
 \[
 \scalebox{0.83}{
  \xymatrix{
  0 \ar[r] & M_*(\Gamma)/3\ar[r] \ar[d]^{\alpha_*/3}& M_*(\Gamma; \F_3) \ar[r]\ar[d]^{\alpha_*^{\F_3}} & H^1(\MMb_0(n); \omega^{\tensor *}) \ar[r]\ar[d]^{\Phi_*} & 0 \\
  0 \ar[r] & M_*(\Gamma)[c_4^{-1}]/3\oplus M_*(\Gamma)[\Delta^{-1}]/3\ar[r]\ar[d] & M_*(\Gamma; \F_3)[c_4^{-1}] \oplus M_*(\Gamma; \F_3)[\Delta^{-1}] \ar[r] \ar[d]& H^1(\MM_0(n); \omega^{\tensor *}) \ar[d]\ar[r] & 0 \\
  & \coker(\alpha_*)/3 \ar[r] & \coker(\alpha_*^{\F_3}) \ar[r] & \coker(\alpha_*^1)
  } 
  }
 \]

As $\Phi_*$ is surjective, we can use the snake lemma to deduce that $\coker(\alpha_*)/3 \to \coker(\alpha_*^{\F_3})$ is surjective as well. Thus, $\psi_*/3$ is surjective if and only if
$$\psi_*^{\F_3}\colon M_*(\Gamma; \F_3)[c_4^{-1}]\,\oplus\, M_*(\Gamma;\F_3)[\Delta^{-1}] \to M_*(\Gamma;\F_3)[(c_4\Delta)^{-1}]\cong M_*(\Gamma)[(c_4\Delta)^{-1}]/3 $$
is surjective. By Theorem \ref{thm:decfield}, we know that $M_*(\Gamma;\F_3)$ is free as a graded $M_*^{\F_3}$-module with basis elements in degrees at most $15$. Recall that $M_*^{\F_3} \cong \F_3[b_2,\Delta]$ and $c_4$ corresponds to $b_2^2$. The map 
$$\tilde{\psi}\colon  M_*^{\F_3}[c_4^{-1}]\,\oplus\, M_*^{\F_3}[\Delta^{-1}] \to M_*^{\F_3}[(c_4\Delta)^{-1}] $$
is surjective in degrees $\ast \geq -13$ as the first monomial not in the image is $b_2^{-1}\Delta^{-1}$ of degree $-14$. As the map $\psi^{\F_3}$ decomposes into shifted copies of $\tilde{\psi}$, we see that $\psi_*^{\F_3}$ is surjective for $\ast \geq 15-13 = 2$. 
\end{proof}

\begin{thm}
 For every noetherian $\Z[\frac1n]$-algebra $R$, the ring $M_*(\Gamma;R)$ is finitely generated as a graded $M_*^R$-module; if $\Gamma$ is tame it is generated in degrees $\leq 17$ and if $\Gamma = \Gamma_0(n)$ and $\frac12\in R$, it is generated in degree $\leq 21$. Moreover, $\widetilde{M}_*(\Gamma;R)$ is also finitely generated as a graded $\widetilde{M}_*^R$-module.
\end{thm}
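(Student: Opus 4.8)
The plan is to reduce to the case $\Gamma = \Gamma_1(n)$ and then split into the tame and non-tame situations, using the vector bundle decompositions already established. First I would observe that it suffices to prove finite generation (with the claimed degree bounds) for $M_*(\Gamma_1(n);R)$ as an $M_*^R$-module: the ring $M_*(\Gamma(n);R)$ is a finite module over $M_*(\Gamma_1(n);R)$ (indeed a direct summand after adjoining an $n$-th root of unity and over a suitable base, as in the discussion of $\MMb(n)$ versus $\MMb_1(n)$ in Section~\ref{sec:ModularCurves}), and $M_*(\Gamma_0(n);R) \cong M_*(\Gamma_1(n);R)^{(\Z/n)^\times}$ is a submodule of a finitely generated module over the noetherian ring $M_*^R$ by Proposition~\ref{prop:square}, hence finitely generated; for the degree bound in the $\Gamma_0(n)$ case one applies Lemma~\ref{lem:kernel} inductively over a cyclic decomposition of $(\Z/n)^\times$, exactly as at the end of the proof of Theorem~\ref{thm:decfield}, which gives generators in degrees $\le 17$ (or $\le 21$ if only $\frac12 \in R$, where the relevant bundle is $(f_2)_*\OO_{\MMb_1(2)}$ rather than $(f_3)_*\OO_{\MMb_1(3)}$ and one must account for the larger shifts).

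For $\Gamma = \Gamma_1(n)$ with $R$ a noetherian $\Z[\frac1n]$-algebra, I would use the identification
$$M_*(\Gamma_1(n);R) \cong H^0(\MMb_{ell,R}; \omega^{\tensor *} \tensor g_*\OO_{\MMb_1(n)_R})$$
together with the structure of $g_*\OO_{\MMb_1(n)_R}$ as a vector bundle (finite and flat by Proposition~\ref{prop:basicprops}). When $\frac16\in R$ we have $\MMb_{ell,R}\simeq \PP_R(4,6)$ and $g_*\OO_{\MMb_1(n)_R}$ decomposes into powers of $\omega$ in a bounded range (Proposition~\ref{thm:VectorBundle} together with Proposition~\ref{prop:coh}, giving the range of shifts $0,\dots,11$ as in Proposition~\ref{prop:dec5}); more generally, over an arbitrary noetherian $\Z[\frac1n]$-algebra one does not have a clean splitting, but one still gets a finite filtration or at least a bound on the Castelnuovo--Mumford-type regularity: since $H^1(\MMb_{ell,R};\omega^{\tensor m}\tensor g_*\OO_{\MMb_1(n)_R}) = H^1(\MMb_1(n)_R;g^*\omega^{\tensor m}) = 0$ for $m\ge 2$ by Proposition~\ref{prop:coh}(\ref{lem:vanishingcoh}), multiplication by a generating set of $M_{\ge k}^R$ for suitable small $k$ (coming from $c_4,c_6,\Delta$ and the fact that $\omega^{\tensor 12}=\OO(\{\infty\})$ is generated by $\Delta$) surjects onto $M_*(\Gamma_1(n);R)$ in all large degrees. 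Concretely, I would argue that $M_*(\Gamma_1(n);R)$ is generated in degrees $\le B + 6$ where $B$ bounds the degrees of generators of $g_*\OO_{\MMb_1(n)_R}$ as an $\OO_{\MMb_{ell,R}}$-module; feeding in $B=11$ from the field case via base change (semicontinuity, i.e.\ Proposition~\ref{prop:coh}(\ref{lem:torsionfree}) and cohomology-and-base-change to control when the decomposition sequence can only get shifted) and adding the degree bound $\le 6$ on generators of the graded ring $M_*^R$ above $c_4,c_6$ yields the stated bound $17$.

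The finite generation of $\widetilde{M}_*(\Gamma;R)$ over $\widetilde{M}_*^R$ is then immediate: inverting $\Delta$ is an exact localization that commutes with the pushforward $g_*$ (Lemma~\ref{lem:pushpull} applied to the open immersion $\MM_{ell,R}\hookrightarrow \MMb_{ell,R}$), so $\widetilde{M}_*(\Gamma;R) \cong M_*(\Gamma;R)[\Delta^{-1}]$ and $\widetilde{M}_*^R \cong M_*^R[\Delta^{-1}]$, and a finite generating set for $M_*(\Gamma;R)$ over $M_*^R$ remains one for the localizations. The main obstacle I anticipate is the non-tame, non-$\Z[\frac16]$ case (e.g.\ $R = \Z[\frac1n]$ itself or $R = \Z_{(2)}$ with $\Gamma$ not tame): there $g_*\OO_{\MMb_1(n)_R}$ need not split, so I cannot simply read off generators from a decomposition sequence, and I must instead argue more carefully that the $H^0$ is generated in bounded degree directly from the vanishing of $H^1$ in weights $\ge 2$ and the fact that $\MMb_{ell,R}$ has cohomological dimension $\le 1$ for such sheaves (which requires the tameness input of Proposition~\ref{prop:basicprops}(4) for $\Gamma$, or the replacement via $\MMb(\Gamma_1(p;n))$ from Proposition~\ref{prop:torsion} when $\Gamma=\Gamma_0(n)$ is not tame) — handling the $\Gamma_0(n)$ non-tame case so as to still land at the bound $21$ when $\frac12\in R$, using Lemma~\ref{lem:iso} to compare $\MMb_0(n)$ and $\MM_0(n)$ cohomology, is the delicate bookkeeping point.
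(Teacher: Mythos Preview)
Your treatment of the weakly holomorphic case matches the paper exactly: one writes $\widetilde{M}_*(\Gamma;R) \cong M_*(\Gamma;R)[\Delta^{-1}]$ and is done. For bare finite generation your reductions are also adequate, since $M_*^R$ is a noetherian ring and submodules of finitely generated modules over it are finitely generated.

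The genuine gap is in the degree bounds. You propose to obtain the bound for $\Gamma_0(n)$ from that for $\Gamma_1(n)$ via Lemma~\ref{lem:kernel}, ``exactly as at the end of the proof of Theorem~\ref{thm:decfield}''. But Lemma~\ref{lem:kernel} is a statement about free modules over a connected graded $K$-algebra with $K$ a \emph{field}; over a general $R$ the module $M_*(\Gamma_1(n);R)$ is not free over $M_*^R$ (this is precisely the content of Theorem~\ref{thm:main}(2)), so the lemma does not apply. Likewise, your regularity sketch for $\Gamma_1(n)$ over general $R$ is not an argument: the vanishing $H^1(\MMb(\Gamma)_R; g^*\omega^{\otimes m}) = 0$ for $m \ge 2$ does not by itself imply that multiplication by $c_4,c_6,\Delta$ surjects onto $M_m(\Gamma;R)$ in high degree --- $\MMb_{ell,R}$ is not a weighted projective stack unless $6$ is inverted, so there is no off-the-shelf Castelnuovo--Mumford theory to invoke, and you never say what ``$B$ bounds the degrees of generators of $g_*\OO_{\MMb_1(n)_R}$'' means when that bundle does not split.

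The paper's route is different and is the missing idea. One localizes at a prime $l$ and invokes the graded Nakayama lemma to reduce to showing $M_*(\Gamma;R)/l$ is generated in degrees $\le 17$ over $M_*^R$. Now $M_*(\Gamma;R)/l$ sits as $\ker(\partial)$ in the exact sequence
\[0 \to M_*(\Gamma;R)/l \to M_*(\Gamma;R/l) \xrightarrow{\partial} H^1(\MMb(\Gamma)_R;\omega^{\otimes *})[l] \to 0,\]
and the middle term is free over $M_*^{R/l}$ with basis in degrees $\le 15$ by Theorem~\ref{thm:decfield}; since $M_*^{R/l}$ is generated over $M_*^R$ in degrees $\le 3$ (from the explicit description of $M_*^{\F_l}$), the middle term is generated over $M_*^R$ in degrees $\le 17$. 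The tame bound then follows because $\partial$ vanishes in degrees $\ge 2$, so any element of $\ker(\partial)$ in degree $>17$ is an $M_*^R$-combination of elements that automatically lie in $\ker(\partial)$. For $\Gamma_0(n)$ with $\frac12\in R$ the paper does \emph{not} pass through $\Gamma_1(n)$ integrally; instead it works at $l=3$ and uses Proposition~\ref{prop:torsion} and Lemma~\ref{lem:iso} (the $c_4$- and $c_6$-torsion of the higher cohomology and the comparison of $\MMb_0(n)$ with $\MM_0(n)$) to show divisibility by $c_4$, $c_6$ or $\Delta$ forces elements of degree $>21$ into the $M_*^R$-span of lower-degree elements of $\ker(\partial)$. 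You correctly flagged this as the delicate point, but the mechanism you propose (Lemma~\ref{lem:kernel}) is the wrong one.
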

\begin{proof}
 First note that 
 $$\widetilde{M}_*(\Gamma;R) \cong M_*(\Gamma;R)[\Delta^{-1}]\cong M_*(\Gamma;R) \tensor_{M_*^R}\widetilde{M}_*^R$$ 
 so that the last statement follows from the first. 
 
 To show that $M_*(\Gamma;R)$ is a finitely generated graded $M_*^R$-module (generated below a fixed degree), we can assume that $R$ is $l$-local for a prime $l$. It is enough to show that $M_*(\Gamma;R)/l$ is a finitely generated $M_*^R/l$-module (generated below a fixed degree) by the graded Nakayama lemma as every degree is a finitely generated $R$-module. Consider the short exact sequence
 \[0 \to M_*(\Gamma;R)/l \to M_*(\Gamma;R/l) \xrightarrow{\partial} H^1(\MMb(\Gamma)_R; \omega^{\tensor *})[l] \to 0\]
 as in \eqref{eq:fundamentalexact}.
 As $R/l$ is a flat $\F_l$-algebra, $M_*(\Gamma;R/l) \cong M_*(\Gamma;\F_l)\tensor_{\F_l} R/l$ and thus $M_*(\Gamma;R/l)$ is free over $M_*^{R/l}$ with basis elements in degrees at most $15$ by Theorem \ref{thm:decfield}. The $M_*^\Z$-module $M_*^{\F_l}$ is a generated in degrees $\leq 3$ if $l=2$, degrees $\leq 2$ if $l=3$ and in degree $0$ else by the remarks after Proposition \ref{prop:dapres}. As the $(M_*^\Z \tensor R)$-action on $M_*^{R/l}$ factors through $M_*^R$, we see that $M_*^{R/l}$ is a finitely generated $M_*^R$-module with generators in degrees at most $17$ if $l=2,3$ and at most $11$ else. In particular, $M_*(\Gamma;R)/l$ is a finitely generated $M_*^R$-module.
 
 It remains to provide bounds on the degrees of the generators. Assume first that $\Gamma$ is tame and let $x \in \ker(\partial) \cong M_*(\Gamma;R)/l$ of degree more than $17$. Write $x = \sum_i \lambda_ig_i$ with $\lambda_i \in M_*^R$ and $|g_i| \leq 17$ in $M_*(\Gamma;R/l)$. As $H^1(\MMb(\Gamma)_R; \omega^{\tensor m}) = 0$ for $m\geq 2$ by Proposition \ref{prop:coh}, $g_i \in \ker(\partial)$ if $|g_i| \geq 2$. If $|g_i| \leq 1$, then $|\lambda_i| \geq 17$ and $\lambda_i$ decomposes into $\sum_j \mu_j\nu_j$ with $|\nu_j|$ and $|\mu_j|$ at least $2$ and thus $\partial(\mu_jg_i) = 0$. We see that $x$ is a $M_*^R$-linear combination of elements of $\ker(\partial)$ of lower degree. Thus, $\ker(\partial)$ is generated in degrees at most $17$. 
 
 Next, assume that $\Gamma = \Gamma_0(n)$ and $l=3$. Let $x \in \ker(\partial)$ be of degree more than $21$ and write $x = \sum_i \lambda_ig_i$, where the $\lambda_i$ are monomials in $c_4, c_6$ and $\Delta$. Note that we can choose $g_i$ to be in degree at most $15$ or of the form $b_2b$ for some $b \in M_*(\Gamma_0(n);R/3)$. 
 
For degree reasons, every $\lambda_i$ must be divisible by $\Delta$, $c_4^2$ or $c_6$. By Proposition \ref{prop:torsion} and Lemma \ref{lem:iso}, $\partial(y) =0$ if $y$ is divisible by $c_4$ or $c_6$. Thus, if $\lambda_i = c_4^2z$, then $c_4z$ is an element in $\ker(\partial)$ of smaller degree than $x$. Assume that $\lambda_i$ is divisible by $c_6$, but not by $\Delta$. Then either $|g_i| = 17$ (and thus $g_i = b_2b$ so that $c_6g_i = c_4^2b$) or $\lambda_i$ is actually divisible by $c_4^2$, $c_4c_6$ or $c_6^2$ and in each case we can argue as in the last sentence. Thus, we can write $x = g + \Delta h$ with $g$ a $M_*^R$-linear combination of elements of $\ker(\partial)$ of lower degree than $x$. But Lemma \ref{lem:iso} implies that $h \in \ker(\Delta)$ as well, which implies the result. 
\end{proof}

We turn to modular functions, treating the cases of $\Gamma_1(2)$ and $\Gamma_1(3)$ first. 
\begin{lemma}
The module $\widetilde{M}_*(\Gamma_1(2); \Z[\frac12])$ is free over $\widetilde{M}_0^{\Z[\frac12]}$. Likewise, $\widetilde{M}_*(\Gamma_1(3); \Z[\frac13])$ is free over $\widetilde{M}_0^{\Z[\frac13]}$.
\end{lemma}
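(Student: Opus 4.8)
The plan is to make both rings completely explicit and to recognize each as free over $\widetilde M_0^R \cong R[j]$ by factoring through the degree-$0$ subring. I spell out $\Gamma_1(2)$ over $R=\Z[\tfrac12]$; the case of $\Gamma_1(3)$ over $\Z[\tfrac13]$ is entirely parallel with $2$ replaced by $3$. By Examples~\ref{exa:moduli} and~\ref{exa:KS} together with the identity $\widetilde M_*(\Gamma;R)\cong M_*(\Gamma;R)[\Delta^{-1}]$ recalled above, we have
\[
\widetilde M_*(\Gamma_1(2);\Z[\tfrac12]) \cong \Z[\tfrac12][b_2,b_4,\Delta^{-1}],\qquad |b_2|=2,\ |b_4|=4,\ \Delta=\tfrac14 b_4^2(b_2^2-32b_4).
\]
Since $b_4\mid\Delta$, the element $b_4$ is invertible here; setting $t:=b_2^2b_4^{-1}$ (of degree $0$) and using $b_2^2-32b_4=b_4(t-32)$ we get $\widetilde M_*(\Gamma_1(2);\Z[\tfrac12])=\Z[\tfrac12][b_2,b_4^{\pm1},(t-32)^{-1}]$. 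From $b_2^2=tb_4$ every monomial reduces to $t^ab_4^kb_2^{\varepsilon}$ with $\varepsilon\in\{0,1\}$, so this ring is free as a module over its degree-$0$ subring $A:=\Z[\tfrac12][t,(t-32)^{-1}]$ on the basis $\{b_4^k,\,b_2b_4^k:k\in\Z\}$ — linear independence is immediate since these basis elements lie in pairwise distinct graded pieces while $A$ sits in degree $0$.

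Next I would show $A$ is free over $\widetilde M_0^{\Z[1/2]}\cong\Z[\tfrac12][j]$. A short computation with Weierstra\ss{} invariants (one has $c_4=b_2^2-24b_4$, hence $c_4=b_4(t-24)$ and $\Delta=\tfrac14 b_4^3(t-32)$) yields
\[
j \;=\; \frac{c_4^3}{\Delta} \;=\; \frac{4(t-24)^3}{t-32}\quad\text{in }A,
\]
so $t$ is a root of the monic cubic $Q(T):=(T-24)^3-\tfrac j4(T-32)\in\Z[\tfrac12][j][T]$ (the coefficient $\tfrac14$ is legitimate since $\tfrac12\in\Z[\tfrac12]$). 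There is a surjection $\Z[\tfrac12][j][T]/(Q)\twoheadrightarrow A$, $T\mapsto t$, whose source is free over $\Z[\tfrac12][j]$ on $\{1,t,t^2\}$. It is injective because the norm of $T-32$ over $\Z[\tfrac12][j]$ equals $-Q(32)=-8^3=-2^9$, a unit: by Cayley--Hamilton $T-32$ is then already invertible in $\Z[\tfrac12][j][T]/(Q)$, so $t\mapsto T$ defines a two-sided inverse $A\to\Z[\tfrac12][j][T]/(Q)$ of the above surjection. Hence $A$ is free over $\Z[\tfrac12][j]$ on $\{1,t,t^2\}$, and since a free module over an algebra that is itself free over a subring is free over that subring, $\widetilde M_*(\Gamma_1(2);\Z[\tfrac12])$ is free over $\widetilde M_0^{\Z[1/2]}$.

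For $\Gamma_1(3)$ over $\Z[\tfrac13]$ the same scheme applies with the Weierstra\ss{} curve $y^2+a_1xy+a_3y=x^3$: one has $\widetilde M_*(\Gamma_1(3);\Z[\tfrac13])\cong\Z[\tfrac13][a_1,a_3,\Delta^{-1}]$ with $|a_1|=1$, $|a_3|=3$, $\Delta=a_3^3(a_1^3-27a_3)$, $c_4=a_1(a_1^3-24a_3)$; the element $a_3$ becomes invertible, $s:=a_1^3a_3^{-1}$ has degree $0$, $j=\dfrac{s(s-24)^3}{s-27}$, the relevant polynomial is the monic quartic $s(s-24)^3-j(s-27)$ whose value at $s=27$ is $27\cdot 3^3=3^6$ (a unit in $\Z[\tfrac13][j]$), $A':=\Z[\tfrac13][s,(s-27)^{-1}]$ is free over $\Z[\tfrac13][j]$ on $\{1,s,s^2,s^3\}$, and $\widetilde M_*(\Gamma_1(3);\Z[\tfrac13])$ is free over $A'$ on $\{a_1^{\varepsilon}a_3^k:\varepsilon\in\{0,1,2\},\ k\in\Z\}$. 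The one genuinely substantive point is the recognition of $A$ (resp.\ $A'$) as a free $\Z[\tfrac12][j]$-module (resp.\ $\Z[\tfrac13][j]$-module): this rests on the norm of $t-32$ (resp.\ $s-27$) being a power of $2$ (resp.\ of $3$) and hence a unit, which is exactly where the standing hypothesis that $2$ (resp.\ $3$) is invertible enters. Everything else is bookkeeping with graded rings and Weierstra\ss{} invariants.
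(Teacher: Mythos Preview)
Your argument is correct and takes a genuinely different route from the paper. The paper instead shows first that the \emph{holomorphic} ring $M_*(\Gamma_1(2);\Z[\tfrac12])\cong\Z[\tfrac12][b_2,b_4]$ is free of rank~$6$ over the subring $\Z[\tfrac12][c_4,\Delta]$, by exhibiting the explicit generating set $\{1,b_2,b_4,b_2b_4,b_4^2,b_2b_4^2\}$ and invoking a Hilbert-series count; it then inverts $\Delta$ and notes that $\Z[\tfrac12][c_4,\Delta^{\pm1}]$ is free over $\Z[\tfrac12][j]$ (and analogously for $\Gamma_1(3)$). You instead factor through the degree-$0$ subring $A=\Z[\tfrac12][t,(t-32)^{-1}]$: freeness over $A$ is immediate because the proposed basis elements sit in pairwise distinct graded degrees, and freeness of $A$ over $\Z[\tfrac12][j]$ follows from the norm computation $N(T-32)=-2^9$, which pinpoints exactly where the hypothesis $\tfrac12\in R$ (resp.\ $\tfrac13\in R$) is used. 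Your approach is arguably cleaner for the weakly holomorphic ring; the paper's has the side benefit of establishing that the holomorphic ring is free over $\Z[\tfrac12][c_4,\Delta]$.
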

\begin{proof}
We claim that $M_*(\Gamma_1(2); \Z[\frac12]) \cong \Z[\frac12][b_2,b_4]$ is free over $\Z[\frac12][c_4,\Delta]$ 
of rank $6$. Counting ranks shows that it is enough to produce a generating system with generators in degrees $0,2,4,6,8$ and $10$. The images of $c_4$ and $\Delta$ in $\Z[\frac12][b_2,b_4]$ are $b_2^2-24b_4$ and $\frac14(b_2^2b_4^2-32b_4^3)$. Thus, $\Z[b_2,b_4]$ is free over $\Z[c_4]$ with basis elements $b_2^ib_4^j$ with $i\in\{0,1\}$. As $4\Delta - b_4^2c_4 = -8b_4^3$, we see that if $x$ and $b_4^2x$ are in a $\Z[\frac12][c_4,\Delta]$-submodule of $\Z[\frac12][b_2,b_4]$ that also $b_4^3x$ is in it. Thus, $\{1,b_2,b_4, b_2b_4, b_4^2, b_2b_4^2\}$ is a generating set. We see that $M_*(\Gamma_1(2); \Z[\frac12]) \cong \Z[\frac12][b_2,b_4]$ is free over $\Z[\frac12][c_4,\Delta]$. 
This implies that $\widetilde{M}_*(\Gamma_1(2);\Z[\frac12])$ is a free module over $\Z[\frac12,c_4,\Delta^{\pm 1}]$ and the latter is free over $\Z[\frac12][j] = \Z[\frac12][\frac{c_4^3}{\Delta}]$. 

Similarly, we claim that $\{a_1^ia_3^j\}_{0\leq i \leq 3, 0\leq j\leq 3}$ is a basis of $M_*(\Gamma_1(3); \Z[\frac13]) \cong \Z[\frac13][a_1, a_3]$ as a $\Z[\frac13][c_4,\Delta]$-module.  This implies that $\widetilde{M}_*(\Gamma_1(3);\Z[\frac13])$ is a free module over $\Z[\frac13][j]$. 
\end{proof}

\begin{prop}
The morphism $j\colon \MM_{ell} \to \A^1$ is flat.
\end{prop}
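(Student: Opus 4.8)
The plan is to reduce the claim, by means of a smooth atlas, to an assertion about affine schemes and then apply the fibrewise criterion for flatness over $\Spec\Z$.

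First I would pass to the Weierstra{\ss} presentation of $\MM_{ell}$. Let $A = \Z[a_1,a_2,a_3,a_4,a_6][\Delta^{-1}]$, so that $\Spec A$ is the complement of $V(\Delta)$ in $\A^5_\Z$ and the morphism $\pi\colon\Spec A\to\MM_{ell}$ classifying the universal Weierstra{\ss} curve is a smooth surjection. By the definition of flatness for a morphism from an algebraic stack to a scheme, $j\colon\MM_{ell}\to\A^1_\Z$ is flat if and only if the composite $j\circ\pi\colon\Spec A\to\A^1_\Z$ is flat; concretely this is the ring map $\Z[j]\to A$ sending $j$ to $c_4^3/\Delta$, and the task becomes to show that $A$ is a flat $\Z[j]$-module.

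The main point is then the reduction to prime characteristic. Both $\Spec A$ and $\A^1_\Z$ are of finite type and flat over $\Spec\Z$ --- indeed $\Spec A$ is smooth, being open in $\A^5_\Z$, and $\Z[j]$ is free over $\Z$ --- so the fibrewise criterion for flatness (see e.g.\ \cite{STACKS}) reduces us to checking that $\Spec(A\tensor k)\to\A^1_k$ is flat for $k=\Q$ and for $k=\F_p$, $p$ prime. For such a field $k$ the ring $A\tensor k = k[a_1,a_2,a_3,a_4,a_6][\Delta^{-1}]$ is an integral domain, and $j\circ\pi$ is dominant since $\MM_{ell,k}\to\A^1_k$ is the coarse moduli map (hence surjective) and $\pi$ is surjective. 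Therefore $k[j]\hookrightarrow A\tensor k$, so $A\tensor k$ is torsion-free as a $k[j]$-module (it is a domain containing $k[j]$); since $k[j]$ is a principal ideal domain, torsion-free $k[j]$-modules are flat, and thus $\Spec(A\tensor k)\to\A^1_k$ is flat. This verifies the hypothesis of the fibrewise criterion and finishes the proof; the statement over an arbitrary base then follows by base change.

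The one genuinely non-formal step is this passage to characteristic $p$: over $\Z[j]$ directly one cannot deduce flatness of $A$ from torsion-freeness alone, since $\Z[j]$ has Krull dimension $2$, so it really matters to spread out over $\Spec\Z$ first and exploit that each fibre $k[j]$ is a PID. Alternatively one could invoke ``miracle flatness'' for $\Spec A\to\A^1_\Z$, both schemes being regular, but then one must check that every fibre of $j$ is equidimensional of dimension $6-2=4$, which is a somewhat more delicate computation.
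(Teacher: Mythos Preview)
Your argument is correct and is genuinely different from the paper's. You work with the full Weierstra{\ss} atlas $\Spec \Z[a_1,\dots,a_6][\Delta^{-1}]\to\MM_{ell}$ and invoke the fibrewise criterion for flatness over $\Spec\Z$, reducing to the observation that over any field $k$ the target $k[j]$ is a PID and $A\otimes k$ is a domain containing it. The paper instead uses the smaller atlas $\MM_1^1(2)\sqcup\MM_1^1(3)\to\MM_{ell}$ and appeals to the preceding lemma, where the rings $\widetilde{M}_*(\Gamma_1(2);\Z[\tfrac12])$ and $\widetilde{M}_*(\Gamma_1(3);\Z[\tfrac13])$ are shown by explicit computation to be \emph{free} (not merely flat) over $\Z[c_4,\Delta]$ and hence over $\Z[j]$.

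Your route is more self-contained for the bare flatness statement and avoids the explicit bases computed in the lemma; in particular your recognition that one cannot conclude directly over $\Z[j]$ from torsion-freeness, and that spreading out over $\Spec\Z$ is the right move, is exactly the point. The paper's route, on the other hand, extracts a stronger conclusion (freeness of those specific module structures) which it needs anyway for the subsequent corollary on $\widetilde{M}_0(\Gamma;R)$, so in context the lemma is not wasted effort. Either approach settles the proposition; they simply optimise for different things.
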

\begin{proof}
By the last lemma, the compositions 
$$\Spec \widetilde{M}_*(\Gamma_1(2);\Z[\frac12]) \simeq \MM_1^1(2) \to \MM_{ell} \xrightarrow{j} \mathbb{A}^1$$
and 
$$\Spec \widetilde{M}_*(\Gamma_1(3);\Z[\frac12]) \simeq \MM_1^1(3) \to \MM_{ell} \xrightarrow{j} \mathbb{A}^1$$
are flat. Here, $\MM_1^1(n)$ classifies elliptic curves with $\Gamma_1(n)$-level structure and chosen invariant differential. This suffices to show flatness as $\MM_1(2) \sqcup \MM_1(3) \to \MM_{ell}$ is an \'etale cover and hence $\MM_1^1(2)\sqcup \MM_1^1(3) \to \MM_{ell}$ is a smooth cover. 
\end{proof}

\begin{cor}
Let $\Gamma = \Gamma_1(n)$, $\Gamma(n)$ or $\Gamma_0(n)$ and $R$ a $\Z[\frac1n]$-algebra. Then $\widetilde{M}_0(\Gamma; R)$ is a finitely generated free module over $\widetilde{M}_0^R \cong R[j]$. 
\end{cor}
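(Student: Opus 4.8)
The plan is to present $\widetilde{M}_0(\Gamma;R)$ as the module of global functions on a scheme (or Deligne--Mumford stack) that is finite and flat over $\Spec R[j]$, and then to deduce freeness from the fact that finitely generated projective modules over a polynomial ring over a principal ideal domain are free. Set $d := [SL_2(\Z):\Gamma]$. One first records that $\widetilde{M}_0(\Gamma;R) = H^0(\MM(\Gamma)_R;\OO_{\MM(\Gamma)_R})$ and $\widetilde{M}_0^R = H^0(\MM_{ell,R};\OO) \cong R[j]$, the latter because $j = c_4^3/\Delta$ generates the degree-$0$ part of $\widetilde{M}_*^R = M_*^R[\Delta^{-1}]$. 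Since $g\colon \MM(\Gamma)_R \to \MM_{ell,R}$ is finite, flat and representable by Proposition \ref{prop:basicprops}, the sheaf $g_*\OO_{\MM(\Gamma)_R}$ is a vector bundle of rank $d$, so $\widetilde{M}_0(\Gamma;R) = H^0(\MM_{ell,R}; g_*\OO_{\MM(\Gamma)_R})$. By the preceding proposition $j\colon \MM_{ell} \to \A^1$ is flat, hence (after base change to $R$) the composite $\MM(\Gamma)_R \to \Spec R[j]$ is a flat morphism.

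The next step is to show that $\widetilde{M}_0(\Gamma;R)$ is a finitely generated flat $R[j]$-module. It suffices to do this for $R = \Z[\tfrac1n]$ (resp.\ $\Z[\tfrac1n,\zeta_n]$ when $\Gamma = \Gamma(n)$): this ring is noetherian, $g_*\OO_{\MM(\Gamma)}$ is flat over it, and the general case follows by the base change $\widetilde{M}_0(\Gamma;R) \cong \widetilde{M}_0(\Gamma;\Z[\tfrac1n]) \otimes_{\Z[1/n]} R$. Finite generation comes from $\MM(\Gamma)_R = \MMb(\Gamma)_R \times_{\Prj^1_{R}} \A^1_{R}$ over the $j$-line (the cusps are precisely the fibre of $j$ over $\infty$) together with properness of $\MMb(\Gamma)_R \to \MMb_{ell,R} \to \Prj^1_{R}$, so that $\widetilde{M}_0(\Gamma;R)$ is a coherent $R[j]$-module. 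For flatness: in the representable case $\MM(\Gamma)_R$ is an affine scheme by Proposition \ref{prop:basicprops}, and a flat affine morphism to $\Spec R[j]$ makes its coordinate ring a flat $R[j]$-module; the non-representable levels $\Gamma_1(2), \Gamma_1(3), \Gamma(2)$ are handled by the lemma above and the explicit description $\MM(2)_R = \Spec R[x_2,y_2,\Delta^{-1}]$; and $\Gamma_0(n)$ when it is not tame is reduced to $\Gamma_1(n)$ via $\widetilde{M}_0(\Gamma_0(n);R) \cong \widetilde{M}_0(\Gamma_1(n);R)^{(\Z/n)^\times}$ (Proposition \ref{prop:square}), using the finite flat surjection $\MM_1(n)_R \to \MM_0(n)_R$ to transfer flatness over $R[j]$ by faithfully flat descent.

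Finally, a finitely generated flat module over the noetherian ring $\Z[\tfrac1n][j]$ is finitely presented, hence finite projective; and since $\Z[\tfrac1n]$ is a principal ideal domain, every finite projective module over $\Z[\tfrac1n][j] = \Z[\tfrac1n][x]$ is free (Seshadri's theorem: such a module is extended from $\Z[\tfrac1n]$, where finite projective of constant rank means free; alternatively it is stably free of rank $d \geq 3 > \dim \Z[\tfrac1n][j]$, hence free by Bass cancellation). Thus $\widetilde{M}_0(\Gamma;\Z[\tfrac1n]) \cong \Z[\tfrac1n][j]^{\oplus d}$, and base change yields $\widetilde{M}_0(\Gamma;R) \cong R[j]^{\oplus d}$ for every $\Z[\tfrac1n]$-algebra $R$. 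The main obstacle I expect is precisely this last passage from projective to free over the two-dimensional ring $\Z[\tfrac1n][j]$: over a field it is the elementary structure theorem for modules over a PID, but here it genuinely requires a Quillen--Suslin/Seshadri-type input; a secondary point needing care is the bookkeeping in the non-representable and non-tame cases and the verification that cohomology commutes with the base change $\Z[\tfrac1n] \to R$.
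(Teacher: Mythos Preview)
Your outline is essentially the paper's: reduce to $R=\Z[\frac1n]$, show that $\widetilde{M}_0(\Gamma;\Z[\frac1n])$ is a finitely generated projective $\Z[\frac1n][j]$-module using flatness of $j$ together with coherence of the pushforward, and conclude freeness via Seshadri. The paper cites \cite[Prop.~6.4]{Ces17} for the base-change step, which you correctly flag as needing care in the non-tame case.

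Where you diverge is in the handling of the non-representable $\MM(\Gamma)$, and here your case analysis has genuine gaps. The paper treats all such cases uniformly: it picks $\Gamma'\subset\Gamma$ with $\MM(\Gamma')$ a scheme, observes that $(j_\Gamma)_*\OO_{\MM(\Gamma)}$ injects into the coherent sheaf $(j_{\Gamma} h)_*\OO_{\MM(\Gamma')}$ on $\A^1$, and deduces coherence (hence, together with flatness, local freeness). Your treatment instead (i) omits $\Gamma_1(4)$, which is also non-representable; (ii) misidentifies $\MM(2)$: the affine scheme $\Spec R[x_2,y_2,\Delta^{-1}]$ is $\MM^1(2)$ (with chosen invariant differential), whereas $\MM(2)$ is its $\Gm$-quotient; (iii) does not cover $\Gamma_0(n)$ in the tame case, which is still never a scheme. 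Most importantly, your reduction for non-tame $\Gamma_0(n)$ is not a valid argument: ``faithfully flat descent'' does not show that the $(\Z/n)^\times$-fixed subring of a flat $R[j]$-module is itself flat when $\phi(n)$ is not invertible, and that is precisely the situation you are in. The paper's $\Gamma'$-embedding trick is what circumvents this difficulty.
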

 \begin{proof}
By \cite[Proposition 6.4]{Ces17}, the base change
$$\widetilde{M}_0(\Gamma;\Z[\tfrac1n]) \tensor R \to \widetilde{M}_0(\Gamma; R)$$
is an isomorphism. Thus we can reduce to $R = \Z[\frac1n]$. 
 
 By the last proposition and (the proof of) Proposition \ref{prop:basicprops}, the composition
 $$\MM(\Gamma) \to \MM_{ell,\Z[\frac1n]} \xrightarrow{j} \mathbb{A}^1_{\Z[\frac1n]}$$
 is flat and furthermore finite if $\MM(\Gamma)$ is representable. Choose $\Gamma' \subset \Gamma$ such that $\MM(\Gamma')$ is representable and the resulting map $h\colon \MM(\Gamma') \to \MM(\Gamma)$ is surjective; it is also automatically finite and flat as both source and target are finite over $\MM_{ell,\Z[\frac1n]}$ and smooth over $\Z[\frac1n]$ by Proposition \ref{prop:basicprops}. Denoting the map $\MM(\Gamma) \to \mathbb{A}^1_{\Z[\frac1n]}$ by $j_\Gamma$, the sheaf $(j_\Gamma h)_*\OO_{\MM(\Gamma')}$ is coherent. It suffices to show that $(j_\Gamma)_*\OO_{\MM(\Gamma)}\to (j_\Gamma h)_*\OO_{\MM(\Gamma')}$ is injective to deduce that $(j_\Gamma)_*\OO_{\MM(\Gamma)}$  is coherent as well and thus a vector bundle. As $(j_\Gamma)_*$ is exact and the map $\OO_{\MM(\Gamma)}\to h_*\OO_{\MM(\Gamma')}$ is the inclusion of a direct summand after applying the faithfully flat map $h$, this injectivity follows.
 
 Hence, 
 $$\widetilde{M}_0(\Gamma; \Z[\frac1n]) \cong H^0(\MM(\Gamma); \OO_{\MM(\Gamma)})$$
 is a projective $\Z[\frac1n][j]$-module of finite rank. 
   As every projective $\Z[\frac1n][j]$-module is free by Seshadri's theorem \cite[Theorem II.6.1]{Lam06}, the result follows.  
 \end{proof}
 
 This finishes the proof of all theorems claimed in the introduction except for Corollary \ref{cor:CM} and the necessity parts in Part (2) of Theorem \ref{thm:main}, which will both be achieved in the next subsection. 

\subsection{Cohen-Macaulay properties}
In this subsection, we treat the question under which conditions the graded ring $M_*(\Gamma;R)$ is Cohen--Maucaulay. Here and in the following, all terms from commutative algebra are meant to be the ones suitable for graded rings, i.e.\ all ideals are assumed to be homogeneous etc. For simplicity, we only consider the cases where $R$ is a field or $\Z_{(l)}$. 
\begin{thm}\label{thm:CM}
 For $K$ a field with $\Char(K)$ not dividing $n$ and $\Gamma = \Gamma_0(n), \Gamma_1(n)$ or $\Gamma(n)$, the ring $M_*(\Gamma;K)$ is Cohen--Macaulay. If $\Gamma$ is additionally tame for $\Z_{(l)}$, the ring $M_*(\Gamma;\Z_{(l)})$ is Cohen--Macaulay if and only if every weight-$1$ modular form for $\Gamma$ over $\F_l$ is liftable to $\Z_{(l)}$.
\end{thm}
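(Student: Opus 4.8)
The plan is to reduce the Cohen--Macaulay question to the structure of $M_*(\Gamma;R)$ as a module over the polynomial subring generated by a regular sequence, and then to invoke the decomposition theorems already proved. First I would recall that for a graded ring $A$ that is a finitely generated module over a graded polynomial subring $P \subseteq A$ on homogeneous generators forming a regular sequence of length equal to $\dim A$, the ring $A$ is Cohen--Macaulay if and only if $A$ is free (equivalently projective, equivalently flat) as a $P$-module; this is the standard Auslander--Buchsbaum/graded-Nakayama package. In our situation $\dim M_*(\Gamma;R) = 2$ when $R$ is a field and $3$ when $R = \Z_{(l)}$ (since $\MMb(\Gamma)_R$ is one-dimensional over $R$ and we are taking a section ring of an ample line bundle), and the obvious candidate for $P$ is the image of $M_*^{\Z}$, which contains the regular sequence $c_4, c_6$ (a field) or $l\Delta, c_4, c_6$ or similar over $\Z_{(l)}$; more efficiently one can use $P = K[c_4, c_6]$ (or its $\Z_{(l)}$-analogue enlarged by $\Delta$ when $l \in \{2,3\}$), noting $M_*^K$ is itself a free module over $K[c_4,c_6]$ for all characteristics by Proposition \ref{prop:dapres} and the remarks after it.

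Next, for the field case I would combine Theorem \ref{thm:decfield} with the freeness of $M_*^K$ over $K[c_4,c_6]$ (when $\Char K > 3$) or over $\F_2[a_1,\Delta]$, $\F_3[b_2,\Delta]$ together with Lemma \ref{lem:freeness}: in each characteristic $M_*(\Gamma;K)$ is free over $M_*^K$, and $M_*^K$ is free over a polynomial subring, hence $M_*(\Gamma;K)$ is free over that polynomial subring, which is a graded polynomial ring in two variables of positive degree; therefore $M_*(\Gamma;K)$ is Cohen--Macaulay. For the case $\Gamma = \Gamma_0(n)$ non-tame one can either argue directly by taking invariants (using Lemma \ref{lem:kernel} to see freeness of the invariant subring as in the proof of Theorem \ref{thm:decfield}) or note that a direct summand of a free module over a polynomial ring need not be free but a graded direct summand of a Cohen--Macaulay module over which $M_*^K$ is module-finite is still Cohen--Macaulay; the cleanest route is to observe $M_*(\Gamma_0(n);K) = M_*(\Gamma_1(n);K)^{(\Z/n)^\times}$ is a finitely generated free module over a graded polynomial subring by the same kernel-computation argument.

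For the integral statement over $R = \Z_{(l)}$ with $\Gamma$ tame, the forward direction is exactly Theorem \ref{thm:decint6} / Theorem \ref{thm:decint23}: if every weight-$1$ mod $l$ form lifts, then $M_*(\Gamma;\Z_{(l)})$ is either free over $M_*^{\Z_{(l)}}$ (for $l \geq 5$) or decomposes into shifted copies of $M_*(\Gamma_1(2);\Z_{(l)})$ (for $l=3$) or $M_*(\Gamma_1(3);\Z_{(l)})$ (for $l=2$); in each case the resulting module is free over a three-variable graded polynomial subring (one checks $M_*^{\Z_{(l)}}$, and also $M_*(\Gamma_1(2);\Z_{(3)})$ and $M_*(\Gamma_1(3);\Z_{(2)})$, are free over $\Z_{(l)}[c_4,c_6,\Delta]$ or a suitable polynomial subring — this uses the explicit presentations in Example \ref{exa:moduli} and an argument like the one in the last lemma of Section \ref{sec:modforms}), so $M_*(\Gamma;\Z_{(l)})$ is Cohen--Macaulay of depth $3$. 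For the converse, the key point is the short exact sequence \eqref{eq:fundamentalexact}: if some weight-$1$ mod $l$ form does not lift, then $H^1(\MMb(\Gamma);g^*\omega)$ has $l$-torsion, and by Remark \ref{rem:cuspy} this torsion produces, via Equations \eqref{eq:H1} and \eqref{eq:H0} of Lemma \ref{lem:Rfreeness}, a torsion element — specifically $H^0(\MMb(\Gamma)_{\Z_{(l)}};g^*\omega)$ contains a $\Tor_{\Z}(H^1,\Z_{(l)})$ summand that is not $\Z_{(l)}$-free in degree $1$. I expect the main obstacle to be this converse: one must argue that non-freeness of $M_1(\Gamma;\Z_{(l)})$ as a $\Z_{(l)}$-module (equivalently $l$-torsion in $H^1(\MMb(\Gamma);g^*\omega)$) forces $\mathrm{depth}_{M_*^{\Z_{(l)}}} M_*(\Gamma;\Z_{(l)}) < 3$, hence not Cohen--Macaulay. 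The cleanest way is probably to compute that $l$ is a zero-divisor on a suitable Koszul homology or, more elementarily, to show $M_*(\Gamma;\Z_{(l)})$ cannot be a free (or even maximal Cohen--Macaulay) module over $\Z_{(l)}[c_4,c_6,\Delta]$ because it is not $\Z_{(l)}$-flat: a maximal Cohen--Macaulay module over a regular local (graded) ring containing $\Z_{(l)}$ is free, hence $\Z_{(l)}$-free, contradicting the torsion in degree $1$. This last implication — tying the local-algebra notion of Cohen--Macaulay to $\Z_{(l)}$-freeness in the graded mixed-characteristic setting — is the delicate step and will require care about which polynomial subring one uses and checking it meets every associated prime appropriately.
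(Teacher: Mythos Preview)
Your field case and the ``if'' direction over $\Z_{(l)}$ are essentially the paper's argument: exhibit a regular sequence of maximal length using the decomposition theorems. The paper phrases this slightly differently (it checks directly that $(c_4,\Delta)$, respectively $(l,c_4,\Delta)$, is a regular sequence on the building blocks $M_*(\Gamma_1(2);\Z_{(3)})$, $M_*(\Gamma_1(3);\Z_{(2)})$, $M_*^{\Z_{(l)}}$), but the content is the same.

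The converse direction, however, contains a genuine error. You assert that if some weight-$1$ mod $l$ form does not lift, then $M_1(\Gamma;\Z_{(l)})$ acquires $\Z_{(l)}$-torsion via a $\Tor_{\Z}(H^1,\Z_{(l)})$ contribution, and hence $M_*(\Gamma;\Z_{(l)})$ is not $\Z_{(l)}$-flat. Both claims are false: $\Z_{(l)}$ is flat over $\Z$, so $\Tor_{\Z}(-,\Z_{(l)})=0$ identically, and $M_k(\Gamma;\Z_{(l)})$ is always a finitely generated torsion-free (hence free) $\Z_{(l)}$-module, being a space of sections of a line bundle on a stack flat over $\Z_{(l)}$. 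The failure of liftability is a failure of \emph{surjectivity} of $M_1(\Gamma;\Z_{(l)}) \to M_1(\Gamma;\F_l)$, not the presence of torsion, so your proposed mechanism for violating Cohen--Macaulayness cannot work.

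The paper's argument for the converse is more delicate and does not go through flatness at all. One first observes that $(l,c_4)$ is always a regular sequence on $M_*(\Gamma;\Z_{(l)})$ (since $M_*(\Gamma;\Z_{(l)})/l$ embeds in the integral domain $M_*(\Gamma;\F_l)$). Given a non-liftable $f \in M_1(\Gamma;\F_l)$, the product $c_4 f$ has weight $5 \geq 2$ and therefore \emph{does} lift to some $\tilde f \in M_5(\Gamma;\Z_{(l)})/l$. One checks that $\tilde f$ is nonzero in $M_*(\Gamma;\Z_{(l)})/(l,c_4)$ (a preimage under $c_4$ would lift $f$), yet $x\tilde f = 0$ in this quotient for every homogeneous $x$ of positive degree (because $xf$ lifts to some $z$ and then $c_4 z = x\tilde f$ in $M_*(\Gamma;\Z_{(l)})/l$). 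Thus every element of the maximal ideal is a zero-divisor on $M_*(\Gamma;\Z_{(l)})/(l,c_4)$, so the regular sequence $(l,c_4)$ cannot be extended and the depth is $2 < 3$.
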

\begin{proof}
 Consider first the case of a field. The ring $M_*(\Gamma;K)$ is graded local. Furthermore, it is free of finite rank as a module over $M_*^K$ by Theorem \ref{thm:decfield} and thus of Krull dimension $2$ as $M_*^K$ has Krull dimension $2$ by Proposition \ref{prop:dapres}. Moreover, the sequence $(c_4,\Delta)$ is regular as $(c_4, \Delta)$ is a regular sequence on $M_*(\Gamma;K)$. 
 
 Now consider the case of $\Z_{(l)}$ and $\Gamma$ tame. The ring $M_*(\Gamma;\Z_{(l)})$ is graded local of Krull dimension $3$. Indeed, the inclusion $M_*(\Gamma;\Z_{(l)})/l \subset M_*(\Gamma;\F_l)$ induces a bijection on the set of prime ideals and thus this ring has Krull dimension $2$ as well; as $l$ is a non-zero divisor, this implies that $M_*(\Gamma;\Z_{(l)})$ has Krull dimension $3$. Assume that every weight-$1$ modular form for $\Gamma$ over $\F_l$ is liftable to $\Z_{(l)}$. We claim that $(l,c_4,\Delta)$ is a regular sequence. By an analogous argument to the above and Theorems \ref{thm:decint6} and \ref{thm:decint23}, we have to check for this only the cases of $\Gamma = \Gamma_1(3)$ (for $l=2$), $\Gamma = \Gamma_1(2)$ (for $l=3$) and $\Gamma = SL_2(\Z)$ (for $l>3$). For the first, note that we have $c_4 \equiv a_1^4$ and $\Delta \equiv a_3^4+a_1^3a_3^3 \mod 2$; this clearly forms a regular sequence in $\F_2[a_1,a_3]$. For the second case, note that we have $c_4 \equiv b_2^2$ and $\Delta \equiv b_2^2b_4^2-b_4^3 \mod 3$; this forms a regular sequence in $\F_3[b_2,b_4]$. The last case is the sequence $(c_4, \frac1{1728}(c_4^3-c_6^2))$ in $\F_{l}[c_4,c_6]$ for $l>3$. Thus, $M_*(\Gamma;\Z_{(l)})$ is Cohen--Macaulay if every weight-$1$ modular form for $\Gamma$ over $\F_l$ is liftable to $\Z_{(l)}$
 
 Assume now that there is a weight-$1$ modular form $f$ for $\Gamma$ over $\F_l$ that is not liftable to $\Z_{(l)}$. We know that $M_*(\Gamma;\Z_{(l)})/l$ injects into $M_*(\Gamma;\F_l)$ (and is an isomorphism for $*\neq 1$). The latter is an integral domain by Proposition \ref{prop:irreducible} and thus so is the former. Thus, $(l,c_4)$ forms a regular sequence on $M_*(\Gamma;\Z_{(l)})$. By \cite[Section 17.2]{Eis95} all maximal regular sequence in the augmentation ideal of $M_*(\Gamma;\Z_{(l)})$ have the same length. Thus, $M_*(\Gamma;\Z_{(l)})$ can only be Cohen--Macaulay if we can extend the regular sequence $(l,c_4)$ by another element $x$ of positive degree. The element $c_4f$ has a preimage $\tilde{f}$ in $M_*(\Gamma;\Z_{(l)})/l$. This is nonzero in $M_*(\Gamma;\Z_{(l)})/(l,c_4)$. Indeed, if $\tilde{f} = c_4y$, then $y$ would be a lift of $f$ as $c_4$ operates injectively on $M_*(\Gamma;\F_l)$. On the other hand, 
$x\tilde{f}$ is zero in $M_*(\Gamma;\Z_{(l)})/(l,c_4)$ for every element $x$ of positive degree. Indeed, $xf$ has a lift $z\in M_*(\Gamma;\Z_{(l)})/l$ and $c_4z = x\tilde{f}$ in $M_*(\Gamma;\Z_{(l)})/l$ because both elements have the same image in $M_*(\Gamma;\F_l)$. Thus, $(l,c_4,x)$ is not a regular sequence and $M_*(\Gamma;\Z_{(l)})$ is not Cohen--Macaulay. 
\end{proof}

The last proof also shows that $M_*(\Gamma;\Z_{(l)})$ must be Cohen--Maucaulay if $M_*(\Gamma;\Z_{(l)})$ splits into shifted copies of $M_*(\Gamma_1(3);\Z_{(l)})$, $M_*(\Gamma_1(2);\Z_{(l)})$ or $M_*^{\Z_{(l)}}$ (the latter rests on $(l,c_4,\Delta)$ also being a regular sequence on $M_*^{\Z_{(l)}}$ if $l=2$ or $3$). This gives the following corollary: 
\begin{cor}\label{cor:onlyiflift}
 Assume that $\Gamma$ is tame for $\Z_{(l)}$ and there is a weight-$1$ modular form for $\Gamma$ over $\F_l$ not liftable to $\Z_{(l)}$. Then $M_*(\Gamma;\Z_{(l)})$ does not split into shifted copies of $M_*(\Gamma_1(3);\Z_{(l)})$, $M_*(\Gamma_1(2);\Z_{(l)})$ or $M_*^{\Z_{(l)}}$. 
\end{cor}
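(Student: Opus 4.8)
The plan is to read this off from the preceding theorem together with the remark following its proof, which already asserts that a splitting of $M_*(\Gamma;\Z_{(l)})$ into shifted copies of one of the three reference rings forces $M_*(\Gamma;\Z_{(l)})$ to be Cohen--Macaulay. Concretely, suppose for a contradiction that $M_*(\Gamma;\Z_{(l)})$ decomposes as a finite direct sum $\bigoplus_i A[s_i]$ of graded shifts of one of
$$A \in \{\, M_*(\Gamma_1(3);\Z_{(l)}),\ M_*(\Gamma_1(2);\Z_{(l)}),\ M_*^{\Z_{(l)}}\,\},$$
the decomposition being one of graded $M_*^{\Z_{(l)}}$-modules. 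First I would record that each such $A$ is Cohen--Macaulay: for $A = M_*(\Gamma_1(3);\Z_{(l)})$ and $A = M_*(\Gamma_1(2);\Z_{(l)})$ these are polynomial rings, so the preceding theorem applies (there are no non-liftable weight-$1$ forms), and in fact $(l,c_4,\Delta)$ was shown in its proof to reduce modulo $l$ to a regular sequence in $\F_2[a_1,a_3]$, resp.\ $\F_3[b_2,b_4]$; for $A = M_*^{\Z_{(l)}}$ one checks the same for $(l,c_4,\Delta)$ directly from $c_4^3 - c_6^2 = 1728\Delta$, as noted in the remark.

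Next I would pass from the module decomposition to Cohen--Macaulayness of the ring $M_*(\Gamma;\Z_{(l)})$ itself. Since $M_*^{\Z_{(l)}} \hookrightarrow M_*(\Gamma;\Z_{(l)})$ is a finite inclusion of graded-local rings, both the Krull dimension and the depth of $M_*(\Gamma;\Z_{(l)})$ may be computed over $M_*^{\Z_{(l)}}$: the dimension is $3$ because $M_*^{\Z_{(l)}}$ has Krull dimension $3$ and the extension is finite, while the depth of a finite direct sum of graded shifts of $A$ equals the depth of $A$, which is $3$ since $A$ is Cohen--Macaulay. Hence $M_*(\Gamma;\Z_{(l)})$ has depth equal to its dimension, i.e.\ it is Cohen--Macaulay.

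Finally, the preceding theorem says that $M_*(\Gamma;\Z_{(l)})$ is Cohen--Macaulay only if every weight-$1$ modular form for $\Gamma$ over $\F_l$ is liftable to $\Z_{(l)}$, contradicting the hypothesis of the corollary; therefore no such decomposition can exist. There is no essential obstacle here, and I do not expect any serious computation: the one point worth stating carefully is the standard fact that for a finite extension of Noetherian graded-local rings the depth (and the Krull dimension) of the larger ring is the same whether measured over itself or over the subring, which is exactly what legitimizes reading off Cohen--Macaulayness of $M_*(\Gamma;\Z_{(l)})$ from its $M_*^{\Z_{(l)}}$-module structure.
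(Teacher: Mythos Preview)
Your proposal is correct and follows essentially the same route as the paper: the paper observes (in the sentence immediately preceding the corollary) that the proof of the preceding theorem already showed $(l,c_4,\Delta)$ is a regular sequence on each of $M_*(\Gamma_1(3);\Z_{(l)})$, $M_*(\Gamma_1(2);\Z_{(l)})$ and $M_*^{\Z_{(l)}}$, hence on any direct sum of shifted copies, so such a splitting would make $M_*(\Gamma;\Z_{(l)})$ Cohen--Macaulay, contradicting the theorem. Your invocation of the general depth-transfer fact for finite graded-local extensions is a slight abstraction of this, but amounts to the same argument.
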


To finish the proof of Theorem \ref{thm:main} (more precisely of the last remaining part, namely the statement that $M_*(\Gamma; \Z_{(l)})$ can only be free over $M_*^{\Z_{(l)}}$ if $l \geq 5$), the following suffices. 
\begin{prop}\label{prop:onlyif5}
 A direct sum of shifted copies of $M_*(\Gamma_1(3);\Z_{(2)})$ cannot be free over $M_*^{\Z_{(2)}}$ and a direct sum of shifted copies of $M_*(\Gamma_1(2);\Z_{(3)})$ cannot be free over $M_*^{\Z_{(3)}}$.
\end{prop}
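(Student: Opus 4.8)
The plan is to detect non-freeness at a single, carefully chosen prime of $M_*^{\Z_{(2)}}$ (resp. $M_*^{\Z_{(3)}}$) by comparing, after localization, the rank of the module with its minimal number of generators. Write $A=M_*^{\Z_{(2)}}=\Z_{(2)}[c_4,c_6,\Delta]/(c_4^3-c_6^2-1728\Delta)$, which is a domain of Krull dimension $3$, and $B=M_*(\Gamma_1(3);\Z_{(2)})=\Z_{(2)}[a_1,a_3]$; by Example~\ref{exa:m13} (or Lemma~\ref{lem:degomega}, where $d_3=8$) the finite $A$-module $B$ has rank $8$. Since the universal $\Gamma_1(3)$-curve is $y^2+a_1xy+a_3y=x^3$ (homogeneous Tate normal form, Example~\ref{exa:moduli}), the standard Weierstra\ss{} formulae give that the structure map $A\to B$ sends $c_4\mapsto a_1^4-24a_1a_3$, $c_6\mapsto -a_1^6+36a_1^3a_3-216a_3^2$ and $\Delta\mapsto a_1^3a_3^3-27a_3^4$. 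I will show that a finite direct sum $F=\bigoplus_{i=1}^N B[s_i]$ cannot be free over $A$, and that the analogous statement at the prime $3$ holds; this suffices, since the direct sums occurring in the applications of this proposition are finite.

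Suppose $F$ were $A$-free. Consider the prime $\mathfrak p=(2,c_4,c_6)\subset A$; it has height $2$ with $A/\mathfrak p\cong\F_2[\Delta]$, so its residue field is $k(\mathfrak p)\cong\F_2(\Delta)$. The first step is to see that $\mathfrak q=(2,a_1)\subset B$ is the unique prime of $B$ over $\mathfrak p$: any such prime contains $2$ and $c_4=a_1(a_1^3-24a_3)$ but not $\Delta=a_3^3(a_1^3-27a_3)$, hence not $a_3$, which forces $a_1$ into it; and it cannot strictly contain $(2,a_1)$, for then it would be maximal in $B$ and contract to a maximal ideal of $A$, not to $\mathfrak p$. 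Consequently $B\otimes_A A_\mathfrak p=B_\mathfrak q$ and $F_\mathfrak p\cong B_\mathfrak q^{\oplus N}$, which would be free over the local domain $A_\mathfrak p$. For a free module over a local domain the minimal number of generators equals the rank; here the $A_\mathfrak p$-rank of $B_\mathfrak q^{\oplus N}$ is $8N$, so freeness would force $N\cdot\dim_{k(\mathfrak p)}(B_\mathfrak q/\mathfrak pB_\mathfrak q)=8N$.

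The crux is therefore to compute $d:=\dim_{k(\mathfrak p)}(B_\mathfrak q/\mathfrak pB_\mathfrak q)$ and see that it exceeds $8$. Modulo $2$ the images of $c_4,c_6$ are $a_1^4$ and $a_1^6$, so $B/\mathfrak pB=\F_2[a_1,a_3]/(a_1^4)$; localizing at $\mathfrak q=(2,a_1)$ gives $B_\mathfrak q/\mathfrak pB_\mathfrak q=\F_2(a_3)[a_1]/(a_1^4)=:V$, a $4$-dimensional $\F_2(a_3)$-vector space. The residue field $k(\mathfrak p)=\F_2(\Delta)$ embeds into $V$ via $\Delta\mapsto a_1^3a_3^3+a_3^4$, and then $\Delta^2\mapsto a_3^8$; hence $\F_2(a_3^8)\subseteq\F_2(\Delta)\subseteq V$ with $[\F_2(\Delta):\F_2(a_3^8)]=2$ (since $\Delta\notin\F_2(a_3)$), while $\dim_{\F_2(a_3^8)}V=4\cdot[\F_2(a_3):\F_2(a_3^8)]=4\cdot 8=32$, so $d=32/2=16>8$. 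This contradicts $16N=8N$, so $F$ is not $A$-free.

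The case of $M_*(\Gamma_1(2);\Z_{(3)})$ over $M_*^{\Z_{(3)}}$ runs identically: now $B=\Z_{(3)}[b_2,b_4]$ has rank $3$ over $A=M_*^{\Z_{(3)}}$, one takes $\mathfrak p=(3,c_4,c_6)$ with $A/\mathfrak p\cong\F_3[\Delta]$, checks that $\mathfrak q=(3,b_2)$ is the unique prime over it, and computes $B_\mathfrak q/\mathfrak pB_\mathfrak q=\F_3(b_4)[b_2]/(b_2^2)$, a $2$-dimensional $\F_3(b_4)$-space on which $k(\mathfrak p)=\F_3(\Delta)$ acts through $\Delta\mapsto\pm b_4^3$; since $[\F_3(b_4):\F_3(b_4^3)]=3$ this yields $\dim_{k(\mathfrak p)}(B_\mathfrak q/\mathfrak pB_\mathfrak q)=2\cdot3=6>3$, the same contradiction. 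The only genuine work is the dimension bookkeeping in the third paragraph; choosing the prime $\mathfrak p=(2,c_4,c_6)$ (over which $B$ acquires nilpotents after reduction) and keeping track of the purely inseparable extension $\F_2(a_3^8)\subseteq\F_2(\Delta)$ (resp.\ $\F_3(b_4^3)\subseteq\F_3(\Delta)$) is where all the content sits — once $\mathfrak p$ is in hand the rest is formal.
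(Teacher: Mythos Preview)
Your argument is correct and takes a genuinely different route from the paper. The paper reduces modulo the prime $l$ and invokes the Krull--Schmidt theorem for graded $M_*^{\Z_{(l)}}/l$-modules to reduce to showing that a single copy of $M_*(\Gamma_1(3);\Z_{(2)})/2$ (resp.\ $M_*(\Gamma_1(2);\Z_{(3)})/3$) is not free; it then exhibits the explicit graded relation $c_4\cdot a_1^2 = c_6\cdot 1$ (resp.\ $c_4\cdot b_2 = c_6\cdot 1$) between forced basis elements. Your approach instead stays with the integral ring, localizes at the height-$2$ prime $\mathfrak p=(l,c_4,c_6)$, and compares the generic rank with the fiber dimension over $k(\mathfrak p)$. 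The paper's proof is shorter and more elementary once one knows the Krull--Schmidt input, and it handles infinite direct sums without comment; your proof, on the other hand, avoids any appeal to Krull--Schmidt and makes transparent \emph{why} freeness fails---namely that the map $\Spec B \to \Spec A$ is ramified over $\mathfrak p$, so the fiber length exceeds the degree. Your restriction to finite direct sums is harmless for the intended application, as you note. One stylistic remark: the sentence ``since $\Delta\notin\F_2(a_3)$'' really argues that $\Delta\notin\F_2(a_3^8)$ (via $\F_2(a_3^8)\subset\F_2(a_3)$), which is what you need; it would read more cleanly to say this directly.
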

\begin{proof}
 By \cite{Ati56}, graded $M^{\Z_{(l)}}_*/l$-modules satisfy the Krull--Schmidt theorem (i.e.\ finitely generated graded modules decompose uniquely into indecomposables). Thus it is enough to show that $M_*(\Gamma_1(3);\Z_{(2)})/2$ is not a free $M_*^{\Z_{(2)}}/2$-module and similarly that $M_*(\Gamma_1(2);\Z_{(3)})/3$ is not a free $M_*^{\Z_{(3)}}/3$-module. 
 
 We begin with the former case. The elements $c_4$ and $c_6$ in $M_*^\Z$ go to $a_1^4$ and $a_1^6$ in $M_*(\Gamma_1(3);\Z_{(2)})/2$. As $1$ and $a_1^2$ would have to be parts of any basis of $M_*(\Gamma_1(3);\Z_{(2)})/2$ over $M_*^{\Z_{(2)}}/2$, the former cannot be free over the latter. 
 
 In the other case, we have $c_4 = b_2^2+216b_4$ and $c_6 = b_2^3-576b_2b_4$ and thus we get $c_4 \equiv b_2^2$ and $c_6 \equiv b_2^3$ modulo $3$. A possible basis of $M_*(\Gamma_1(2);\Z_{(3)})/3$ as a module over $M_*^{\Z_{(3)}}/3$ must involve (up to scalar) the elements $1$ and $b_2$. But this cannot be as $c_4 \cdot b_2 = c_6 \cdot 1$. 
\end{proof}

\appendix
\section{Vector bundles on weighted projective lines}\label{sec:weighted}
The aim of this section is to generalize some well-known facts about coherent sheaves and vector bundles on projective spaces to weighted projective stacks. This is relevant for our purposes because several compactified moduli stacks of elliptic curves (with level structure) are weighted projective stacks (see Example \ref{exa:moduli}).

\begin{defi}\label{def:wps} For $w_0,\dots, w_n$ positive integers  and a commutative ring $R$, the \textit{weighted projective stack} $\PP_R(w_0,\dots, w_n)$ is the (stack) quotient of $\A_R^{n+1}-\{0\}$ by the multiplicative group $\G_m$ under the action which is the restriction of the map
\[ \phi\co \G_m\times \A_R^{n+1} \to \A_R^{n+1}, \]
which is induced by the ring map 
\begin{align*} \Z[t, t^{-1}]\otimes R[t_0,\dots, t_n] &\leftarrow R[t_0,\dots, t_n] \\
 t^{w_i}\tensor t_i &\mapsfrom t_i,
\end{align*}
to $\G_m\times (\A^{n+1}_R-\{0\})$. Here, $\A_R^{n+1}-\{0\}$ denotes the complement of the zero point, i.e.\ of the common vanishing locus of all $t_i$. On geometric points, the action corresponds to the map $(t, t_0,\dots, t_n) \mapsto (t^{w_0}t_0,\dots, t^{w_n}t_n)$. In the special case of $n=1$ we speak of a \textit{weighted projective line}.\end{defi}

As explained in \cite[Section 2]{Mei13}, this is a smooth and proper Artin stack over $\Spec R$, Deligne--Mumford if all $w_i$ are invertible on $R$. 

Recall that a grading on a commutative ring $A$ is equivalent to a $\Gm$-action on $\Spec A$. Moreover, there is an equivalence between graded $A$-modules and quasi-coherent sheaves on $\Spec A/\Gm$ given by pullback to $\Spec A$ and this equivalence is compatible with $\tensor$. The map $\phi$ above gives a $\G_m$-action on $\A^{n+1}_R$ and this corresponds to the grading $|t_i| = w_i$. The category of quasi-coherent sheaves on $\A^{n+1}_R/\G_m$ is thus equivalent to graded $R[t_0,\dots, t_n]$-modules. 

For $M$ a graded module, denote by $M[m]$ the graded module with $M[m]_k = M_{m+k}$. Then $R[t_0,\dots, t_n][m]$ is a graded $R[t_0,\dots, t_n]$-module, which corresponds to a line bundle on $\A^{n+1}_R/\G_m$ whose restriction to $\PP_R(w_0,\dots, w_n)$ we denote by $\OO(m)$. As usual, we set $\FF(m) = \FF \tensor \OO(m)$. It is easy to see that for a quasi-coherent sheaf $\FF$ on $\PP_R(w_0,\dots, w_n)$, the graded global sections $\Gamma_*(\FF) = H^0(\PP_R(w_0,\dots, w_n); \bigoplus_{m\in\Z}\FF(m))$ are exactly 
the graded $R[t_0,\dots, t_n]$-module corresponding to $\FF$. 

The following theorem summarizes some of the fundamental properties of $\OO(m)$:

\begin{thm}\label{thm:fundamentalweighted}Let $X = \PP_R(w_0,\dots, w_n)$.
 \begin{enumerate}
  \item \label{item:P1}The sheaf $\OO(1)$ is ample in the sense that for every coherent sheaf $\FF$ on $X$, there is a surjection from a sum of sheaves of the form $\OO(m)$ with $m\geq 1$ to $\FF$ .
    \item \label{item:P1.5}For any coherent sheaf $\FF$, there exist an $m\geq 1$ such that $H^i(X;\FF(m)) = 0$ for all $i>0$.
  \item \label{item:P2}The sheaf $\OO(-\sum_{i=0}^n w_i)$ is dualizing in the sense that there are natural isomorphisms
  \[\Hom_{\OO_X}(\FF,\OO(-\sum_{i=0}^n w_i)) \xrightarrow{\cong} \Hom_R(H^n(X; \FF),R)\]
  for all coherent sheaves $\FF$ on $X$. Moreover, $\OO(-\sum_{i=0}^n w_i)$ agrees with $\Lambda^n\Omega^1_{X/R}$. 
 \end{enumerate}
\end{thm}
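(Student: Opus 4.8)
The plan is to reduce everything to statements about graded modules over the polynomial ring $S = R[t_0,\dots,t_n]$ and then to the already-well-understood case of ordinary projective space, using the finite flat quotient map relating weighted and ordinary projective stacks. Throughout write $X = \PP_R(a_0,\dots,a_n)$, and recall from the discussion before the theorem that quasi-coherent sheaves on $X$ correspond to graded $S$-modules (up to the usual torsion ambiguity), with $\FF \mapsto \Gamma_*(\FF)$ a right adjoint to $\widetilde{(-)}$, and $\OO(m)$ corresponding to $S[m]$.

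First I would prove \eqref{item:P1}. Given a coherent sheaf $\FF$ on $X$, pull it back to $U = \A^{n+1}_R - \{0\}$; since $U$ is covered by the affine opens $D(t_i)$ and $\FF$ is coherent, its module of graded global sections $\Gamma_*(\FF)$ is a graded $S$-module that is finitely generated in each "twist" in a suitable sense — more precisely, one checks as in \cite[II.5.17]{Har77} that $\FF$ is a quotient of a finite direct sum $\bigoplus_j \OO(-d_j)$, and then twisting up (replacing each generator $s_j$ of degree $-d_j$ by $t_0^{N}s_j$ etc., using that the $t_i$ have no common zero) one may arrange all $d_j \le -1$, i.e.\ obtain a surjection from a sum of $\OO(m)$ with $m \ge 1$. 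The argument is formally identical to the ungraded projective case once one is careful that the degrees $a_i$ are positive, so that $S$ is $\Z_{\ge 0}$-graded with $S_0 = R$.

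Next, \eqref{item:P1.5}: by \eqref{item:P1} write a surjection $\EE \twoheadrightarrow \FF$ with $\EE = \bigoplus \OO(m_j)$, $m_j \ge 1$, and let $\KK$ be the kernel, also coherent. Since $X$ has cohomological dimension $n$ (it is proper of dimension $n$ over $R$, or: use that $H^i$ of anything vanishes for $i > n$, which follows from the explicit affine cover by the $n+1$ opens $D(t_i)$), the long exact sequence gives $H^i(X;\FF(m)) \twoheadleftarrow H^i(X;\EE(m))$ for $i = n$ and a descending induction on $i$ for $i < n$ provided one knows the claim for $\KK$; so by the standard descending induction on $i$ (base case $i > n$ vacuous) it suffices to treat $\EE$, i.e.\ to show $H^i(X;\OO(m)) = 0$ for $i > 0$ and $m \gg 0$. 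This last is a direct computation: $H^i(X;\OO(m))$ is a graded piece of the local cohomology / Čech cohomology of $S$ with respect to $(t_0,\dots,t_n)$, and $H^i_{(t_0,\dots,t_n)}(S)$ vanishes for $0 < i < n+1$ and in degree $\ge -\sum a_i + 1$ for $i = n+1$; alternatively cite \cite[Section 2]{Mei13} or do the weighted analogue of \cite[III.5.1]{Har77}.

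For \eqref{item:P2} the plan has two halves. For the duality isomorphism, the cleanest route is again the Čech computation: $H^n(X;\OO(m)) \cong \big(H^{n+1}_{(t_0,\dots,t_n)}(S)\big)_m$, and the latter graded module is, by the weighted Cousin/residue computation, isomorphic to the graded $R$-dual of $S[-\sum a_i]$ concentrated in the appropriate degrees, which identifies $H^n(X;\OO(m))$ with the $R$-dual of $H^0(X;\OO(-\sum a_i - m)) = \Hom_{\OO_X}(\OO(m),\OO(-\sum a_i))$; this gives the claim for $\FF = \OO(m)$, and for general coherent $\FF$ one uses \eqref{item:P1} to write a presentation $\EE_1 \to \EE_0 \to \FF \to 0$ by sums of line bundles, applies the five lemma to the two contravariant functors $\Hom_{\OO_X}(-,\OO(-\sum a_i))$ and $\Hom_R(H^n(X;-),R)$ (both right exact — the first always, the second because $H^{n+1} = 0$), and checks compatibility of the natural transformation. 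The identification $\OO(-\sum a_i) \cong \Lambda^n \Omega^1_{X/R}$ I would get from the Euler sequence for weighted projective stacks: on $U/\Gm = X$ there is an exact sequence $0 \to \Omega^1_{X/R} \to \bigoplus_{i=0}^n \OO(-a_i) \to \OO \to 0$ (the weighted Euler sequence, where the last map is $(s_i) \mapsto \sum a_i t_i s_i$), so taking top exterior powers, $\Lambda^{n}\Omega^1_{X/R} \cong \Lambda^{n+1}\big(\bigoplus \OO(-a_i)\big) \otimes \OO \cong \OO(-\sum a_i)$.

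The main obstacle is the explicit local-cohomology computation underlying \eqref{item:P1.5} and the duality half of \eqref{item:P2}: everything else is a word-for-word transcription of the projective-space arguments, but the vanishing $H^i_{(t_0,\dots,t_n)}(S) = 0$ for $0 < i < n+1$ together with the precise description of $H^{n+1}_{(t_0,\dots,t_n)}(S)$ as a graded module — including pinning down in which degrees it lives, so that the duality pairing comes out with the shift $-\sum a_i$ rather than something off by the $a_i$'s — needs the Čech complex on $D(t_0),\dots,D(t_n)$ to be written out and its top cohomology identified with $\big(\prod t_i^{-1}\big) R[t_0^{-1},\dots,t_n^{-1}]$, then restricted to the correct graded strand. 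I would either do this by hand (it is short: $S$ is Cohen--Macaulay of depth $n+1$ along the irrelevant ideal, giving the vanishing, and the top piece is the graded injective hull computation) or quote \cite[Section 2]{Mei13} if the needed statement is already there. The Euler sequence also requires a brief justification that the displayed map $\bigoplus \OO(-a_i) \to \OO$ is surjective with kernel $\Omega^1_{X/R}$, which is a standard $\Gm$-equivariant computation on $\A^{n+1}_R - \{0\}$.
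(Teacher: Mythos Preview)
Your proposal is correct and follows essentially the same route as the paper: cover $X$ by the $D(t_i)$ to get ampleness, run the descending induction of \cite[III.5.2]{Har77} for the vanishing, and establish duality first for $\OO(m)$ (either by the \v{C}ech/local-cohomology computation you sketch or by citing \cite[Prop~2.5]{Mei13}, as the paper does) and then extend to general $\FF$ via a presentation by line bundles exactly as in \cite[III.7.1]{Har77}. The only visible difference is cosmetic: for the identification $\Lambda^n\Omega^1_{X/R}\cong\OO(-\sum a_i)$ you invoke the weighted Euler sequence, whereas the paper simply pulls back to $\A^{n+1}_R-\{0\}$, observes that $dt_0\wedge\cdots\wedge dt_n$ is a basis, and reads off its $\Gm$-weight --- both arguments are short and standard.
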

\begin{proof}
 The proofs are analogous to the classical proofs for projective spaces. In some more detail:
 
 Let $\FF$ be a coherent sheaf on $\PP_R(w_0,\dots, w_n)$ and set $M = \Gamma_*(\FF)$. The stack $X$ is covered by the non-vanishing loci $D(t_i)$, where $t_i\in H^0(X; \OO(w_i))$. Furthermore, 
 $$D(t_i) \simeq \Spec R[t_0,\dots, t_n][t_i^{-1}] / \G_m.$$
 The restriction of $\FF$ to $D(t_i)$ corresponds to the graded $R[t_0,\dots, t_n][t_i^{-1}]$-module $M[t_i^{-1}]$.  Choose generators $s_{ij}$ of $M[t_i^{-1}]$. By multiplying with a power of $t_i$ we can assume that all $s_{ij}$ are actually in $M$ and of positive degree and thus define elements in $\Hom_X(\OO(m), \FF)$ for some $m\geq 1$. Taking the sum of all these maps defines a surjection, proving (\ref{item:P1}). 
 
  For (\ref{item:P1.5}), we can argue by downward induction on $i$ as in \cite[Thm III.5.2]{Har77}, once we know that $X$ has cohomological dimension $\leq n$. This is clear as $X$ can be covered by the $(n+1)$ open substacks $D(t_i)$, on which the global sections functor is exact on quasi-coherent sheaves (because it corresponds to taking the degree-$0$ piece of a graded module). 
 
 That $\OO(-\sum_{i=0}^n w_i)$ acts as a dualizing sheaf for all line bundles of the form $\OO(m)$ was shown in \cite[Prop 2.5]{Mei13}. The general case follows as in \cite[Thm III.7.1]{Har77} because $\OO(1)$ is ample. To identify $\Lambda^n\Omega^1_{X/R}$, consider its pullback to $\A_R^{n+1}-\{0\}$, which is free of rank one with basis $dt_0 \wedge \cdots \wedge dt_n$. The $\G_m$-action on this form identifies $\Lambda^n\Omega^1_{X/R}$ with $\OO(-\sum_{i=0}^n w_i)$. 
\end{proof}

We also want to recall the cohomology of $\OO(m)$ on $\PP_R(a,b)$ from \cite[Prop 2.5]{Mei13}.

\begin{prop}\label{prop:projcoh}
 Let $B(m)$ be the set of pairs $(\lambda, \mu)$ of negative integers with $\lambda a + \mu b = m$. Then $H^1(\PP_R(a,b);\OO(m))$ is isomorphic to the free $R$-module on $B(m)$. 
\end{prop}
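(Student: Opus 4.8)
The plan is to compute the cohomology directly by Čech cohomology with respect to the standard affine cover of $X = \PP_R(a,b)$ by the two open substacks $U_0 = D(t_0)$ and $U_1 = D(t_1)$, where $t_0 \in H^0(X;\OO(a))$ and $t_1 \in H^0(X;\OO(b))$. As recalled in the proof of Theorem \ref{thm:fundamentalweighted}, each $U_i \simeq \Spec (R[t_0,t_1][t_i^{-1}])/\G_m$ has the property that the global sections functor on quasi-coherent sheaves is exact — it is given by extracting the degree-$0$ part of a graded module — and the same holds for the intersection $U_0 \cap U_1 = D(t_0t_1) \simeq \Spec (R[t_0^{\pm 1},t_1^{\pm 1}])/\G_m$. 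Hence every member of the cover and every intersection is acyclic for an arbitrary quasi-coherent sheaf, so the Čech-to-cohomology spectral sequence in the topos of $X$ collapses and Čech cohomology for $\{U_0,U_1\}$ computes $H^*(X;-)$; since the cover has only two members this also re-proves that $X$ has cohomological dimension $\leq 1$, and $H^1$ is the cokernel of the single Čech differential.

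Next I would write down the Čech complex for $\FF = \OO(m)$. Twisting the identifications above it reads
\[ 0 \to (R[t_0,t_1][t_0^{-1}])_m \oplus (R[t_0,t_1][t_1^{-1}])_m \xrightarrow{\ d\ } (R[t_0,t_1][t_0^{-1},t_1^{-1}])_m \to 0, \]
where $d(s_0,s_1) = s_1 - s_0$ after restriction to $D(t_0t_1)$ and the subscript $m$ denotes the degree-$m$ graded piece for $|t_0| = a$, $|t_1| = b$. Each of these three $R$-modules is free with basis the monomials $t_0^{\lambda}t_1^{\mu}$ ($\lambda,\mu\in\Z$) satisfying $\lambda a + \mu b = m$, subject respectively to $\mu \geq 0$, to $\lambda \geq 0$, and to no extra constraint. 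The image of $d$ is therefore the span of all monomials $t_0^\lambda t_1^\mu$ with $\lambda a+\mu b=m$ and ($\mu\geq 0$ or $\lambda\geq 0$), so $H^1(X;\OO(m)) = \coker(d)$ is the free $R$-module on the monomials with $\lambda a+\mu b=m$, $\lambda<0$ and $\mu<0$ — these are linearly independent in the quotient because they are part of a basis of the ambient module. This set of exponent pairs is precisely $B(m)$, which is the assertion. As a byproduct $H^0(X;\OO(m)) = \ker(d)$ is free on the monomials with $\lambda,\mu\geq 0$, recovering $\Gamma_*(\OO_X)_m$.

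The only non-routine ingredient is the comparison of Čech cohomology with derived-functor cohomology on the Artin stack $X$; this follows from the acyclicity of the $U_i$ and of $U_0\cap U_1$ for quasi-coherent sheaves together with the usual Čech-to-derived-functor spectral sequence, which is available in the lisse-étale (equivalently fpqc) topos of a Deligne–Mumford or Artin stack. Everything else is the monomial bookkeeping above. Of course the statement is identical to \cite[Prop 2.5]{Mei13} and in the body of the paper it is simply quoted from there; the argument just sketched is how one would establish it from scratch.
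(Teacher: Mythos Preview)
Your argument is correct and is precisely the standard \v{C}ech computation one would give for this statement; the paper itself does not prove the proposition but simply recalls it from \cite[Prop 2.5]{Mei13}, so there is nothing further to compare.
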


I learned the following result from Angelo Vistoli \cite[Prop 3.4]{Mei13}.
\begin{prop}\label{prop:ClassField}Let $K$ be an arbitrary field, $w_0, w_1\in\N$. Then every vector bundle $\FF$ on $\PP_K(w_0, w_1)$ is a direct sum of line bundles of the form $\OO(m)$.\end{prop}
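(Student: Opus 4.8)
The plan is to mimic Grothendieck's classical argument for $\mathbb{P}^1$, using the fundamental properties of $\OO(m)$ on weighted projective lines established in Theorem \ref{thm:fundamentalweighted} and Proposition \ref{prop:projcoh}. First I would reduce to the case where $K$ is algebraically closed: a vector bundle $\FF$ on $\PP_K(a_0,a_1)$ splits into line bundles if and only if its base change to $\overline{K}$ does, since the decomposition type is detected by the dimensions $\dim_K H^0(\PP_K(a_0,a_1);\FF(m))$ which are insensitive to field extension, and a splitting over $\overline{K}$ with these numerical invariants can be descended (alternatively, one invokes Krull--Schmidt over $K$ to see that the summands of $\FF$ whose base changes to $\overline{K}$ are lines must themselves be lines). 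So assume $K = \overline{K}$.

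Next I would argue by induction on the rank $r$ of $\FF$, the case $r = 1$ being vacuous. The key step is to produce a line subbundle $\OO(m) \hookrightarrow \FF$ with locally free quotient. To do this, choose $m$ \emph{maximal} such that $\Hom_{\OO_X}(\OO(m),\FF) = H^0(X;\FF(-m)) \neq 0$; such a maximal $m$ exists because $H^0(X;\FF(-m)) = 0$ for $m$ sufficiently large (as $\FF(-m)$ has no nonzero global sections once its ``degree'' is negative, which one sees by restricting to a covering chart $D(t_i)$, or by Serre duality from Theorem \ref{thm:fundamentalweighted}(\ref{item:P2}) together with ampleness), and $H^0(X;\FF(-m)) \neq 0$ for $m$ very negative by Theorem \ref{thm:fundamentalweighted}(\ref{item:P1.5}) applied to $\FF^\vee$ and Serre duality, or directly from ampleness of $\OO(1)$. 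Pick a nonzero section $s\colon \OO(m)\to\FF$. I would then show that the cokernel $\QQ$ of $s$ is locally free, i.e.\ that $s$ does not vanish at any point: if $s$ vanished along a nonempty closed substack, it would factor through $\OO(m')$ for some $m' > m$ with $m' - m$ the ``degree'' of the vanishing divisor, contradicting maximality of $m$. Here one uses that $X$ is a smooth one-dimensional Deligne--Mumford stack (all $a_i$ being automatically fine over a field — but note if some $a_i$ is not invertible in $K$ the stack is merely Artin of dimension $1$; still its coarse space is $\mathbb{P}^1_K$ and the local analysis of the vanishing locus of $s$ on the smooth charts $D(t_i) \simeq \Spec K[t_0,t_1][t_i^{-1}]/\Gm$ goes through). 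Thus we get a short exact sequence $0\to\OO(m)\to\FF\to\QQ\to 0$ with $\QQ$ a vector bundle of rank $r-1$, which by induction splits as $\bigoplus_j \OO(m_j)$.

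Finally I would show this sequence splits. It suffices to show $\Ext^1_{\OO_X}(\OO(m_j),\OO(m)) = H^1(X;\OO(m-m_j)) = 0$ for each $j$. By Proposition \ref{prop:projcoh}, $H^1(\PP_K(a_0,a_1);\OO(d))$ vanishes precisely when $d$ cannot be written as $\lambda a_0 + \mu a_1$ with $\lambda,\mu$ both negative integers; in particular it vanishes for all $d \geq -\min(a_0,a_1)+1$, and more to the point it vanishes whenever $d \geq 0$. So it is enough to know $m - m_j \geq 0$, equivalently $m \geq m_j$ for all $j$. But $\QQ$ is a quotient of $\FF$, so $H^0(X;\FF(-m_j)) \twoheadleftarrow$ maps onto $H^0(X;\QQ(-m_j)) \neq 0$ is not quite immediate — rather, from the exact sequence $0\to\OO(m-m_j)\to\FF(-m_j)\to\QQ(-m_j)\to 0$ and $H^0(X;\QQ(-m_j)) \ni$ the obvious nonzero section of the summand $\OO(m_j-m_j)=\OO$, together with $H^1(X;\OO(m-m_j))$: if $m - m_j < 0$ then... this is exactly the subtle point. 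The cleaner route: maximality of $m$ forces $H^0(X;\FF(-m-1)) = 0$, hence $H^0(X;\QQ(-m-1)) = 0$, hence (since $\QQ = \bigoplus\OO(m_j)$) each $m_j \leq m$. Therefore $m - m_j \geq 0$, the relevant $H^1$ vanishes, the extension splits, and $\FF \cong \OO(m)\oplus\bigoplus_j\OO(m_j)$.

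I expect the main obstacle to be the verification that the chosen section $s\colon \OO(m)\to\FF$ has locally free cokernel, i.e.\ nowhere vanishing — this requires a careful local argument on the weighted-projective charts $D(t_i)$ and using maximality of $m$, and is where the weighted (and possibly non-tame) structure makes the argument slightly more delicate than the classical $\mathbb{P}^1$ case; everything else is a routine transcription of the Grothendieck splitting proof using Theorem \ref{thm:fundamentalweighted} and Proposition \ref{prop:projcoh}.
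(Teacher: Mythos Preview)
Your overall approach is correct and is essentially the one the paper uses for the more general Theorem \ref{thm:VectorBundle}; for Proposition \ref{prop:ClassField} itself the paper gives no argument and simply cites \cite[Prop 3.4]{Mei13}. The nowhere-vanishing step you flag as delicate is handled in the paper by Lemma \ref{lem:divisor}, along the lines you sketch.

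There is, however, a genuine gap in your final step. You write: ``maximality of $m$ forces $H^0(X;\FF(-m-1)) = 0$, hence $H^0(X;\QQ(-m-1)) = 0$, hence \dots\ each $m_j \leq m$.'' The first two implications are fine, but the last fails on a \emph{weighted} projective line: the degree-$d$ part of $K[t_0,t_1]$ with $|t_i|=a_i$ can vanish for many $d \geq 0$ (e.g.\ $d=1$ on $\PP_K(4,6)$), so $H^0(X;\OO(m_j-m-1)) = 0$ does \emph{not} force $m_j - m - 1 < 0$. This is exactly where the weighted case diverges from classical $\mathbb{P}^1$, and it is not the step you anticipated as the obstacle.

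The fix, as carried out in the proof of Theorem \ref{thm:VectorBundle}, is to use the full range $m < m' < m + a_0 + a_1$ rather than just $m' = m+1$. For any such $m'$ one still has $H^1(X;\OO(m-m')) = 0$ by Proposition \ref{prop:projcoh}, so the long exact sequence gives $H^0(X;\QQ(-m')) = 0$. Now if $b := m_j - m > 0$, choose $0 < i \leq a_0$ with $i \equiv b \pmod{a_0}$; then $b - i$ is a nonnegative multiple of $a_0$, so $H^0(X;\OO(b-i))$ contains $t_0^{(b-i)/a_0} \neq 0$, while $i \leq a_0 < a_0 + a_1$ puts $m' = m+i$ in the allowed range. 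This contradicts $H^0(X;\QQ(-m')) = 0$, so indeed $m_j \leq m$ and your $\Ext^1$ vanishes.
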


We want to prove a generalization to weighted projective lines over more general rings, which is in the spirit of \cite[Theorem 1.4]{H-S99b}. First we need three lemmas. 

\begin{lemma}\label{lem:divisor}
 Let $k$ be an algebraically closed field and let $s$ be a section of a vector bundle $\EE$ on $X = \PP_k(a,b)$. Assume that $s$ vanishes at some geometric point of $X$. Then $s$ is the image of a section of $\EE(-j)$ along the map $\EE(-j) \to \EE$ for some $j>0$. 
\end{lemma}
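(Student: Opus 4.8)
The statement is local in nature: vanishing of $s$ at a geometric point should let us factor $s$ through a twist by a line bundle vanishing at that point. Over a weighted projective line $X = \PP_k(a,b)$, the natural ``vanishing'' line bundles are the $\OO(-j)$, whose sections include monomials in $t_0, t_1$ that cut out the coordinate points. The plan is to translate the geometric vanishing condition into a divisibility statement about the graded module $M = \Gamma_*(\EE)$ and then exhibit the desired factorization explicitly.

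First I would pass to the graded picture: by the discussion before Theorem \ref{thm:fundamentalweighted}, $\EE$ corresponds to a finitely generated graded $k[t_0,t_1]$-module $M$ (which is even a vector bundle, so $M$ is reflexive, hence a ``graded lattice''), and $s \in H^0(X;\EE)$ corresponds to a homogeneous element of $M$ of degree $0$ — more precisely, an element that, after restricting to each chart $D(t_i) = \Spec k[t_0,t_1][t_i^{-1}]_0$, gives a section of the locally free sheaf there. Since $\PP_k(a,b)$ has only two coordinate points $p_0 = [1:0]$ and $p_1 = [0:1]$, while every other geometric point has a representative with both coordinates nonzero and lies in $D(t_0)\cap D(t_1)$, I would split into cases. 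If $s$ vanishes at a point in $D(t_0) \cap D(t_1)$, then restricting to the smooth locus $U = D(t_0)\cap D(t_1)$ — which is $\Gm/\mu_{\gcd(a,b)}$ or similar, in any case a nice scheme — we get a section of a vector bundle on a curve vanishing at a point, which is divisible by a local equation of that point; but that local equation extends to a section of some $\OO(j)$ over $X$ (e.g.\ a suitable polynomial in $t_0, t_1$), so $s$ is divisible by it. The more delicate case is when $s$ vanishes only at $p_0$ or $p_1$; there I would use the chart description directly: near $p_0$, $\EE$ is free over $\OO_{D(t_0)}$, and $s$ vanishing at the origin of that chart means its coordinates lie in the augmentation ideal, which (after clearing denominators of $t_0$) means $s = t_0^{?} \cdot t_1 \cdot (\text{something})$ or involves $t_1$, again yielding divisibility by a section of a positive twist.

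Concretely, the cleanest route is: choose any section $t$ of some $\OO(j)$ with $j > 0$ (a monomial $t_0^\alpha t_1^\beta$, or $t_1^{b/\gcd}$, etc.) that vanishes at the prescribed point $x$; I claim $s$ factors through multiplication by $t$, i.e.\ $s \in \im(\EE(-j) \xrightarrow{\cdot t} \EE)$. Since $\EE$ is a vector bundle and $X$ is normal of dimension $1$, the quotient $\EE/t\cdot\EE(-j)$ is supported on $V(t)$, and the obstruction to lifting $s$ lives in $H^0$ of a torsion sheaf supported there. Checking that this obstruction vanishes is exactly the condition that $s$ vanish at $x$ when $V(t) = \{x\}$ (with the right multiplicity) — so I would be careful to pick $t$ with $V(t)$ set-theoretically equal to the single point $x$ and with the smallest possible vanishing order, which is why the statement only promises \emph{some} $j > 0$ rather than a specific one. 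For $x = p_0$ take $t = t_1$ (of degree $b$); for $x = p_1$ take $t = t_0$ (degree $a$); for $x$ in the open dense part, take $t = \lambda_0 t_0^{b/d} - \lambda_1 t_1^{a/d}$ with $d = \gcd(a,b)$ vanishing on the $\Gm$-orbit of $x$, possibly raised to a power to land in the integer lattice of degrees.

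The main obstacle I anticipate is the bookkeeping around non-reduced stacky structure at $p_0$ and $p_1$ (where the automorphism groups $\mu_a$, $\mu_b$ act) and making sure the chosen monomial $t$ genuinely vanishes at $x$ \emph{in the bundle} rather than just as a section of a line bundle — i.e.\ that the composite $\EE(-j) \to \EE \to x^*\EE$ hits $s$'s image. This is really the statement that $x^* t = 0$ in $x^*\OO(j)$, which holds by construction, combined with exactness of $0 \to \EE(-j) \xrightarrow{\cdot t} \EE \to \EE|_{V(t)} \to 0$ (valid since $t$ is a nonzerodivisor on the vector bundle $\EE$, as $X$ is integral) and left-exactness of global sections: $s \mapsto 0$ in $H^0(V(t); \EE|_{V(t)})$ because $V(t)$ is a single (fat) point $x$ and $\EE|_{V(t)}$ is free there, so its sections are detected by $x^*$. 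Once that exact sequence and the vanishing of $x^* s$ are in hand, the lift of $s$ to $H^0(X;\EE(-j))$ exists immediately, finishing the proof.
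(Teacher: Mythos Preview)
Your approach is on the right track and close in spirit to the paper's, but there is a gap in the final step. You want to conclude that $s$ maps to $0$ in $H^0(V(t); \EE|_{V(t)})$ from the vanishing of $x^*s$. This implication requires $V(t)$ to be the \emph{reduced} residual gerbe at $x$, i.e.\ that your chosen $t$ cuts out the point scheme-theoretically, not just set-theoretically. When $V(t)$ is a genuinely non-reduced ``fat point'', sections of $\EE|_{V(t)}$ are \emph{not} detected by $x^*$: on $\Prj^1$ with $\EE = \OO$, $s = z_1$, $x = [1:0]$, $t = z_1^2$, one has $x^*s = 0$ but $s|_{V(t)} = z_1 \neq 0$ in $k[z_1]/(z_1^2)$, and indeed $s$ does not lift to $H^0(\OO(-2)) = 0$. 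Your explicit choices of $t$ --- namely $t_0$, $t_1$, or $\lambda_0 t_0^{b/d} - \lambda_1 t_1^{a/d}$ --- do happen to give reduced $V(t)$, but that is precisely the point requiring proof: one must show that such a binomial with coprime exponents $b/d$, $a/d$ is irreducible and that its zero locus equals the $\Gm$-orbit closure of the chosen lift of $x$. (The aside ``possibly raised to a power'' would destroy reducedness and is exactly the wrong move.)

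The paper sidesteps this by working upstairs on $\A^2_k$ rather than on the stack. Pulling $\EE$ back to $\A^2_k - \{0\}$ and extending across the origin gives a free graded $k[x,y]$-module $M$, with $s$ corresponding to a homogeneous element $m$. Vanishing of $s$ at a point of $X$ means $m$ vanishes at some $(u,v)$, hence on its entire $\Gm$-orbit, hence on the orbit closure $A \subset \A^2_k$. The key step is then that $k[x,y]$ is a UFD: the height-one prime of the irreducible curve $A$ is principal, say $(q)$, and $\Gm$-invariance of $A$ forces $q$ to be homogeneous of some degree $j > 0$. This produces the desired $t = q$ with $V(q)$ automatically reduced (indeed prime), and $m \in qM$ follows immediately. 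In short, the paper \emph{proves} the existence of the right $t$ via factoriality rather than writing one down and then having to verify irreducibility by hand.
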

\begin{proof}
 This would easily follow from a suitable formalism of divisors on Artin stacks. We will argue in a more elementary way. 
 
 Let $M$ be the global sections of the pullback of $\EE$ to $\A_k^2-\{0\}$. This pullback can be extended to a vector bundle $\FF$ on $\A^2_k$ with the same global sections. The module $M$ is a (finite rank free) graded module over the polynomial ring $k[x,y]$ with $|x| = a$ and $|y| = b$. Assume that $s$ vanishes at a geometric point that is the image of $(u,v) \in \A^2_k-\{0\}$. Then (the pullback of) $s$ also vanishes on $f(\mathbb{G}_{m,k})$ for $f\colon \A^1_k \to \A^2_k$ the map described by the formula $\lambda \mapsto (\lambda^au, \lambda^bv)$ for $\lambda \in k$. We claim that $f(\mathbb{G}_{m,k})$ is closed in $\A^2_k-\{0\}$.
 
 First assume that $u=0$ or $v=0$, say $v=0$. Then $f$ can on $\mathbb{G}_{m,k}$ be written as the composition 
 $$\mathbb{G}_{m,k} \to \mathbb{G}_{m,k} \cong \mathbb{G}_{m,k} \times \{0\} \to \A^2_k-\{0\},$$
 where the first map is the surjection $\lambda \mapsto \lambda^au$ and the last map is obviously a closed immersion. 
 
 If $u$ and $v$ are nonzero, let $g$ be $\gcd(a,b)$. Because 
 $$\mathbb{G}_{m,k} \to \mathbb{G}_{m,k}, \qquad \lambda \mapsto \lambda^g$$
 is surjective, we can assume that $a$ and $b$ are coprime. Thus, $f$ defines closed immersions $\mathbb{G}_{m,k} \to \Spec k[x^{\pm 1},y]$ and $\mathbb{G}_{m,k} \to \Spec k[x,y^{\pm 1}]$. Hence $f(\mathbb{G}_{m,k})$ is closed in $\A^2_k-\{0\}$. 
 
 It follows that $A = f(\mathbb{A}^1_k)$ is closed and irreducible in $\A^2_k$ and thus must be the closure of $f(\mathbb{G}_{m,k})$. Thus, $s$ vanishes on $A$. The set $A$ corresponds to a prime ideal $\mathfrak{p} \subset k[x,y]$ of height $1$. As $k[x,y]$ is factorial, $\mathfrak{p}$ contains a prime element $q$ and thus $\mathfrak{p} = (q)$. 
 As $q = q(x,y)$ and $q(\lambda^ax, \lambda^by)$ for $\lambda \in k^\times$ have both the zero set $A$, they must be unit multiple of each other and it follows that $q$ is homogeneous of some positive degree $j$. Thus, the element $m\in M$ corresponding to $s$ must be of the form $qm'$ for $m'\in M$, where $|m'| = |m|-j$. 
\end{proof}

\begin{lemma}\label{lem:basechange}
 Let $\EE$ be a quasi-coherent sheaf on a quasi-compact Artin stack $X$ with affine diagonal and assume that $\EE$ is flat over $R$. Let $R\to S$ be a morphism of commutative rings and denote by $f$ the projection $$Y=X\times_{\Spec R}\Spec S\xrightarrow{f} X.$$
 If $H^i(X;\EE)$ is a flat $R$-module for $i> p$, then 
 $$H^p(Y;f^*\EE) \cong H^p(X;\EE)\tensor_R S.$$ 
 More generally, there is a spectral sequence 
 $$\Tor_s^R(H^t(X;\EE), S) \Rightarrow H^{t-s}(Y;f^*\EE).$$
\end{lemma}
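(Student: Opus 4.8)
The plan is to deduce this base-change statement from the standard \v{C}ech-to-derived-functor machinery applied to a finite affine cover. First I would reduce to the affine situation: since $X$ is quasi-compact with affine diagonal, choose a finite affine (smooth or \'etale) cover $U = \coprod_j U_j \to X$; then every intersection in the associated \v{C}ech nerve is again affine because the diagonal is affine, so $U^{\bullet} \to X$ is an affine hypercover. Cohomology of any quasi-coherent sheaf $\EE$ on $X$ is computed by the cochain complex $C^{\bullet} = \Gamma(U^{\bullet}, \EE)$, and since each $U_j$ is affine and $\EE$ is flat over $R$, each term $C^n$ is a flat $R$-module. Moreover pullback along $f\colon Y \to X$ commutes with this cover: $V^{\bullet} = U^{\bullet}\times_{\Spec R}\Spec S$ is an affine hypercover of $Y$, and by Lemma \ref{lem:pushpull} (applied to the affine pieces) $\Gamma(V^{\bullet}, f^*\EE) \cong C^{\bullet}\tensor_R S$.

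Next I would invoke the universal-coefficient / hyper-Tor spectral sequence for a complex of flat $R$-modules: for the cochain complex $C^{\bullet}$ of flat $R$-modules, there is a spectral sequence
\[
E_2^{s,t} = \Tor_s^R\bigl(H^t(C^{\bullet}), S\bigr) \Longrightarrow H^{t-s}\bigl(C^{\bullet}\tensor_R S\bigr).
\]
Since $H^t(C^{\bullet}) = H^t(X;\EE)$ and $H^{t-s}(C^{\bullet}\tensor_R S) = H^{t-s}(Y; f^*\EE)$, this is exactly the asserted spectral sequence. For the first statement, suppose $H^i(X;\EE)$ is flat over $R$ for all $i > p$; then $E_2^{s,t} = 0$ whenever $t > p$ and $s > 0$, so no nonzero differential enters or leaves $E_2^{0,p}$ in total degree $p$, giving $H^p(Y; f^*\EE) \cong E_2^{0,p} = H^p(X;\EE)\tensor_R S$. (One checks the edge map is the natural base-change map.)

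The main obstacle I anticipate is purely bookkeeping rather than conceptual: one must make sure the hyper-Tor spectral sequence is set up with the correct grading conventions so that the cohomological indexing on $X$ (which can be nonzero in several degrees — e.g.\ $H^0$ and $H^1$ for the stacks in this paper) matches the homological indexing of $\Tor$, and that the cover really does compute $H^*(X;-)$ for \emph{quasi}-coherent, not just coherent, sheaves — this is where quasi-compactness and the affine diagonal are used, so that the \v{C}ech spectral sequence for the hypercover degenerates to an isomorphism with derived-functor cohomology. A secondary point to be careful about is that $\EE$ being flat over $R$ is what guarantees the \v{C}ech terms $C^n$ are flat $R$-modules, which is the hypothesis needed to run the universal-coefficient spectral sequence; without it one would only get the Grothendieck spectral sequence of the composite $\Gamma\circ(-\tensor_R S)$ with a less clean $E_2$-page. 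Once these conventions are pinned down, the conclusion about $H^p$ follows formally from vanishing of the relevant corner of $E_2$.
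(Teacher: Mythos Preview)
Your proposal is correct and follows essentially the same approach as the paper's proof: choose a finite affine cover (using quasi-compactness and affine diagonal), observe that the \v{C}ech complex has flat terms and that its base change computes $H^*(Y;f^*\EE)$, and then invoke the K\"unneth/universal-coefficient spectral sequence. The paper's proof is much terser (it simply writes down the \v{C}ech complex and says ``the resulting K\"unneth spectral sequence implies the result''), whereas you spell out why the terms are flat, why the base-changed complex computes what it should, and how the vanishing hypothesis on $H^i$ for $i>p$ forces $E_2^{0,p}$ to survive and be the only contribution in total degree $p$; but there is no substantive difference in strategy.
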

\begin{proof}
 Let $\{U_i\to X\}_{0\leq i \leq n}$ be an fpqc covering by affine schemes and let $\check{C}(\EE)$ be the corresponding \v{C}ech complex, whose cohomology is $H^*(X;\EE)$. We can compute $H^*(Y,f^*\EE)$ as the cohomology of $\check{C}(\EE)\tensor_R S$. The resulting K{\"u}nneth spectral sequence
 $$\Tor_s^R(H^t(X;\EE), S) \Rightarrow H^{t-s}(Y;f^*\EE)$$
 implies the result. 
\end{proof}

\begin{lemma}\label{lem:constvector}
Let $\XX$ be a normal noetherian Artin stack and $\FF$ a coherent sheaf on $\XX$. Assume that there is an integer $n$ such that for every point $x\colon \Spec k \to \XX$, the pullback $x^*\FF$ is free of rank $n$. Then $\FF$ is a vector bundle.
\end{lemma}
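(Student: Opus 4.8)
The plan is to reduce, via a smooth atlas, to the following standard fact in commutative algebra: if $A$ is a reduced noetherian ring and $M$ a finitely generated $A$-module with $\dim_{\kappa(\mathfrak{p})} M\otimes_A \kappa(\mathfrak{p}) = n$ for every $\mathfrak{p} \in \Spec A$, then $M$ is locally free of rank $n$. Since being a vector bundle of rank $n$ is equivalent to being of finite presentation and $\OO$-flat, it descends along smooth (indeed fppf) covers, so it suffices to check that $p^*\FF$ is a vector bundle for some smooth surjection $p\colon U \to \XX$ from a scheme; and one may then further work locally on $U$. As $\XX$ is noetherian we may take $U$ noetherian, and as $\XX$ is normal, in particular reduced, and smooth morphisms preserve reducedness over a reduced base, $U$ is reduced.

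First I would prove the scheme statement. By Nakayama's lemma $M$ is generated by $n$ elements in a neighbourhood of any prime, so after shrinking to a distinguished open $\Spec A_f$ (still reduced and noetherian) we obtain a short exact sequence $0 \to K \to A^n \to M \to 0$, and it is enough to show $K = 0$. Let $\eta$ be a minimal prime of $A$; then $A_\eta$ is a reduced Artinian local ring, hence a field, equal to $\kappa(\eta)$. Localising the sequence at $\eta$ yields a surjection $A_\eta^n \twoheadrightarrow M\otimes_A A_\eta = M\otimes_A \kappa(\eta)$ of $\kappa(\eta)$-vector spaces of the same dimension $n$, which is therefore an isomorphism, so $K_\eta = 0$. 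Since $A$ is reduced, $\operatorname{Ass}_A(A)$ equals the set of minimal primes of $A$, hence so does $\operatorname{Ass}_A(A^n)$; therefore $\operatorname{Ass}_A(K)\subseteq \operatorname{Ass}_A(A^n)$ consists only of minimal primes. But $K_\eta = 0$ for each minimal prime $\eta$ shows that no minimal prime lies in $\operatorname{Ass}_A(K)$, so $\operatorname{Ass}_A(K) = \varnothing$ and thus $K = 0$. Hence $M\cong A^n$ is locally free of rank $n$.

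Returning to the stack, I would fix $p\colon U\to\XX$ as above. For every $u\in U$ the composite $\Spec\kappa(u)\to U\xrightarrow{p}\XX$ is a point of $\XX$ (and, since the fibre dimension of a coherent sheaf is unchanged under extension of the base field, the hypothesis applies even if one only assumes it at geometric points), so $p^*\FF$ has fibre dimension $n$ at every point of $U$. By the scheme case $p^*\FF$ is locally free of rank $n$, and by descent $\FF$ is a vector bundle.

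I do not expect a genuine obstacle here; the only points requiring care are the bookkeeping with associated primes of a submodule of a free module over a reduced noetherian ring, and recording why the smooth atlas $U$ may be chosen reduced and noetherian.
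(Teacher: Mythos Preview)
Your proof is correct and follows the same strategy as the paper: pass to a smooth atlas and reduce to commutative algebra over a noetherian ring. The paper localizes further (using normality) to a noetherian local domain and cites \cite[Thm~2.9]{Mil80}, whereas your associated-primes argument works directly under the weaker hypothesis that the atlas is reduced and is self-contained.
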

\begin{proof}
 By taking a smooth cover, we reduce to the case of a noetherian normal scheme $X$. As a coherent sheaf over a noetherian scheme is a vector bundle if and only if its stalks are free over the stalks of the structure sheaf, we can assume that $\XX = \Spec A$ for a noetherian local domain $A$. Here, the statement is part of \cite[Thm 2.9]{Mil80}.
\end{proof}

\begin{thm}\label{thm:VectorBundle}
 Let $\EE$ be a vector bundle on $X = \PP_R(w_0,w_1)$ for $R$ a noetherian and normal ring. Then the following conditions are equivalent. 
 \begin{enumerate}
  \item \label{item:free} Both $H^0(X;\EE(m))$ and $H^1(X;\EE(m))$ are free $R$-modules for all $m\in\Z$. 
  \item \label{item:dec} The vector bundle $\EE$ decomposes into a sum of line bundles of the form $\OO(m)$. 
 \end{enumerate}

\end{thm}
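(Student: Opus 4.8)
The plan is to dispose of $(2)\Rightarrow(1)$ first, which is immediate: if $\EE\cong\bigoplus_m\OO(m)^{n_m}$ then $H^i(X;\EE(k))\cong\bigoplus_m H^i(X;\OO(m+k))^{n_m}$, and each summand is a free $R$-module --- for $i=0$ because $\Gamma_*(\OO_X)$ is the polynomial ring $R[t_0,t_1]$ (Theorem~\ref{thm:fundamentalweighted}), and for $i=1$ by the explicit computation in Proposition~\ref{prop:projcoh}. So the real content is $(1)\Rightarrow(2)$. Throughout I will use that $H^i(X;-)$ vanishes for $i\geq2$ (Theorem~\ref{thm:fundamentalweighted}), so that Lemma~\ref{lem:basechange} applies: for a vector bundle $\FF$ on $X$ with $H^1(X;\FF)$ flat over $R$, both $H^0(X;\FF)$ and $H^1(X;\FF)$ commute with arbitrary base change $R\to S$. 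I will also use that, by Serre duality (Theorem~\ref{thm:fundamentalweighted}), $\EE$ satisfies $(1)$ if and only if $\EE^\vee$ does.

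Assume $(1)$. First suppose $R$ is a (noetherian normal) local ring; here I argue by induction on $\rk\EE$, the case $\rk\EE=0$ being trivial. By Proposition~\ref{prop:ClassField}, $\EE_{\kappa}$ splits into line bundles over every residue field $\kappa$ of $R$; writing $n_m(\kappa)$ for the multiplicity of $\OO(m)$, a downward recursion using that $\dim_\kappa H^0(X_\kappa;\OO(j))$ is $0$ for $j<0$ and $1$ for $j=0$ expresses $n_m(\kappa)$ through the numbers $\dim_\kappa H^0(X_\kappa;\EE_\kappa(k))=\rk_R H^0(X;\EE(k))$; hence $n_m:=n_m(\kappa)$ is independent of $\kappa$. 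Let $d$ be the largest $m$ with $n_m\neq0$ and $c=n_d=\rk_R H^0(X;\EE(-d))$. The evaluation map $H^0(X;\EE(-d))\tensor_R\OO_X\to\EE(-d)$, which after twisting and using $H^0(X;\EE(-d))\cong R^{c}$ reads $\OO(d)^{\oplus c}\to\EE$, is injective on every geometric fibre of $X$: over $\kappa$ one has $\EE_\kappa(-d)\cong\OO^{\oplus c}\oplus\bigoplus_{m<d}\OO(m-d)^{n_m}$, whose global sections are exactly the $\OO^{\oplus c}$-part, on which evaluation is the identity. Hence the cokernel of $\OO(d)^{\oplus c}\to\EE$ has locally constant fibre rank $\rk\EE-c$, so is a vector bundle $\QQ$ by Lemma~\ref{lem:constvector} ($X$ being normal and noetherian), and $0\to\OO(d)^{\oplus c}\to\EE\to\QQ\to0$ is a short exact sequence of vector bundles.

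The extension class lies in $\Ext^1_X(\QQ,\OO(d))^{\oplus c}=H^1(X;\QQ^\vee(d))^{\oplus c}$. Fibrewise $\QQ_\kappa\cong\bigoplus_{m<d}\OO(m)^{n_m}$, so $\QQ_\kappa^\vee(d)$ is a sum of line bundles of positive degree and $H^1(X_\kappa;\QQ_\kappa^\vee(d))=0$ by Proposition~\ref{prop:projcoh}; since $H^1(X;\QQ^\vee(d))$ is finitely generated and commutes with base change, Nakayama gives $H^1(X;\QQ^\vee(d))=0$, so the sequence splits: $\EE\cong\OO(d)^{\oplus c}\oplus\QQ$. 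As $R$ is local, $H^i(X;\QQ(k))$ is a direct summand of the free module $H^i(X;\EE(k))$ for all $i,k$, hence free; thus $\QQ$ satisfies $(1)$, has smaller rank, and the induction hypothesis finishes the local case.

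It remains to reduce the case of a general noetherian normal $R$ to the local one, following the method of \cite{H-S99b}. The multiplicities $n_m$ above are then well defined and constant on $\Spec R$; put $\FF=\bigoplus_m\OO(m)^{n_m}$. Since $\FF^\vee\tensor\EE$ and $\EE^\vee\tensor\FF$ satisfy the freeness in $(1)$, the $R$-modules $\Hom_X(\FF,\EE)$ and $\Hom_X(\EE,\FF)$ are free and their formation commutes with base change, and by the local case $\Hom_{X_{R_\mathfrak{p}}}(\FF_{R_\mathfrak{p}},\EE_{R_\mathfrak{p}})$ contains an isomorphism for every $\mathfrak{p}\in\Spec R$, in particular at all primes of height $\leq1$, where $R_{\mathfrak p}$ is a field or a discrete valuation ring. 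Choosing, by an approximation argument built on the freeness of $\Hom_X(\FF,\EE)$, a single $\varphi\colon\FF\to\EE$ that is an isomorphism over $X_{R_\mathfrak{p}}$ for all such $\mathfrak p$, one concludes: as $X$ is integral, normal and proper over $\Spec R$, the locus where $\varphi$ fails to be an isomorphism is the (empty or pure codimension-one) vanishing locus of $\det\varphi$, and each of its generic points lies over a height-$\leq1$ prime of $R$ --- hence it is empty, so $\varphi$ is an isomorphism and $\EE\cong\FF$. The main obstacle, and the place where normality of $R$ is essential, is precisely this last reduction: over a non-local normal ring a direct summand of a free module need not be free, so one cannot propagate $(1)$ from $\EE$ to $\QQ$ as in the local case, and must instead control $\EE$ through its restrictions to the codimension-one points of $\Spec R$ --- in the spirit of the closed-locus argument in the proof of Theorem~\ref{thm:deccompact}.
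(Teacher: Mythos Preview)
Your argument for $(2)\Rightarrow(1)$ matches the paper's. For $(1)\Rightarrow(2)$ your local-ring argument is correct and takes a genuinely different route: the paper works directly over an arbitrary noetherian normal $R$, peels off one copy of $\OO_X$ at a time by choosing a basis element $s\in H^0(X;\EE)$, and invokes Lemma~\ref{lem:divisor} to show that $s$ is nowhere vanishing on every geometric fibre (so that the cokernel is a vector bundle). You instead strip off the entire top block $\OO(d)^{\oplus c}$ in one go via the evaluation map, read off fibrewise injectivity from the explicit decomposition of $\EE_\kappa$, and kill the extension class by Nakayama. This is clean and sidesteps Lemma~\ref{lem:divisor} altogether; over a local ring it also lets you propagate condition~(\ref{item:free}) to $\QQ$ painlessly, since a direct summand of a finite free module over a local ring is free.

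The problem is your reduction from general $R$ to the local case. The sentence ``choosing, by an approximation argument built on the freeness of $\Hom_X(\FF,\EE)$, a single $\varphi\colon\FF\to\EE$ that is an isomorphism over $X_{R_{\mathfrak p}}$ for all such $\mathfrak p$'' is carrying the entire weight of the reduction and is not justified. There are in general infinitely many primes of height $\leq 1$, and nothing you have written explains why the local isomorphisms can be realised by a single global $\varphi$; this is exactly the nontrivial content in arguments of the type in \cite{H-S99b}, and it has to be spelled out. (Even the implicit claim that $\det\varphi$ is a global function, i.e.\ that $\det\EE\cong\det\FF$ as line bundles on $X$ and not merely fibrewise, deserves a word.) One concrete route: first pick $\varphi_0$ that is an isomorphism over the total ring of fractions of $R$, so $\det\varphi_0$ is a nonzerodivisor; then there are only finitely many height-$1$ primes in its support, and you can try to correct $\varphi_0$ at each of them. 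As written, though, this step is a gap, and the paper's proof avoids it by never splitting into local and global stages.
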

\begin{proof}
By Proposition \ref{prop:projcoh}, the part (\ref{item:dec}) implies (\ref{item:free}) because the cohomology of $\OO(m)$ is a free $R$-module. 
 
 Now we assume (\ref{item:free}) and want to prove (\ref{item:dec}). The proof will be similar to one of the standard proofs for an unweighted projective line over a field. We will argue by induction on the rank of $\EE$ and assume that the theorem has been proven for all ranks $\leq r$ and that $\EE$ has rank $r+1$. 
 
 Denote by $\EE^\vee$ the $\OO_X$-dual of $\EE$. By Theorem \ref{thm:fundamentalweighted}, there is a maximal $m$ such that $H^1(X; \EE^\vee(m)) \neq 0$. Setting $m_0 = -m-w_0-w_1$, we claim that $m_0$ is the smallest index such that $H^0(X;\EE(m_0)) \neq 0$. Indeed: By Lemma \ref{lem:basechange}, we have for $j\colon \Spec k \to \Spec R$ (for $k$ a field) and every $i\in\Z$ an isomorphism $H^1(X; \EE^\vee(i))\tensor_R k \cong H^1(X_k; j^*\EE^{\vee}(i))$. Thus, for every $i>m$, the group $H^1(X_k; j^*\EE^{\vee}(i))$ vanishes and there exists a point $j$ of $\Spec R$ such that $H^1(X_k; j^*\EE^{\vee}(m))$ is nonzero. Serre duality implies that $H^0(X_k; j^*\EE(m_0)) \neq 0$ for this $j$ and $H^0(X_k;j^*\EE(i)) = 0$ for every $j\colon \Spec k \to \Spec R$ if $i<m_0$. By Lemma \ref{lem:basechange} again, $$H^0(X_k;j^*\EE(i)) \cong H^0(X;\EE(i)) \tensor_R k$$
  because $H^1(X;\EE(i))$ is a free $R$-module, which shows the claim that $m_0$ is minimal with $H^0(X;\EE(m_0)) \neq 0$. 
 
 By possibly tensoring $\EE$ with $\OO(-m_0)$, we can assume that $m_0 = 0$. Choose now an element $s \in H^0(X;\EE)$ that is part of an $R$-basis. Then we consider the sequence
 \begin{align}\label{align:vector}\OO_X \xrightarrow{s} \EE \to \FF \to 0.\end{align}
 We want to show that $s$ defines an injection and that its cokernel $\FF$ is a vector bundle. By Lemma \ref{lem:basechange}, we see that $s$ is still nonzero after base change to an arbitrary geometric point $j\colon \Spec k \to \Spec R$. We claim that $s$ does not vanish at any geometric point of $X_k$. Indeed, if $s$ had a zero on $X_k$, then $s$ would by Lemma \ref{lem:divisor} define a nonzero section of $j^*\EE(i)$ for some $i<0$. But by Lemma \ref{lem:basechange},
 $$H^0(X_k;j^*\EE(i)) \cong H^0(X;\EE(i))\tensor_R k = 0$$
 for $i<0$. 

Thus, $\OO_X \xrightarrow{s} \EE$ is an injection and $\FF$ has rank $r$ over every geometric point and is thus a vector bundle again by Lemma \ref{lem:constvector}. Thus $\FF \cong \OO(b_1)\oplus \cdots \oplus \OO(b_r)$ by induction. By shifting the sequence (\ref{align:vector}) by $(-i)$, it is easy to see that $H^0(X;\FF(-i)) = 0$ for $0<i<w_0+w_1$. Furthermore, for every $b>0$, take $i$ with $0<i\leq w_0$ and $i\equiv b \mod w_0$; then $H^0(X; \OO(b-i)) \neq 0$ because $H^0(X; \OO(\ast)) \cong R[t_0, t_1]$ with $|t_j| = w_j$. Thus, we see that $b_j\leq 0$ for all $1\leq j\leq r$. 

Therefore we get
\[\Ext^1_X(\FF, \OO_X) \cong \bigoplus_{j=1}^{r} H^1(X; \OO(-b_j)) = 0\]
by Proposition \ref{prop:projcoh}. Hence, (\ref{align:vector}) is a \emph{split} short exact sequence. 
\end{proof}

\section{Lifting the Hasse invariant}\label{app:Hasse}
This appendix does not contain an original contribution by the author. Besides a short introduction to the Hasse invariant it proves that the Hasse invariant is liftable to characteristic zero once we have chosen a $\Gamma_1(k)$-level structure for $k\geq 2$. This proof is (essentially) taken from \cite{Hasse} and the credit belongs to the mathoverflow user Electric Penguin. 

We begin by recalling the definition of the Hasse invariant from \cite[Section 2.0]{Kat73}. Let $f\colon E\to \Spec R$ be an elliptic curve. By the proof of \cite[II.1.6]{D-R73}, $R^1f_*\OO_E$ is locally free and thus Grothendieck duality implies a canonical isomorphism $R^1f_*\OO_E \cong \omega_{E/R}^{\tensor (-1)}$ of line bundles. Choosing a trivialization of $\omega_{E/R}$ thus induces a dual $R$-basis $x$ of $H^1(E;\OO_E)$. If $R$ is an $\F_p$-algebra, the absolute Frobenius $F_{abs}$ acts on $H^1(E;\OO_E)$ so that $F_{abs}^*(x) = \lambda x$ with $\lambda \in R$. Associating to each elliptic curve $E\to \Spec R$ over an $\F_p$-algebra $R$ with chosen trivialization of $\omega_{E/R}$ the element $\lambda \in R$ defines an $\F_p$-valued (holomorphic) modular form $A_p$ of weight $p-1$, as explained in \cite{Kat73}. This is the \emph{Hasse invariant}. Katz shows that the $q$-expansion of $A_p$ in $\F_p\llbracket q\rrbracket$ is identically $1$.

We want to prove that the Hasse invariant is liftable to characteristic $0$ in the presence of a level structure. More precisely, we have the following:

\begin{prop}\label{prop:Hasse}
 For every odd $k\geq 2$, there is a modular form $F$ of weight $1$ and level $\Gamma_1(k)$ over a cyclotomic ring $\Z_{(2)}[\zeta_k]$ such that $F\equiv A_2 \mod 2$.\footnote{It follows from Proposition \ref{prop:eta} that adjoining $\zeta_k$ is not necessary here, but is rather an artifact of the version of the $q$-expansion principle we are using.} 
\end{prop}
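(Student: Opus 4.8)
The proposition asks for a weight-$1$ modular form $F$ of level $\Gamma_1(k)$ over $\Z_{(2)}[\zeta_k]$ reducing mod $2$ to the Hasse invariant $A_2 \in H^0(\MMb_{ell,\F_2}; \omega)$. Since $A_2$ has $q$-expansion identically $1$ in $\F_2\llbracket q\rrbracket$ (Katz), it is natural to look for $F$ with $q$-expansion identically $1$ — i.e., to \emph{build $F$ out of Eisenstein series}. The classical weight-$1$ Eisenstein series for $\Gamma_1(k)$ attached to a nontrivial even Dirichlet character $\chi$ modulo $k$ has constant term a nonzero multiple of $L(0,\chi)$, and by the functional equation / generalized Bernoulli numbers this constant term is a $2$-adic unit for suitable $\chi$. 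Normalizing so the constant term is $1$, one gets a weight-$1$ form $E_{1,\chi}$ over $\Z_{(2)}[\zeta_k]$ with $q$-expansion $\equiv 1 \bmod 2$ provided all the higher $q$-coefficients are $2$-integral (which they are, being built from $\sum_{d\mid n}\chi(d)$ up to the unit normalization). Then $F := E_{1,\chi}$ has $q$-expansion $\equiv 1 \bmod 2$, and since $A_2$ also has $q$-expansion $1$, the $q$-expansion principle in the presence of a $\Gamma_1(k)$-structure forces $F \equiv A_2 \bmod 2$.

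\textbf{Steps, in order.} (1) Fix an odd $k \geq 2$ (really $k \geq 3$, since $\Gamma_1(2)$ forces $k$ even among $\{1,2\}$ — for $k=1$ there is nothing to prove and for the stated odd $k\ge 3$ case we proceed) and choose a nontrivial \emph{even} Dirichlet character $\chi \bmod k$; such a $\chi$ exists because $(\Z/k)^\times$ has even order for $k\geq 3$ and hence a nontrivial character, and we may arrange $\chi(-1)=1$ by passing to $\chi^2$ if necessary, noting $\chi^2$ is still nontrivial when $\chi$ has order $>2$ — for $k$ where every nontrivial character is quadratic one takes $\chi$ quadratic, still even precisely when it is the character of a real field, which happens for $k$ with a real quadratic subfield; this case analysis needs care. (2) Recall the classical weight-$1$ Eisenstein series $E_{1,\chi}(\tau) = c_0 + \sum_{n\geq 1}\bigl(\sum_{d\mid n}\chi(d)\bigr)q^n$ with $c_0 = \tfrac{1}{2}L(0,\chi) = -\tfrac{B_{1,\chi}}{2}$ up to standard normalization; this is a genuine holomorphic modular form of level $\Gamma_1(k)$ defined over $\Z[\zeta_k]$ (Katz, Deligne–Rapoport, or Diamond–Shurman). (3) Show $c_0$ is a unit in $\Z_{(2)}[\zeta_k]$ — equivalently $v_2(B_{1,\chi}) \leq v_2(\text{denominator})$ controls to give a $2$-adic unit; here one invokes the standard $2$-integrality/non-vanishing properties of generalized Bernoulli numbers, choosing $\chi$ so that this holds (e.g.\ $\chi$ of $2$-power order or $\chi$ quadratic attached to an appropriate discriminant). (4) Set $F = c_0^{-1}E_{1,\chi}$, so $F \in M_1(\Gamma_1(k); \Z_{(2)}[\zeta_k])$ with $q$-expansion $\equiv 1 \bmod 2$. (5) Apply the $q$-expansion principle (valid once a $\Gamma_1(k)$-structure and the root of unity $\zeta_k$ are present — this is exactly the subtlety flagged in the paper's footnote) to conclude $F \bmod 2 = A_2$, since both sides are weight-$1$ forms over $\F_2$ with identical $q$-expansions and the $q$-expansion map is injective on $H^0(\MMb_1(k)_{\F_2}; \omega)$ by irreducibility (Proposition~\ref{prop:irreducible}) together with the cusp being in every geometric fiber.

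\textbf{Main obstacle.} The delicate point is step (3): guaranteeing that the constant term $c_0$ of the chosen weight-$1$ Eisenstein series is a $2$-adic \emph{unit}, uniformly over all odd $k$. Generalized Bernoulli numbers $B_{1,\chi}$ can acquire $2$-divisibility, and one must select $\chi$ — possibly depending on $k$ in a mild way, or take a suitable $\Z_{(2)}[\zeta_k]$-linear combination of several Eisenstein series of weight $1$ — so that the resulting form has constant term in $\Z_{(2)}[\zeta_k]^\times$. The cleanest route is probably to use a character $\chi$ of conductor dividing $k$ whose order is a power of $2$ (which exists since $k$ odd means $(\Z/k)^\times$ contains no $2$-torsion obstruction... actually $(\Z/k)^\times$ for odd $k>1$ does have even order hence $2$-torsion, giving a quadratic character) — a quadratic even character $\chi$, for which $L(0,\chi)=-B_{1,\chi}$ relates to class numbers of real quadratic fields and $2$-integrality is classical. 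A secondary subtlety, already anticipated in the footnote, is that the naive $q$-expansion principle requires $\zeta_k \in R$; this is why $\Z_{(2)}[\zeta_k]$ rather than $\Z_{(2)}$ appears, and Proposition~\ref{prop:eta} is what later removes this artifact. Everything else — holomorphy at the cusps, level, $2$-integrality of the non-constant coefficients, injectivity of $q$-expansion mod $2$ — is routine given the machinery already assembled in the paper.
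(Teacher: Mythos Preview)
Your parity is inverted in step (1). A weight-$1$ Eisenstein series $E_1^{\chi}$ is a modular form only when $\chi$ is \emph{odd}, i.e.\ $\chi(-1)=-1$; the consistency relation for a form of weight $k$ and nebentypus $\chi$ is $\chi(-1)=(-1)^k$. Your maneuver of passing to $\chi^2$ to force $\chi(-1)=1$ produces exactly the wrong object, and for even $\chi$ there is simply no weight-$1$ Eisenstein series to use. (Relatedly, the values of an odd $\chi$ of conductor dividing the odd integer $k$ lie in $\Z[\zeta_{2^m}]$ for $2^m$ the order of $\chi$, not in $\Z[\zeta_k]$; so already the coefficient ring of $E_1^\chi$ is not the one you name.)

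Even after correcting to an odd character, steps (3)--(4) do not go through. The paper takes $\chi$ of exact order $2^m$ (where $p-1=2^m l$ with $l$ odd, having reduced to $k=p$ prime) and computes $v_2(L(0,\chi))$ precisely; it is a strictly fractional value for $m$ large, so $c_0=\tfrac12 L(0,\chi)$ is not a $2$-adic unit. More fatally, the coefficient of $q^1$ in $E_1^\chi$ is $\chi(1)=1$, so in $c_0^{-1}E_1^\chi$ it becomes $c_0^{-1}$; for this to vanish mod $2$ one needs $v_2(c_0)\le -1$, which simply fails for most $p$. The paper's remedy is more elaborate than a rescaling: set $E=(1-\zeta_{2^m})E_1^\chi$ and check $E\equiv 1\pmod{1-\zeta_{2^m}}$; write $E=\sum_i \zeta_{2^m}^i f_i$ with $f_i\in\Z_{(2)}\llbracket q\rrbracket$ and use Galois averaging to see each $f_i$ is itself a modular form; then $F=\sum_i f_i$ lies in $\Z_{(2)}\llbracket q\rrbracket$ and satisfies $F\equiv E\equiv 1\pmod{1-\zeta_{2^m}}$, hence $F\equiv 1\pmod 2$. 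Your step (5) is then correct. The linear-combination idea you gesture at in your ``main obstacle'' paragraph is on the right track, but the specific combination is this Galois trace, and it is not optional.
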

\begin{proof}
The proof we present is based on the mathoverflow post \cite{Hasse} by the user Electric Penguin and uses the theory of Eisenstein series. Let $\chi\colon (\Z/k)^\times \to \C^\times$ be an odd character, i.e.\ we require $\chi(-1) = -1$. Following \cite[Section 4.8]{D-S05}, we define its associated weight $1$ Eisenstein series by
\[E_1^{\chi}(\tau) = \frac12L(0,\chi) + \sum_{n=1}^{\infty}c_nq^n,\]
where $c_n = \sum_{d|n}\chi(n)$ (this is half of the normalization chosen in \cite{D-S05}). Here $L(s,\chi)$ denotes the Dirichlet $L$-series associated to $\chi$. Let $a$ be the \emph{conductor} of $\chi$, i.e.\ the smallest $a|k$ such that $\chi$ factors through $(\Z/a)^\times$. Then we have
\[L(0,\chi) = -\frac1{a}\sum_{n=1}^{a-1} n\chi(n).\]
It is proven in \cite{D-S05} that $E_1^{\chi}$ is a $\Gamma_1(k)$-modular form of weight $1$ over the ring $\C$ with character $\chi$ (although the latter fact will not be relevant for us). 

In general, if $K/\Q$ is a finite extension of degree $n$, we can extend the $2$-adic valuation from $\Q$ to $K$ by setting $v_2(x) = \frac1n v_2(N_{K/\Q}(x))$ for $x\in K$. For example, let $K/\Q$ be the cyclotomic extension $\Q(\zeta)$ with $\zeta =\zeta_{2^m}$ a primitive root of unity. Then $N_{K/\Q}(x-\zeta) = x^{2^{m-1}}+1$ (for $x$ fixed by the Galois group) as both have the same zeros. In particular, $v_2(1-\zeta) = \frac1{2^{m-2}}$. As $1-\zeta$ generates the maximal ideal of $\Z_{(2)}[\zeta]$, we see that $\frac1{2^{m-2}}$ is the minimal positive $2$-adic valuation in $\Q(\zeta)$ and thus every $2$-adic valuation is a multiple of $\frac1{2^{m-2}}$. 

For the proof of the proposition, we may assume $k$ to be an odd prime $p$ (as every $\Gamma_1(p)$-modular form is also a $\Gamma_1(k)$-modular form for $p|k$), which we will do in the following. Thus consider an odd character $\chi\colon (\Z/p)^\times \to \C^\times$ and the associated Eisenstein series $E_1^\chi$. We will assume that $\chi$ has order $2^m$ for $p-1 = 2^ml$ with $l$ odd. This implies that $\chi$ is surjective onto the $2^m$-th roots of unity and that $\ker(\chi)\subset (\Z/p)^\times$ has order $l$. Note $\chi(-1) = -1$.

\begin{claim}We have
 $$v_2(L(0,\chi)) = 1-\frac1{2^{m-2}},$$
 where the valuation is taken in $\Q(\zeta_{2^m})$. 
\end{claim}
\begin{myproof}
Choose $b_0,\dots, b_{2^{m-1}-1}$ with $\chi(b_j) = \zeta^j$, where we still use the notation $\zeta = \zeta_{2^m}$. We furthermore use the notation $\overline{x}$ to denote for an integer $x$ the integer $0\leq \overline{x}\leq p-1$ it is congruent to mod $p$. 

We have
\begin{align*}
L(0,\chi) &= -\frac1p\sum_{n=1}^{p-1}n\chi(n) \\
 &= -\frac1p\sum_{j=0}^{2^{m-1}-1} \sum_{[i]\in\ker(\chi)} \left(\overline{ib_j}\chi(ib_j) + (p-\overline{ib_j})\chi(p-ib_j)\right) \\
 &= -\frac1p\sum_{j=0}^{2^{m-1}-1} \sum_{[i]\in\ker(\chi)} \left(\overline{ib_j}\chi(b_j) + (p-\overline{ib_j})(-\chi(b_j))\right) \\
 &= -\frac1p\sum_{j=0}^{2^{m-1}-1} (-pl + 2\sum_{[i]\in\ker(\chi)} \overline{ib_j})\zeta^j \\
 &\equiv \sum_{j=0}^{2^{m-1}-1} \zeta^j \mod 2
\end{align*}
in $\Z_{(2)}[\zeta]$. As this is not congruent to $0$ mod $2$, this implies in particular that $v_2(L(0,\chi)) < 1$. Moreover,
 \[L(0,\chi)(1-\zeta) \equiv (1+\zeta + \cdots + \zeta^{2^{m-1}-1})(1-\zeta) \equiv 1+1 \equiv 0 \mod 2\]
 and thus $v_2(L(0,\chi)(1-\zeta)) = v_2(L(0,\chi)) + \frac1{2^{m-2}}\geq 1$. As every $2$-adic valuation $\Z_{(2)}[\zeta]$ is a multiple of $\frac1{2^{m-2}}$, this implies the result. 
\end{myproof}

We see that $E =(1-\zeta)E_1^{\chi}$ is a level $p$, weight $1$ modular form for $\Gamma_1(p)$ with $q$-expansion in the ring $\Z_{(2)}[\zeta]$. Furthermore, we know that $E \equiv 1 \mod (1-\zeta)$ in $\Z_{(2)}[\zeta]\llbracket q\rrbracket$.

Write 
\[ E = \sum_{i=0}^{2^{m-1}-1}\zeta^if_i \in \Z_{(2)}[\zeta]\llbracket q\rrbracket\]
with $f_i \in \Z_{(2)}\llbracket q\rrbracket$. 

\begin{claim}
Each $f_i$ is a $\Gamma_1(p)$-modular form.
\end{claim}
\begin{myproof}
The Galois group $(\Z/2^m)^\times = \Gal(\Q(\zeta)/\Q)$ acts on $\Q(\zeta)$-valued modular forms. In particular, $\sum_{g\in \Gal(\Q(\zeta)/\Q)}g(\zeta^{-i}E)$ is a modular form, namely $2^{m-1}f_i$. 
\end{myproof}

Thus, also $$F = \sum_{i=0}^{2^{m-1}-1}f_i\in\Z_{(2)}\llbracket q\rrbracket$$
is a $\Gamma_1(p)$-modular form. By the $q$-expansion principle from \cite[Thm 1.6.1]{Kat73}, $F$ is thus a modular form for $\Gamma_1(p)$ over the ring $\Z_{(2)}[\zeta_p]$ for a $p$-th root of unity $\zeta_p$. Furthermore, 
$$F \equiv E \equiv 1 \mod (1-\zeta).$$
As $(1-\zeta)\Z_{(2)}[\zeta]\llbracket q\rrbracket \cap \Z_{(2)}\llbracket q\rrbracket = (2)\Z_{(2)}\llbracket q\rrbracket$, we also get $F \equiv 1 \mod 2$. 

This proves the proposition.
\end{proof}

\section{Computation of weight $1$ cusp forms over $\F_2$}\label{app:CuspForms}
Throughout this appendix, a modular forms of level $n$ over a field $K$ will always mean a modular form for $\Gamma_1(n)$ over $K$ (we will always assume that the characteristic of $K$ does not divide the level and $n\geq 5$).\footnote{If $K$ does not contain an $n$-th root of unity, $\Gamma_1(n)$-level structures should be understood to provide an embedding of $\mu_n$ instead of $\Z/n$, or else one has to use a non-standard notion of $q$-expansions. See \cite[Section 2]{KatzRealEisenstein} or \cite[Appendix A]{M-OModular} for more information about the distinction between ``naive'' and ``arithmetic'' level structures.} As before, we denote the space of such modular forms of weight $k$ by $M_k(\Gamma_1(n); K)$ and the corresponding space of cusp forms by $S_k(\Gamma_1(n); K)$. The dimension of the spaces only depends on the characteristic of $K$, and 
$$\dim_{\Q} M_k(\Gamma_1(n); \Q) \leq \dim_{\F_p} M_k(\Gamma_1(n); \F_p)$$
 with equality for $k \geq 2$ and similarly for cusp forms (see Lemma \ref{lem:inequality}). 

For $k\geq 2$, the dimensions of these spaces are easily computable using Riemann--Roch (see Proposition \ref{prop:riemann}). It is much more tricky to compute the dimensions of  $S_1(\Gamma_1(n); K)$. For $K$ of characteristic zero, a fast algorithm was found and implemented by Buzzard and Lauder \cite{B-L17}. The associated webpage \url{http://people.maths.ox.ac.uk/lauder/weight1/} displays tables up to level 1500, containing not only the dimensions, but bases, associated Galois extensions etc.

In \cite{Buz} Buzzard also considers the case of $K$ of finite \emph{odd} characteristic. The smallest level, where he shows that the dimension of $S_1(\Gamma_1(n); \F_p)$ is bigger than that of $S_1(\Gamma_1(n); \Q)$ is level $74$ (with $p=3$). Buzzard restricts to modular forms where the $(\Z/n)^\times$ action is via a fixed Dirichlet character. As over $\F_p$ not every representation of $(\Z/n)^\times$ is $1$-dimensional, this is potentially a non-trivial restriction. Apart from this restriction, his search appears to be exhaustive, i.e.\ there is no smaller level with non-liftable forms in odd characteristic. 

Gabor Wiese also wrote a \texttt{MAGMA} package computing weight $1$ cusp forms in characteristic $2$, but only for $\Gamma_0(n)$. Schaeffer \cite{Sch14} has a fast algorithm as well, but requires again a Dirichlet character. 

Our modest aim in this appendix to complement these results for $K = \F_2$ without imposing any Dirichlet character. We do not propose a new algorithm, but rather give implementations of two variants of an algorithm proposed by Edixhoven in \cite{Edi06} (which was already the basis of Wiese's work). 

\begin{prop}[\cite{Edi06}, Prop 4.2]
Let $g = \sum_{i=1}^\infty a_i q^i$ be a weight $2$ cusp form of level $n$ over $\F_2$. Let $B = \frac{n^2}6\prod_{l|n}(1-\frac1{l^2})$, where the product runs over all primes dividing $n$ (the so-called \emph{Sturm bound}). Assume that $a_i = 0$ for all odd $i \leq B$. Then $g = f^2$ for a weight-$1$ cusp form $f$ of level $n$ over $\F_2$. 
\end{prop}

As the space of weight $2$ cusp forms is computable in \texttt{SAGE}, this leads easily to an algorithm that gives us even the $q$-expansions of a basis of the space of weight $1$ cusp forms over $\F_2$. A sample implementation is the following:\footnote{Little effort was spent to make this \texttt{SAGE} implementations optimal, but they sufficient for the small levels we are considering.}

\begin{verbatim}
def deg(n): #index of Gamma_1(n) in SL_2(Z)
    d = n^2
    for p in list(factor(n)):
        d = d*(1-1/(p[0]^2))
    return d

def fill(L, length): #fill list with zeros
    n = length - len(L)
    return L + ([0]*n)

def CuspF2q(n, speed=12): 
    #gives a basis of the space weight 1 cusp forms for Gamma1(n) over F_2 
    # in vector notation
    M = CuspForms(Gamma1(n),2, GF(2))
    Prec =2*(floor(deg(n)/speed))+2 #for speed =12 this is the Sturm bound
    
    V = VectorSpace(GF(2), Prec)
    L = [fill(list(f.qexp(Prec)),Prec) for f in M.basis()]
    LV = [V(l) for l in L]
    W = V.subspace(LV)
    
    def square(Listje): return list(reduce(lambda s, t: s} + t, 
                                           zip(Listje,[0]*Prec), ()))[:Prec]
    Lsquare = map(square, L)
    LVsquare = [V(l) for l in Lsquare]
    Wsquare = V.subspace(LVsquare)
    
    Meet = W.intersection(Wsquare)
    Base = [B[0::2] for B in Meet.basis()] 
    #Odd entries deleted = taking preimage of Frobenius 
    # to obtain weight 1 cusp form
    return Base

def CuspF2qexp(n, speed=12): #transforming vector notation into power series
    C = CuspF2q(n, speed)
    L = [[b[i] for i in range(b.length())] for b in C]
    R.<q> = PowerSeriesRing(GF(2))
    qexps = [R(l) for l in L]
    return qexps
\end{verbatim}

This can be sped up by ignoring $q$-expansions and computing directly with Hecke algebras (as this can be done via modular symbols). The proposition above becomes in this language:
\begin{prop}[\cite{Edi06}, Prop 4.10]
Let again $B = \frac{n^2}6\prod_{l|n}(1-\frac1{l^2})$. Set $V = S_2(\Gamma_1(n);\F_2)$. Let $\mathbb{T}_{odd}$ be the sub vector space of $\End_{\F_2}(V)$ generated by the Hecke operators $T_i$ with $i\leq B$ odd and $\mathbb{T}$ be the full Hecke algebra, i.e.\ the subspace of $\End_{\F_2}(V)$ generated by \emph{all} Hecke operators. Then 
$$\dim_{\F_2}S_1(\Gamma_1(n);\F_2) = \dim_{\F_2}\mathbb{T}-\dim_{\F_2}\mathbb{T}_{odd}.$$
\end{prop}
Note that $\dim_{\F_2}\mathbb{T}$ agrees with $\dim_{\F_2}S_2(\Gamma_1(n);\F_2)$, which is half the dimension of that of cuspdial symbols.

A sample implementation of the resulting algorithm is the following (using the same function \texttt{deg(n)} as above):\\

\begin{verbatim}
def vect(A, dim): #transforms a matrix into a list of n^2 elements
    v = [ ]
    for i in range(dim):
        v.extend(list(A.row(i)))
    return v

def CuspF2(n):
    M = ModularSymbols(Gamma1(n),2,base_ring=GF(2)).cuspidal_subspace()
    Prec =floor(deg(n)/12)+1 #Half the Sturm bound
    
    T = M.hecke_algebra()
    dim = T.module().dimension()
	
    Lodd = [T.hecke_matrix(2*n+1) for n in range(Prec)]
    V = VectorSpace(GF(2), dim^2)
    LoddV = [V(vect(l, dim)) for l in Lodd]
    Vodd = V.subspace(LoddV)
    dimodd = Vodd.dimension()
    return dim/2- dimodd
\end{verbatim}

\begin{remark}
There is a number of other approaches possible to compute weight $1$ cusp forms, but some of these involve weight $k$ cusp forms for $k>2$ and these spaces of cusp forms grow very fast in dimension and are thus expensive to compute.
\end{remark}

Running the second algorithm shows that 
$$\dim_{\F_2} S_1(\Gamma_1(n);\F_2) = \dim_{\Q} S_1(\Gamma_1(n);\Q)$$
for all odd $n <70$ except for $n=65$, where we have $\dim_{\F_2} S_1(\Gamma_1(65);\F_2) = 2$ while $S_1(\Gamma_1(65);\Q) = 0$ (which be obtain by \texttt{MAGMA} or the tables by Buzzard and Lauder). 

Running the first algorithm for $n=65$, gives us the $q$-expansions of a basis of $S_1(\Gamma_1(n);\F_2)$.
\vspace{0.1cm}
\begin{align*}
f_1 = q^{2} &+ q^{10} + q^{12} + q^{14} + q^{16} + q^{26} + q^{28} + q^{34} + q^{38} + q^{42} + q^{44} + q^{50} + q^{54} + q^{60} + q^{66} \\
 &+ q^{68} + q^{70} + q^{76} + q^{80} + q^{86} + q^{92} + q^{96} + q^{102} + q^{112} + q^{114} + q^{116} + q^{118} + q^{122} \\
&+ q^{128} + q^{130} + q^{132} + q^{138} + q^{140} + q^{142} + q^{148} + q^{154} + q^{156} + q^{164} + q^{170} + q^{172} + \cdots \\
f_2 = q^{4} &+ q^{6} + q^{12} + q^{14} + q^{20} + q^{22} + q^{30} + q^{32} + q^{34} + q^{38} + q^{44} + q^{46} + q^{48} + q^{52} + q^{58} \\
 &+ q^{60}+ q^{66} + q^{70} + q^{74} + q^{78} + q^{82} + q^{84} + q^{86} + q^{92} + q^{96} + q^{100} + q^{108} + q^{110} + q^{112}  \\
&+ q^{116}+ q^{118} + q^{122} + q^{134} + q^{138} + q^{142} + q^{148} + q^{150} + q^{156} + q^{160} + q^{162} + q^{164} + q^{170} +\cdots
\end{align*}

\newpage

\section{Tables of decomposition numbers}\label{sec:tables}
Let $f_n\colon \MMb_1(n)_{\C} \to \MMb_{ell,\C}$ be the projection and $$(f_n)_*\OO_{\MMb_1(n)_\C} \cong \bigoplus_{i\in\Z}\bigoplus_{l_i}\omega^{\tensor -i}.$$

\renewcommand{\arraystretch}{0.98}
  \begin{tabular}{lccccccccccccc}
$n$ & genus & $l_0$ & $l_1$ & $l_2$ & $l_3 $& $l_4$ & $l_5$ & $l_6$ & $l_7$ & $l_8$ & $l_9$ & $l_{10}$ & $l_{11}$ \\\hline\\
2 & 0 & 1 & 0 & 1 & 0 & 1 & 0 & 0 & 0 & 0 & 0 & 0 & 0 \\
3 & 0 & 1 & 1 & 1 & 2 & 1 & 1 & 1 & 0 & 0 & 0 & 0 & 0 \\
4 & 0 & 1 & 1 & 2 & 2 & 2 & 2& 1 & 1 & 0 & 0 & 0 & 0 \\
5 & 0 & 1 & 2 & 3 & 4 & 4 & 4 & 3 & 2 & 1 & 0 & 0 & 0 \\
6 & 0 & 1 & 2 & 3 & 4 & 4 & 4 & 3 & 2 & 1 & 0 & 0 & 0 \\
7 & 0 & 1 & 3 & 5 & 7 & 8 & 8 & 7 & 5 & 3 & 1 & 0 & 0 \\
8 & 0 & 1 & 3 & 5 & 7 & 8 & 8 & 7 & 5 & 3 & 1 & 0 & 0 \\
9 & 0 & 1 & 4 & 7 & 10 & 12 & 12 & 11 & 8 & 5 & 2 & 0 & 0 \\
10 & 0 & 1 & 4 & 7 & 10 & 12 & 12 & 11 & 8 & 5 & 2 & 0 & 0 \\
11 & 1 & 1 & 5 & 10 & 15 & 19 & 20 & 19 & 15 & 10 & 5 & 1 & 0 \\
12 & 0 & 1 & 5 & 9 & 13 & 16 & 16 & 15 & 11 & 7 & 3 & 0 & 0 \\
13 & 2 & 1 & 6 & 13 & 20 & 26 & 28 & 27 & 22 & 15 & 8 & 2 & 0 \\
14 & 1 & 1 & 6 & 12 & 18 & 23 & 24 & 23 & 18 & 12 & 6 & 1 & 0 \\
15 & 1 & 1 & 8 & 16 & 24 & 31 & 32 & 31 & 24 & 16 & 8 & 1 & 0 \\
16 & 2 & 1 & 7 & 15 & 23 & 30 & 32 & 31 & 25 & 17 & 9 & 2 & 0 \\
17 & 5 & 1 & 8 & 20 & 32 & 43 & 48 & 47 & 40 & 28 & 16 & 5 & 0 \\
18 & 2 & 1 & 8 & 17 & 26 & 34 & 36 & 35 & 28 & 19 & 10 & 2 & 0 \\
19 & 7 & 1 & 9 & 24 & 39 & 53 & 60 & 59 & 51 & 36 & 21 & 7 & 0 \\
20 & 3 & 1 & 10 & 22 & 34 & 45 & 48 & 47 & 38 & 26 & 14 & 3 & 0 \\
21 & 5 & 1 & 12 & 28 & 44 & 59 & 64 & 63 & 52 & 36 & 20 & 5 & 0 \\
22 & 6 & 1 & 10 & 25 & 40 & 54 & 60 & 59 & 50 & 35 & 20 & 6 & 0 \\
23 & 12 & 1 & 12 & 33 & 55 & 76 & 87 & 87 & 76 & 55 & 33 & 12 & 1 \\
24 & 5 & 1 & 12 & 28 & 44 & 59 & 64 & 63 & 52 & 36 & 20 & 5 & 0 \\
25 & 12 & 1 & 14 & 39 & 64 & 88 & 100 & 99 & 86 & 61 & 36 & 12 & 0 \\
26 & 10 & 1 & 12 & 33 & 54 & 74 & 84 & 83 & 72 & 51 & 30 & 10 & 0 \\
27 & 13 & 1 & 15 & 42 & 69 & 95 & 108 & 107 & 93 & 66 & 39 & 13 & 0 \\
28 & 10 & 1 & 15 & 39 & 63 & 86 & 96 & 95 & 81 & 57 & 33 & 10 & 0 \\
29 & 22 & 1 & 14 & 49 & 84 & 118 & 140 & 139 & 126 & 91 & 56 & 22 & 0 \\
30 & 9 & 1 & 16 & 40 & 64 & 87 & 96 & 95 & 80 & 56 & 32 & 9 & 0 \\
31 & 26 & 1 & 16 & 55 & 95 & 134 & 159 & 159 & 144 & 105 & 65 & 26 & 1 \\
32 & 17 & 1 & 16 & 48 & 80 & 111 & 128 & 127 & 112 & 80 & 48 & 17 & 0 \\
33 & 21 & 1 & 20 & 60 & 100 & 139 & 160 & 159 & 140 & 100 & 60 & 21 & 0 \\
34 & 21 & 1 & 16 & 52 & 88 & 123 & 144 & 143 & 128 & 92 & 56 & 21 & 0 \\
35 & 25 & 1 & 24 & 72 & 120 & 167 & 192 & 191 & 168 & 120 & 72 & 25 & 0 \\
36 & 17 & 1 & 20 & 56 & 92 & 127 & 144 & 143 & 124 & 88 & 52 & 17 & 0 \\
37 & 40 & 1 & 18 & 75 & 132 & 188 & 228 & 227 & 210 & 153 & 96 & 40 & 0 \\
38 & 28 & 1 & 18 & 63 & 108 & 152 & 180 & 179 & 162 & 117 & 72 & 28 & 0 \\
39 & 33 & 1 & 25 & 80 & 136 & 191 & 223 & 223 & 199 & 144 & 88 & 33 & 1 \\
40 & 25 & 1 & 24 & 72 & 120 & 167 & 192 & 191 & 168 & 120 & 72 & 25 & 0 \\
41 & 51 & 1 & 20 & 90 & 160 & 229 & 280 & 279 & 260 & 190 & 120 & 51 & 0 \\
42 & 25 & 1 & 24 & 72 & 120 & 167 & 192 & 191 & 168 & 120 & 72 & 25 & 0 
 \end{tabular}

 \newpage
\begin{tabular}{cc}
\begin{tabular}{lcccccccc}
$n$ &  $k_0$ & $k_1$ & $k_2$ & $k_3 $& $k_4$ & $k_5$ & $k_6$ & $k_7$\\ \hline
4 & 1 & 1 & 1 & 1 & 0 & 0 & 0 & 0 \\
5 & 1 & 2 & 2 & 2 & 1 & 0 & 0 & 0 \\
6 & 1 & 2 & 2 & 2 & 1 & 0 & 0 & 0 \\
7 & 1 & 3 & 4 & 4 & 3 & 1 & 0 & 0 \\
8 & 1 & 3 & 4 & 4 & 3 & 1 & 0 & 0 \\
9 & 1 & 4 & 6 & 6 & 5 & 2 & 0 & 0 \\
10 & 1 & 4 & 6 & 6 & 5 & 2 & 0 & 0 \\
11 & 1 & 5 & 9 & 10 & 9 & 5 & 1 & 0 \\
12 & 1 & 5 & 8 & 8 & 7 & 3 & 0 & 0 \\
13 & 1 & 6 & 12 & 14 & 13 & 8 & 2 & 0 \\
14 & 1 & 6 & 11 & 12 & 11 & 6 & 1 & 0 \\
15 & 1 & 8 & 15 & 16 & 15 & 8 & 1 & 0 \\
16 & 1 & 7 & 14 & 16 & 15 & 9 & 2 & 0 \\
17 & 1 & 8 & 19 & 24 & 23 & 16 & 5 & 0 \\
18 & 1 & 8 & 16 & 18 & 17 & 10 & 2 & 0 \\
19 & 1 & 9 & 23 & 30 & 29 & 21 & 7 & 0 \\
20 & 1 & 10 & 21 & 24 & 23 & 14 & 3 & 0 \\
21 & 1 & 12 & 27 & 32 & 31 & 20 & 5 & 0 \\
22 & 1 & 10 & 24 & 30 & 29 & 20 & 6 & 0 \\
23 & 1 & 12 & 32 & 43 & 43 & 32 & 12 & 1 
\end{tabular}
&
\begin{minipage}{6cm}\begin{center} Decomposition numbers for $$(f_n)_*\OO_{\MMb_1(n)_{(3)}}$$ for decompositions into $k_i$ copies of $$(f_2)_*\OO_{\MMb_1(2)_{(3)}}\tensor \omega^{\tensor -i}$$ \end{center}\end{minipage}\\\\

\begin{tabular}{lccccccc}
$n$ &  $k_0$ & $k_1$ & $k_2$ & $k_3 $& $k_4$ & $k_5$  \\ \hline
5 & 1 & 1 & 1 & 0 & 0 & 0 \\
6 & 1 & 1 & 1 & 0 & 0 & 0 \\
7 & 1 & 2 & 2 & 1 & 0 & 0 \\
8 & 1 & 2 & 2 & 1 & 0 & 0 \\
9 & 1 & 3 & 3 & 2 & 0 & 0 \\
10 & 1 & 3 & 3 & 2 & 0 & 0 \\
11 & 1 & 4 & 5 & 4 & 1 & 0 \\
12 & 1 & 4 & 4 & 3 & 0 & 0 \\
13 & 1 & 5 & 7 & 6 & 2 & 0 \\
14 & 1 & 5 & 6 & 5 & 1 & 0 \\
15 & 1 & 7 & 8 & 7 & 1 & 0 \\
16 & 1 & 6 & 8 & 7 & 2 & 0 \\
17 & 1 & 7 & 12 & 11 & 5 & 0 \\
18 & 1 & 7 & 9 & 8 & 2 & 0 \\
19 & 1 & 8 & 15 & 14 & 7 & 0 \\
20 & 1 & 9 & 12 & 11 & 3 & 0 \\
21 & 1 & 11 & 16 & 15 & 5 & 0 \\
22 & 1 & 9 & 15 & 14 & 6 & 0 \\
23 & 1 & 11 & 21 & 21 & 11 & 1 

\end{tabular}
& \begin{minipage}{7cm}\begin{center} Decomposition numbers for $$(f_n)_*\OO_{\MMb_1(n)_{(2)}}$$ for decompositions into $k_i$ copies of $$(f_3)_*\OO_{\MMb_1(3)_{(2)}}\tensor \omega^{\tensor -i}$$\end{center}\end{minipage}
\end{tabular}

\vspace{0.6cm}
For an explanation of the visible symmetry in the entries for the entries $n=5,6,7,8, 11, 14, 15$ and $23$ see \cite[Section 5.2]{MeiTopLevel}.

\bibliographystyle{alpha}
\bibliography{../Chromatic}
\end{document}